\documentclass[12pt]{amsart}
\usepackage{fullpage}
\usepackage{graphicx}
\usepackage{todonotes}
\usepackage{tikz}
\usepackage{mathtools}
\usetikzlibrary{decorations.markings}
\usetikzlibrary{patterns}

\newtheorem{theorem}{Theorem}[section]
\newtheorem{corollary}[theorem]{Corollary}
\newtheorem{lemma}[theorem]{Lemma}
\newtheorem{proposition}[theorem]{Proposition}
\newtheorem{definition}[theorem]{Definition}
\newtheorem{question}[theorem]{Question}
\newtheorem{remark}[theorem]{Remark}

\numberwithin{equation}{section}

\newcommand\CC {{\mathbb C}}

\newcommand\NN {{\mathbb N}}
\newcommand\QQ {{\mathbb Q}}
\newcommand\RR {{\mathbb R}}

\newcommand\ZZ {{\mathbb Z}}

\newcommand\slgroup{{\rm SL}}

\newcommand\im{{\rm Im }}
\newcommand\distance{{\rm Dist }}

\newcommand\id{{\rm Id}}
\newcommand\lipschitz{{\rm Lip}}

\newcommand\interior{{\rm Int}}
\newcommand\closure{{\rm Cl}}

\newcommand\cA{{\mathcal{A}  }}
\newcommand\cB{{\mathcal{B}  }}
\newcommand\cC{{\mathcal{C}  }}
\newcommand\cD{{\mathcal{D}  }}

\newcommand\cF{{\mathcal{F}  }}

\newcommand\cH{{\mathcal{H}  }}
\newcommand\cI{{\mathcal{I}  }}

\newcommand\cO{{\mathcal{O}  }}
\newcommand\cP{{\mathcal{P}  }}
\newcommand\cQ{{\mathcal{Q}  }}
\newcommand\cR{{\mathcal{R}  }}

\newcommand\cT{{\mathcal{T}  }}
\newcommand\cU{{\mathcal{U}  }}
\newcommand\cV{{\mathcal{V}  }}

\begin{document}

\title{Higher genus Cherry flows and full families of GIETs}

\author[L. Marchese]{Luca Marchese}
\address{Dipartimento di Matematica, Universit\`a di Bologna, Piazza di Porta San Donato 5, 40126, Bologna, Italia}

\email{luca.marchese4@unibo.it}

\author[L. Palmisano]{Liviana Palmisano}
\address{Department of Mathematics, KTH - Royal Institute of Technology, Stockholm, Sweden}

\email{liviana@kth.se}


\begin{abstract}
Cherry flows are classical examples of $C^\infty$ flows on the two-dimensional torus exhibiting non-trivial recurrent dynamics. For every higher genus $g\geq1$ we construct a $C^\infty$ parameter family of flows whose first return map is a generalized interval exchange transformation (GIET) with flat pieces. In such family, for every interval exchange transformation $T$ satisfying the Keane property, there are parameters corresponding to a Cherry flow whose return map is semi-conjugate to $T$. In particular, for each $k=1,\ldots,g$, our family contains a Cherry flow whose unique quasi-minimal set supports exactly $k$ ergodic invariant measures. The construction relies on a Full Family Theorem for GIETs with flat pieces, which establishes the realization of every admissible Rauzy renormalization path within a specific finite-dimensional family with the optimal number of parameters.
\end{abstract}

\maketitle


\section{Introduction}

The study of non-trivial recurrent dynamics for flows on the two-dimensional torus is a classical topic in dynamical systems. In 1886, Poincar\'e \cite{Poincare} asked whether a $C^\infty$ flow on the torus could admit a non-trivial minimal set strictly contained in the torus. Here, a minimal set is a nonempty compact invariant set in which every orbit is dense, and non-triviality excludes equilibria and periodic orbits. Denjoy later gave a negative answer, ruling out such examples even in the $C^2$ category \cite{Denjoy}. In 1938, Cherry \cite{Cherry} showed that a smooth flow on the torus can nevertheless have a non-trivial recurrent orbit whose closure is a proper subset of the torus. Such a closure is called a \emph{quasi-minimal} set and may contain singularities. His examples are $C^\infty$ flows with a hyperbolic saddle, a sink, and a unique quasi-minimal set. These are now known as \emph{Cherry flows}. The dynamics of a Cherry flow is naturally encoded by the first return map to a suitable circle section. Extending this map across the interval of points that do not return gives a continuous, non-decreasing circle map of degree one, constant on that interval and semi-conjugate to an irrational rotation. The connection with one-dimensional dynamics has played a central role in the study of the topological, geometric, and ergodic properties of Cherry flows; see, for example,
\cite{GraczykJonkerSwiatekTangermanVeerman,MartensvanStriendeMelo, Mendes,MoreiraGaspar,Palmisano1,Palmisano2,Palmisano3,Palmisano4}.

A natural question is whether a similar picture holds beyond the torus. In this paper, we construct $C^\infty$ analogues of Cherry flows on compact orientable surfaces of every positive genus. These flows have one sink and finitely many saddles. One saddle has a separatrix connecting it to the sink, and there are no saddle connections. Each flow has a unique quasi-minimal set; see Section~\ref{SectionHigherGenusCherryFlows} for precise definitions. 
The one-dimensional models change accordingly. Linear flows on translation surfaces have interval exchange transformations as their first return maps. For the Cherry flows considered here, the extended return maps retain the interval-exchange structure away from an interval on which they are constant. This leads to generalized interval exchange transformations (GIETs) with flat pieces; see Section~\ref{SectionIETsDeformationsFlatPiece} and Figure~\ref{FigureIETsFlatPiece}.

Our first main result realizes prescribed interval exchange dynamics through the return maps of higher-genus Cherry flows. More precisely, for every genus $g\geq1$ and every admissible collection of saddle indices, we construct a parameter family of flows such that, for every interval exchange transformation $T$ satisfying the Keane property and having compatible combinatorics, the family contains a Cherry flow whose return map is semi-conjugate to $T$ (Theorem~\ref{TheoremMainTheoremCherryFlow}). This realization also gives control over the ergodic structure of the quasi-minimal set. In particular, for each $k=1,\ldots,g$, our family contains a Cherry flow whose unique quasi-minimal set supports exactly $k$ ergodic invariant measures (Corollary~\ref{Cor:uniqueQMSandmeasures}).

Our result is related to Sataev's work on the number of ergodic invariant measures for smooth surface flows \cite{Sataev}. Our construction realizes the prescribed numbers of ergodic invariant measures within a single family of $C^\infty$ Cherry flows with fixed genus and saddle indices. A related realization result was obtained by Carrand \cite{Carrand} for self-similar interval exchange transformations and $C^1$ surface flows. Our construction applies to general admissible Keane interval exchange transformations, produces $C^\infty$ flows, and includes examples with different numbers of ergodic measures within the same family. A complementary perspective is provided by the recent work of Fougeron, Schmidhuber and Ulcigrai \cite{FougeronSchmidhuberUlcigrai} where the authors extend Rauzy--Veech induction to GIETs with gaps to study the decomposition of a given GIET into periodic and quasi-minimal domains, together with regions without recurrence. Our work addresses a complementary realization problem: we construct finite-dimensional families of GIETs with flat pieces realizing every admissible Rauzy renormalization path and use them to produce smooth Cherry flows with controlled ergodic properties.

The one-dimensional mechanism underlying our construction is the second main result of the paper: a Full Family Theorem for GIETs with flat pieces (Theorem~\ref{TheoremFullFamilyTheorem}). It extends the theorem established in \cite{MarchesePalmisano} for invertible GIETs to a class whose branches are continuous and non-decreasing but may contain flat intervals; see Section~\ref{SectionClassGIETs}. In this broader class, we introduce finite-dimensional parameter families with the optimal number of real parameters and define the notion of a \emph{full family} (Definition~\ref{DefinitionFullFamily}). Under a natural geometric condition and a mild combinatorial assumption, we prove that every admissible Rauzy renormalization path is realized by a member of the family. In particular, for every interval exchange transformation satisfying the Keane property and the prescribed combinatorial conditions, the family contains a map semi-conjugate to that transformation. This is the realization mechanism used in the construction of the surface flows.

Full family theorems express a form of universality: a finite-dimensional family is rich enough to realize all admissible combinatorial dynamics. Results of this type have been established for continuous non-decreasing circle maps \cite{deMelovanStrien}, multimodal circle maps \cite{deMeloSalomaoVargas}, tent maps and multimodal interval maps \cite{MilnorThurston}, Lorenz maps \cite{MartensdeMelo}, the complex quadratic family \cite{DouadyHubbard}, and rational maps \cite{deFariadeMeloSalomaoVargas}. Such results do not hold in complete generality, as illustrated by the negative result for H\'enon maps in \cite{HazardMartensTresser}. The mechanisms behind these theorems vary considerably. For continuous non-decreasing circle maps, fullness follows from continuity of the rotation number and the intermediate value theorem, whereas for the complex quadratic family it relies on complex-analytic techniques. In several other settings, proofs use a method introduced by Thurston; see \cite{DongMartensPalmisano} for a presentation of this approach. Our proof develops this method in the setting of generalized interval exchange transformation with flat pieces.

For each finite Rauzy path, we introduce a configuration space and a Thurston map such that a fixed point of the map determines a parameter realizing that path. We analyse the closure of the configuration space and extend the map continuously to it, establishing the conditions needed to apply Brouwer's fixed-point theorem. We then exclude fixed points on the boundary, so that the resulting fixed point belongs to the configuration space. By realizing successively longer initial segments of a prescribed infinite path and passing to the limit, we obtain a GIET realizing the entire path. The defining properties of a full family (Definition~\ref{DefinitionFullFamily}) emerge naturally from this construction.

The main new difficulties compared with \cite{MarchesePalmisano} concern the geometry of the configuration space and the boundary behaviour of the Thurston map. In \cite{MarchesePalmisano}, the configuration space was the full simplex, and the boundary dynamics had a cyclic structure. Here, the configuration space has a more complicated geometry within the ambient simplex, and the boundary dynamics no longer has this cyclic structure. The proof therefore requires both an analysis of the geometry needed for the fixed-point argument and new arguments to exclude boundary fixed points. 
An alternative approach to a full family theorem would be to consider a parametric family like in 
Definition \ref{DefinitionFullFamily} and proving that its topological/analytic properties remain stable after renormalizing the family with the Rauzy-Veech induction. This requires the study of topological and analytic properties of the renormalization operator. The Thurston's approcach adopted in our paper bypasses these difficulties.

The Full Family Theorem also opens a new direction of investigation into rigidity for GIETs with flat pieces. Having established existence for every admissible Rauzy renormalization path, we can ask whether, and under what additional assumptions, rigidity results analogous to those known for invertible GIETs \cite{Ghazouani,GhazouaniUlcigrai1,GhazouaniUlcigrai2} hold in this setting. Such a study could also clarify the geometric and ergodic properties of the higher-genus Cherry flows constructed here, building on investigations of these properties on the two-dimensional torus \cite{Palmisano1,Palmisano2,Palmisano3}.

Our results broaden the classical correspondence between one-dimensional dynamics and surface flows. On the one-dimensional side, they extend the full-family and Thurston realization approach to non-invertible GIETs with flat pieces. On the surface side, they extend Cherry dynamics beyond the torus through families of smooth flows with controlled ergodic properties. This connection provides a framework for studying the topological, geometric, and ergodic properties of quasi-minimal sets on higher-genus surfaces through their one-dimensional return maps.

\subsection{IETs with a flat piece}
\label{SectionIETsDeformationsFlatPiece}

An alphabet is a finite set $\cA$ with $d\geq2$ letters. A \emph{combinatorial datum} $\pi$ is a pair of bijections $\pi_t,\pi_b:\cA\to\{1,\dots,d\}$. Such $\pi$ is \emph{admissible} if  
$$
(\pi_t)^{-1}\{1,\dots,k\}\not=(\pi_b)^{-1}\{1,\dots,k\}
\quad\text{ for any }\quad
1\leq k\leq d-1.
$$
A \emph{length datum} is a vector $\lambda\in\RR_+^\cA$ with all entries positive. For any such pair of data $(\pi,\lambda)$ consider the interval 
$
I:=[0,\sum_{\chi\in\cA}\lambda_\chi)
$ 
and its two partitions 
$
I=\bigsqcup_{\alpha\in\cA}I^t_\alpha
$ 
and 
$
I=\bigsqcup_{\alpha\in\cA}I^b_\alpha
$, 
where for any $\alpha$ we set
\begin{equation}
\label{EquationIntervalsIET}
I^t_\alpha:=
\Big[
\sum_{\pi_t(\chi)\leq\pi_t(\alpha)-1}\lambda_\chi,
\sum_{\pi_t(\chi)\leq\pi_t(\alpha)}\lambda_\chi
\Big)
\quad
\text{ and }
\quad
I^b_\alpha:=
\Big[
\sum_{\pi_b(\chi)\leq\pi_b(\alpha)-1}\lambda_\chi,
\sum_{\pi_b(\chi)\leq\pi_b(\alpha)}\lambda_\chi
\Big).
\end{equation}
An IET is the map $T:I\to I$ that for any $\alpha$ sends $I^t_\alpha$ onto $I^b_\alpha$ via a translation. Such map is  uniquely determined by the data $(\pi,\lambda)$ and is denoted $T(\pi,\lambda)$. We often consider IETs acting on $[0,1)$, that is IETs defined by length data $\lambda$ in the simplex 
$
\Delta^{d}:=\{\lambda\in\RR^\cA_+:\sum_{\chi\in\cA}\lambda_\chi=1\}
$.

\smallskip

Let $g:[0,1)\to[0,1)$ be a non-decreasing and surjective function such that there exist four points $0<a<b<c<d\leq1$ with 
\begin{equation}
\label{EquationPerturbationOfIdentity}
\left\{
\begin{array}{l}
\bullet\quad
g(x)=x \text{ for any }x\in[0,1)\setminus(a,d)
\\
\bullet\quad
g \text{ is strictly increasing on }[a,b)\cup[c,d) 
\\
\bullet\quad
g \text{ is constant on }[b,c].  
\end{array}
\right.
\end{equation}
Any such $g$ is continuous since non-decreasing and surjective. An \emph{IET with a flat piece} over $\pi$ is a map $f:[0,1)\to[0,1)$ of the form $f=T\circ g$, where $T$ is an IET with combinatorial datum $\pi$ and $g$ is a map as in Equation~\eqref{EquationPerturbationOfIdentity}, such that $[a,d)$ is contained in some continuity interval $I^t_\alpha$ of $T$. The interval $U:=[b,c]$ is called the \emph{flat interval} of $f$. See Figure~\ref{FigureIETsFlatPiece}.

\begin{figure}
\begin{center}
{\begin{tikzpicture}[scale=0.043]



\node at (45,100) {$T$};

\draw[-] (0,0) -- (90,0) {};
\draw[-,thin,dashed] (-5,35) -- (90,35) {};
\draw[-,thin,dashed] (-5,40) -- (90,40) {};
\draw[-,thin,dashed] (-5,70) -- (90,70) {};
\draw[-,thin,dashed] (-5,90) -- (90,90) {};

\draw[-] (0,0) -- (0,90) {};
\draw[-,thin,dashed] (20,-5) -- (20,90) {};
\draw[-,thin,dashed] (50,-5) -- (50,90) {};
\draw[-,thin,dashed] (55,-5) -- (55,90) {};
\draw[-,thin,dashed] (90,-5) -- (90,90) {};


\node [circle,fill,inner sep=1.1pt] at (0,70) {};
\draw[-,thick] (0,70)-- (20,90) {};
\node at (10,-5) {$A$};
\node at (-5,80) {$A$};


\node [circle,fill,inner sep=1.1pt] at (20,40) {};
\draw[-,thick] (20,40)-- (50,70) {};
\node at (35,-5) {$B$};
\node at (-5,55) {$B$};


\node [circle,fill,inner sep=1.1pt] at (50,35) {};
\draw[-,thick] (50,35)-- (55,40) {};
\node at (52.9,-5) {$C$};
\node at (-5,37) {$C$};


\node [circle,fill,inner sep=1.1pt] at (55,0) {};
\draw[-,thick] (55,0)-- (90,35) {};
\node at (70,-5) {$D$};
D\node at (-5,15) {$D$};



\end{tikzpicture}}
\hspace{0.6 cm}
{\begin{tikzpicture}[scale=0.043]

\node at (45,100) {$g$};

\node [circle,fill,inner sep=1.1pt] at (0,0) {};
\draw[-] (0,0) -- (90,0) {};
\draw[-] (0,90) -- (90,90) {};
\draw[-] (90,0) -- (90,90) {};

\draw[-] (0,0) -- (0,90) {};
\draw[-,thick] (0,0) -- (55,55) {};
\draw[-,thin,dashed] (55,55) -- (90,90) {};

\draw[-,thick] (65,75) -- (80,75) {};
\draw[-,thick] (55,55) .. controls (60,75) and (63,75) .. (65,75) {};
\draw[-,thick] (80,75) .. controls (83,75) and (86,75) .. (90,90) {};

\draw[-,thin,dotted] (55,0) -- (55,55) {};
\node at (55,-5) {$a$};
\draw[-,thin,dotted] (65,0) -- (65,75) {};
\node at (65,-5) {$b$};
\draw[-,thin,dotted] (80,0) -- (80,75) {};
\node at (80,-5) {$c$};
\draw[-,thin,dotted] (90,0) -- (90,90) {};
\node at (90,-5) {$d$};

\end{tikzpicture}}
\hspace{0.6 cm}
{\begin{tikzpicture}[scale=0.043]

\node at (45,100) {$f=T\circ g$};

\draw[-] (0,0) -- (90,0) {};
\draw[-,thin,dashed] (-5,35) -- (90,35) {};
\draw[-,thin,dashed] (-5,40) -- (90,40) {};
\draw[-,thin,dashed] (-5,70) -- (90,70) {};
\draw[-,thin,dashed] (-5,90) -- (90,90) {};

\draw[-] (0,0) -- (0,90) {};
\draw[-,thin,dashed] (20,-5) -- (20,90) {};
\draw[-,thin,dashed] (50,-5) -- (50,90) {};
\draw[-,thin,dashed] (55,-5) -- (55,90) {};
\draw[-,thin,dashed] (90,-5) -- (90,90) {};


\node [circle,fill,inner sep=1.1pt] at (0,70) {};
\draw[-,thick] (0,70)-- (20,90) {};
\node at (10,-5) {$A$};
\node at (-5,80) {$A$};


\node [circle,fill,inner sep=1.1pt] at (20,40) {};
\draw[-,thick] (20,40)-- (50,70) {};
\node at (35,-5) {$B$};
\node at (-5,55) {$B$};


\node [circle,fill,inner sep=1.1pt] at (50,35) {};
\draw[-,thick] (50,35)-- (55,40) {};
\node at (52.9,-5) {$C$};
\node at (-5,37) {$C$};


\node [circle,fill,inner sep=1.1pt] at (55,0) {};
\draw[-,thick] (55,0) .. controls (57,10) and (60,20) .. (65,20) {};
\draw[-,thick] (80,20) .. controls (83,20) and (86,21) .. (90,35) {};
\draw[-,thick] (65,20)-- (80,20) {};
\node at (70,-5) {$D$};
D\node at (-5,15) {$D$};



\end{tikzpicture}}
\end{center}
\caption{On the left is represented a standard IET $T$ and in the center a map $g$ as in Equation~\eqref{EquationPerturbationOfIdentity}. The map $f=T\circ g$ on the right represents an IET with a flat piece, whose flat interval is $U:=[b,c]$. We have $[a,d)\subset I^t_D$.}
\label{FigureIETsFlatPiece}
\end{figure}

\subsection{Cherry flows}
\label{SectionHigherGenusCherryFlows}

Let $Z:\RR^2\to\RR^2$ be the smooth vector field defined by
\begin{equation}
\label{EquationSaddleSinkNode}
Z(x,y):=\big(-x,y^2-1\big).
\end{equation}
The only zeros of $Z$ are at points $s:=(0,-1)$, which has index\footnote{If $p$ is an isolated zero of a smooth vector field $V:\RR^2\to\RR^2$, then its index $\iota_V(p)$ at $p$ is defined as the degree of the map $x\mapsto |V(x)|^{-1}\cdot V(x)$ from a small circle around $p$ onto the unitary circle.} $\iota_Z(s)=1$ and is a sink, and $p:=(0,1)$, which is a saddle with index $\iota_Z(p)=-1$. 
Moreover the vertical segment connecting $s$ to $p$ is an integral curve of $Z$. A representation of the integral curves of $Z$ appears in the right part of Figure~\ref{FigureZipperedRectanglesAndSaddleSink}. A smooth vector field $Y$ over a smooth surface $X$ has a saddle-sink node $(s,p)$ if there exists an open neighborhood $U$ of $X$ such that, modulo a change of coordinates, the restriction of $Y$ to $U$ coincides with the restriction of $Z$ in Equation~\eqref{EquationSaddleSinkNode} to the ball $B(0,4)$.

Let $X$ be a smooth, compact and boundaryless surface of genus $g\geq1$ and fix positive integers $k_1,\dots,k_m$. Let $Y$ be a smooth vector field on $X$ with isolated zeros at points $p_1,\dots,p_m$. We say that these zeros have type $(k_1,\dots,k_m)$ if their indices satisfy  $\iota_Y(p_i)=-k_i$ for any $i=1,\dots,m$. If such $Y$ has also a saddle-sink node $(s_0,p_0)$ and no other zeros, the Theorem of Poincar\'e-Hopf gives
\begin{equation}
\label{EquatinPoincareHopf}
2g-2=
\sum_{i=1}^m k_i=
-\sum_{i=1}^m \iota_Y(p_i)-\bigg(\iota_Y(s_0)+\iota_Y(p_0)\bigg).
\end{equation}
A \emph{Cherry flow} on $X$ is the integral flow of a field $Y$ as above, such that there is no integral curve of $Y$ connecting two zeros except the saddle-sink node. 

\smallskip

A \emph{translation surface} $X$ is a flat surface of genus $g\geq1$ with a finite number of conical singularities, whose conical angles are integer multiples of $2\pi$. Such data induce the structure of a Riemann surface with an holomorphic one form, and thus a smooth atlas. On such surface one can define \emph{linear flows}, whose \emph{first return maps} are standard IETs (see \S~\ref{SectionTranslationSurfaces}).

Admissible combinatorial data are elements of sets called \emph{Rauzy classes} (see \S~\ref{SectionRauzyClasses}). A connected component $\cH$ of the moduli space of translation surfaces of genus $g\geq1$ is an \emph{orbifold} of complex dimension $2g+m-1$, where $k_1+\dots+k_m=2g-2$ and $k_1,\dots,k_m$ are the orders of the conical singularities of the surfaces in $\cH$. Let $\pi$ be an admissible combinatorial datum over an alphabet $\cA$ with $d$ letters.  
Veech's \emph{zippered rectangles} construction (see \cite{Veech}, or \S~\ref{SectionZipperedRectangles} of this paper) gives a connected component $\cH$ as above and a subset $\cH(\pi)\subset\cH$ of surfaces represented as \emph{suspension} of IETs with combinatorial datum $\pi$. All combinatorial data in the same Rauzy class as $\pi$ give rise to surfaces in $\cH$. Moreover, any connected component is associated to an unique Rauzy class by such construction. 

Fix a letter $\chi\in\cA$ and replace it by a pair of letters $\chi_L,\chi_R$. This replaces $\pi$ by an  admissible combinatorial datum $\widehat{\pi}$ over the alphabet 
$
\widehat{\cA}:=\{\chi_L,\chi_R\}\cup\cA\setminus\{\chi\}
$, 
which has $d+1$ letters. We say that $\pi$ is a \emph{allowed} if there exists a letter $\chi$ such that 
$\widehat{\pi}$ constructed as above belongs to a Rauzy class containing a \emph{cyclic} combinatorial datum, in the sense of the next Definition~\ref{DefinitionCyclicCombinatorialDatum}. This construction is necessary in the proof of the next Theorem~\ref{TheoremMainTheoremCherryFlow}. Therefore we use parametric families of vector fields on surfaces in $\cH(\pi)$ parametrized by IETs with $d+1$ letters. The natural parameter space for such families is the simplex 
$\Delta^{d+1}$.

In this paper we construct Cherry flows modelled on linear flows on translation surfaces. More precisely, we define parametric families of flows whose zeros are as in Equation~\eqref{EquatinPoincareHopf} and whose first return map, in the sense explained in \S~\ref{SectionParameterFamilyFirstReturnMaps}, is an IET with a flat piece. An example of such a map is represented in Figure~\ref{FigureIETsFlatPiece}, moreover maps arising in this way belong to a larger class of maps introduced in the next \S~\ref{SectionFullFamilyTheoremForFlatIntervals}. Then, inside our parametric family, we prove the existence of parameters such that the corresponding flow is a Cherry flow. This is stated in the next Theorem~\ref{TheoremMainTheoremCherryFlow}.

\begin{theorem}
\label{TheoremMainTheoremCherryFlow}
Assume that the combinatorial datum $\pi$ is allowed. 
There exists a family 
$
(X_{\widehat{\lambda}},Y_{\widehat{\lambda}})_{\widehat{\lambda}\in\Delta^{d+1}}
$, 
where $X_{\widehat{\lambda}}\in\cH(\pi)$ and $Y_{\widehat{\lambda}}$ is a smooth vector field on 
$X_{\widehat{\lambda}}$, such that the following holds.
\begin{enumerate}
\item
For any $\widehat{\lambda}$ the zeros of $Y_{\widehat{\lambda}}$ are as in Equation~\eqref{EquatinPoincareHopf}.
\item
For any $\widehat{\lambda}$, the flow $Y_{\widehat{\lambda}}$ has a smooth first return map\footnote{In the sense explained in \S~\ref{SectionParameterFamilyFirstReturnMaps}} $f_{\widehat{\lambda}}:[0,1)\to[0,1)$ which is a IET with a flat piece over $\pi$. 
\item
If $T$ is a Keane IET with combinatorial datum $\pi$, then there exists 
$\widehat{\lambda}\in\Delta^{d+1}$ such that $Y_{\widehat{\lambda}}$ generates a Cherry flow and
$f_{\widehat{\lambda}}$ is semi-conjugated to $T$. 
\end{enumerate}
\end{theorem}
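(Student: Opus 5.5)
The plan is to build the family in two layers: first a finite-dimensional family of IETs with a flat piece, parametrized by $\widehat{\lambda}\in\Delta^{d+1}$, which satisfies the hypotheses of the Full Family Theorem (Theorem~\ref{TheoremFullFamilyTheorem}); then a realization of each member as the first return map of a smooth flow on a surface in $\cH(\pi)$, obtained by inserting a saddle-sink node into a linear flow. For a parameter $\widehat{\lambda}$ over the alphabet $\widehat{\cA}$, with $\chi$ split into $\chi_L,\chi_R$, I consider the IET $T(\widehat{\pi},\widehat{\lambda})$. I then collapse the junction between $I^t_{\chi_L}$ and $I^t_{\chi_R}$ by a map $g_{\widehat{\lambda}}$ as in \eqref{EquationPerturbationOfIdentity}. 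The map $g_{\widehat{\lambda}}$ squeezes a small neighbourhood of the common endpoint to a point, and the flat interval $U$ is the part of that neighbourhood whose points fall into the sink. The resulting map $f_{\widehat{\lambda}}$ then has $d$ monotone branches over $\pi$ with one flat piece, so it is an IET with a flat piece over $\pi$. The choice of $g_{\widehat{\lambda}}$ must depend smoothly on $\widehat{\lambda}$, and the lengths $|U|$, $b-a$, $d-c$ must tend to $0$ suitably as $\widehat{\lambda}$ approaches the boundary of $\Delta^{d+1}$. Only with these choices does the family meet the geometric condition in Definition~\ref{DefinitionFullFamily}. The combinatorial assumption there is exactly that $\widehat{\pi}$ lies in a Rauzy class containing a cyclic datum, and this is where allowedness of $\pi$ is used.

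For the surface part, I take the zippered rectangle suspension (\S~\ref{SectionZipperedRectangles}) of the IET $T(\pi,\lambda)$, where $\lambda$ is obtained from $\widehat{\lambda}$ by setting $\lambda_\chi=\widehat{\lambda}_{\chi_L}+\widehat{\lambda}_{\chi_R}$ and keeping the other entries. This gives $X_{\widehat{\lambda}}\in\cH(\pi)$ with its vertical linear flow, whose zeros are the conical points of orders $k_i$. Inside the rectangle over $I^t_\chi$, just above the base, I glue in the model field $Z$ of \eqref{EquationSaddleSinkNode} via a smooth bump interpolation with the vertical field. The saddle $p_0$ sits so that its incoming separatrices come from the points $b$ and $c$, the sink $s_0$ absorbs the strip over $(b,c)$, and outside a compact set the field is the vertical one. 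Poincar\'e--Hopf \eqref{EquatinPoincareHopf} holds automatically, which gives item (1). The return map to the base interval is then $T\circ g$ with $g$ smooth, and I tune the bump so that $g$ equals the prescribed $g_{\widehat{\lambda}}$, which gives item (2). All constructions must be smooth in $\widehat{\lambda}$; I would use a fixed model diffeomorphism rescaled by $\widehat{\lambda}_{\chi_L},\widehat{\lambda}_{\chi_R}$.

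For item (3), given a Keane IET $T$ over $\pi$, I take its infinite Rauzy path. Lifted to $\widehat{\pi}$, this is an admissible path, and the Full Family Theorem gives a parameter $\widehat{\lambda}$ with $f_{\widehat{\lambda}}$ realizing it. A standard argument then applies: realizing the same infinite path and Keane minimality force the nested Rauzy intervals to shrink on the complement of the wandering set, namely the preimages of $U$. This yields a monotone semi-conjugacy $h$ collapsing each preimage of $U$, with $h\circ f_{\widehat{\lambda}}=T\circ h$. Finally, the flow is a Cherry flow because a saddle connection other than the node would produce a finite orbit of an endpoint of a branch. That would mean finite Rauzy termination, which is incompatible with an infinite path realizing a Keane $T$.

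I expect the main obstacle to be the first step. The family $g_{\widehat{\lambda}}$ produced by the flow construction must satisfy the precise boundary and geometric conditions of Definition~\ref{DefinitionFullFamily}, while remaining realizable by a $C^\infty$ field depending continuously on parameters. My first approach would be to design $g_{\widehat{\lambda}}$ abstractly to satisfy the definition, and only afterwards build the vector field as the suspension of that prescribed $g$ by a flow-box argument.
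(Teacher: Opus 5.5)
\textbf{Verdict: genuine gap.} Your overall architecture matches the paper: mark $\chi$, suspend $T(\pi,\lambda)$ by zippered rectangles, insert a saddle-sink box in $R_\chi$, and apply Theorem~\ref{TheoremFullFamilyTheorem} over $\widehat{\pi}$. But the step you postpone as ``the main obstacle'' is where the proof actually lives, and you misread what Definition~\ref{DefinitionFullFamily} demands.

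\textbf{What Definition~\ref{DefinitionFullFamily} requires.} Shrinking $|U|$, $b-a$, $d-c$ at $\partial\Delta^{d+1}$ only addresses Point (3). Points (1) and (2) require two further things, and nothing in your construction provides them:
\begin{enumerate}
\item $\widehat{\lambda}\mapsto u^b(F(\widehat{\lambda}))$ must be a bijection onto $\nabla_{\widehat{\pi}}$ extending to a homeomorphism of the closures;
\item $\widehat{\lambda}\mapsto u^t(F(\widehat{\lambda}))$ must be a bijection onto $\nabla^t_{\widehat{\pi}}$.
\end{enumerate}

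\textbf{The missing idea.} The hitting map $g$ through $R_\chi$ must fix the marked point $u$. Concretely, the left incoming separatrix $\sigma_1$ enters $R_\chi$ at abscissa $u$, and the outgoing separatrix $\sigma_2$ leaves $R_\chi$ at the same abscissa $u$. Then $g\equiv u$ on the flat interval $[u,c]$, and $g$ maps $[\inf I^t_\chi,u)$ and $[u,\sup I^t_\chi)$ onto themselves. Consequently $F(\widehat{\lambda})$ has exactly the singular points and singular values of the marked IET $T(\widehat{\pi},\widehat{\lambda})$, so $u^t\circ F$ and $u^b\circ F$ are the obvious linear isomorphisms.

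\textbf{Why this needs an extra device.} It fails for $Z$ glued to the vertical field by a bump alone. Then the vertical line through the saddle is invariant, so $\sigma_2$ exits at the saddle's abscissa rather than at $u$, and the singular values of $F(\widehat{\lambda})$ are shifted. The paper adds the shear $H^{(s)}=(-s,s)$ above the node and chooses $s$ by an intermediate value argument (Lemma~\ref{LemmaFlowBox}, Point (6)). It also scales the box by $r_0=\frac{1}{10}\min_\alpha\widehat{\lambda}_\alpha$, which forces $g\to\id$ at $\partial\Delta^{d+1}$ and yields Point (3). Relatedly, the junction of $\chi_L$ and $\chi_R$ must be the left endpoint of $U$, not an interior point of a squeezed neighbourhood; otherwise $f\notin\cC(\widehat{\pi},[0,1))$.

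\textbf{Your fallback runs the wrong way.} You propose to design $g_{\widehat{\lambda}}$ abstractly and then tune a bump to realize it. Near the endpoints of $U$ the hitting map passes close to the hyperbolic saddle, so its form there is constrained by the saddle (it is of Dulac type). An abstractly chosen $g$ is therefore in general not realizable. The paper instead fixes an explicit model field and reads $g$ off from it (Lemma~\ref{LemmaReturnInsideRectangle}), needing only the single property $g(u)=u$.

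\textbf{Two smaller points.}
\begin{itemize}
\item For item (1), the vertical field does not extend to the conical points. It must be multiplied by functions flat there (the paper's $G_j$) to get zeros of index $-k_j$.
\item For item (3), a Rauzy path of $T$ over $\pi$ has no canonical lift to $\widehat{\pi}$; the marked path depends on where the point is marked. Instead, choose $u\in\interior I^t_\chi(T)$ off the countably many orbits of the endpoints. Then $T$, read as $T(\widehat{\pi},\widehat{\lambda})$, is Keane over $\widehat{\pi}$. Apply Theorem~\ref{TheoremFullFamilyTheorem} to it; a semi-conjugacy to it is a semi-conjugacy to $T$. The same genericity of $u$ is what makes your saddle-connection argument for the Cherry property go through.
\end{itemize}
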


A \emph{quasi-minimal set} for the flow of a smooth vector field $Y$ on a surface $X$ is the closure of a recurrent orbit. The next Corollary is proved in \S~\ref{SectionProofOfCorollary}.

\begin{corollary}
\label{Cor:uniqueQMSandmeasures}
For any $\widehat{\lambda}$ as in Point (3) of Theorem~\ref{TheoremMainTheoremCherryFlow} the flow of 
$Y_{\widehat{\lambda}}$ has only one quasi-minimal set.  Moreover for any $k=1,\dots,g$ there exists parameters $\widehat{\lambda}$ such that $Y_{\widehat{\lambda}}$ has exactly $k$ ergodic measures all supported on the quasi-minimal set (besides the Dirac mass at the zeros).
\end{corollary}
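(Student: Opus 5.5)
The corollary has two parts: a structural claim (a unique quasi-minimal set) and a realization claim (every $k=1,\dots,g$ occurs as the number of ergodic measures). Throughout I fix $\widehat{\lambda}$ as in Point (3) of Theorem~\ref{TheoremMainTheoremCherryFlow}. Let $h:[0,1)\to[0,1)$ be the non-decreasing surjective semi-conjugacy with $h\circ f_{\widehat{\lambda}}=T\circ h$, where $T$ is a Keane IET over $\pi$. I plan to transfer every dynamical statement from $T$ to $f_{\widehat{\lambda}}$ via $h$, and then from $f_{\widehat{\lambda}}$ to the flow via the first return structure of \S~\ref{SectionParameterFamilyFirstReturnMaps}.

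\textbf{Unique quasi-minimal set.} Since $T$ is Keane, it is minimal. Let $W$ be the union of the non-degenerate fibres $h^{-1}(y)$. These are wandering intervals: none of them can be periodic, because $T$ has no periodic points. Each such fibre is eventually mapped into the flat interval $U$, since by Denjoy-type reasoning the only way to collapse intervals is through $U$. Set $\Lambda:=[0,1)\setminus\interior(W)$. This $\Lambda$ is the unique minimal set of $f_{\widehat{\lambda}}$, and it is the closure of the orbit of any non-wandering point. On the flow side, every orbit of $Y_{\widehat{\lambda}}$ either tends to the sink $s_0$ (orbits crossing $U$ or meeting $W$), or is a saddle separatrix, or crosses the section at a point of $\Lambda$. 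The saturation of $\Lambda$ by the flow, together with the saddles, is therefore the closure of every recurrent orbit, so the quasi-minimal set is unique. The one delicate point is to check that every recurrent orbit actually meets the section. I would use that the section is transverse and that the complement of the suspension region is a neighbourhood of the zeros only. This is where the construction of \S~\ref{SectionParameterFamilyFirstReturnMaps} is used.

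\textbf{Ergodic measures.} Invariant probability measures of the flow that are not Dirac masses at zeros correspond bijectively to finite invariant measures of the return map (the return time is integrable, being bounded away from the saddles, and using the saddle neighbourhood model). Invariant measures of $f_{\widehat{\lambda}}$ give no mass to wandering intervals, so $h_*$ is a bijection between ergodic measures of $f_{\widehat{\lambda}}$ and those of $T$. To invert it, I would check that a non-atomic $T$-invariant measure pulls back uniquely, since $h$ is injective off the countable set of images of fibres. Hence the number of ergodic measures of $Y_{\widehat{\lambda}}$ equals that of $T$, and they are all supported on the quasi-minimal set.

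\textbf{Realization of each $k$.} It remains to choose, for each $k=1,\dots,g$, a Keane IET $T$ with combinatorial datum $\pi$ having exactly $k$ ergodic measures. I would invoke the known results of Veech, Keane and Sataev: in a Rauzy class whose suspensions have genus $g$, there exist minimal IETs with exactly $k$ ergodic measures for each $1\leq k\leq g$. Uniquely ergodic ones form a full measure set by Masur–Veech, and Sataev's construction gives the others. Applying Point (3) of Theorem~\ref{TheoremMainTheoremCherryFlow} to such a $T$ then gives the required parameter. I expect the main obstacle to be confirming that Sataev's examples can be taken within the given Rauzy class of $\pi$ and satisfy Keane's condition; if needed, I would instead reproduce them via explicit Rauzy paths, which is possible since any infinite Keane path is realized by Theorem~\ref{TheoremFullFamilyTheorem}.
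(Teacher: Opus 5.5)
Your strategy is the paper's. Your $\Lambda$ is exactly the paper's $K=[0,1)\setminus\bigcup_{y}\interior h^{-1}(y)$, uniqueness rests on minimality of $T$ and absence of periodic points, and measures are transferred through $\mu\mapsto h_\ast\mu$.

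The claim that every non-degenerate fibre of $h$ is eventually mapped into $U$ ``by Denjoy-type reasoning'' is unsupported and should be dropped. No Denjoy theory is available for GIETs (even smooth or affine ones can have wandering intervals), and the branches of $f_{\widehat{\lambda}}$ adjacent to $U$ are flat to infinite order at its endpoints. The paper itself describes the gaps of $K$ as preimages of $U$ \emph{together with} wandering intervals of $f_{\widehat{\lambda}}$. So your trichotomy for flow orbits is not justified. It is also not needed: for uniqueness you only use that interior points of non-degenerate fibres are non-recurrent, which you correctly derive from $T$ having no periodic points.

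What you do need, and only assert, is that the orbit closure of a point on any recurrent orbit is all of $\Lambda$. The paper proves this as follows. Take $z\in\Lambda$ with $h^{-1}(h(z))=\{z\}$. Minimality of $T$ gives $T^{n_j}(h(x))\to h(z)$, and monotonicity of $h$ then gives $f_{\widehat{\lambda}}^{n_j}(x)\to z$. Such $z$ are dense in $\Lambda$, since only countably many values of $h$ have non-trivial fibres.

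The genuine gap is in transferring measures to the flow. The return time is not bounded. Orbits starting near the discontinuities pass arbitrarily close to the conical points $p_j$, where $V=G_1\cdots G_m\cdot V_0$ is slowed by a factor $f(|z|)$ with $f^{(n)}(0)=0$ for all $n$. The passage time at horizontal distance $\delta$ from $p_j$ is therefore $\int dy/f(\sqrt{\delta^2+y^2})\geq c_n\delta^{1-n}$ for every $n$. (The hyperbolic saddle of the node is harmless, giving only a logarithmic term; the $p_j$ are not.)

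Integrability of the return time against the pulled-back measures is thus not automatic. Any ergodic $\mu$ for which it fails suspends to an infinite invariant measure, not to an ergodic probability measure of $Y_{\widehat{\lambda}}$. The paper's proof does not close this either: it establishes the bijection $\mu\mapsto h_\ast\mu$ for the return map and then simply asserts the conclusion for the flow. You should either state the count at the level of the section (invariant measures of $f_{\widehat{\lambda}}$, i.e.\ transverse invariant measures of the flow), or supply a real integrability estimate. The latter requires quantitative control of $\mu$ near the discontinuities against a passage time growing faster than any power of $1/\delta$.

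Finally, as in the paper, the existence of a Keane $T$ over $\pi$ with exactly $k$ ergodic measures is imported rather than proved. Your fallback through Theorem~\ref{TheoremFullFamilyTheorem} does not address this: that theorem realizes a given Rauzy path, but does not produce a path whose IETs have $k$ ergodic measures.
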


Let $\widehat{\lambda}$ be a parameter as in Point (3) of Theorem~\ref{TheoremMainTheoremCherryFlow}. Locally, the corresponding quasi minimal set is the product of a Cantor set with an interval. Globally, the quasi minimal set intersects a specific section in the Cantor set appearing in Equation \eqref{EquationQuasiMinimalSet}, that is
$$
K:=[0,1)\setminus
\bigcup_{y\in[0,1)}\operatorname{Int}h^{-1}(y),
$$
where $h:[0,1)\to[0,1)$ is the \emph{semi-conjugacy} map in the next Equation \eqref{EquationSemiconjugation}, and where the intervals $\operatorname{Int}h^{-1}(y)$ in the union above are disjoint and thus countable. These intervals are the preimages of the flat interval, together with the \emph{wandering intervals} of $f_{\widehat{\lambda}}$.

\subsection{Full Family Theorem for a class of generalizations of IETs}
\label{SectionFullFamilyTheoremForFlatIntervals}

A \emph{right-open interval} is an interval $[a,b)\subset\RR$ closed on the left and open on the right. Denote by $\interior(I):=(a,b)$ its interior and by $\closure(I):=[a,b]$ its closure.

\subsubsection{The class $\cC(\pi,I)$ of GIETs}
\label{SectionClassGIETs}

Let $\cA$ be a finite alphabet with $d\geq2$ letters.  Given a combinatorial datum $\pi$ over $\cA$ and a right-open interval $I$, consider two partitions of $I$ into $d$ right-open subintervals
$
\cP_t=\{I^t_\alpha\}_{\alpha\in\cA}
$ 
and 
$
\cP_b=\{I^b_\alpha\}_{\alpha\in\cA}
$, 
where for any $\alpha\in\cA$ the subintervals   
$
I_\alpha^t
$ 
and 
$
I_\alpha^b
$ 
appear respectively in $\pi_t(\alpha)$-th and $\pi_b(\alpha)$-th position in $I$ counting from the left.   
A \emph{generalized interval exchange transformation}, of briefly GIET, is a map $f:I\to I$ such that for any 
$\alpha\in\cA$ the restriction satisfies  
\begin{equation}
\label{EquationPiecewiseHomeomorphism}
f|_{I_\alpha^t}=f_\alpha,
\end{equation}
where $f_\alpha:I_\alpha^t\to I_\alpha^b$ is a continuous, surjective and non-decreasing map, and there exists a right open interval 
$
J_\alpha\subset I^t_\alpha
$ 
such that the restriction 
$
f_\alpha|_{J_\alpha}:J_\alpha\to I^b_\alpha
$ 
is an homeomorphism. We have $\sup J_\alpha=\sup I^t_\alpha$ and $f_\alpha$ is an homeomorphism if 
$\inf I^t_\alpha=\inf J_\alpha$. In general, the map $f_\alpha$ is constant on the closed interval 
\begin{equation}
\label{EquationFlatInterval}
L_\alpha:=[\inf I^t_\alpha,\inf J_\alpha].
\end{equation}
Denote $\cC(\pi,I)$ the set of GIETs of the interval $I$ with combinatorial datum $\pi$. Two maps in this class are represented in the left and the central part of Figure~\ref{FigureDegenerationGIETs}. In \cite{MarchesePalmisano} it was considered the smaller class of maps $f$ such that all continuous branches $f_\alpha$ are increasing homeomorphisms, that is 
\begin{equation}
\label{EquationGIETsDefinedByHomeomorphisms}
\big\{f\in\cC(\pi,I):J_\alpha=I^t_\alpha\quad\forall\alpha\in\cA\big\}.
\end{equation}

\subsubsection{The singular values map}
\label{SectionSingularValuesMap}

Consider $f\in\cC(\pi,I)$ and the partitions 
$
\cP_t=\{I^t_\alpha\}_{\alpha\in\cA}
$ 
and 
$
\cP_b=\{I^b_\alpha\}_{\alpha\in\cA}
$ 
of $I$. It is practical to keep track of the dependence on $f$ of such partitions and their atoms, so we write
\begin{equation}
\label{EquationDynamicalPartitionTimeZero}
\cP_t(f)=\{I^t_\alpha(f)\}_{\alpha\in\cA}
\quad
\textrm{ and }
\quad
\cP_b(f)=\{I^b_\alpha(f)\}_{\alpha\in\cA}.
\end{equation}
For $\alpha\in\cA$, the \emph{singular points} $u_\alpha^t(f)$ and the \emph{singular values} 
$u_\alpha^b(f)$ of $f$ are respectively 
\begin{equation}
\label{EquationSingularitiesTimeZero}
u_\alpha^t(f):=\inf I^t_\alpha(f)
\quad
\textrm{ and }
\quad
u_\alpha^b(f):=\inf I^b_\alpha(f).
\end{equation}
In particular, if $f=T(\pi,\lambda)$ is the IET determined by combinatorial-length data $(\pi,\lambda)$, then for $\alpha\in\cA$ its singularities 
$
u_\alpha^t(f)=u_\alpha^t(\pi,\lambda)
$ 
and 
$
u_\alpha^b(f)=u_\alpha^b(\pi,\lambda)
$ 
are given by the linear relations 
\begin{equation}
\label{EquationSingularitiesTimeZeroIET}
u_\alpha^t(f)=
\sum_{\pi_t(\chi)\leq\pi_t(\alpha)-1}\lambda_\chi
\quad
\textrm{ and }
\quad
u_\alpha^b(f)=
\sum_{\pi_b(\chi)\leq\pi_b(\alpha)-1}\lambda_\chi.
\end{equation}

Let $\nabla_\pi\subset\RR_+^\cA$ be the simplex of vectors 
$u=(u_\alpha)_{\alpha\in\cA}$ in $\RR_+^\cA$ with 
$u_{\beta_0}=0$ for the letter $\beta_0$ with $\pi_b(\beta_0)=1$ and such that $u_\alpha<u_\beta<1$ whenever 
$
\pi_b(\alpha)\leq\pi_b(\beta)-1
$. 
With this notation, we introduce the \emph{singular values map}
\begin{equation}
\label{EquationSingularValuesMap}
u^b:\cC\big(\pi,[0,1)\big)\to \nabla_\pi,
\quad f\mapsto \big(u^b_\alpha(f)\big)_{\alpha\in\cA}.
\end{equation}

For example, if $f\in\cC\big(\pi,[0,1)\big)$ has alphabet $\cA=\{A,B,C,D\}$ and combinatorial datum which satisfies  $\pi_b(D)=1$, $\pi_b(C)=2$, $\pi_b(B)=3$ and $\pi_b(A)=4$, then the entires of the vector $(u_A,u_B,u_C,u_D):=u^b(f)$ satisfy $0=u_D<u_C<u_B<u_A<1$. We also introduce the \emph{singular points map}
\begin{equation}
\label{EquationSingularPointsMap}
u^t:\cC\big(\pi,[0,1)\big)\to \nabla^t_\pi,
\quad f\mapsto \big(u^t_\alpha(f)\big)_{\alpha\in\cA},
\end{equation}
where $\nabla^t_\pi$ is the simplex  of vectors $u=(u_\alpha)_{\alpha\in\cA}$ in $\RR_+^\cA$ with $u_{\alpha_0}=0$ for the letter $\alpha_0$ with $\pi_t(\alpha_0)=1$ and such that $u_\alpha<u_\beta<1$ whenever 
$
\pi_t(\alpha)\leq\pi_t(\beta)-1
$. 
Finally for any $\alpha\in\cA$ it is useful to introduce also the map 
\begin{equation}
\label{EquationRightEndpointFlatInterval}
j_\alpha:\cC\big(\pi,[0,1)\big)\to[0,1)
\quad f\mapsto j_\alpha(f):=\sup L_\alpha(f)=\inf J_\alpha(f),
\end{equation}
where $L_\alpha(f)$ is the flat interval of the $\alpha$-branch of $f$ in Equation \eqref{EquationFlatInterval}.

\begin{figure}
\begin{center}
{\begin{tikzpicture}[scale=0.043]



\node at (50,110) {$g$};

\draw[-] (0,0) -- (100,0) {};
\draw[-,thin,dashed] (-5,5) -- (100,5) {};
\draw[-,thin,dashed] (-5,20) -- (100,20) {};
\draw[-,thin,dashed] (-5,40) -- (100,40) {};
\draw[-,thin,dashed] (-5,70) -- (100,70) {};
\draw[-,thin,dashed] (-5,100) -- (100,100) {};

\draw[-] (0,0) -- (0,100) {};
\draw[-,thin,dashed] (15,-5) -- (15,100) {};
\draw[-,thin,dashed] (45,-5) -- (45,100) {};
\draw[-,thin,dashed] (65,-5) -- (65,100) {};
\draw[-,thin,dashed] (95,-5) -- (95,100) {};
\draw[-,thin,dashed] (100,-5) -- (100,100) {};


\node [circle,fill,inner sep=1.1pt] at (0,5) {};
\draw[-,thick] (0,5) .. controls (10,5) and (10,15) .. (15,20) {};
\node [circle,inner sep=1.5pt] at (20,35) {};
\node at (7,-5) {$A$};
\node at (-5,13) {$A$};


\node [circle,fill,inner sep=1.1pt] at (15,70) {};
\draw[-,thick] (15,70) .. controls (30,90) and (40,100) .. (45,100) {};
\node [circle,inner sep=1.5pt] at (60,100) {};
\node at (30,-5) {$B$};
\node at (-5,85) {$B$};


\node [circle,fill,inner sep=1.1pt] at (45,20) {};
\draw[-,thick] (45,20) .. controls (53,38) and (58,40) .. (65,40) {};
\node at (54,-5) {$E$};
\node at (-5,30) {$E$};


\node [circle,fill,inner sep=1.1pt] at (65,40) {};
\draw[-,thick] (65,40) --(80,40) {};
\draw[-,thick] (80,40) .. controls (85,40) and (87,45) .. (95,70) {};
\node [circle,inner sep=1.1pt] at (65,70) {};
\node at (80,-5) {$C$};
\node at (-5,55) {$C$};


\node [circle,fill,inner sep=1.1pt] at (95,0) {};
\draw[-,thick] (95,0) -- (100,5) {};
\node [circle,inner sep=1.1pt] at (95,0) {};
\node at (97,-5) {$D$};

\node at (-5,2) {$D$};

\end{tikzpicture}}
\hspace{0.5 cm}
{\begin{tikzpicture}[scale=0.043]


\node at (50,110) {$f$};


\draw[-] (0,0) -- (100,0) {};
\draw[-,thin,dashed] (-5,5) -- (100,5) {};
\draw[-,thin,dashed] (-5,25) -- (100,25) {};
\draw[-,thin,dashed] (-5,30) -- (100,30) {};
\draw[-,thin,dashed] (-5,60) -- (100,60) {};
\draw[-,thin,dashed] (-5,100) -- (100,100) {};

\draw[-] (0,0) -- (0,100) {};
\draw[-,thin,dashed] (20,-5) -- (20,100) {};
\draw[-,thin,dashed] (60,-5) -- (60,100) {};
\draw[-,thin,dashed] (65,-5) -- (65,100) {};
\draw[-,thin,dashed] (95,-5) -- (95,100) {};
\draw[-,thin,dashed] (100,-5) -- (100,100) {};


\node [circle,fill,inner sep=1.2pt] at (0,5) {};
\draw[-,thick] (0,5) -- (8,5) {};
\draw[-,thick] (8,5) .. controls (10,5) and (10,25) .. (20,25) {};
\node [circle,inner sep=1.2pt] at (20,25) {};
\node at (10,-5) {$A$};
\node at (-5,15) {$A$};


\node [circle,fill,inner sep=1.2pt] at (20,60) {};
\draw[-,thick] (20,60) .. controls (25,75) and (30,86) .. (60,100) {};
\node [circle,inner sep=1.2pt] at (60,100) {};
\node at (40,-5) {$B$};
\node at (-5,79) {$B$};


\node [circle,fill,inner sep=1.2pt] at (60,0) {};
\draw[-,thick] (60,0) .. controls (62,4) .. (65,5) {};
\node [circle,inner sep=1.2pt] at (65,5) {};
\node at (62,-5) {$E$};
\node at (-5,2) {$E$};


\node [circle,fill,inner sep=1.2pt] at (65,30) {};
\draw[-,thick] (65,30) --(80,30) {};
\draw[-,thick] (80,30) .. controls (90,30) and (93,55) .. (95,60) {};
\node [circle,inner sep=1.2pt] at (65,70) {};
\node at (79,-5) {$C$};
\node at (-5,46) {$C$};


\node [circle,fill,inner sep=1.2pt] at (95,25) {};
\draw[-,thick] (95,25) -- (100,30) {};
\node [circle,inner sep=1.2pt] at (100,30) {};
\node at (97,-5) {$D$};
\node at (-5,28) {$D$};

\end{tikzpicture}}
\hspace{0.5 cm}
{\begin{tikzpicture}[scale=0.043]


\node at (50,110) {$\cD$};


\draw[-] (0,0) -- (100,0) {};
\draw[-,thin,dashed] (-5,60) -- (100,60) {};
\draw[-,thin,dashed] (-5,25) -- (100,25) {};
\draw[-,thin,dashed] (-5,100) -- (100,100) {};

\draw[-] (0,0) -- (0,100) {};
\draw[-,thin,dashed] (25,-5) -- (25,100) {};
\draw[-,thin,dashed] (65,-5) -- (65,100) {};
\draw[-,thin,dashed] (100,-5) -- (100,100) {};


\node [circle,fill,inner sep=1.2pt] at (0,0) {};
\draw[-,thick] (0,0) -- (10,0) {};
\draw[-,thick] (10,0) .. controls (13,13) and (15,20) .. (25,25) {};
\node [circle,inner sep=1.2pt] at (20,25) {};
\node at (10,-5) {$A$};
\node at (-5,13) {$A$};


\node [circle,fill,inner sep=1.2pt] at (25,60) {};
\draw[-,thick] (25,60) .. controls (30,75) and (46,96) .. (65,100) {};

\node [circle,inner sep=1.2pt] at (65,100) {};
\node at (45,-5) {$B$};
\node at (-5,79) {$B$};


\node [circle,fill,inner sep=1.2pt] at (65,0) {};


\node [circle,fill,inner sep=1.2pt] at (65,25) {};
\draw[-,thick] (65,25) --(80,25) {};
\draw[-,thick] (80,25) .. controls (90,25) and (95,50) .. (100,60) {};
\node [circle,inner sep=1.2pt] at (65,70) {};
\node at (83,-5) {$C$};
\node at (-5,43) {$C$};


\node [circle,fill,inner sep=1.2pt] at (100,25) {};

\end{tikzpicture}}
\end{center}
\caption{In the center a GIET $f$ with five intervals, on the right a degeneration $D$, which can be obtained from $f$ shrinking to zero the branches $f_D$ and $f_E$. This gives rise to two points, which are the singular part of $D$. The graph of $f$ is close to $D$ in the Hausdorff distance. On the left a GIET $g$ with five intervals, acting continuously on the union
$I^{t}_C(g)\cup I^{t}_E(g)$. IETs with a flat piece are a special example of maps like $g$. Interpreting these maps as GIETs with one more interval enables to apply Theorem~\ref{TheoremFullFamilyTheorem}.}
\label{FigureDegenerationGIETs}
\end{figure}

\subsubsection{Distance on $\cC\big(\pi,[0,1)\big)$}
\label{SectionDistanceOnGIETs}

The box norm $\|(x,y)\|:=\max\big\{|x|,|y|\big\}$ induces the corresponding distance $d(\cdot,\cdot)$ on $\RR^2$ and the \emph{Hausdorff distance} 
$\distance_H(E,F)$ between closed sets $E,F\subset \closure(I)^2$, that is 
$$
\distance_H(E,F):=
\max\left\{
\quad
\sup_{x\in E}\inf_{y\in F}d(x,y)
\quad,\quad
\sup_{x\in F}\inf_{y\in E}d(x,y)
\quad
\right\}.
$$

For $f\in\cC(\pi,I)$ and $\alpha\in\cA$ denote by $G_{f_\alpha}$ the graph of the continuous branch $f_\alpha$ in  Equation~\eqref{EquationPiecewiseHomeomorphism} and by $\overline{G_{f_\alpha}}$ its closure in $\closure(I)^2$. The set $\cC(\pi,I)$ is a metric space setting 
\begin{equation}
\label{EquationDinstanceGIETs}
\distance(f,f'):=
\sum_{\alpha\in\cA}
\distance_H(\overline{G_{f_\alpha}},\overline{G_{f'_\alpha}})+\big|j_\alpha(f)-j_\alpha(f')\big|.
\end{equation}

Two maps $f,f'$ are close in such metric if for any $\alpha\in\cA$ both the closures of the graphs of the branches $f_\alpha$, $f'_\alpha$ and the flat intervals $L_\alpha(f), L_\alpha(f')$ are close in the Hausdorff distance.

\subsubsection{Degenerations}
\label{SectionDescriptionAllowedDegenerations}

Consider a combinatorial datum $\pi$ over $\cA$ and a sub-alphabet $\cA'\subset\cA$ with $d'\leq d-1$ elements. 
For $\epsilon\in\{t,b\}$ let 
$
\rho_\epsilon:\pi_\epsilon(\cA')\to\{1,\dots,d'\}
$ 
be the corresponding increasing bijections. Define 
$
\pi'_t,\pi'_b:\cA'\to\{1,\dots,d'\}
$ 
by 
$
\pi'_t:=\rho_t\circ\pi_t|_{\cA'}
$ 
and 
$
\pi'_b:=\rho_b\circ\pi_b|_{\cA'}
$. 
The combinatorial datum $\pi'=(\pi'_t,\pi'_b)$ over $\cA'$ is called a \emph{reduction} of $\pi$. In general $\pi'$ is not admissible even if $\pi$ is.

Let $I$ be a right-open interval. A \emph{degeneration} of a GIET in $\cC(\pi,I)$ is a pair $D=(f,S)$. 
The \emph{regular part} of $D$ is a GIET $f\in\cC(\pi',I)$ whose combinatorial datum $\pi'$ is a reduction of $\pi$ with alphabet $\cA'\subset\cA$. The \emph{singular part} of $D$ is a finite set $S\subset\closure(I)^2$ of 
$|\cA|-|\cA'|$ points with coordinates on the boundary of the continuity intervals of $f$ and $f^{-1}$, that is 
$$
S\subset
\big(\{u^t_\alpha(f),\alpha\in\cA'\}\cup\{\sup I\}\big)
\times
\big(\{u^b_\alpha(f),\alpha\in\cA'\}\cup\{\sup I\}\big).
$$
Let $\cD(\pi',I)$ be the set of degenerations $D$ whose regular part $f:I\to I$ has combinatorial datum $\pi'$. The right part of Figure~\ref{FigureDegenerationGIETs} shows that such degenerations can be obtained as limits of sequences of maps in $\cC(\pi,I)$: the branches corresponding to letters $\alpha\in\cA'$ remain maps as in Equation~\eqref{EquationPiecewiseHomeomorphism} also in the limit, while the branches corresponding to letters in $\cA\setminus\cA'$ are contracted to a point in $S$. Finally consider the set  
$$
\widehat{\cC}(\pi,I):=
\cC(\pi,I)\cup
\bigcup_{\pi'\textrm{ reduction of }\pi}\cD(\pi',I).
$$ 
Consider $D=(f,S)$ in $\widehat{\cC}(\pi,I)$ and let $\cA(D)\subset\cA$ be the alphabet of the regular part $f$ of $D$, where $\cA(D)=\cA$ and $f\in\cC(\pi,I)$ if $S=\emptyset$. 
When $S\not=\emptyset$, denote its points by $s_\alpha=(x_\alpha,y_\alpha)$ with $\alpha\in\cA\setminus\cA(D)$. For any 
$\alpha\in\cA$ set 
$
D_\alpha:=\overline{\text{Graph}(f_\alpha)}
$ 
if $\alpha\in\cA(D)$, and $D_\alpha:=\{s_\alpha\}$ otherwise. Set also $j_\alpha(D):=x_\alpha$ for any 
$\alpha\in\cA\setminus\cA(D)$.
The distance in Equation~\eqref{EquationDinstanceGIETs} extends to pairs $D=(f,S)$ and $D'=(f',S')$ in 
$\widehat{\cC}(\pi,I)$ setting 
$$
\distance(D,D'):=
\sum_{\alpha\in\cA}
\distance_H\Big(D_\alpha,D'_\alpha)\Big)
+
\big|j_\alpha(D)-j_\alpha(D')\big|.
$$

\subsubsection{The full family Theorem} 

IETs with the \emph{Keane property} represent the good generalization of rotations with irrational rotation number (see \S~\ref{SectionIterationRauzyInductionGIETs} for definitions). If $f$ is a map in 
$\cC\big(\pi,[0,1)\big)$ and $T:[0,1)\to[0,1)$ is a Keane IET with combinatorial datum $\pi$, then we say that $f$ is \emph{semi-conjugated} to $T$ if there exists a continuous, surjective and non-decreasing map $h:[0,1)\to[0,1)$ such that 
\begin{equation}
\label{EquationSemiconjugation}
h\circ f=T\circ h.
\end{equation}
According to Proposition~\ref{PropositionSemiconjugation}, the condition above holds if and only if $T$ and $f$ have the same \emph{Rauzy renormalization path}. In this paper we define parameter families of GIETs in $\cC\big(\pi,[0,1)\big)$ such that the following holds: if $T$ is any Keane IET with combinatorial datum $\pi$, then there exists a map $f$ in the family which is semi-conjugated to $T$. Definition \ref{DefinitionFullFamily} describes analytic conditions required for such parametric family. Our methods require also an additional combinatorial assumption on the underlying combinatorial datum $\pi$, or more precisely on its Rauzy class $\cR$, which must contain a \emph{cyclic} element, in the sense of Definition~\ref{DefinitionCyclicCombinatorialDatum} below. This assumption seems to be satisfied by most Rauzy classes, and we don't know any counterexample (see Question \ref{QuestionCyclicElements} below).

\begin{definition}
\label{DefinitionFullFamily}
A \emph{full family} over $\pi$ is a continuous map 
$
F:\Delta^d\to\cC\big(\pi,[0,1)\big)
$ 
satisfying the properties below.
\begin{enumerate}
\item
The composition with the singular values map gives a bijective map 
$$
u^b\circ F:\Delta^d\to\nabla_\pi
$$
which extends to an homeomorphism of $\overline{\Delta^d}$ onto $\overline{\nabla_\pi}$.
\item
The composition with the singular points map also gives a bijective map 
$$
u^t\circ F:\Delta^d\to\nabla^t_\pi.
$$
\item
The map $F$ has a continuous extension 
$$
\widehat{F}:\overline{\Delta^d}\to\widehat{\cC}\big(\pi,[0,1)\big).
$$
\end{enumerate}
\end{definition}

The maps in Equations \eqref{EquationSingularValuesMap} and \eqref{EquationSingularPointsMap} are continuous with respect to the distance in Equation~\eqref{EquationDinstanceGIETs}, so the maps $u^{t/b}\circ F$ in Points (1) and (2) of Definition~\ref{DefinitionFullFamily} are continuous too. By Brower's Invariance of Domain Theorem the maps 
$u^{t/b}\circ F$ are bijective if and only if they are homeomorphisms. Remark \ref{RemarkNoDegenerateMapG} explains why the extension property is required only in Point (1). Fix $\alpha\in\cA$. According to Point (3), when $f$ varies in $F(\Delta^d)$ the maps $f_\alpha:I^t_\alpha(f)\to I^b_\alpha(f)$ in Equation \eqref{EquationPiecewiseHomeomorphism} and their inverse branches $f_\alpha^{-1}:I^b_\alpha(f)\to J_\alpha(f)$ form relatively compact families. A variation of the easy part of the Ascoli-Arzel\`a Theorem (see \S~\ref{AppendixUniformContinuityModulus}) implies that we have a uniform modulus of continuity, that is $\omega:[0,1]\to[0,1]$ monotone with $\omega(\delta)\to0$ for $\delta\to0$ such that for any 
$\alpha\in\cA$ and any $f\in\cC\big(\pi,[0,1)\big)$ we have
\begin{equation}
\label{EquationUniformContinuityModulus}
\left\{
\begin{array}{l}
\big|f_\alpha(x)-f_\alpha(x')\big|\leq\omega\big(|x-x'|\big)
\quad\text{for any}\quad 
x,x'\in I^t_\alpha(f)
\\
\big|f_\alpha^{-1}(y)-f_\alpha^{-1}(y')\big|\leq\omega\big(|y-y'|\big)
\quad\text{for any}\quad 
y,y'\in I^b_\chi(f).
\end{array}
\right.
\end{equation}
Point (3) also implies the uniform continuity of the map $F$. Moreover, according to Points (1) and (3), if 
$\lambda\in\partial\Delta^d$ then 
$
u^b\big(\widehat{F}(\lambda)\big)\in\partial\nabla_{\pi}
$, 
so that $\widehat{F}(\lambda)\not\in\cC\big(\pi,[0,1)\big)$, that is the map $F$ is proper. Finally, the definition of the distance in Equation \eqref{EquationDinstanceGIETs} implies that for any $\alpha$ we have a continuous map 
$j_\alpha\circ F:\Delta^d\to[0,1)$, $\lambda\mapsto j_\alpha\big(F(\lambda)\big)$.

\begin{definition}
\label{DefinitionCyclicCombinatorialDatum}
Let $\pi$ be a combinatorial datum and   
$
\pi_t,\pi_b:\cA\to\{1,\dots,d\}
$ 
be the corresponding pair of bijections. Let 
$
\sigma=\sigma(\pi):\{1,\dots,d\}\to\{1,\dots,d\}
$ 
be the bijection given by 
$
\sigma:=\pi_b\circ\pi_t^{-1}
$. 
We say that $\pi$ is a \emph{cyclic} combinatorial datum if $\sigma(\pi)$ is cyclic of maximal order $d$.
\end{definition}

\begin{theorem}
\label{TheoremFullFamilyTheorem}
Fix an admissible combinatorial datum $\pi$ and assume that the Rauzy class $\cR$ of $\pi$ contains a cyclic combinatorial datum $\pi^{(\ast)}$. 
Let 
$
F:\Delta^d\to\cC\big(\pi,[0,1)\big)
$ 
be a full family as in Definition \ref{DefinitionFullFamily}. Then for any Keane $T$ with combinatorial datum $\pi$ there exists $\lambda\in\Delta^d$ such that $f_\lambda:=F(\lambda)$ is semi-conjugated to $T$.
\end{theorem}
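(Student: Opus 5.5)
The plan is to follow Thurston's approach, as announced in the introduction, adapted from \cite{MarchesePalmisano}. By Proposition~\ref{PropositionSemiconjugation}, $f$ is semi-conjugated to $T$ if and only if $f$ and $T$ have the same Rauzy renormalization path. It therefore suffices to realize the infinite Rauzy path $\gamma$ of $T$ inside the family $F(\Delta^d)$.

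I would first realize every finite initial segment $\gamma_n$ of $\gamma$, and then pass to the limit. For the limit step, take parameters $\lambda_n$ such that $f_{\lambda_n}$ follows $\gamma_n$. Extract a subsequence $\lambda_n\to\lambda\in\overline{\Delta^d}$ and use the continuous extension $\widehat F$ from Point (3) of Definition~\ref{DefinitionFullFamily}. Since $T$ is Keane, the path $\gamma$ is infinite and visits every letter as a winner infinitely often. Consequently, if $\widehat F(\lambda)$ were a degeneration, some branch would be collapsed, which is incompatible with following arbitrarily long segments of $\gamma$; the uniform modulus of continuity~\eqref{EquationUniformContinuityModulus} controls the limit here. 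Hence $\lambda\in\Delta^d$. Because following a finite path is a closed condition, once the boundary (tie) cases are handled, $F(\lambda)$ follows all of $\gamma$.

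For a fixed finite path $\gamma_n$ ending at some $\pi^{(n)}\in\cR$, I would build a configuration space and a Thurston map. The configuration space consists of the positions of the singular values $u^b$ (a subset of $\nabla_\pi$) together with the prescribed dynamical partition of level $n$ induced by $\gamma_n$. Given $u\in\nabla_\pi$, let $f=F\big((u^b\circ F)^{-1}(u)\big)$, which is well defined by Point (1). Pull back the combinatorial positions dictated by $\gamma_n$ through the branches of $f$ to obtain new positions of the singular values. This defines a map $\Phi$ whose fixed points are exactly the parameters whose $F$-image follows $\gamma_n$. The bijectivity of $u^t\circ F$ from Point (2) is used to identify the singular points consistently when pulling back. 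To apply Brouwer's theorem, I would show that the closure of the configuration space, a subset of $\overline{\nabla_\pi}$ cut out by the order constraints of $\gamma_n$, is homeomorphic to a closed ball; I expect it to be star-shaped or convex in suitable coordinates. I would then extend $\Phi$ continuously to this closure using $\widehat F$ on $\partial\Delta^d$ and the extension of $u^b\circ F$ to a homeomorphism of $\overline{\Delta^d}$ onto $\overline{\nabla_\pi}$.

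The main obstacle is excluding fixed points on the boundary, that is, fixed points at which the extended map is a degeneration $D=(f',S)$ with collapsed branches, or at which two singular values coincide. A boundary fixed point would give a degenerate map following $\gamma_n$ with reduced combinatorics $\pi'$. Here I would use the cyclic datum $\pi^{(\ast)}$: concatenate $\gamma_n$ with a fixed Rauzy path through $\pi^{(\ast)}$ (extending $\gamma_n$ does no harm, since we only need arbitrarily long segments). At $\pi^{(\ast)}$ the permutation $\sigma$ is a single $d$-cycle, so no proper sub-alphabet $\cA'$ is invariant under the combinatorics. I would argue that the points of $S$ are then forced, after finitely many pullback steps, either to land back inside the interior of regular branches or to force all branches to collapse, which contradicts $u^b\in\partial\nabla_\pi$ being a fixed configuration with mass $1$. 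Making this rigorous, given that the flat intervals $L_\alpha$ may absorb collapsed points, is where I expect most of the work.
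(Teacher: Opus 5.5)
Your outer structure matches the paper's. You realize every finite path ending at $\pi^{(\ast)}$ (Theorem~\ref{TheoremRealizationRauzyPath} with Remark~\ref{RemarkRealizationRauzyPath}) and pass to the limit using relative compactness from a positive initial segment (Lemma~\ref{LemmaPositiveRauzyPaths}). Two slips there: following a finite path is an \emph{open} condition (Lemma~\ref{LemmaContinuityRauzyPath}), not a closed one, and a Thurston fixed point only \emph{implies} that $f$ follows the path; the converse is false and not needed.

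The genuine gap is the finite-path step, where you never give a space on which a pullback map makes sense. A configuration space ``inside $\overline{\nabla_\pi}$ cut out by the order constraints of $\gamma_n$'' cannot work. If it means $u^b\circ F(\Delta_{\gamma_n})$, its non-emptiness is exactly what must be proved. Otherwise, the ``positions dictated by $\gamma_n$'' do not exist for an $f$ that does not already follow $\gamma_n$, since they involve up to $q^{(r)}_\alpha$ iterates of $f$.

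The missing idea is the reference map $T_\gamma=T(\pi,\lambda^{(\gamma)})$ with $\lambda^{(\gamma)}=N^{-1}\,{}^t\!B_\gamma\vec{1}$ and $N=\sum_\chi q^{(r)}_\chi$. Because $\gamma$ \emph{ends} at the cyclic datum (passing through it is not enough), $T_\gamma^{(r)}=T(\pi^{(\ast)},N^{-1}\vec{1})$ permutes its $d$ intervals in a single cycle. Hence the $N$ endpoints of $\cP(T_\gamma,r)$ form one periodic orbit, labelled by $\cI_\gamma$, with $[\alpha,i]\mapsto[\alpha,i+1]$ an $N$-cycle.
\begin{itemize}
\item Configurations are $N$ \emph{independent} points in $\Delta^N$ ordered like this orbit. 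They must also satisfy the condition that no flat interval of $g_\cV$ contains another point; this is where Point (2) of Definition~\ref{DefinitionFullFamily} enters.
\item $f_\cV$ is the member of the family with $u^b_\alpha(f_\cV)=v([\alpha,1])$.
\item The Thurston map \eqref{EquationDefinitionThurstonMap} shifts every label back one step through $f_\cV^{-1}$ and resets $[\alpha,0]$ to $u^t_\alpha(f_\cV)$.
\item A fixed point is a periodic orbit of $f_\cV$ with the combinatorics of $T_\gamma$. This gives $\cP(f_\cV,r)\sim\cP(T_\gamma,r)$, hence the path, by Proposition~\ref{PropositionPartitionDeterminesRauzyPath}.
\end{itemize}

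The same $N$-cycle drives the boundary exclusion, which you leave open. The relevant objects are the gaps between consecutive configuration points, not the singular set $S$. Boundary configurations can have $f_\cV$ non-degenerate, for example when two non-critical orbit points collide or a point reaches the right end of a flat interval.

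Lemma~\ref{LemmaControlExpansionInverseBranches} says inverse branches along $\widehat{F}$ neither create nor destroy length in the limit. From it, Lemmas~\ref{LemmaCyclicBehavior} and~\ref{LemmaCyclicBehaviorBis} give the propagation rules:
\begin{itemize}
\item a collapsed gap after $[\alpha,i]$ with $i\neq1$ pulls back to a collapsed gap after $[\alpha,i-1]$;
\item a strict gap pulls back to a strict gap;
\item a collapsed gap after the critical value $[\alpha,1]$ pulls back only to $v'([\alpha,0]^\ast)=j_\alpha(D_\cV)$, which is precisely where flat intervals absorb collapses.
\end{itemize}

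At a boundary fixed point you then run once around the $N$-cycle. If the limit flat interval of $\alpha$ is nontrivial, a strict gap is born at $[\alpha,0]$ and propagates back to the collapsed gap you started from, a contradiction. Otherwise the collapse continues all the way round, every gap vanishes, and $0=1$. The remaining case, $j_\alpha(g_\cV)=v([\alpha,0]^\ast)$ with $\cV\in\Delta^N$, is excluded via \eqref{EquationRelationFunctionsFandG} and Lemma~\ref{LemmaCyclicBehaviorBis}.

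Without the $N$-point periodic configuration, neither the Thurston map nor this propagation argument is available. So your proposal does not yet prove Theorem~\ref{TheoremRealizationRauzyPath}.
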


Theorem \ref{TheoremFullFamilyTheorem} above covers a version of Theorem 1.5 in \cite{MarchesePalmisano}, indeed it is enough to consider a family $F$ as in Definition \ref{DefinitionFullFamily} with values in the subclass of $\cC\big(\pi,[0,1)\big)$ defined by Equation \eqref{EquationGIETsDefinedByHomeomorphisms}. Both this paper and \cite{MarchesePalmisano} use the assumption on the existence of cyclic element in Rauzy classes, and the next Question \ref{QuestionCyclicElements} rises naturally.

\begin{question}
\label{QuestionCyclicElements}
Does any Rauzy class $\cR$ contain a cyclic element?
\end{question}

\begin{remark}
\label{RemarkCuttingFlatIntervalMarkingPoints}
Let $g:[0,1)\to[0,1)$ be a map as in Equation~\eqref{EquationPerturbationOfIdentity} and $(\pi,\lambda)$ be data with 
$\lambda\in\Delta^{d}$ defining a IET with flat piece $f:=T\circ g$, where $T=T(\pi,\lambda)$. Let $0<a,b,c,d<1$ be the points as in Equation~\eqref{EquationPerturbationOfIdentity} and $\chi$ be the letter such that $[a,d)\subset I^t_\chi$. Split such interval as
$
I^t_\chi=[\inf I^t_\chi,b)\cup[b,\inf I^t_\chi)
$  
and replace $\chi$ by the pair of letters $\chi_L$, $\chi_R$. Setting
$
I^t_{\chi_L}:=[\inf I^t_\chi,b)
$ 
and 
$
I^t_{\chi_R}:=[b,\inf I^t_\chi)
$ 
we obtain a combinatorial datum $\widehat{\pi}$ over $d+1$ letters, and $f$ can be considered as an element of 
$
\cC\big(\widehat{\pi},[0,1)\big)
$. 
In particular we can deform $f$ inside a full family parametrized by 
$
\widehat{\lambda}\in\Delta^{d+1}
$, 
so that any Keane IET with combinatorial datum $\pi$ is semi-conjugated to some element 
$f_{\widehat{\lambda}}$ of the family. The left part of Figure~\ref{FigureDegenerationGIETs} represents an example with $\chi_L=E$ and $\chi_R=C$.
\end{remark}

\subsection{Content of this paper}

In \S~\ref{SectionBackgroundRauzyVeech} we introduce the Rauzy-Veech algorithm for maps in 
$\cC\big(\pi,[0,1)\big)$. In \S~\ref{SectionDynamicalPartition} we define the dynamical partition of $[0,1)$ induced by a map $f$ in $\cC\big(\pi,[0,1)\big)$ whose first $r$ steps of the algorithm follow a fixed Rauzy path of length $r$.

In \S~\ref{SectionProofFullFamilyTheorem} we prove Theorem \ref{TheoremFullFamilyTheorem} assuming Theorem \ref{TheoremRealizationRauzyPath}, which states that for any fixed length $r$ Rauzy path 
$\gamma$ we have a non-empty set $\Delta_\gamma\subset\Delta^d$ of parameters $\lambda$ such that $f=F(\lambda)$ follows the $r$ steps of the algorithm encoded by $\gamma$. Here $F$ parametrizes the family in Definition \ref{DefinitionFullFamily}.

In \S~\ref{SectionThurstonMap} we fix a Rauzy path $\gamma$ and we introduce the space of configurations, which is an open subset $\cO_\gamma$ in the high dimensional simplex $\Delta^N$, where $N$ is the number of atoms of the dynamical partition associated to $\gamma$. We also define the \emph{Thurston map} $\cT_\gamma$ on $\cO_\gamma$, which is uniformly continuous, so it extends to a continuous map on the closure of $\cO_\gamma$. Brower's fixed point Theorem gives a fixed point for such extension.

In \S~\ref{SectionExistenceFixedPointThurstonMap} we study the behaviour of the Thurston map on the boundary of the configuration space, proving that it cannot fix any point in $\partial\cO_\gamma$. This step requires the existence of cyclic elements in Rauzy classes. Therefore Brower's fixed point belong to $\cO_\gamma$, and by Lemma \ref{LemmaFixedPointsThurstonAndRauzyPath} such fixed point of $\cT_\gamma$ produces a parameter in $\Delta_\gamma$, proving Theorem \ref{TheoremRealizationRauzyPath}.

In \S~\ref{SectionTranslationSurfaces} we recall Veech's \emph{zippered rectagles} construction.  
In \S~\ref{SectionFlowBox} we build a model for the saddle-sink node inside a fixed box in $\RR^2$.
In \S~\ref{SectionConstructionCherryFlows} we build a family of marked translation surfaces parametrized by length data $\widehat{\lambda}\in\Delta^{d+1}$. Each surface $X_{\widehat{\lambda}}$ is endowed with a smooth vector field 
$Y_{\widehat{\lambda}}$, which is everywhere equal to (a time change of) the vertical flow of $X_{\widehat{\lambda}}$ with the exception of a little box, where $Y_{\widehat{\lambda}}$ is a change of variable of the flow in  \S~\ref{SectionFlowBox}. Taking the first return map of $Y_{\widehat{\lambda}}$ we get a parameter family 
$\widehat{\lambda}\mapsto f_{\widehat{\lambda}}$ of IETs with a flat piece which satisfies the requirements in Definition \ref{DefinitionFullFamily}. This proves Theorem \ref{TheoremMainTheoremCherryFlow} 
(see \S~\ref{SectionEndProofTheoremCherryFlows}).

In \S~\ref{SectionEstimatesHausdorffDistance} and \S~\ref{AppendixUniformContinuityModulus} we prove basic estimates for the Hausdorff distance used throughout the paper.

\section{The Rauzy-Veech algorithm}
\label{SectionBackgroundRauzyVeech}

\subsection{Rauzy classes}
\label{SectionRauzyClasses}

Let $\cA$ be a finite alphabet with $d\geq2$ letters. We define two operations $R^t$ and $R^b$ on the set of admissible combinatorial data $\pi$ over $\cA$, where the symbols "$t$" and "$b$" stand for \emph{top} and \emph{bottom} respectively. It is practical to introduce the variable 
$
\varepsilon\in\{t,b\}
$. 
Fix an admissible combinatorial datum $\pi=(\pi_t,\pi_b)$ and let $\alpha_t$ and $\alpha_b$ be the letters in $\cA$ such that 
$
\pi_t(\alpha_t)=\pi_b(\alpha_b)=d
$. 
Below, for $\varepsilon\in\{t,b\}$, we describe the combinatorial datum  
$
\widetilde{\pi}=(\widetilde{\pi}_t,\widetilde{\pi}_b)
$, 
where 
$
\widetilde{\pi}:=R^\varepsilon(\pi)
$.

\begin{description}
\item
[Top operation]
The letter $\alpha_t$ is said the \emph{winner} of the top operation $R^t$ and $\alpha_b$ is said the \emph{looser}. The top operation 
$
\widetilde{\pi}:=R^t(\pi)
$ 
leaves invariant $\pi_t$, that is 
$
\widetilde{\pi}_t:=\pi_t
$, 
and its action on $\pi_b$ is defined by
\begin{eqnarray*}
&&
\widetilde{\pi}_b(\chi):=\pi_b(\chi)
\quad
\textrm{ for }
\quad
1\leq\pi_b(\chi)\leq\pi_b(\alpha_t)
\\
&&
\widetilde{\pi}_b(\alpha_b):=\pi_b(\alpha_t)+1
\\
&&
\widetilde{\pi}_b(\chi):=\pi_b(\chi)+1
\quad
\textrm{ for }
\quad
\pi_b(\alpha_t)+1\leq\pi_b(\chi)\leq d-1.
\end{eqnarray*}
\item
[Bottom operation]
The letter $\alpha_b$ is said the \emph{winner} of the bottom operation $R^b$ and $\alpha_t$ is said the \emph{looser}. The top operation 
$
\widetilde{\pi}:=R^b(\pi)
$ 
leaves invariant $\pi_b$, that is 
$
\widetilde{\pi}_b:=\pi_b
$, 
and its action on $\pi_t$ is defined by
\begin{eqnarray*}
&&
\widetilde{\pi}_t(\chi):=\pi_t(\chi)
\quad
\textrm{ for }
\quad
1\leq\pi_t(\chi)\leq\pi_t(\alpha_b)
\\
&&
\widetilde{\pi}_t(\alpha_t):=\pi_t(\alpha_b)+1
\\
&&
\widetilde{\pi}_t(\chi):=\pi_t(\chi)+1
\quad
\textrm{ for }
\quad
\pi_t(\alpha_b)+1\leq\pi_t(\chi)\leq d-1.
\end{eqnarray*}
\end{description}

It is easy to check that both $R^b(\pi)$ and $R^t(\pi)$ are admissible if $\pi$ is. A \emph{Rauzy class} $\cR$ is a set of admissible combinatorial data which is invariant both under $R^t$ and $R^b$ and which is minimal with this property. The \emph{Rauzy diagram} of $\cR$ is the connected oriented graph whose vertexes are the elements of $\cR$ and whose elementary oriented arcs $\gamma$, or \emph{arrows}, correspond to Rauzy elementary operations. The set of arrows $\gamma$ in the Rauzy diagram of $\cR$ is in bijection with the set of pairs $(\pi,\varepsilon)$ with $\pi\in\cR$ and $\varepsilon\in\{t,b\}$. A concatenation of $r$ compatible arrows $\gamma_1,\dots,\gamma_r$ in the Rauzy diagram is called a \emph{Rauzy path} and is denoted $\gamma=\gamma_1\ast\dots\ast\gamma_r$. If $\pi,\pi'$ in $\cR$ are respectively the initial and the final combinatorial data in such chain of combinatorial operations, we write also $\gamma:\pi\to\pi'$. If a path $\gamma$ is concatenation of $r$ simple arrows, we say that $\gamma$ has length $r$. Length one paths are arrows. Elements of $\cR$ are identified with trivial length-zero paths. Let 
$
\{e_\chi\}_{\chi\in\cA}
$ 
be the canonical basis of $\RR^\cA$. For any finite path $\gamma$ define a linear map $B_\gamma\in\slgroup(d,\ZZ)$ as follows. If $\gamma$ is trivial then $B_\gamma:=id$. If $\gamma$ is an arrow with winner $\alpha$ and loser $\beta$ set
$$
B_\gamma e_\alpha=e_\alpha+e_\beta
\quad
\textrm{ and }
\quad
B_\gamma e_\chi=e_\chi
\textrm{ for }
\chi\not=\alpha.
$$
Then extend the definition to paths so that for any concatenation $\gamma_1\ast\gamma_2$ we have 
$$
B_{\gamma_1\ast\gamma_2}=B_{\gamma_2}\cdot B_{\gamma_1}.
$$

\subsection{The Rauzy-Veech map}
\label{SectionRauzyInductionGIETs}

Let $\cR$ be a Rauzy class over $\cA$ and consider $\pi\in\cR$. Consider a right-open interval $I$ and a map 
$f\in\cC(\pi,I)$. Consider the two corresponding partitions 
$
\cP_t(f)=\{I^t_\alpha(f)\}_{\alpha\in\cA}
$ 
and 
$
\cP_b(f)=\{I^b_\alpha(f)\}_{\alpha\in\cA}
$ 
of $I$ as in Equation~\eqref{EquationDynamicalPartitionTimeZero}. For $\alpha\in\cA$ consider the singular points $u_\alpha^t(f)$ and the singular values $u_\alpha^b(f)$ of $f$ given by 
Equation~\eqref{EquationSingularitiesTimeZero}, then consider also $j_\alpha(f)=\inf J_\alpha(f)=\sup L_\alpha(f)$ as in 
Equation \eqref{EquationRightEndpointFlatInterval}. As in \S~\ref{SectionRauzyClasses}, let $\alpha_t$ and $\alpha_b$ be the letters with $\pi_t(\alpha_t)=\pi_b(\alpha_b)=d$. 

\subsubsection{A condition for renormalizability}
\label{SectionRauzyInductionGIETs(Conditions)}

Suppose that the following condition is satisfied
\begin{equation}
\label{EquationConditionRenormalizabilityDifferentSingularities}
\text{either }\quad j_{\alpha_t}(f)<u_{\alpha_b}^b(f)
\quad\text{ or }\quad
u_{\alpha_t}^t(f)>u_{\alpha_b}^b(f),
\end{equation}
then assign a value to the variable 
$
\varepsilon=\varepsilon(f)\in\{t,b\}
$ 
according to the two cases below: 
\begin{eqnarray*}
&&
\varepsilon(f):=t
\quad
\Leftrightarrow
\quad
j_{\alpha_t}(f)<u_{\alpha_b}^b(f)
\\
&&
\varepsilon(f):=b
\quad
\Leftrightarrow
\quad
u_{\alpha_t}^t(f)>u_{\alpha_b}^b(f).
\end{eqnarray*}
For the sub-class of maps in Equation~\eqref{EquationGIETsDefinedByHomeomorphisms} Condition~\eqref{EquationConditionRenormalizabilityDifferentSingularities} takes the form
$$
u_{\alpha_t}^t(f)\not=u_{\alpha_b}^b(f).
$$
In particular, if $f=T(\pi,\lambda)$ is the IET determined by combinatorial-length data $(\pi,\lambda)$, so that its singularities depend on $(\pi,\lambda)$ as in  
Equation~\eqref{EquationSingularitiesTimeZeroIET}, we have 
$
\varepsilon(f)=t\Leftrightarrow\lambda_{\alpha_t}>\lambda_{\alpha_b}
$ 
and 
$
\varepsilon(f)=b\Leftrightarrow\lambda_{\alpha_t}<\lambda_{\alpha_b}
$.

\subsubsection{Formulae for first return}
\label{SectionRauzyInductionGIETs(Formulae)}

\begin{figure}
\begin{center}
{\begin{tikzpicture}[scale=0.037]


\node at (50,110) {$f$};

\draw[-] (0,0) -- (100,0) {};
\draw[-,thin,dashed] (0,20) -- (100,20) {};
\draw[-,thin,dashed] (0,40) -- (100,40) {};
\draw[-,thin,dashed] (0,80) -- (100,80) {};
\draw[-,thin,dashed] (0,100) -- (100,100) {};

\draw[-] (0,0) -- (0,100) {};
\draw[-,thin,dashed] (20,0) -- (20,100) {};
\draw[-,thin,dashed] (40,0) -- (40,100) {};
\draw[-,thin,dashed] (60,0) -- (60,100) {};
\draw[-,thin,dashed] (100,0) -- (100,100) {};


\node [circle,fill,inner sep=1.1pt] at (0,80) {};
\draw[-,thick] (0,80) -- (20,100) {};
\node at (10,-5) {$A$};
\node at (-5,90) {$A$};


\node [circle,fill,inner sep=1.1pt] at (20,40) {};
\draw[-,thick] (20,40) -- (25,40) {};
\draw[-,thick] (25,40) .. controls (30,40) and (40,70) .. (40,80) {};
\node at (30,-5) {$B$};
\node at (-5,60) {$B$};


\node [circle,fill,inner sep=1.1pt] at (40,20) {};
\draw[-,thick] (40,20) -- (50,20) {};
\draw[-,thick] (50,20) .. controls (55,20) and (60,30) .. (60,40) {};
\node at (50,-5) {$C$};
\node at (-5,30) {$C$};


\node [circle,fill,inner sep=1.1pt] at (60,0) {};
\draw[-,thick] (60,0) -- (75,0) {};
\draw[-,thick] (75,0) .. controls (80,0) and (100,10) .. (100,20) {};
\node at (70,-5) {$D$};
\node at (-5,10) {$D$};

\end{tikzpicture}}
{\begin{tikzpicture}[scale=0.037]


\node at (40,110) {$\widetilde{\cQ}_{top}(f)$};

\draw[-] (0,0) -- (80,0) {};
\draw[-,thin,dashed] (0,5) -- (80,5) {};
\draw[-,thin,dashed] (0,20) -- (80,20) {};
\draw[-,thin,dashed] (0,40) -- (80,40) {};
\draw[-,thin,dashed] (0,80) -- (80,80) {};

\draw[-] (0,0) -- (0,80) {};
\draw[-,thin,dashed] (20,0) -- (20,80) {};
\draw[-,thin,dashed] (40,0) -- (40,80) {};
\draw[-,thin,dashed] (60,0) -- (60,80) {};
\draw[-,thin,dashed] (80,0) -- (80,80) {};


\node [circle,fill,inner sep=1.1pt] at (0,5) {};
\draw[-,thick] (0,5) .. controls (10,5) and (20,10) .. (20,20) {};
\node at (10,-5) {$A$};
\node at (-5,10) {$A$};


\node [circle,fill,inner sep=1.1pt] at (20,40) {};
\draw[-,thick] (20,40) -- (25,40) {};
\draw[-,thick] (25,40) .. controls (30,40) and (40,70) .. (40,80) {};
\node at (30,-5) {$B$};
\node at (-5,60) {$B$};


\node [circle,fill,inner sep=1.1pt] at (40,20) {};
\draw[-,thick] (40,20) -- (50,20) {};
\draw[-,thick] (50,20) .. controls (55,20) and (60,30) .. (60,40) {};
\node at (50,-5) {$C$};
\node at (-5,30) {$C$};


\node [circle,fill,inner sep=1.1pt] at (60,0) {};
\draw[-,thick] (60,0) -- (70,0) {};
\draw[-,thick] (70,0) .. controls (75,0) .. (80,5) {};
\node at (70,-5) {$D$};
\node at (-5,0) {$D$};

\end{tikzpicture}}
\hspace{0.5 cm}
{\begin{tikzpicture}[scale=0.037]



\node at (40,110) {$f'$};

\draw[-] (0,0) -- (100,0) {};
\draw[-,thin,dashed] (0,20) -- (100,20) {};
\draw[-,thin,dashed] (0,40) -- (100,40) {};
\draw[-,thin,dashed] (0,60) -- (100,60) {};
\draw[-,thin,dashed] (0,100) -- (100,100) {};

\draw[-] (0,0) -- (0,100) {};
\draw[-,thin,dashed] (25,0) -- (25,100) {};
\draw[-,thin,dashed] (50,0) -- (50,100) {};
\draw[-,thin,dashed] (80,0) -- (80,100) {};
\draw[-,thin,dashed] (100,0) -- (100,100) {};


\node [circle,fill,inner sep=1.1pt] at (0,60) {};
\draw[-,thick] (0,60) -- (10,60) {};
\draw[-,thick] (10,60) .. controls (20,60) and (25,80) .. (25,100) {};
\node at (12,-5) {$A$};
\node at (-5,80) {$A$};


\node [circle,fill,inner sep=1.1pt] at (25,40) {};
\draw[-,thick] (25,40) -- (40,40) {};
\draw[-,thick] (40,40) .. controls (45,40) and (50,45) .. (50,60) {};
\node at (37,-5) {$B$};
\node at (-5,50) {$B$};


\node [circle,fill,inner sep=1.1pt] at (50,20) {};
\draw[-,thick] (50,20) -- (60,20) {};
\draw[-,thick] (60,20) .. controls (70,20) and (75,30) .. (80,40) {};
\node at (65,-5) {$C$};
\node at (-5,30) {$C$};


\node [circle,fill,inner sep=1.1pt] at (80,0) {};
\draw[-,thick] (80,0) -- (100,20) {};
\node at (85,-5) {$D$};
\node at (-5,10) {$D$};

\end{tikzpicture}}
{\begin{tikzpicture}[scale=0.037]



\node at (50,110) {$\widetilde{\cQ}_{bot}(f')$};

\draw[-] (0,0) -- (80,0) {};
\draw[-,thin,dashed] (0,20) -- (80,20) {};
\draw[-,thin,dashed] (0,40) -- (80,40) {};
\draw[-,thin,dashed] (0,60) -- (80,60) {};
\draw[-,thin,dashed] (0,80) -- (80,80) {};

\draw[-] (0,0) -- (0,80) {};
\draw[-,thin,dashed] (20,0) -- (20,80) {};
\draw[-,thin,dashed] (25,0) -- (25,80) {};
\draw[-,thin,dashed] (50,0) -- (50,80) {};
\draw[-,thin,dashed] (80,0) -- (80,80) {};


\node [circle,fill,inner sep=1.1pt] at (0,60) {};
\draw[-,thick] (0,60) -- (10,60) {};
\draw[-,thick] (10,60) .. controls (15,60) and (20,65) .. (20,80) {};
\node at (5,-5) {$A$};
\node at (-5,70) {$A$};


\node [circle,fill,inner sep=1.1pt] at (25,40) {};
\draw[-,thick] (25,40) -- (40,40) {};
\draw[-,thick] (40,40) .. controls (45,40) and (50,45) .. (50,60) {};
\node at (40,-5) {$B$};
\node at (-5,50) {$B$};


\node [circle,fill,inner sep=1.1pt] at (50,20) {};
\draw[-,thick] (50,20) -- (60,20) {};
\draw[-,thick] (60,20) .. controls (70,20) and (75,30) .. (80,40) {};
\node at (65,-5) {$C$};
\node at (-5,30) {$C$};


\node [circle,fill,inner sep=1.1pt] at (20,0) {};
\draw[-,thick] (20,0) .. controls (22,0) and (25,10) .. (25,20) {};
\node at (20,-5) {$D$};
\node at (-5,10) {$D$};

\end{tikzpicture}}

\end{center}
\caption{On the left a GIET $f$ with $\varepsilon(f)=t$ and its image 
$
\widetilde{f}=\widetilde{\cQ}_{top}(f)
$. 
The branch $\widetilde{f}_D$ is the restriction of $f_D$ to $[u^t_D,u^b_A)$. On the right, a GIET $f'$ with $\varepsilon(f')=b$ and its image 
$
\widetilde{f}'=\widetilde{\cQ}_{bot}(f')
$.}
\label{FigureRauzyVeech}
\end{figure}

Assume that Condition~\eqref{EquationConditionRenormalizabilityDifferentSingularities} holds, then let 
$
\widetilde{\pi}:=R^{\varepsilon(f)}(\pi)
$ 
be the combinatorial datum and 
$
\gamma:\pi\to\widetilde{\pi}
$ 
be the arrow in the Rauzy diagram of $\cR$ corresponding to the pair 
$
(\pi,\varepsilon=\varepsilon(f))
$ 
as in \S~\ref{SectionRauzyClasses}. Let $\widetilde{I}\subset I$ be the subinterval defined by 
\begin{equation}
\label{EquationSubintervalRightmostSingularity}
\inf\widetilde{I}=\inf I
\quad
\textrm{ and }
\quad
\sup\widetilde{I}:=\max\{u_{\alpha_t}^t(f),u_{\alpha_b}^b(f)\}.
\end{equation}
Let $\widetilde{f}:\widetilde{I}\to\widetilde{I}$ be the first return map of $f$ to $\widetilde{I}$. The so-called \emph{Rauzy-Veech map} $\widetilde{\cQ}$, which is defined\footnote{Actually, 
$\widetilde{\cQ}$ is defined on the set of those $f\in\cC(\pi,I)$ which satisfy 
Condition~\eqref{EquationConditionRenormalizabilityDifferentSingularities}.} on $\cC(\pi,I)$, expresses the dependence of $\widetilde{f}$ on $f$. We write
$$
f\mapsto \widetilde{\cQ}(f):=\widetilde{f}.
$$
Recall that for any $\alpha\in\cA$ we denote by $f_\alpha$ the continuous branch of $f$ as in 
Equation~\eqref{EquationPiecewiseHomeomorphism}. The explicit expression for $\widetilde{f}$ is given according to the two cases below. 
\begin{enumerate}
\item
If $\varepsilon(f)=t$ then, observing that 
$
I^b_{\alpha_b}=f_{\alpha_b}(I^t_{\alpha_b})\subset J_{\alpha_t} \subset I^t_{\alpha_t}
$, 
we have
\begin{align*}
&
\widetilde{f}(x)=f^2(x)=f_{\alpha_t}\circ f_{\alpha_b}(x)
&
\quad
\textrm{ for any }
\quad
x\in I_{\alpha_b}^t
\\
&
\widetilde{f}(x)=f(x)
&
\quad
\textrm{ for any }
\quad
x\in\widetilde{I}\setminus I_{\alpha_b}^t.
\end{align*}
In this case, which is represented by the picture on the left in 
Figure~\ref{FigureRauzyVeech}, we define the \emph{top branch} $\widetilde{\cQ}_{top}$ of 
$\widetilde{\cQ}$ by setting 
$$
\widetilde{\cQ}_{top}(f):=\widetilde{f}\in\cC(\widetilde{\pi},\widetilde{I}).
$$
\item
If $\varepsilon(f)=b$ then, observing that  
$
I_{\alpha_t}^t\subset I_{\alpha_b}^b=f_{\alpha_b}(I_{\alpha_b})
$, 
we have
\begin{align*}
&
\widetilde{f}(x)=f^2(x)=f_{\alpha_t}\circ f_{\alpha_b}(x)
&
\quad
\textrm{ for any }
\quad
x\in f_{\alpha_b}^{-1}(I_{\alpha_t}^t) 
\\
&
\widetilde{f}(x)=f(x)
&
\quad
\textrm{ for any }
\quad
x\in\widetilde{I}\setminus f_{\alpha_b}^{-1}(I_{\alpha_t}^t).
\end{align*}
In this case, which is represented by the picture on the right in 
Figure~\ref{FigureRauzyVeech} we define the \emph{bottom branch} $\widetilde{\cQ}_{bot}$ of 
$\widetilde{\cQ}$ by setting 
$$
\widetilde{\cQ}_{bot}(f):=\widetilde{f}\in\cC(\widetilde{\pi},\widetilde{I}).
$$
\end{enumerate}

The map $\widetilde{\cQ}$ preserves the sub-class of maps in 
Equation~\eqref{EquationGIETsDefinedByHomeomorphisms}. The set of IETs is also preserved by $\widetilde{\cQ}$, and for $f=T(\pi,\lambda)$ we have 
$
\widetilde{f}=T(\widetilde{\pi},\widetilde{\lambda})
$,  
where the new length datum is given by 
$$
\left\{
\begin{array}{l}
\widetilde{\lambda}_{\alpha}=\lambda_{\alpha}
\quad
\forall
\quad
\alpha\neq\alpha_t
\\
\widetilde{\lambda}_{\alpha_t}=
\lambda_{\alpha_t}-\lambda_{\alpha_b}
\end{array}
\right.
\text{ if }
\quad
\varepsilon(f)=t
\quad;\quad
\left\{
\begin{array}{l}
\widetilde{\lambda}_{\alpha}=\lambda_{\alpha}
\quad
\forall
\quad
\alpha\neq\alpha_b
\\
\widetilde{\lambda}_{\alpha_b}=
\lambda_{\alpha_b}-\lambda_{\alpha_t}
\end{array}
\right.
\text{ if }
\quad
\varepsilon(f)=b.
$$

\subsubsection{Normalized Rauzy-Veech map}
\label{SectionRauzyInductionGIETs(Normalized)}

Consider $f\in\cC\big(\pi,[0,1)\big)$ satisfying Condition~\eqref{EquationConditionRenormalizabilityDifferentSingularities} and let 
$\widetilde{I}=\widetilde{I}(f)$ be the right-open interval given by 
Equation~\eqref{EquationSubintervalRightmostSingularity} applied to $I:=[0,1)$. Let 
$
\psi_f:[0,1)\to\widetilde{I}
$ 
be the uniquely defined increasing affine bijection. Setting 
$
\widetilde{f}:=\widetilde{\cQ}(f)
$, 
define a map 
$
\widehat{f}\in\cC\big(\widetilde{\pi},[0,1)\big)
$ 
by 
\begin{equation}
\label{EquationRenormalizationFirstReturn}
\widehat{f}:=\psi_f^{-1}\circ\widetilde{f}\circ\psi_f.
\end{equation}
Let $\cC_{top}\big(\pi,[0,1)\big)$ and $\cC_{bot}\big(\pi,[0,1)\big)$ be the two subsets of those 
$
f\in\cC\big(\pi,[0,1)\big)
$ 
satisfying Condition~\eqref{EquationConditionRenormalizabilityDifferentSingularities} and such that respectively $\varepsilon(f)=t$ and $\varepsilon(f)=b$. The construction above defines two maps 
\begin{equation}
\label{EquationNormalizedBrancesRauzyMap}
\left\{
\begin{array}{l}
\cQ_{top}:\cC_{top}\big(\pi,[0,1)\big)\to\cC\big(R^t(\pi),[0,1)\big)
\quad;\quad
f\mapsto \cQ_{top}(f):=\widehat{f}
\\
\cQ_{bot}:\cC_{bot}\big(\pi,[0,1)\big)\to\cC\big(R^b(\pi),[0,1)\big)
\quad;\quad
f\mapsto \cQ_{bot}(f):=\widehat{f},
\end{array}
\right.
\end{equation}
which are the two branches of the so-called \emph{normalized Rauzy-Veech map}.

\subsection{Iteration of the Rauzy-Veech map}
\label{SectionIterationRauzyInductionGIETs}

Fix $f\in\cC(\pi,I)$ and $r\in\NN$ and assume that the $r$-th iterated 
$
f^{(r)}:=\widetilde{\cQ}^r(f)
$ 
of the Rauzy-Veech map is defined on $f=f^{(0)}$. Let $\pi^{(r)}\in\cR$ be the combinatorial datum of $f^{(r)}$ and $I^{(r)}$ be the interval where $f^{(r)}$ acts, so that in our notation we write 
$
f^{(r)}\in\cC(\pi^{(r)},I^{(r)})
$, 
which means that $f^{(r)}:I^{(r)}\to I^{(r)}$ is a GIET with combinatorial datum $\pi^{(r)}$. Finally let $\alpha^{(r)}_t$ and $\alpha^{(r)}_b$ be the letters such that 
$
\pi^{(r)}_t(\alpha^{(r)}_t)=\pi^{(r)}_b(\alpha^{(r)}_b)=d
$ 
and assume that $f^{(r)}$ satisfies 
Condition~\eqref{EquationConditionRenormalizabilityDifferentSingularities}, that is
$$
\left\{
\begin{array}{l}
\text{either }\quad
u^b_{\alpha_b}(f^{(r)})>j_{\alpha^{(r)}_t}(f^{(r)})
\quad\text{ equivalently }\quad
\varepsilon(f^{(r)})=t
\\
\text{or }\quad
u^t_{\alpha^{(r)}_t}(f^{(r)})>u^b_{\alpha^{(r)}_b}(f^{(r)})
\quad\text{ equivalently }\quad
\varepsilon(f^{(r)})=b.
\end{array}
\right.
$$
Then the interval $I^{(r+1)}\subset I^{(r)}$ and the map 
$
f^{(r+1)}=\widetilde{\cQ}(f^{(r)})\in\cC(\pi^{(r+1)},I^{(r+1)})
$ 
are defined inductively as in \S~\ref{SectionRauzyInductionGIETs}, that is
\begin{eqnarray*}
&&
\inf I^{(r+1)}=\inf I^{(r)}
\quad
\textrm{ and }
\quad
\sup I^{(r+1)}:=
\max\{u^t_{\alpha^{(r)}_t}(f^{(r)}),u^b_{\alpha^{(r)}_b}(f^{(r)})\}
\\
&&
\pi^{(r+1)}:=R^{\varepsilon(r)}(\pi^{(r)})
\quad
\textrm{ where }
\quad
\varepsilon(r)=\varepsilon(f^{(r)}).
\end{eqnarray*}
The operation above is encoded by the arrow 
$
\gamma_{r+1}:\pi^{(r)}\to\pi^{(r+1)}
$ 
in the Rauzy diagram corresponding to the data 
$
(\pi^{(r)},\varepsilon(f^{(r)}))
$.

\begin{definition}
\label{DefinitionRauzyPathOfGIET}
Consider $f\in\cC(\pi,I)$.
\begin{enumerate}
\item
The map $f$ is said \emph{infinitely renormalizable} if $f^{(r)}$ satisfies 
Condition~\eqref{EquationConditionRenormalizabilityDifferentSingularities} for any $r\in\NN$ and moreover, for any $\alpha\in\cA$, there exists infinitely many $r$ such that $\alpha$ is the winner of the arrow $\gamma_r$.
\item
The \emph{Rauzy path} 
$
\gamma(f,r):\pi^{(0)}\to\pi^{(r)}
$ 
of $f$ up to time $r$ is the path in the Rauzy diagram of $\cR$ obtained by concatenation of the arrows above, that is
$$
\gamma(f,r):=
\gamma_1\ast\dots\ast\gamma_r.
$$
\item
The infinite Rauzy path of $f$ is the infinite concatenation 
$
\gamma(f,+\infty)=\gamma_1\ast\gamma_2\ast\dots
$. 
Since any letter $\alpha$ wins infinitely many times in $\gamma(f,+\infty)$, we say that such path is \emph{infinite complete}. 
\end{enumerate}
\end{definition}

Condition~\eqref{EquationConditionRenormalizabilityDifferentSingularities} implies that the orbits of the singularities $u^{t/b}_\alpha(f)$ of $f$ never meet its flat intervals, so that the latter are never cut during the Rauzy induction. If $T$ is a standard IET, then $T^{(r)}=\widetilde{\cQ}^r(T)$ satisfies 
Condition~\eqref{EquationConditionRenormalizabilityDifferentSingularities} for any $r\in\NN$ if and only if $T$ has the \emph{Keane property}, that is, for any $\alpha$ and $\beta$ and any $n\in\NN$ we have
$$ 
T^n(u^b_\beta)\not=u^t_\alpha.
$$ 
Moreover $T$ satisfies this last condition if and only if the infinite Rauzy path 
$\gamma(T,\infty)$, which a priori is just the infinite concatenation of compatible Rauzy arrows 
$
\gamma_1,\gamma_2,\dots
$, 
is also infinite complete, that is any letter $\alpha$ is the winner of infinitely many arrows in the infinite path (see \cite{Yoccoz}). Nevertheless, it is easy to construct examples of maps in 
$\cC\big(\pi,[0,1)\big)$ such that 
Condition~\eqref{EquationConditionRenormalizabilityDifferentSingularities} is always satisfied, but infinite completeness is false. For this reason, when seeking for a semiconjugaition to a standard IET, infinite completeness must be added as an extra assumption, as in Point (1) of 
Definition~\ref{DefinitionRauzyPathOfGIET}. According to 
Proposition~\ref{PropositionSemiconjugation} below, infinite Rauzy paths are the unique topological invariant of semi-conjugacy classes.

\begin{proposition}
[Poincar\'e, Yoccoz]
\label{PropositionSemiconjugation}
Let $T=T(\pi,\lambda)$ be a Keane IET determined by combinatorial-length data $(\pi,\lambda)$ and consider $f\in\cC(\pi,I)$, where 
$
I:=\big[0,\sum_{\chi\in\cA}\lambda_\chi\big)
$. 
Then we have 
$
\gamma(f,\infty)=\gamma(T,\infty)
$, 
that is $f$ and $T$ have the same Rauzy path, if and only if $f$ is semi-conjugated to $T$.\end{proposition}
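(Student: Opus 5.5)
We prove the two implications separately, using the nested intervals produced by the Rauzy--Veech algorithm for $f$ and for $T$.

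\emph{Semi-conjugacy implies equal paths.} Assume $h\circ f=T\circ h$ with $h$ continuous, surjective and non-decreasing. Since $T$ is Keane, its orbits are infinite and its singular orbits are disjoint, and $h$ is monotone. So $h$ sends each continuity interval $I^t_\alpha(f)$ onto $I^t_\alpha(T)$, up to endpoints, and likewise each $I^b_\alpha(f)$ onto $I^b_\alpha(T)$. Hence $h(u^{t}_\alpha(f))=u^t_\alpha(T)$ and $h(u^b_\alpha(f))=u^b_\alpha(T)$.
\begin{itemize}
\item Monotonicity of $h$ then shows that the comparison deciding $\varepsilon(f)$ agrees with the one deciding $\varepsilon(T)$. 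Equality in Condition~\eqref{EquationConditionRenormalizabilityDifferentSingularities} for $f$ would force $\lambda_{\alpha_t}=\lambda_{\alpha_b}$, which is excluded by the Keane property.
\item Next, $h$ maps $\widetilde I(f)$ onto $\widetilde I(T)$. Since first return maps are intertwined by $h$, the restriction of $h$ semi-conjugates $\widetilde{\cQ}(f)$ to $\widetilde{\cQ}(T)$.
\item Induction on $r$ gives $\gamma(f,r)=\gamma(T,r)$ for every $r$.
\item Infinite completeness of $\gamma(f,\infty)$ follows because it equals $\gamma(T,\infty)$, which is complete for Keane $T$ (Yoccoz).
\end{itemize}

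\emph{Equal paths imply semi-conjugacy.} This is the Poincar\'e-type construction. For each $r$ and $\alpha$ consider the dynamical partition of $I$ into towers over $I^{(r)}_\alpha(f)$, of height given by the column sums of $B_{\gamma(f,r)}$. Equal paths give the same matrices $B_\gamma$ for $f$ and $T$. Hence the partitions $\cP_r(f)$ and $\cP_r(T)$ have the same number of atoms, arranged in the same order on the line, and $f$ and $T$ act on them by the same combinatorial rule.

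Define $h$ on the endpoints of the atoms, for all $r$, by sending the $k$-th endpoint of $\cP_r(f)$ to the $k$-th endpoint of $\cP_r(T)$. This is consistent across $r$ because the partitions refine compatibly. It is non-decreasing, and it commutes with the dynamics on these points. The atoms of $\cP_r(T)$ have mesh tending to $0$ since $T$ is Keane: completeness makes $B_\gamma$ eventually positive, so the heights grow. Therefore the image of this set of endpoints is dense in $[0,\sum\lambda_\chi)$.

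We then extend $h$ by monotonicity, setting $h(x)=\sup\{h(y): y\le x,\ y \text{ an endpoint}\}$. Density of the image gives continuity and surjectivity. Continuity also makes $h$ constant on every limit of nested atoms of $f$ of positive length, which covers wandering intervals and flat intervals $L_\alpha$, because the flat pieces are never cut. Finally, $h\circ f=T\circ h$ holds on the dense set of endpoints and extends everywhere by continuity of the branches and one-sided limits on right-open intervals.

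The main obstacle is well-definedness and continuity at the points where $f$ is not injective, namely the flat intervals. For the first implication, one also has to check that $h$ genuinely preserves the renormalizability comparison in the form $j_{\alpha_t}(f)<u^b_{\alpha_b}(f)$ rather than $u^t_{\alpha_t}(f)<u^b_{\alpha_b}(f)$. This uses that $h$ collapses $L_{\alpha_t}$ to the single point $u^t_{\alpha_t}(T)$, so both $u^t_{\alpha_t}(f)$ and $j_{\alpha_t}(f)$ are sent there.
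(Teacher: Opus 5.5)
The paper gives no argument of its own here. It cites Proposition~3.4 of \cite{MarchesePalmisano} (homeomorphic branches) and calls the extension straightforward. Your sketch takes the natural route, but two steps fail as written, and at exactly those points the statement itself needs an extra hypothesis.

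\emph{First implication.} Everything rests on $h(u^t_\alpha(f))=u^t_\alpha(T)$, i.e.\ $h(I^t_\alpha(f))\subset I^t_\alpha(T)$. The reasons you give (Keane, disjoint singular orbits, monotonicity) do not imply it; what does is discontinuity.
Take $c=u^t_\alpha(T)$ with $\pi_t(\alpha)\geq2$ and $x_0:=\min h^{-1}(c)$. Letting $y\uparrow x_0$ gives $T(h(y))\to T(c^-)$, while $h(f(y))\to T(c)$ if $f$ is continuous at $x_0$. So when $T(c^-)\neq T(c)$, the point $x_0$ must be a singular point of $f$. Since $c\mapsto x_0$ is injective and order preserving between two sets of $d-1$ points, $x_0=u^t_\alpha(f)$.

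This requires every $u^t_\alpha(T)$ to be a genuine jump, i.e.\ no letter is immediately preceded by the same letter in both rows of $\pi$. Without it the implication is false. Take top row $A\,B\,C$ and bottom row $C\,A\,B$ (admissible, even cyclic). Then $T(\pi,\lambda)$ is the rotation by $\lambda_C$ and depends only on $\lambda_C$ and $\lambda_A+\lambda_B$. Choose generic $\lambda,\lambda'$ with $\lambda'_C=\lambda_C$, $\lambda'_A+\lambda'_B=\lambda_A+\lambda_B$ and $\lambda_B<\lambda_C<\lambda'_B$. The Keane IETs $f=T(\pi,\lambda')$ and $T=T(\pi,\lambda)$ are the same map, so $h=\mathrm{id}$ works, yet $\varepsilon(f)=b\neq t=\varepsilon(T)$.

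Under the non-degeneracy hypothesis your comparison step (with $h(L_{\alpha_t}(f))=\{u^t_{\alpha_t}(T)\}$) is right. However, the induction must carry the matchings $h(I^{t}_\alpha(f^{(r)}))\subset I^{t}_\alpha(T^{(r)})$ and $h(I^{b}_\alpha(f^{(r)}))\subset I^{b}_\alpha(T^{(r)})$ from step $r$ to $r+1$ through the explicit first-return formulas. You cannot re-derive them from the semi-conjugacy of the induced maps, because $\pi^{(r)}$ can be degenerate even when $\pi$ is not. The same matching gives $h^{-1}(\widetilde{I}(T))=\widetilde{I}(f)$, which you need for the return maps to be intertwined.

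\emph{Second implication.} The Poincar\'e construction is fine with one exception. Your extension $h(x)=\sup\{h(y)\}$ takes values in $\closure(I)$, and it equals $\sup I$ on a terminal gap $(x,\sup I)$ of $f$. Nothing rules such a gap out.

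For example, take a Denjoy circle homeomorphism and cut it open at the right endpoint of a wandering arc. This gives an element of $\cC(\pi,[0,1))$ for rows $A\,B$ and $B\,A$, with wandering interval $W=(x,1)$. It has the same Rauzy path as the rotation by its rotation number, because the circle semi-conjugacy normalized at $0$ preserves the circular order of the orbit of $0$.
Yet no semi-conjugacy $h:[0,1)\to[0,1)$ exists:
\begin{itemize}
\item $h$ must be constant on $W$, since otherwise the sets $T^n(h(W))=h(f^n(W))$ would be infinitely many almost disjoint sets of equal positive measure;
\item a non-decreasing surjection onto $[0,1)$ cannot be constant on a final segment.
\end{itemize}
So you must either show that the left endpoints of the atoms of $\cP(f,r)$ accumulate at $\sup I$, or allow $h$ to take values in $\closure(I)$, i.e.\ a circle-type semi-conjugacy.

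A smaller slip: these endpoints are not dense in $I$ when $f$ has wandering or flat intervals; only their $h$-images are. On each gap, use that $h$ is constant there and $f$ is continuous on its closure.
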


\begin{proof}
A proof is given by Proposition~3.4 in \cite{MarchesePalmisano} for the class of maps in Equation \eqref{EquationGIETsDefinedByHomeomorphisms}. For the larger class $\cC\big(\pi,[0,1)\big)$ considered in this paper the generalization is straightforward. 
\end{proof}

\subsection{Dynamically defined partitions}
\label{SectionDynamicalPartition}

Fix $\pi\in\cR$ and let 
$
\gamma=\gamma_1\ast\dots\ast\gamma_r
$ 
be a finite Rauzy path of length $r$. Let $B_\gamma\in\slgroup(d,\ZZ)$ be the matrix defined in \S~\ref{SectionRauzyClasses}. Denote by 
$
\vec{1}\in\NN^\cA
$ 
the vector with all entries equal to $1$, define the integer vector 
$
q^{(r)}:=B_\gamma\vec{1}\in\NN^\cA
$, 
and for $\alpha\in\cA$ let $q^{(r)}_\alpha$ be its $\alpha$-entry. Denoting by $[A]_{\alpha,\beta}$ the entry of a matrix $A$ in row $\alpha$ and column $\beta$, we have
$$
q^{(r)}_\alpha=
\sum_{\chi\in\cA}[B_\gamma]_{\alpha,\chi}.
$$
Fix $f\in\cC(\pi,I)$ and $r\in\NN$ such that  
$
f^{(k)}:I^{(k)}\to I^{(k)}
$ 
satisfies Condition~\eqref{EquationConditionRenormalizabilityDifferentSingularities} for any $k=0,\dots,r$. In particular, in our notation 
$
f^{(r)}\in\cC(\pi^{(r)},I^{(r)})
$. 
Let 
$
\cP_t(f^{(r)})=\{I^t_\alpha(f^{(r)})\}_{\alpha\in\cA}
$ 
be the partition of $I^{(r)}$ associated to $f^{(r)}$ as in Equation~\eqref{EquationDynamicalPartitionTimeZero}. 
Let $\gamma=\gamma(f,r)$ be the Rauzy path as in Definition~\ref{DefinitionRauzyPathOfGIET} and set 
$
q^{(r)}:=B_\gamma\vec{1}
$, 
which of course depends both on $f$ and on $r$. Define the intervals 

\begin{equation}
\label{EquationDynamicalPartitionHigherOrder}
I(f,r,\alpha,i):=f^i\big(I^t_\alpha(f^{(r)})\big)
\quad
\textrm{ for }
\quad
\alpha\in\cA
\quad,\quad
0\leq i\leq q^{(r)}_\alpha-1.
\end{equation}
Since $f^{(r)}:I^{(r)}\to I^{(r)}$ is the first return of $f$ to $I^{(r)}\subset I$, then it is easy to see by an induction argument that the right-open intervals defined above form a partition of $I$, that we denote by $\cP(f,r)$. Explicitly: 
\begin{equation}
\label{EquationDynamicalPartitionOrder(r)}
\cP(f,r):=
\left\{
I(f,r,\alpha,i)
\quad;\quad
\alpha\in\cA
\quad;\quad
0\leq i\leq q^{(r)}_\alpha-1
\right\}.
\end{equation}
The partition above is called the \emph{dynamical partition} of order $r$, and is a refinement of the partition $\cP_t(f)$ in Equation~\eqref{EquationDynamicalPartitionTimeZero}, which in our terminology corresponds to the dynamical partition of order $0$. The following Lemma is a classical fact. A proof can be found in \S~7.5 in \cite{Yoccoz} for maps in the class defined by Equation \eqref{EquationGIETsDefinedByHomeomorphisms} and it can be easily adapted to the class $\cC(\pi,I)$.

\begin{lemma}
\label{LemmaIntervalsPartitionAndMatrix}
The following holds.
\begin{enumerate}
\item
For any $\alpha\in\cA$ we have
$
f^{(r)}|_{I^t_\alpha(f,r)}=f^{q^{(r)}_\alpha}
$ 
and moreover
$$
q^{(r)}_\alpha=
\min
\left\{
i\geq1\quad;\quad f^i\big(I^t_\alpha(f^{(r)})\big)\subset I^{(r)}
\right\}.
$$
\item
For any $\alpha$ and $\beta$ we have
$$
[B_\gamma]_{\alpha,\beta}=
\sharp
\left\{
0\leq i\leq q^{(r)}_\alpha-1
\quad;\quad
I(f,r,\alpha,i)\subset I^t_\beta
\right\}.
$$
\end{enumerate}
\end{lemma}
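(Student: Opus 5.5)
We argue by induction on the length $r$ of the path $\gamma=\gamma(f,r)$, proving both points simultaneously. For $r=0$ we have $B_\gamma=\id$, $q^{(0)}_\alpha=1$ and $f^{(0)}=f$, so both statements are trivial: $I(f,0,\alpha,0)=I^t_\alpha(f)$ lies in $I^{(0)}=I$ and meets exactly $I^t_\alpha$. For the inductive step, write $\gamma(f,r+1)=\gamma(f,r)\ast\gamma_{r+1}$, so that $B_{\gamma(f,r+1)}=B_{\gamma_{r+1}}B_{\gamma(f,r)}$. Let $\alpha,\beta$ be the winner and loser of $\gamma_{r+1}$. Left multiplication by $B_{\gamma_{r+1}}$, whose only off-diagonal entry is the $(\beta,\alpha)$ entry equal to $1$, replaces row $\beta$ by the sum of rows $\beta$ and $\alpha$ and leaves the other rows unchanged. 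Hence $q^{(r+1)}_\beta=q^{(r)}_\beta+q^{(r)}_\alpha$, and $q^{(r+1)}_\chi=q^{(r)}_\chi$ for $\chi\neq\beta$.

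The core of the inductive step is to read off from the explicit formulas of \S~\ref{SectionRauzyInductionGIETs(Formulae)} which branches of $f^{(r+1)}$ are compositions. In both cases $\varepsilon=t$ and $\varepsilon=b$, the new map $f^{(r+1)}$ equals $f^{(r)}_{\alpha}\circ f^{(r)}_{\beta}$ on the new loser interval $I^t_\beta(f^{(r+1)})$, and equals $f^{(r)}$ on every other $I^t_\chi(f^{(r+1)})\subset I^t_\chi(f^{(r)})$. Concretely, for $\varepsilon=t$ we have $\beta=\alpha_b$ and $I^t_{\alpha_b}$ is unchanged; for $\varepsilon=b$ we have $\beta=\alpha_t$ and the new interval is $f_{\alpha_b}^{-1}(I^t_{\alpha_t})$ (the labels swap roles according to the winner/loser convention, and we check this bookkeeping in each case). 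Here Condition~\eqref{EquationConditionRenormalizabilityDifferentSingularities} is what we use: it guarantees that $f^{(r)}_\beta\big(I^t_\beta(f^{(r+1)})\big)\subset J_\alpha\subset I^t_\alpha(f^{(r)})$ when needed, so that no flat interval is cut. Substituting the inductive hypothesis $f^{(r)}|_{I^t_\chi(f^{(r)})}=f^{q^{(r)}_\chi}$ then gives $f^{(r+1)}=f^{q^{(r)}_\beta+q^{(r)}_\alpha}$ on the loser interval and $f^{q^{(r)}_\chi}$ elsewhere, which proves the first claim of Point (1).

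The minimality statement in Point (1) follows because $f^{(r+1)}$ is the first return map of $f$ to $I^{(r+1)}$. This holds since it is the first return of $f^{(r)}$ to $I^{(r+1)}$, and $f^{(r)}$ is itself the first return of $f$ to $I^{(r)}\supset I^{(r+1)}$. The return time is constant on each interval $I^t_\chi(f^{(r+1)})$: the intermediate images lie inside single continuity intervals of $f^{(r)}$ and avoid $I^{(r+1)}$, the latter because the first iterate of the loser interval lands in $I^{(r)}\setminus I^{(r+1)}$. For Point (2), the orbit segment of $I^t_\beta(f^{(r+1)})$ of length $q^{(r+1)}_\beta$ consists of the first $q^{(r)}_\beta$ iterates, which follow the orbit of $I^t_\beta(f^{(r)})$, followed by $q^{(r)}_\alpha$ iterates, which follow the orbit of $I^t_\alpha(f^{(r)})$. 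Counting visits to $I^t_\beta$ gives row $\beta$ plus row $\alpha$ of $B_{\gamma(f,r)}$, which is exactly the row recursion above. Here we need that each $f^i\big(I^t_\chi(f^{(r)})\big)$ lies inside a single $I^t_\beta(f)$. This holds because $f^{(r)}$ is continuous on $I^t_\chi(f^{(r)})$, so the intermediate iterates cannot cross a singular point.

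The same argument also shows that the sets $I(f,r,\alpha,i)$ partition $I$. Subintervals of $I^{(r)}$ are disjoint, and so are their images before returning, and together they cover $I$ by a Kac-type return argument. We expect the main obstacle to be the careful treatment of the non-injective branches in the case $\varepsilon=b$, where one must verify that images of right-open intervals under non-decreasing maps with flat pieces remain right-open intervals, and that the preimage $f_{\alpha_b}^{-1}(I^t_{\alpha_t})$ has the stated form.
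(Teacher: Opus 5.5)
Your induction on the length of $\gamma$ is correct, and it is the standard argument the paper defers to (\S 7.5 of Yoccoz's notes): left multiplication by $B_{\gamma_{r+1}}$ adds the winner's row to the loser's row, the new loser branch is a composition of one old branch of each of the two letters, and all other branches are restrictions of old ones. Two small repairs are needed: for $\varepsilon=b$ the new loser branch is $f^{(r)}_\beta\circ f^{(r)}_\alpha$ on $(f^{(r)}_\alpha)^{-1}\big(I^t_\beta(f^{(r)})\big)\subset J_\alpha(f^{(r)})$ (winner first), which swaps the order of the two orbit blocks but changes neither the counts nor the exponent; and the fact that each atom $I(f,r,\alpha,i)$ lies in a single $I^t_\delta(f)$ should be carried in the inductive hypothesis (new atoms are subsets of old atoms) rather than deduced from continuity of $f^{(r)}$, since continuity cannot detect a singular point at which $f$ happens to be continuous.
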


Letting $f$ vary in $\cC(\pi,I)$ we obtain different partitions of $I$ of order $r$.

\begin{definition}
\label{DefinitionEquivalenceDynamicalPartitions}
For $f,f'\in\cC(\pi,I)$ we say that two dynamical partitions 
$\cP(f,r)$ and $\cP(f',r)$ are \emph{combinatorially equivalent}, and we write 
$
\cP(f,r)\sim\cP(f',r)
$, 
if the intervals $I(f,r,\alpha,i)$ in $\cP(f,r)$ are as in Equation~\eqref{EquationDynamicalPartitionHigherOrder}, moreover the intervals $I(f',r,\alpha,i)$ in $\cP(f',r)$ are labeled by the same indices $\alpha$ and $i$ as for $f$ and we have 
$$
I(f',r,\alpha,i)=
\varphi\big(I(f,r,\alpha,i)\big)
\quad
\textrm{ for any }
\quad
\alpha\in\cA
\quad,\quad
0\leq i\leq q^{(r)}_\alpha-1,
$$
where $\varphi:I\to I$ is an increasing homeomorphism.  In other words the intervals in the two partitions have the same labels in the same order.
\end{definition}

Considering the inverse and the composition of increasing homeomorphisms of $I$, it is easy to see that the relation above is an equivalence relation.

\begin{proposition}
\label{PropositionPartitionDeterminesRauzyPath}
Let $f$ and $f'$ be two elements of $\cC(\pi,I)$ and fix $r\in\NN$. Then we have the equivalence 
$$
\cP(f,r)\sim\cP(f',r)
\quad
\Leftrightarrow
\quad
\gamma(f,r)=\gamma(f',r).
$$
\end{proposition}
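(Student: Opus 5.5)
We argue by induction on $r$, proving both implications at once. For $r=0$ the statement is immediate: $\gamma(f,0)=\gamma(f',0)=\pi$, and $\cP(f,0)=\cP_t(f)$, $\cP(f',0)=\cP_t(f')$ both consist of the intervals $I^t_\alpha$ ordered according to $\pi_t$. Hence they are combinatorially equivalent, via a piecewise affine increasing homeomorphism matching the endpoints $u^t_\alpha$.

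For the inductive step, assume the equivalence holds at order $r$ and suppose $\gamma(f,r)=\gamma(f',r)=:\gamma$, ending at $\pi^{(r)}$. We show that the arrow $\gamma_{r+1}$ is determined by the ordering of the atoms of $\cP(f,r+1)$, and conversely. The key observation is this. Let $\alpha_t,\alpha_b$ be the last letters of $\pi^{(r)}$. The atom $I^t_{\alpha_t}(f^{(r)})$ of $\cP(f,r)$ is the rightmost atom of $\cP(f,r)$ contained in $I^{(r)}$. The image $f^{(r)}(I^t_{\alpha_b}(f^{(r)}))=f^{q_{\alpha_b}}(I^t_{\alpha_b}(f^{(r)}))$ is the rightmost part of $I^{(r)}$ in the bottom partition.

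If $\varepsilon(f^{(r)})=t$, then $I^{(r+1)}$ is cut inside $I^t_{\alpha_t}(f^{(r)})$ at the point $u^b_{\alpha_b}(f^{(r)})$. The new atom $I^t_{\alpha_t}(f^{(r+1)})$ is the left piece, and the tower over $\alpha_b$ gets lengthened by the tower over $\alpha_t$, which follows from Lemma~\ref{LemmaIntervalsPartitionAndMatrix}. In $\cP(f,r+1)$ this appears as follows: the atom $I(f,r,\alpha_t,0)$ is split into two atoms, labelled $(\alpha_t,0)$ on the left and $(\alpha_b,q^{(r)}_{\alpha_b})$ on the right. The images under $f^i$ of these pieces then subdivide each level $I(f,r,\alpha_t,i)$ in the same left/right order, since the branches are non-decreasing and the flat intervals are not cut, by Condition~\eqref{EquationConditionRenormalizabilityDifferentSingularities}.

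If instead $\varepsilon(f^{(r)})=b$, the roles are exchanged. The level $I(f,r,\alpha_b,0)$ is split into $(\alpha_b,0)$ on the left and $(\alpha_t,q^{(r)}_{\alpha_t})$ on the right, and the subdivision propagates up the $\alpha_b$-tower.

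Thus the labelled ordered partition $\cP(f,r+1)$ is obtained from $\cP(f,r)$ by a refinement rule depending only on $\gamma$ and $\varepsilon(f^{(r)})$. Moreover the two rules produce different labelled partitions: in the first the letter $\alpha_t$ has $q^{(r+1)}_{\alpha_t}=q^{(r)}_{\alpha_t}$, while in the second $q^{(r+1)}_{\alpha_t}>q^{(r)}_{\alpha_t}$. It follows that equal paths give equivalent partitions. For the converse, suppose $\cP(f,r+1)\sim\cP(f',r+1)$. The labels, meaning the multiset of pairs $(\alpha,i)$, coincide, and so the vectors $q^{(r+1)}$ agree. Coarsening by forgetting the last refinement shows $\cP(f,r)\sim\cP(f',r)$: atoms of order $r$ are unions of atoms of order $r+1$ with consecutive labels, and this union is recovered from the labels since the new labels $(\alpha_b,q^{(r)}_{\alpha_b}+j)$ are exactly those attached to $(\alpha_t,j)$. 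By induction $\gamma(f,r)=\gamma(f',r)$, and the $(r+1)$-th arrows agree because $q^{(r+1)}$ detects $\varepsilon$. Strictly, the descending direction is cleaner if we read off $\gamma$ directly from the labels: $q^{(k)}$ for each $k\le r+1$ is recovered by truncating towers, and consecutive $q^{(k)}$ determine the winners.

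The construction of $\varphi$ is routine. Two ordered partitions of $I$ into right-open intervals with the same labels in the same order are related by the piecewise affine increasing homeomorphism matching corresponding endpoints.

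The main obstacle is justifying the propagation claim in the presence of flat pieces. We must show that the levels $f^i(I^t_{\alpha}(f^{(r+1)}))$ are genuine right-open intervals with nonempty interior, and that the splitting point of a level maps to the splitting point of the next level rather than being swallowed by a flat interval. For this we use Condition~\eqref{EquationConditionRenormalizabilityDifferentSingularities} at each step. In the case $\varepsilon=t$ it gives $u^b_{\alpha_b}>j_{\alpha_t}$, so the cut lies in the homeomorphic part $J_{\alpha_t}$. Together with the remark that the orbits of the singularities avoid the flat intervals, this ensures each split is preserved under the branches $f_\alpha$. We would first verify this carefully at order $r+1$ using the explicit formulae for $\widetilde{\cQ}_{top}$ and $\widetilde{\cQ}_{bot}$.
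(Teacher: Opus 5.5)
Your forward implication follows the same route as the paper. The paper writes the atoms of $\cP(f,r+1)$ as images, along the towers of order $r$, of the atoms of $\cP(f^{(r)},1)$, using its explicit $r=1$ computation; this is your refinement rule.

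\textbf{The converse is not proved.} Your coarsening undoes the last refinement using $\pi^{(r)}$, $q^{(r)}$ and $\varepsilon(f^{(r)})$, that is, using $\gamma(f,r+1)$. Applied to $f'$ it uses $\gamma(f',r+1)$. So deducing $\cP(f,r)\sim\cP(f',r)$ this way presupposes the conclusion.

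Your fallback, reading $\gamma$ off the labels, fails. The label set only records $q^{(r+1)}$, and $q^{(r+1)}$ does not determine the path. Take the Rauzy class of the datum with top row $ABC$ and bottom row $CBA$:
\begin{itemize}
\item the path $t$ then $b$ (winners $C$, $B$) has $q^{(1)}=(2,1,1)$ and $q^{(2)}=(2,1,2)$;
\item the path $b$ then $t$ (winners $A$, $B$) has $q^{(1)}=(1,1,2)$ and $q^{(2)}=(2,1,2)$.
\end{itemize}
For IETs following these paths, the atoms read left to right are $(A,0),(B,0),(C,0),(C,1),(A,1)$ and $(A,0),(C,0),(B,0),(A,1),(C,1)$ respectively. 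The two partitions have identical label sets and differ only in their order. ``Truncating towers'' needs exactly the information you are trying to recover.

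What is missing is an argument that extracts the arrows from the left-to-right order of the atoms, equivalently the order-$r$ partition together with the last arrow. In the paper this is the backward implication of the last equivalence in its chain:
$\cP(f,r+1)\sim\cP(f',r+1)\Rightarrow \cP(f,r)\sim\cP(f',r)$ and $\cP(f^{(r)},1)\sim\cP((f')^{(r)},1)$.
Only the base case $r=1$ is settled there by label sets, since the two possible order-one partitions have different labels. For $r\ge 2$ the paper asserts that implication without writing out the decoding. Your proof has to supply it; $q^{(r+1)}$ alone will not do.

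\textbf{Your case $\varepsilon(f^{(r)})=b$ is mislabelled.} The right piece of $I(f,r,\alpha_b,0)$ is the new base $I^t_{\alpha_t}(f^{(r+1)})=(f^{(r)})^{-1}\big(I^t_{\alpha_t}(f^{(r)})\big)$, with label $(\alpha_t,0)$, not $(\alpha_t,q^{(r)}_{\alpha_t})$. Its images up the $\alpha_b$-tower are $(\alpha_t,j)$ for $0\le j\le q^{(r)}_{\alpha_b}-1$. The old $\alpha_t$-tower is relabelled $(\alpha_t,j)\mapsto(\alpha_t,q^{(r)}_{\alpha_b}+j)$. This matches the paper's formulas $I(f,1,\alpha_t,0)=f^{-1}(I^t_{\alpha_t}(f))$ and $I(f,1,\alpha_t,1)=I^t_{\alpha_t}(f)$.

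The two cases are not mirror images: in case $t$ the loser's tower grows at the top, while in case $b$ it grows at the bottom, which shifts its indices. This does not harm the forward implication, since the corrected rule still depends only on $\gamma(f,r)$ and $\varepsilon(f^{(r)})$. But any decoding argument for the converse must use the correct rule.
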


\begin{proof}
We prove the Proposition by induction. For $r=0$ the statement is trivial. Consider $r=1$ and recall \S~\ref{SectionRauzyClasses} and \S~\ref{SectionRauzyInductionGIETs}. We have 
$
\gamma(f,1)=\gamma(f',1)
$ 
iff 
$
\epsilon(f)=\epsilon(f')
$. 
If $\epsilon=t$ then $q^{(1)}_\chi=2$ for $\chi=\alpha_b$ and $q^{(1)}_\chi=1$ for any 
$\chi\not=\alpha_b$, moreover 
\begin{eqnarray*}
&&
I(f,1,\chi,0)=I^t_\chi(f^{(1)})=I^t_\chi(f)
\quad
\textrm{ for }
\quad
\chi\not=\alpha_t,\alpha_b
\\
&&
I(f,1,\alpha_t,0)=I^t_{\alpha_t}(f^{(1)})
=
I^t_{\alpha_t}(f)\setminus f\big(I^t_{\alpha_b}(f)\big)
\\
&&
I(f,1,\alpha_b,1)=f\big(I^t_{\alpha_b}(f)\big),
\end{eqnarray*}
and the same holds for $f'$, therefore $\cP(f,1)\sim\cP(f',1)$. On the other hand, if $\epsilon=b$ then $q^{(1)}_\chi=2$ for $\chi=\alpha_t$ and $q^{(1)}_\chi=1$ for any 
$\chi\not=\alpha_t$, moreover 
\begin{eqnarray*}
&&
I(f,1,\chi,0)=I^t_\chi(f^{(1)})=I^t_\chi(f)
\quad
\textrm{ for }
\quad
\chi\not=\alpha_t,\alpha_b
\\
&&
I(f,1,\alpha_b,0)=I^t_{\alpha_b}(f^{(1)})=
I^t_{\alpha_b}(f)\setminus f^{-1}\big(I^t_{\alpha_t}(f)\big)
\\
&&
I(f,1,\alpha_t,0)=I^t_{\alpha_t}(f^{(1)})=f^{-1}\big(I^t_{\alpha_t}(f)\big)
\\
&&
I(f,1,\alpha_t,1)=f\big(I^t_{\alpha_t}(f^{(1)})\big)=I^t_{\alpha_t}(f),
\end{eqnarray*}
and the same holds for $f'$, therefore again $\cP(f,1)\sim\cP(f',1)$. The argument above proves the implication 
$
\gamma(f,1)=\gamma(f',1)\Rightarrow\cP(f,1)\sim\cP(f',1)
$, 
but since there are only two equivalence classes of dynamical partition of order $r=1$, then the opposite implication also holds. Finally, fix $r\in\NN$ and assume that the equivalence in the statement is proved up to $r$ for any $\pi'\in\cR$, any interval $J$ and any $f,f'\in\cC(\pi',J)$. Fix 
$f,f'\in\cC(\pi,I)$. Observe preliminarily that for a map $f:I\to I$ the intervals in the partition 
$\cP(f,r+1)$ of $I$ are the images under $f$ of the intervals in the partition $\cP(f^{(r)},1)$ of $I^{(r)}$. Then the Proposition follows because the inductive assumption, together with the argument for $r=1$, imply the equivalences
\begin{align*}
\gamma(f,r+1)=\gamma(f',r+1)
&
\Leftrightarrow
\left\{
\begin{array}{l}
\gamma(f,r)=\gamma(f',r)
\\
\epsilon(f^{(r)})=\epsilon\big((f')^{(r)}\big)
\end{array}
\right.
\\
&
\Leftrightarrow
\left\{
\begin{array}{l}
\cP(f,r)\sim\cP(f',r)
\\
\cP(f^{(r)},1)\sim\cP((f')^{(r)},1)
\end{array}
\right.
&
\Leftrightarrow
\cP(f,r+1)\sim\cP(f',r+1).
\end{align*}
\end{proof}

\subsection{Continuity of the Rauzy-Veech map}
\label{SectionContinuityRauzyMap}

Given $f,f'\in\cC\big(\pi,[0,1)\big)$ and $\chi\in\cA$ write for simplicity
$$
\distance(f_\chi,f'_\chi):=\distance\big(\overline{\text{Graph}(f_\chi)},\overline{\text{Graph}(f'_\chi)}\big).
$$

\begin{proposition}
\label{PropositionContinuityRenormalizationHaudorffdistance}
The maps $\cQ_{top}$ and $\cQ_{bot}$ in 
Equation~\eqref{EquationNormalizedBrancesRauzyMap} are continuous with respect to the distance in 
Equation \eqref{EquationDinstanceGIETs}, that is for any $\chi\in\cA$ the following holds.
\begin{enumerate}
\item
The branch $\widetilde{f}_\chi$ of $\widetilde{f}:=\cQ_{top}(f)$ depends continuously on $f$. The same holds for 
$\widetilde{f}:=\cQ_{bot}(f)$. 
\item
We have continuous maps 
$f\mapsto j_\chi\big(\cQ_{top}(f)\big)$ and $f\mapsto j_\chi\big(\cQ_{bot}(f)\big)$. 
\end{enumerate}
\end{proposition}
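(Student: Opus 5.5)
We treat $\cQ_{top}$; the case of $\cQ_{bot}$ is symmetric, exchanging the roles of $f_{\alpha_b}$ and $f_{\alpha_t}$ and replacing images by preimages under $f_{\alpha_b}$. Fix $f\in\cC_{top}\big(\pi,[0,1)\big)$ and a sequence $f_n\to f$ in the distance of Equation~\eqref{EquationDinstanceGIETs}. The approach is to prove separately that the unnormalized first return $\widetilde{f}_n=\widetilde{\cQ}(f_n)$ converges branchwise to $\widetilde{f}$, and that the affine rescaling $\psi_{f_n}$ converges to $\psi_f$. The normalized statement then follows, because conjugating graphs by affine maps whose coefficients converge is continuous for the Hausdorff distance, with a bound given by the estimates of \S~\ref{SectionEstimatesHausdorffDistance}.

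\textbf{Step 1: convergence of the combinatorial data.} Hausdorff convergence of the closed graphs $\overline{G_{(f_n)_\chi}}\to\overline{G_{f_\chi}}$ implies convergence of the endpoints of $I^t_\chi$ and $I^b_\chi$. Hence $u^t_\chi(f_n)\to u^t_\chi(f)$ and $u^b_\chi(f_n)\to u^b_\chi(f)$. By definition of the distance we also have $j_\chi(f_n)\to j_\chi(f)$. Since the condition $j_{\alpha_t}(f)<u^b_{\alpha_b}(f)$ defining $\cC_{top}$ is open, $f_n\in\cC_{top}$ for large $n$. Moreover $\sup\widetilde{I}(f_n)=u^b_{\alpha_b}(f_n)\to u^b_{\alpha_b}(f)$, so $\psi_{f_n}\to\psi_f$ uniformly.

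\textbf{Step 2: convergence of the branches.} For $\chi\neq\alpha_t,\alpha_b$ the branch is unchanged, and there is nothing to prove. For $\chi=\alpha_t$, $\widetilde{f}_{\alpha_t}$ is the restriction of $f_{\alpha_t}$ to $[u^t_{\alpha_t},f_{\alpha_t}^{-1}(u^b_{\alpha_b}))$, which is a sub-arc of the graph cut at height $u^b_{\alpha_b}$. Because $u^b_{\alpha_b}>j_{\alpha_t}$, this height lies strictly inside the strictly increasing part $J_{\alpha_t}$. Cutting the graph there is Hausdorff continuous, using the uniform modulus of continuity \eqref{EquationUniformContinuityModulus} for the inverse branch. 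The new flat interval is unchanged, so $j_{\alpha_t}$ is continuous in this case.

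The main obstacle is $\chi=\alpha_b$, where $\widetilde{f}_{\alpha_b}=f_{\alpha_t}\circ f_{\alpha_b}$. Here we show that composition is continuous in the graph distance: if the graphs of $g_n$ and $h_n$ converge to those of $g$ and $h$, then the graphs of $g_n\circ h_n$ converge to that of $g\circ h$. We use the fact that all branches are non-decreasing and continuous, parametrize each closed graph as a monotone arc, and apply the uniform modulus $\omega$. The delicate point is flat pieces, where Hausdorff closeness does not give pointwise closeness in the vertical direction; monotonicity of both factors still lets us match points of the composed graphs up to an error of order $\omega(\cdot)$. The new flat endpoint is $j_{\alpha_b}(\widetilde{f})=\sup\{x:f_{\alpha_t}(f_{\alpha_b}(x))=f_{\alpha_t}(u^b_{\alpha_b})\}$. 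This equals $j_{\alpha_b}(f)$ when $u^b_{\alpha_b}>j_{\alpha_t}$, since then $f_{\alpha_t}$ is injective on $I^b_{\alpha_b}$. So $j_{\alpha_b}$ also passes to the limit, and it is precisely the strict inequality in Condition~\eqref{EquationConditionRenormalizabilityDifferentSingularities} that gives continuity.
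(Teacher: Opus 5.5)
\textbf{The gap is your claim that $\cQ_{bot}$ is symmetric to $\cQ_{top}$.} It is not. The flat pieces sit at the left end of the top intervals, and the inverse of a map in $\cC(\pi,I)$ is not a map of the same type (its branches land in $J_\alpha\subsetneq I^t_\alpha$). So the usual exchange $t\leftrightarrow b$, $f\leftrightarrow f^{-1}$ is unavailable, and the two cases have different structure.

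In the top case everything is forward: a restriction, a composition, and all flat endpoints unchanged. In the bottom case, write $\alpha=\alpha_t$ and $\beta=\alpha_b$.
\begin{itemize}
\item The new $\beta$-branch is $f_\beta$ restricted to $[u^t_\beta,f_\beta^{-1}(u^t_\alpha))$, so the cut point is a \emph{preimage}.
\item The new $\alpha$-branch $f_\alpha\circ f_\beta$ has flat endpoint $j_\alpha(\widetilde f)=f_\beta^{-1}(j_\alpha(f))$, which moves.
\end{itemize}
This is the only nontrivial instance of Point (2), and your proposal does not treat it. You must prove that $f\mapsto f_\beta^{-1}(y(f))$ is continuous for $y(f)=u^t_\alpha(f)$ and for $y(f)=j_\alpha(f)$. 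This does not follow from graph convergence of the branches $f_\beta$. Indeed $j_\beta$ is not lower semicontinuous for the graph distance: the flat piece of $f_n$ may be much shorter than that of $f$ while the graphs stay close. Hence inverse branches need not converge on all of $I^b_\beta$.

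\textbf{How to close it.} The paper uses Corollary~\ref{CorollaryHausdorffDistanceInverse}, which is exactly where the term $|j_\alpha(f)-j_\alpha(f')|$ in the distance is needed. It combines this with Equation~\eqref{EquationHausdorffDistanceBoundsDifference} applied to $f_\beta^{-1}$. Alternatively, use the strict inequality defining the bottom case: $j_\alpha(f)\geq u^t_\alpha(f)>u^b_\beta(f)$. Both levels then lie strictly above the flat value of $f_\beta$ and have a unique preimage on its closed graph. Any limit point of $(f_n)_\beta^{-1}(y(f_n))$ lies on that graph at height $y(f)$, so it must be that preimage.

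\textbf{The difficulty you flag for flat pieces is misplaced.} It belongs here, not in the top-case composition. For forward branches, applying Equation~\eqref{EquationHausdorffDistanceBoundsDifference} twice, with the modulus of continuity of the fixed limit $f$, already suffices. Note also that Equation~\eqref{EquationUniformContinuityModulus} is a property of a full family, not of an arbitrary convergent sequence in $\cC(\pi,[0,1))$.

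\textbf{A slip in the top case.} $\widetilde f_{\alpha_t}$ is $f_{\alpha_t}$ restricted to $[u^t_{\alpha_t},u^b_{\alpha_b})$, that is, a cut at the abscissa $u^b_{\alpha_b}$, not at a height. Your expression $f_{\alpha_t}^{-1}(u^b_{\alpha_b})$ is undefined, since $u^b_{\alpha_b}\notin I^b_{\alpha_t}$. No inverse branch is needed there.
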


\begin{proof}
The position of the rightmost singularity depends continuously on $f$ in $\cC\big(\pi,[0,1)\big)$, and therefore also the rescaling map in 
Equation~\eqref{EquationRenormalizationFirstReturn}. Hence it is enough to prove the continuity of  
$\widetilde{\cQ}_{top}$ and $\widetilde{\cQ}_{bot}$. Let $\alpha$ and $\beta$ be the letters with $\pi_t(\alpha)=\pi_b(\beta)=d$. 

\smallskip

Part (I). For $f,f'\in\cC_{top}\big(\pi,[0,1)\big)$ set  
$
\widetilde{f}:=\widetilde{\cQ}_{top}(f)
$ 
and 
$
\widetilde{f}':=\widetilde{\cQ}_{top}(f')
$. 
We fix $f$ and for any $\chi\in\cA$ we prove 
$
\distance(\widetilde{f}_\chi,\widetilde{f}'_\chi)\to0
$ 
when $\distance(f,f')\to 0$.

\emph{Letters $\chi\not=\alpha,\beta$.} Any such $\chi$ neither wins nor looses, thus we have 
$$
\distance(\widetilde{f}_\chi,\widetilde{f}'_\chi)
=
\distance(f_\chi,f'_\chi)
\leq
\distance(f,f')\to0.
$$

\emph{Letter $\chi=\alpha$.}  Parametrize the closure of $I^t_\alpha(\widetilde{f}')$ by
$$
t\mapsto x(f',\alpha,t):=(1-t)u^t_\alpha(f')+tu^b_\beta(f')
\quad\text{with}\quad t\in[0,1].
$$
Then parametrize the closure of the graph of $\widetilde{f}'_\alpha$ by 
$$
t\mapsto \gamma(f',\alpha,t):=\Big(x(f',\alpha,t),f'_\alpha\big(x(f',\alpha,t)\big)\Big).
$$
For any $t$ we have 
$$
\big|x(f',\alpha,t)-x(f,\alpha,t)\big|
\leq
(1-t)\big|u^t_\alpha(f')-u^t_\alpha(f)\big|+t\big|u^b_\beta(f')-u^b_\beta(f)\big|
\leq
\distance(f,f'). 
$$
Therefore Equation \eqref{EquationHausdorffDistanceBoundsDifference} gives
\begin{align*}
&
\big\|\gamma(f',\alpha,t)-\gamma(f,\alpha,t)\big\|
\leq
\big|x(f',\alpha,t)-x(f,\alpha,t)\big|
+
\Big|f'_\alpha\big(x(f',\alpha,t)\big)-f_\alpha\big(x(f,\alpha,t)\big)\Big|
\\
&
\leq
\big|x(f',\alpha,t)-x(f,\alpha,t)\big|
+
\distance(f_\alpha,f'_\alpha)
+
\omega_{f_\alpha}\Big(\distance(f_\alpha,f'_\alpha)+\big|x(f',\alpha,t)-x(f,\alpha,t)\big|\Big)
\\
&
\leq
\distance(f,f')+\distance(f_\alpha,f'_\alpha)
+
\omega_{f_\alpha}\Big(\distance(f_\alpha,f'_\alpha)+\distance(f,f')\Big).
\end{align*}
It follows that
$$
\distance(\widetilde{f}_\alpha,\widetilde{f}'_\alpha)
\leq
\sup_{t\in[0,1]}\big\|\gamma(f',\alpha,t)-\gamma(f,\alpha,t)\big\|
\leq
2\distance(f,f')
+
\omega_{f_\alpha}\Big(2\distance(f,f')\Big)\to0.
$$

\emph{Letter $\chi=\beta$.} Parametrize the closure of 
$
I^t_\beta(\widetilde{f}')=I^t_\beta(f')
$ 
by 
$$
t\mapsto x(f',\beta,t):=(1-t)u^t_\beta(f')+tu^t_{\beta_\ast}(f')
\quad\text{with}\quad t\in[0,1],
$$
where $\pi^t(\beta_\ast)=\pi^t(\beta)+1$. Parametrize the closure of the graph of $\widetilde{f}'_\beta$ by 
$$
t\mapsto\gamma(f',\beta,t):=\Big(x(f',\beta,t),f'_\alpha\circ f'_\beta\big(x(f',\beta,t)\big)\Big).
$$
Arguing as for $\chi=\alpha$ and applying Equation \eqref{EquationHausdorffDistanceBoundsDifference} twice, for any $t$ we get
\begin{align*}
&
\big|
f'_\alpha\circ f'_\beta\big(x(f',\beta,t)\big)-f_\alpha\circ f_\beta\big(x(f,\beta,t)\big)
\big|
\leq
\\
&
\distance(f_\alpha,f'_\alpha)
+
\omega_{f_\alpha}
\Big(
\distance(f_\alpha,f'_\alpha)
+
\big|f'_\beta\big(x(f',\beta,t)\big)-f_\beta\big(x(f,\beta,t)\big)\big|
\Big)
\leq
\\
&
\distance(f_\alpha,f'_\alpha)
+
\omega_{f_\alpha}
\Big(
\distance(f_\alpha,f'_\alpha)
+
\distance(f_\beta,f'_\beta)
+
\omega_{f_\beta}
\big(
\distance(f_\beta,f'_\beta)
+
\big|x(f',\beta,t)-x(f,\beta,t)\big|
\big)
\Big).
\end{align*}
It follows that 
\begin{align*}
&
\distance(\widetilde{f}_\beta,\widetilde{f}'_\beta)
\leq
\sup_{t\in[0,1]}\big\|\gamma(f',\beta,t)-\gamma(f,\beta,t)\big\|
\leq
\\
&
\sup_{t\in[0,1]}
\big|x(f',\beta,t)\big)-x(f,\beta,t)\big|
+
\big|
f'_\alpha\circ f'_\beta\big(x(f',\beta,t)\big)-f_\alpha\circ f_\beta\big(x(f,\beta,t)\big)
\big|
\leq
\\
&
2\distance(f,f')
+
\omega_{f_\alpha}
\Big(
2\distance(f,f')+\omega_{f_\beta}\big(2\distance(f,f')\Big)\to0.
\end{align*}

\smallskip

Part (II). For $f,f'\in\cC_{bot}\big(\pi,[0,1)\big)$ set  
$
\widetilde{f}:=\widetilde{\cQ}_{bot}(f)
$ 
and 
$
\widetilde{f}':=\widetilde{\cQ}_{bot}(f')
$. 
We fix $f$ and for any $\chi\in\cA$ we prove 
$
\distance(\widetilde{f}_\chi,\widetilde{f}'_\chi)\to0
$ 
when $\distance(f,f')\to 0$. 

\emph{Letters $\chi\not=\alpha,\beta$.} As in Part (I), for these letters the statement is trivially true.

\emph{Letter $\chi=\beta$.}  Parametrize the closure of $I^t_\beta(\widetilde{f}')$ by
$$
t\mapsto x(f',\beta,t):=(1-t)u^t_\beta(f')+t(f'_\beta)^{-1}\big(u^t_\alpha(f')\big)
\quad\text{with}\quad t\in[0,1].
$$
Then parametrize the closure of the graph of $\widetilde{f}'_\beta$ by 
$$
t\mapsto \gamma(f',\beta,t):=\Big(x(f',\beta,t),f'_\beta\big(x(f',\beta,t)\big)\Big).
$$
Equation \eqref{EquationHausdorffDistanceBoundsDifference} implies
\begin{align*}
&
\Big|
(f'_\beta)^{-1}\big(u^t_\alpha(f')\big)-f_\beta^{-1}\big(u^t_\alpha(f)\big)
\Big|
\leq
\\
&
\distance((f'_\beta)^{-1},f_\beta^{-1})
+
\omega_{f_\beta^{-1}}\big(\distance((f'_\beta)^{-1},f_\beta^{-1})+|u^t_\alpha(f')-u^t_\alpha(f)|
\big).
\end{align*}
Corollary \ref{CorollaryHausdorffDistanceInverse} implies 
$
\distance((f'_\beta)^{-1},f_\beta^{-1})\leq\distance(f',f)
$, 
thus for any $t$ we have 
\begin{align*}
\big|x(f',\beta,t)-x(f,\beta,t)\big|
&
\leq
(1-t)\big|u^t_\beta(f')-u^t_\beta(f)\big|
+
t\Big|(f'_\beta)^{-1}\big(u^t_\alpha(f')\big)-f_\beta^{-1}\big(u^t_\alpha(f)\big)\Big|
\\
&
\leq
2\distance(f',f)
+
\omega_{f_\beta^{-1}}\big(2\distance(f',f)\big).
\end{align*}
Therefore Equation \eqref{EquationHausdorffDistanceBoundsDifference} gives
\begin{align*}
&
\big\|\gamma(f',\beta,t)-\gamma(f,\beta,t)\big\|
\leq
\big|x(f',\beta,t)-x(f,\beta,t)\big|
+
\Big|f'_\beta\big(x(f',\beta,t)\big)-f_\beta\big(x(f,\beta,t)\big)\Big|
\\
&
\leq
\big|x(f',\beta,t)-x(f,\beta,t)\big|
+
\distance(f_\beta,f'_\beta)
+
\omega_{f_\beta}\Big(\distance(f_\beta,f'_\beta)+\big|x(f',\beta,t)-x(f,\beta,t)\big|\Big)
\\
&
\leq
3\distance(f',f)
+
\omega_{f_\beta^{-1}}\big(2\distance(f',f)\big)
+
\omega_{f_\beta}\Big(3\distance(f',f)
+
\omega_{f_\beta^{-1}}\big(2\distance(f',f)\big)\Big).
\end{align*}
Since 
$
\distance(\widetilde{f}_\beta,\widetilde{f}'_\beta)
\leq
\sup_{t\in[0,1]}\big\|\gamma(f',\beta,t)-\gamma(f,\beta,t)\big\|
$ 
then the argument follows as in the previous cases.

\emph{Letter $\chi=\alpha$.} Parametrize the closure of 
$I^t_\alpha(\widetilde{f}')$ 
by 
$$
t\mapsto x(f',\alpha,t):=(1-t)(f'_\beta)^{-1}\big(u^t_\alpha(f')\big)+tu^t_{\beta_\ast}(f')
\quad\text{with}\quad t\in[0,1],
$$
where $\pi^t(\beta_\ast)=\pi^t(\beta)+1$. Parametrize the closure of the graph of $\widetilde{f}'_\alpha$ by 
$$
t\mapsto\gamma(f',\alpha,t):=\Big(x(f',\alpha,t),f'_\alpha\circ f'_\beta\big(x(f',\alpha,t)\big)\Big).
$$
Arguing as in the previous case for $\chi=\beta$ we have
$$
\big|x(f',\beta,t)-x(f,\beta,t)\big|
\leq
2\distance(f',f)+\omega_{f_\beta^{-1}}\big(2\distance(f',f)\big).
$$
The argument works as in the previous cases because applying Equation \eqref{EquationHausdorffDistanceBoundsDifference} twice we get
\begin{align*}
&
\big|
f'_\alpha\circ f'_\beta\big(x(f',\alpha,t)\big)-f_\alpha\circ f_\beta\big(x(f,\alpha,t)\big)
\big|
\leq
\\
&
\distance(f_\alpha,f'_\alpha)
+
\omega_{f_\alpha}
\Big(
\distance(f_\alpha,f'_\alpha)
+
\big|f'_\beta\big(x(f',\alpha,t)\big)-f_\beta\big(x(f,\alpha,t)\big)\big|
\Big)
\leq
\\
&
\distance(f_\alpha,f'_\alpha)
+
\omega_{f_\alpha}
\Big(
\distance(f_\alpha,f'_\alpha)
+
\distance(f_\beta,f'_\beta)
+
\omega_{f_\beta}
\big(
\distance(f_\beta,f'_\beta)
+
\big|x(f',\alpha,t)-x(f,\alpha,t)\big|
\big)
\Big)
\leq
\\
&
\distance(f,f')
+
\omega_{f_\alpha}
\bigg(
2\distance(f,f')
+
\omega_{f_\beta}
\Big(
3\distance(f',f)+\omega_{f_\beta^{-1}}\big(2\distance(f',f)\big)
\Big)
\bigg).
\end{align*}

Part (III): continuity of the flat intervals. Under $\widetilde{\cQ}_{top}$ we have $j_\chi(\widetilde{f}')=j_\chi(f')$ for any $\chi$, thus the statement holds trivially. Under $\widetilde{\cQ}_{bot}$ we have $j_\chi(\widetilde{f}')=j_\chi(f')$ for any $\chi\not=\alpha$
and 
$
j_\alpha(\widetilde{f}')=(f'_\beta)^{-1}(j_\alpha(f'))
$, 
therefore the statement holds because applying Equation \eqref{EquationHausdorffDistanceBoundsDifference} as in Part (II) we get
$$
\big|
j_\alpha(\widetilde{f}')-j_\alpha(\widetilde{f})
\big|
=
\big|
(f'_\beta)^{-1}(j_\alpha(f'))-f_\beta^{-1}(j_\alpha(f))
\big|
\leq
\distance(f',f)
+
\omega_{f_\beta^{-1}}\big(2\distance(f',f)\big).
$$
The Proposition is proved.
\end{proof}

\section{Proof of Full family Theorem~\ref{TheoremFullFamilyTheorem}}
\label{SectionProofFullFamilyTheorem}

In this section we prove the Full Family Theorem~\ref{TheoremFullFamilyTheorem} assuming Theorem~\ref{TheoremRealizationRauzyPath} below. Let $\pi$ be an admissible combinatorial datum over the alphabet 
$\cA$ and assume that the Rauzy class of $\pi$ contains a cyclic combinatorial datum $\pi^{(\ast)}$. 
Let $F:\Delta^d\to\cC\big(\pi,[0,1)\big)$ be the map in Definition~\ref{DefinitionFullFamily} and consider the family $\cF:=(f_\lambda)_{\lambda\in\Delta^d}$, where $f_\lambda=F(\lambda)$.

\begin{theorem}
\label{TheoremRealizationRauzyPath}
For any finite Rauzy path $\gamma:\pi\to\pi^{(\ast)}$ of length $r$ there exists 
$
\lambda=\lambda(\gamma)\in\Delta^d
$ 
such that 
$$
\gamma(f_\lambda,r)=\gamma.
$$
\end{theorem}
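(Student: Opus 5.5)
We follow the Thurston strategy announced in the introduction. Fix a finite path $\gamma:\pi\to\pi^{(\ast)}$ of length $r$, with $q^{(r)}=B_\gamma\vec{1}$ and $N:=\sum_\alpha q^{(r)}_\alpha$ atoms in the dynamical partition. By Proposition~\ref{PropositionPartitionDeterminesRauzyPath}, $\gamma(f_\lambda,r)=\gamma$ holds exactly when $f_\lambda$ satisfies Condition~\eqref{EquationConditionRenormalizabilityDifferentSingularities} at each step along $\gamma$ and its partition $\cP(f_\lambda,r)$ has the combinatorial order prescribed by $\gamma$. This order can be read off from any model, for instance a Keane IET whose path begins with $\gamma$.

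\textbf{Configuration space.} Let $\cO_\gamma\subset\Delta^N$ be the set of length vectors $(\ell_{\alpha,i})$ of the $N$ atoms $I(\alpha,i)$, placed in $[0,1)$ in the order dictated by $\gamma$. We add the constraints forced by the flat pieces: if an atom meets the flat interval $L_\chi$ of some branch, its image is a point, so its successor cannot be assigned arbitrary positive length. Thus $\cO_\gamma$ is an open subset of $\Delta^N$ cut out by these constraints, not the whole simplex as in \cite{MarchesePalmisano}.

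\textbf{Thurston map.} For a configuration $\ell\in\cO_\gamma$, the atoms $I(\alpha,0)$ and $I(\alpha,q_\alpha-1)$, together with their order, determine positions of the singular values. Point~(1) of Definition~\ref{DefinitionFullFamily} says $u^b\circ F$ is a homeomorphism onto $\nabla_\pi$, so these positions yield a unique parameter $\lambda(\ell)$. We then pull the atoms back by $f_{\lambda(\ell)}$: set $\cT_\gamma(\ell)_{\alpha,i}:=|f_{\lambda(\ell),\beta}^{-1}(I(\alpha,i+1))|$, where $\beta$ is the letter of the top interval containing $I(\alpha,i)$ (read from $B_\gamma$ via Lemma~\ref{LemmaIntervalsPartitionAndMatrix}). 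Pulling back through the homeomorphic part $J_\beta$ makes this well defined, and we renormalize. A fixed point of $\cT_\gamma$ is a configuration consistent with the dynamics of $f_{\lambda(\ell)}$, so it realizes $\gamma$ (the analogue of Lemma~\ref{LemmaFixedPointsThurstonAndRauzyPath}).

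The uniform modulus of continuity~\eqref{EquationUniformContinuityModulus}, together with Point~(3), gives uniform continuity of $\cT_\gamma$. Hence $\cT_\gamma$ extends continuously to $\closure(\cO_\gamma)$, with boundary points mapped via the degenerations $\widehat F$. Next we check that $\closure(\cO_\gamma)$ is homeomorphic to a closed ball; we expect it to be star-shaped or convex, being cut out by linear constraints. Brouwer's theorem then gives a fixed point.

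\textbf{Main obstacle: no fixed points on $\partial\cO_\gamma$.} At a boundary configuration some atoms have length zero, or an atom is pinned to a flat interval. We will argue as follows. If a fixed point had a zero-length atom $I(\alpha,i)$, invariance propagates the degeneracy along the orbit chain, so every atom in the $\alpha$-tower collapses. Point~(1) then puts $\lambda(\ell)$ on $\partial\Delta^d$, where the regular part of $\widehat F(\lambda)$ has a reduced alphabet.

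This is where we use that $\gamma$ ends at the cyclic datum $\pi^{(\ast)}$. Since $\sigma(\pi^{(\ast)})$ is a single $d$-cycle, the first return to $I^{(r)}$ permutes the top intervals transitively. A proper nonempty set of collapsed letters cannot then be invariant, so either everything collapses, which contradicts $\sum\ell=1$, or nothing does.

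Boundary points created by flat pieces need separate treatment. Such a point corresponds to an atom whose successor lies in $f(L_\chi)$, and we expect it to be excluded because Condition~\eqref{EquationConditionRenormalizabilityDifferentSingularities} fails there with a strict inequality in the pulled-back configuration. We anticipate this flat-piece case is the delicate new part, since the cyclic structure of the boundary dynamics used in \cite{MarchesePalmisano} is lost.
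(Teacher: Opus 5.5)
Your overall architecture matches the paper's: periodic model, configuration space, Thurston map, continuous extension, Brouwer, and exclusion of boundary fixed points via cyclicity. However, the flat pieces are mishandled at each place where they matter.

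\textbf{First, the Thurston map.} Pulling every atom back through $J_\beta$ and then renormalizing discards the flat intervals, since the new lengths sum to $1-\sum_\beta|L_\beta|$. A fixed point then satisfies $\ell=c\cdot(\text{pulled-back lengths})$ with $c>1$ as soon as some flat interval is nondegenerate, which is always the case in the family of Section~\ref{SectionConstructionCherryFlows}. Its points are therefore not an orbit of $f_{\lambda(\ell)}$, and the analogue of Lemma~\ref{LemmaFixedPointsThurstonAndRauzyPath} fails. The paper works with positions, see \eqref{EquationDefinitionThurstonMap}: $v'([\alpha,0]):=u^t_\alpha(f_\cV)$ and $v'([\alpha,i]):=f_\cV^{-1}(v([\alpha,i+1]))$ for $i\neq0$. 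The gap starting at a critical point thus absorbs $L_\alpha$, and nothing needs normalizing. Two smaller points here. The map $f_\cV$ is read off from the points labelled $[\alpha,1]$, not from bottom or top atoms of towers. Also, the labels close up into a single $N$-cycle because the model is the periodic $T_\gamma$ with $T_\gamma^{(r)}=T(\pi^{(\ast)},N^{-1}\vec{1})$; a Keane model does not provide this.

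\textbf{Second, the configuration space.} Your constraint defining $\cO_\gamma$ must say which map's flat intervals are meant. The paper uses $g_\cV$, determined by the critical points $v([\alpha,0])$ through Point (2) of Definition~\ref{DefinitionFullFamily}, which you never use, and requires $v([\alpha,0])^\ast>j_\alpha(g_\cV)$. The identity $g_{\cT_\gamma(\cV)}=f_\cV$ of \eqref{EquationRelationFunctionsFandG} is what gives $\cT_\gamma(\cO_\gamma)\subset\cO_\gamma$, which you do not check. Note also that this constraint is not linear, since $j_\alpha(g_\cV)$ is merely continuous in $\cV$, so convexity is not automatic.

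\textbf{Third, the boundary.} Under pullback, a zero gap at $[\alpha,i]$ gives a zero gap at $[\alpha,i-1]$ only for $i\neq1$. At a critical value it only gives $v'([\alpha,0]^\ast)=j_\alpha(D_\cV)$, so the gap at $[\alpha,0]$ has the length of the limiting flat interval, which may be positive. Hence ``the whole tower collapses'' is unjustified. Your step through $\lambda(\ell)\in\partial\Delta^d$ also conflates the collapsed letters of the reduced datum of $\widehat{F}(\lambda)$ (letters of $\pi$) with the towers permuted by $\sigma(\pi^{(\ast)})$.

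The missing idea is the asymmetric propagation of Lemmas~\ref{LemmaCyclicBehavior} and~\ref{LemmaCyclicBehaviorBis}. Both rest on Lemma~\ref{LemmaControlExpansionInverseBranches}, that is, on continuity of $\widehat{F}$ and injectivity of the limiting branch on $\closure(J_\alpha)$. A positive gap always pulls back to a positive gap, including across $[\alpha,1]\to[\alpha,0]$, where it gives $v'([\alpha,0]^\ast)>j_\alpha(D_\cV)$. The map $[\chi,j]\mapsto[\chi,j+1]$ is a single $N$-cycle, and this is where cyclicity enters. The argument then runs as follows.
\begin{itemize}
\item At a fixed point, a zero gap propagates backward until some label $[\alpha,1]$, giving $v([\alpha,0]^\ast)=j_\alpha(D_\cV)$.
\item If $j_\alpha(D_\cV)>v([\alpha,0])$, positivity propagates backward around the whole cycle to the original gap, a contradiction.
\item Otherwise the gap at $[\alpha,0]$ is zero too and one continues. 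In the end all $N$ gaps vanish, forcing $0=1$.
\end{itemize}

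You leave the flat-piece boundary as an expectation, and renormalizability is not the relevant mechanism there. It is short once the above is in place. At a fixed point $\cV\in\Delta^N\cap\partial\cO_\gamma$ one has $g_\cV=f_\cV$, hence $j_\alpha(f_\cV)=v([\alpha,0]^\ast)$. But $v([\alpha,0]^\ast)$ is the $J_\alpha$-preimage of $v([\alpha,1]^\ast)>v([\alpha,1])=f_\cV(j_\alpha(f_\cV))$, so it is strictly larger than $j_\alpha(f_\cV)$, a contradiction.
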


Given any two finite paths $\gamma:\pi\to\pi'$ and $\eta:\pi'\to\pi''$ in $\cR$, where the starting point $\pi'\in\cR$ of $\eta$ coincides with the ending point of $\gamma$, consider the concatenation 
$
\gamma\ast\eta:\pi\to\pi''
$.

\begin{remark}
\label{RemarkRealizationRauzyPath}
The requirement that $\gamma$ ends at $\pi^{(\ast)}$ in Theorem~\ref{TheoremRealizationRauzyPath} can be easily removed a posteriori. Indeed if $\gamma:\pi\to\pi'$ is any finite Rauzy path with length $r$, there exists a finite path 
$
\eta:\pi'\to\pi^{(\ast)}
$ 
starting at the ending point $\pi'$ of $\gamma$, ending at $\pi^{(\ast)}$ and with length $|\eta|$ bounded by the cardinality of $\cR$. For the concatenation $\gamma\ast\eta$ Theorem~\ref{TheoremRealizationRauzyPath} gives $\lambda\in\Delta^d$ with 
$$
\gamma\big(f_\lambda,r+|\eta|\big)=\gamma\ast\eta,
$$ 
so that by truncation we get $\gamma(f_\lambda,r)=\gamma$.
\end{remark}

\subsection{Preliminary Lemmas}

Given a Rauzy path $\gamma:\pi\to\pi'$ define 
$$
\Delta_\gamma:=\{\lambda\in\Delta^d:\gamma(f_\lambda,r)=\gamma\}.
$$
The continuity of the map $\lambda\to f_\lambda$ and Lemma~\ref{LemmaContinuityRauzyPath} imply that $\Delta_\gamma$ is an open subset of $\Delta^d$. The less trivial statement is the non-emptyness of $\Delta_\gamma$, which is the content of Theorem~\ref{TheoremRealizationRauzyPath}.

\begin{lemma}
\label{LemmaContinuityRauzyPath}
Fix any $r\in\NN$ and consider $f\in\cC\big(\pi,[0,1)\big)$ which admits $r$ steps of the Rauzy induction. Then there exists an open set  
$
\cU=\cU(f,r)\subset\cC\big(\pi,[0,1)\big)
$ 
such that any $f'\in\cU$ has the same Rauzy renormalization path as $f$ up to the first $r$ steps, that is 
$$
\gamma(f',r)=\gamma(f,r).
$$
\end{lemma}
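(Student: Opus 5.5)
We argue by induction on $r$, using two ingredients: the renormalization condition \eqref{EquationConditionRenormalizabilityDifferentSingularities} is a strict inequality between quantities that depend continuously on $f$, and the normalized Rauzy-Veech branches $\cQ_{top}$, $\cQ_{bot}$ are continuous by Proposition~\ref{PropositionContinuityRenormalizationHaudorffdistance}. We work with the normalized iterates $\widehat{f}^{(k)}:=\cQ^k(f)\in\cC(\pi^{(k)},[0,1))$. These differ from $f^{(k)}$ only by an affine rescaling, so they follow the same Rauzy path and the same case distinction $\varepsilon$ at each step.

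\textbf{Step 1 (case $r=1$).} The quantities $u^t_\alpha(f)$, $u^b_\alpha(f)$ and $j_\alpha(f)$ depend continuously on $f$ in the distance \eqref{EquationDinstanceGIETs}. Indeed, $u^t_\alpha(f)$ and $u^b_\alpha(f)$ are the left endpoints of the projections of $\overline{G_{f_\alpha}}$ to the two axes, and the Hausdorff distance between graphs bounds the distance between these endpoints. The term $|j_\alpha(f)-j_\alpha(f')|$ is part of the distance itself. Suppose $\varepsilon(f)=t$, that is $j_{\alpha_t}(f)<u^b_{\alpha_b}(f)$. This is a strict inequality, so it persists on a neighbourhood $\cU_1$ of $f$, and every $f'\in\cU_1$ satisfies \eqref{EquationConditionRenormalizabilityDifferentSingularities} with $\varepsilon(f')=t$. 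The case $u^t_{\alpha_t}(f)>u^b_{\alpha_b}(f)$ is identical. Since $\pi$ is the same for $f$ and $f'$, we get $\gamma(f',1)=\gamma(f,1)$. In particular $\cU_1\subset\cC_{\varepsilon(f)}(\pi,[0,1))$.

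\textbf{Step 2 (induction).} Assume the statement holds for $r$ and every combinatorial datum in $\cR$. Let $f$ admit $r+1$ steps. Apply Step~1 to $f$ to get an open $\cU_1\subset\cC_{\varepsilon(f)}(\pi,[0,1))$ on which $\cQ_{\varepsilon(f)}$ is defined. By Proposition~\ref{PropositionContinuityRenormalizationHaudorffdistance}, this map is continuous there with values in $\cC(R^{\varepsilon(f)}(\pi),[0,1))$. The map $\widehat{f}:=\cQ(f)$ admits $r$ steps, so the inductive hypothesis gives an open $\cU(\widehat{f},r)$. Set
$$
\cU:=\cU_1\cap\cQ_{\varepsilon(f)}^{-1}\big(\cU(\widehat{f},r)\big).
$$
This set is open, contains $f$, and every $f'\in\cU$ satisfies $\gamma(f',r+1)=\gamma_1\ast\gamma(\cQ(f'),r)=\gamma_1\ast\gamma(\widehat{f},r)=\gamma(f,r+1)$.

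\textbf{Main obstacle.} The only delicate point is justifying continuity of $f\mapsto u^{t/b}_\alpha(f)$ and $f\mapsto j_\alpha(f)$, together with the identification of the normalized and unnormalized paths. The first follows from the Hausdorff estimates, as the paper already asserts right after Definition~\ref{DefinitionFullFamily}. For the second, $\psi_f$ is an increasing affine map, so it preserves all order relations, including those in \eqref{EquationConditionRenormalizabilityDifferentSingularities}.
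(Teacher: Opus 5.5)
Your proof is correct and follows essentially the same route as the paper: induction on $r$, with the base case coming from the strict inequalities in Condition~\eqref{EquationConditionRenormalizabilityDifferentSingularities} together with the continuity of $u^{t/b}_\alpha$ and $j_\alpha$, and the inductive step from the continuity of $\cQ_{top},\cQ_{bot}$ in Proposition~\ref{PropositionContinuityRenormalizationHaudorffdistance}. The only cosmetic difference is the order of the split. You peel off the first step and apply the inductive hypothesis to $\cQ(f)$, which is why you need it for every datum in the Rauzy class. The paper instead applies the one-step argument to $f^{(r)}=\cQ^r(f)$, using the continuity of $\cQ^r$.
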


\begin{proof}
We prove the Lemma by induction on the number of steps $r$. For $r=1$ the statement is true because the type $\varepsilon(f)=b/t$ depends on the relative positions of the singularities $u^{t/b}_\alpha(f)$ and on the endpoints $j_\alpha(f)$ of the flat interval, and these quantities depend continuously on $f$. Now fix $r$. The inductive step $(r)\Rightarrow(r+1)$ follows from a similar argument because $f^{(r)}=\cQ^r(f)$ and its flat intervals depend continuously on $f$ by Proposition \ref{PropositionContinuityRenormalizationHaudorffdistance}.
\end{proof}

A finite Rauzy path $\nu$ in $\cR$ is \emph{positive} if the matrix $B_\nu\in\slgroup(d,\ZZ)$ defined in \S~\ref{SectionRauzyClasses} has all its entries strictly positive.

\begin{lemma}
\label{LemmaPositiveRauzyPaths}
If $\nu$ is positive then we have $\overline{\Delta_\nu}\subset\Delta^d$.
\end{lemma}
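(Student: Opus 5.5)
We argue by contradiction. Suppose there are $\lambda_n\in\Delta_\nu$ converging to some $\lambda_\infty\in\partial\Delta^d$. Write $f_n:=F(\lambda_n)$ and let $r$ be the length of $\nu$. By Point (3) of Definition~\ref{DefinitionFullFamily}, $f_n\to\widehat{F}(\lambda_\infty)=:D$ in the extended distance. By Point (1) and the remark after the definition, $u^b(D)\in\partial\nabla_\pi$, so $D$ is a genuine degeneration and its singular part is non-empty. Fix a letter $\beta\in\cA\setminus\cA(D)$.

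\emph{Step 1: the $\beta$-branch shrinks.} The set $D_\beta=\{s_\beta\}$ is a single point and $j_\beta(D)=x_\beta$. Convergence in $\distance$ then gives that the Hausdorff distance from $\overline{G_{(f_n)_\beta}}$ to $s_\beta$ tends to $0$. Hence both $|I^t_\beta(f_n)|\to0$ and $|I^b_\beta(f_n)|\to0$. The term $|j_\beta|$ in the distance makes sure the flat interval $L_\beta(f_n)$ is controlled as well, although the graph condition already forces this. Set $\delta_n:=|I^t_\beta(f_n)|\to0$.

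\emph{Step 2: every tower visits $I^t_\beta$.} Since $\lambda_n\in\Delta_\nu$ we have $\gamma(f_n,r)=\nu$, so the dynamical partition $\cP(f_n,r)$ has towers of heights $q_\alpha:=q^{(r)}_\alpha$, independent of $n$. Because $B_\nu$ is positive, Lemma~\ref{LemmaIntervalsPartitionAndMatrix}(2) gives, for every $\alpha$, some level $0\le i_\alpha\le q_\alpha-1$ with $f_n^{i_\alpha}\big(I^t_\alpha(f_n^{(r)})\big)\subset I^t_\beta(f_n)$. This level therefore has length at most $\delta_n$.

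\emph{Step 3: propagate forward only.} We use just the forward modulus of continuity in Equation~\eqref{EquationUniformContinuityModulus}, which is uniform over the family. The reason is that inverse images may contain flat pieces, so they are not controlled. Applying $f_n$ at most $q_\alpha$ times, and using Lemma~\ref{LemmaIntervalsPartitionAndMatrix}(1), we get
$$
\big|f_n^{(r)}\big(I^t_\alpha(f_n^{(r)})\big)\big|=\big|f_n^{q_\alpha}\big(I^t_\alpha(f_n^{(r)})\big)\big|\le\omega^{(q_\alpha)}(\delta_n),
$$
where $\omega^{(k)}$ denotes $\omega$ iterated $k$ times. Here we use that each branch maps an interval onto an interval, so diameters obey $\omega$.

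\emph{Step 4: conclude.} The sets $f^{(r)}_n(I^t_\alpha(f^{(r)}_n))=I^b_\alpha(f^{(r)}_n)$, for $\alpha\in\cA$, partition $I^{(r)}_n$. Hence $|I^{(r)}_n|\le d\,\omega^{(Q)}(\delta_n)$, where $Q:=\max_\alpha q_\alpha$. Each base $I^t_\alpha(f_n^{(r)})$ lies in $I^{(r)}_n$. Applying the forward estimate again, every atom of $\cP(f_n,r)$ has length at most $\omega^{(Q)}\big(d\,\omega^{(Q)}(\delta_n)\big)\to0$. The partition $\cP(f_n,r)$ has a fixed number $N=\sum_\alpha q_\alpha$ of atoms and covers $[0,1)$. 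Its total length is therefore at most $N\,\omega^{(Q)}(d\,\omega^{(Q)}(\delta_n))$, which tends to $0$. This contradicts the total length being $1$.

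\emph{Expected main obstacle.} The delicate point is Step 1: we need the collapse of a branch to occur in the domain $I^t_\beta$, including its flat part, and not only in $I^b_\beta$. This is exactly where the $j_\alpha$ term in the extended distance, and the fact that limits in $\widehat{\cC}$ contract whole branches to points, are used. The rest relies only on forward continuity and the fixed combinatorics of $\nu$.
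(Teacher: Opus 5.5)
Your proof is correct and is essentially the paper's argument in contrapositive form. The paper shows that some tower of $\cP(f_n,k)$ has top $I^b_{\alpha(n)}(f_n^{(k)})$ of length bounded below, deduces from the forward modulus of continuity that all its levels are bounded below, and uses positivity of $B_\nu$ to place one such level in every $I^t_\beta(f_n)$; you instead start from a collapsing $I^t_\beta(f_n)$, use positivity to put a level of every tower inside it, and push the smallness forward to all atoms, with the only tacit point being that bounding fewer than $q_\alpha$ iterations by $\omega^{(q_\alpha)}(\delta_n)$ needs $\omega(\delta)\ge\delta$, which you can always arrange by replacing $\omega$ with $\max\{\omega(\delta),\delta\}$.
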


\begin{proof}
Recall the notation in \S~\ref{SectionDynamicalPartition}. Set $q^{(k)}:=B_\nu\vec{1}$, where $k$ is the length of $\nu$. For any $\lambda\in\Delta_\nu$ we have a partition $\cP(f_\lambda,k)$, where the atoms in 
$\cP(f_\lambda,k)$ are the intervals 
$$
I(f_\lambda,k,\alpha,i):=
f_\lambda^i\big(I^t_\alpha(f_\lambda^{(k)})\big)
\quad
\textrm{ for }
\quad
\alpha\in\cA
\quad;\quad
0\leq i\leq q^{(k)}_\alpha-1.
$$
All these partitions are equivalent by Proposition~\ref{PropositionPartitionDeterminesRauzyPath}. 
Consider 
$
\lambda(\infty)\in\overline{\Delta_\nu}
$ 
and $\lambda(n)\in\Delta_\nu$ 
with $\lambda(n)\to\lambda(\infty)$. If $\lambda(\infty)\in\partial\Delta^d$ then $f_{\lambda(\infty)}$ is a degenerate GIET, because $(f_\lambda)$ is a proper family. Hence, in order to prove that $\Delta_\nu$ is relatively compact in $\Delta^d$, we prove that
$$
f_{\lambda(\infty)}\in\cC\big(\pi,[0,1)\big).
$$ 
Set for simplicity $f:=f_{\lambda(\infty)}$ and $f_n:=f_{\lambda(n)}$. The intervals $I^{(k)}_n$ where the first return maps $f_n^{(k)}=\widetilde{\cQ}^k(f_n)$ acts satisfy
$
\inf_{n\in\NN}\big|I^{(k)}_n\big|>0
$. 
Indeed the maps $f_n$ have an uniform modulus of continuity by Equation \eqref{EquationUniformContinuityModulus}, thus if for $\alpha\in\cA$ all the intervals $I^t_\alpha(f_n^{(k)})$ are arbitrarily small, then their iterates $f_n^i\big(I^t_\alpha(f_n^{(k)})\big)$ for $\alpha\in\cA$ and $0\leq i \leq q^{(k)}_\alpha-1$ are also small and cannot form a partition of $[0,1)$. Therefore there exists $\epsilon>0$ such that for any $n$ there exist 
$\alpha(n)\in\cA$ depending on $n$ with
$
\big|I^b_{\alpha(n)}(f_n^{(k)})\big|>\epsilon
$. 
Setting $Q:=q^{(k)}_{\alpha(n)}$ we have 
$$
I^b_{\alpha(n)}(f_n^{(k)})=f_n^{Q-i}\big(I(f_n,k,\alpha(n),i)\big).
$$
The maps $f_n$ have an uniform modulus of continuity, thus if $I(f_n,k,\alpha(n),i)$ is arbitrarily small for some $0\leq i\leq q^{(k)}_{\alpha(n)}-1$ then $I^b_{\alpha(n)}(f_n^{(k)})$ is arbitrarily small too, which is absurd. Hence, in terms of the sequence of letters $\alpha(n)$ defined above, we have 
$$
\inf_{n\in\NN}|I(f_n,k,\alpha(n),i)|>0
\quad
\textrm{ for any}
\quad
0\leq i\leq q^{(k)}_{\alpha(n)}-1.
$$
Since $\nu$ is positive, then Lemma~\ref{LemmaIntervalsPartitionAndMatrix} implies that for any $\beta,\alpha\in\cA$ and any $n$ there exists an integer $j$ with $0\leq j\leq q^{(k)}_\alpha-1$ and   
$
f_n^j\big(I^t_\alpha(f_n^{(k)})\big)\subset I^t_\beta(f_n)
$.
Therefore for any $\beta$ we have
$
\inf_{n\in\NN}|I^t_\beta(f_n)|>0
$ 
and thus $|I^t_\beta(f)|>0$ by continuity of $\lambda\to f_\lambda$. In other words  
$
f_{\lambda(\infty)}\in\cC\big(\pi,[0,1)\big)
$. 
The Lemma is proved.
\end{proof}

\subsection{Proof of Theorem~\ref{TheoremFullFamilyTheorem}}
\label{SectionProofTheoremFullFamily} 

Let $T$ be a minimal IET with combinatorial datum $\pi$. Recalling Definition~\ref{DefinitionRauzyPathOfGIET}, 
let 
$
\gamma(T,\infty)
$ 
be the infinite Rauzy path of $T$. Then fix $r\in\NN$ and let 
$
\gamma(T,r)=\gamma_1\ast\dots\ast\gamma_r
$ 
be the concatenation of the first $r$ arrows of 
$\gamma(T,\infty)$. According to Theorem~\ref{TheoremRealizationRauzyPath} and Remark~\ref{RemarkRealizationRauzyPath} there exists  
$
\lambda(r)\in\Delta^d
$ 
such that 
\begin{equation}
\label{Equation(1)SectionProofTheoremFullFamily}
\gamma(f_{\lambda(r)},r)=\gamma(T,r). 
\end{equation}
Moreover, according to Lemma~\ref{LemmaContinuityRauzyPath}, there is a non-empty open set 
$$
\Delta_{\gamma(T,r)}:=\{\lambda\in\Delta^d,\gamma(f_\lambda,r)=\gamma(T,r)\}.
$$
Increasing $r$ we get a sequence of nested non-empty open sets 
$$
\dots\subset\Delta_{\gamma(T,r+1)}\subset\Delta_{\gamma(T,r)}\subset\dots\subset\Delta^d.
$$
The path $\gamma(T,\infty)$ is infinite-complete, that is any letter $\alpha\in\cA$ is the winner of infinitely many arrows of $\gamma(T,\infty)$. According to Proposition 7.12 in \cite{Yoccoz}, if $r_0$ is big enough, the path 
$\gamma(T,r_0)$ is positive, so that $\Delta_{\gamma(T,r_0)}$ is relatively compact in $\Delta^d$, by Lemma \ref{LemmaPositiveRauzyPaths}. As a consequence of this last property
$$
\Delta_{\gamma(T,\infty)}:=
\bigcap_{r\in\NN}\overline{\Delta_{\gamma(T,r)}}
$$
is a compact subset contained in the interior of $\Delta^d$. Modulo taking a subsequence of the parameters $\lambda(r)\in\Delta^d$, $r\in\NN$ in Equation~\eqref{Equation(1)SectionProofTheoremFullFamily}, there exists 
$
\lambda\in\Delta_{\gamma(T,\infty)}
$ 
such that $\lambda(r)\to\lambda$ as $r\to\infty$, that is 
$
f_{\lambda(r)}\to f_\lambda
$. 
In particular the limit $f_\lambda$ is a non-degenerate element in 
$
\cC\big(\pi,[0,1)\big)
$. 
Moreover $f_\lambda$ is infinitely renormalizable, because 
$
\lambda\in\Delta_{\gamma(T,\infty)}
$. 
By definition we have $\gamma(f_\lambda,r)=\gamma(T,r)$ for any $r$, therefore 
$
\gamma(f_\lambda,\infty)=\gamma(T,\infty)
$. 
Theorem~\ref{TheoremFullFamilyTheorem} is proved. $\qed$

\section{The Thurston map}
\label{SectionThurstonMap}

In this section we introduce the \emph{space of configurations} (\S~\ref{SectionConfigurations}) and show that the \emph{Thurston map} $\cT_\gamma$ acts uniformly continuously over it (\S~\ref{SectionDefinitionThurstonMap}). 
Recall the notation of \S~\ref{SectionProofFullFamilyTheorem}, then fix a finite Rauzy path 
$\gamma:\pi\to\pi^{(\ast)}$, where $\pi^{(\ast)}$ is a cyclic combinatorial datum.

\subsection{Configurations}
\label{SectionConfigurations}

Let $B_\gamma\in\slgroup(d,\ZZ)$ be the matrix associated to $\gamma$ as in \S~\ref{SectionRauzyClasses}. Let 
$
q^{(r)}:=B_\gamma\vec{1}
$ 
be the integer vector of \emph{return times} defined in \S~\ref{SectionDynamicalPartition}. 
Consider the normalized length datum 
$
\lambda^{(\gamma)}\in\QQ_+^\cA\cap\Delta^d
$ 
defined by 
$$
\lambda^{(\gamma)}:=(1/N)^t\!B_\gamma\vec{1}
,\quad\text{where}\quad
N=N(\gamma):=\sum_{\chi\in\cA}q^{(r)}_\chi.
$$ 
Observe that 
$
\sum_{\chi\in\cA}\lambda^{(\gamma)}_\chi=1
$ 
and let 
$
T_\gamma:[0,1)\to[0,1)
$ 
be the IET with combinatorial-length data $(\pi,\lambda^{(\gamma)})$. 
Let
$
T_\gamma^{(r)}:=\cQ^r(T_\gamma)
$ 
be the image of $T_\gamma$ under the $r$-th iteration of the Rauzy map in \S~\ref{SectionRauzyInductionGIETs}. Recalling \S~\ref{SectionDynamicalPartition}, for $\alpha\in\cA$ consider the intervals $I^t_\alpha(T_\gamma^{(r)})$ and their images under $T_\gamma$, that is the intervals $I(T_\gamma,r,\alpha,i)$ with $0\leq i\leq q^{(r)}_\alpha-1$ in Equation~\eqref{EquationDynamicalPartitionHigherOrder}, which are the atoms of the partition $\cP(T_\gamma,r)$. Observe that
$$
T_\gamma^{(r)}=T(\pi^{(\ast)},N^{-1}\cdot \vec{1}),
$$
which is a periodic IET acting on the intervals $I^t_\alpha(T_\gamma^{(r)})$, $\alpha\in\cA$ as the cyclic permutation 
$
\sigma(\pi^{(\ast)})
$. 
Recall also that for $\alpha\in\cA$ we call  
$
u^t_\alpha(T_\gamma):=\min I^t_\alpha(T_\gamma)
$ 
and 
$
u^t_\alpha(T_\gamma^{(r)}):=\min I^t_\alpha(T_\gamma^{(r)})
$ 
the critical points respectively of $T_\gamma$ and of $T^{(r)}$. Let $h(\alpha,r)$ be the unique integer with 
$
0\leq h(\alpha,r)\leq q^{(r)}_\alpha-1
$ 
such that 
\begin{equation}
\label{EquationHalfReturnTimes}
T_\gamma^{h(\alpha,r)}\big(u^t_\alpha(T_\gamma^{(r)})\big)=
u^t_\alpha(T_\gamma).
\end{equation}
In particular $h(\alpha_0,r)=0$ for the letter $\alpha_0$ with $\pi_t(\alpha_0)=1$. 
Since $T_\gamma^{(r)}$ is a first return of $T_\gamma$, then $T_\gamma$ acts as a cyclic permutation on the intervals of the partition $\cP(T_\gamma,r)$ by 
$$
T_\gamma\big(I(T_\gamma,r,\alpha,i)\big)=
I(T_\gamma,r,\alpha,i+1),
$$
modulo identifications on the labels $(\alpha,i)\in\cA\times\ZZ$ given by (see also Figure~\ref{FigureExampleConfiguration}):

\begin{equation}
\label{EquationIdentificationLabelsCriticalPoints}
(\alpha,i)\sim(\beta,j)
\Leftrightarrow
\left\{
\begin{array}{l}
\pi^{(\ast)}_t(\beta)=\pi^{(\ast)}_b(\alpha)
\\
i-j=q^{(r)}_\alpha +h(\beta,r)-h(\alpha,r).
\end{array}
\right.
\end{equation}
Let $\cI_\gamma$ be the quotient of the set $\cA\times\ZZ$ under the equivalence relation $\sim$ in  Equation~\eqref{EquationIdentificationLabelsCriticalPoints}, whose equivalence classes are denoted by $[\alpha,i]$. 
The map 
$
[\alpha,i]\mapsto[\alpha,i+1]
$ 
on equivalence classes corresponds to the cyclic action of $T_\gamma$ on intervals of the partition $\cP(T_\gamma,r)$. With these identifications, the critical points of $T_\gamma$ correspond to classes $[\alpha,0]$ and critical values of $T_\gamma$ correspond to classes $[\alpha,1]$. An example is given in Figure \ref{FigureExampleConfiguration}, where we consider the alphabet $\cA:=\{A,B,C,D\}$ and the admissible combinatorial data 
$$
\pi=
\begin{pmatrix}
A & B & C & D \\
D & C & B & A 
\end{pmatrix}
\quad
\textrm{ and }
\quad
\pi^{(\ast)}=
\begin{pmatrix}
A & B & D & C \\
D & A & C & B 
\end{pmatrix},
$$
which are connected by the Rauzy path $\gamma:\pi\to\pi^{(\ast)}$ of length five whose winner are in the order the letters $A,A,A,D,B$. In particular $\pi^{(\ast)}$ is cyclic with 
$
\sigma(\pi^{(\ast)})=(1,2,4,3)
$, 
which has order $4$. In this case, the matrix $B_\gamma$ defined in \S~\ref{SectionRauzyClasses} is 
$$
B_{\gamma}=
\begin{pmatrix}
2 & 0 & 0 & 1 \\
1 & 1 & 0 & 0 \\
1 & 0 & 1 & 0 \\
2 & 1 & 0 & 1
\end{pmatrix}
\quad
\textrm{ with }
\quad
^tB^{-1}_{\gamma}
=
\begin{pmatrix}
1 &-1 &-1 &-1 \\
1 & 0 &-1 &-2 \\
0 & 0 & 1 & 0 \\
-1& 1 & 1 & 2
\end{pmatrix}.
$$
We have $q^{(r)}=(3,2,2,4)$ and $N(\gamma)=11$, so that 
$
\lambda^{(\gamma)}=(\frac{6}{11},\frac{2}{11},\frac{1}{11},\frac{2}{11})
$. 
Moreover the vector $h\in\NN^\cA$ whose entries $h_\alpha:=h(\alpha,r)$ for $\alpha\in\cA$ are defined by Equation~\eqref{EquationIdentificationLabelsCriticalPoints} is $h=(0,1,1,3)$.

\begin{figure}
\begin{center}
{\begin{tikzpicture}[scale=0.1]



\draw[-,thick] (0,0) -- (110,0) {};
\draw[-,thick] (0,0) -- (0,10) {};
\draw[-,thin,dashed] (0,0) -- (0,20) {};
\node at (30,15) {$A$};
\node at (5,3) {$A,0$};
\draw[-,thin,dashed] (10,0) -- (10,10) {};
\node at (15,3) {$A,3$};
\draw[-,thin,dashed] (20,0) -- (20,10) {};
\node at (25,3) {$C,1$};
\draw[-,thin,dashed] (30,0) -- (30,10) {};
\node at (35,3) {$B,1$};
\draw[-,thin,dashed] (40,0) -- (40,10) {};
\node at (45,3) {$C,3$};
\draw[-,thin,dashed] (50,0) -- (50,10) {};
\node at (55,3) {$A,1$};

\draw[-,thick] (60,0) -- (60,10) {};
\draw[-,thin,dashed] (60,0) -- (60,20) {};
\node at (70,15) {$B$};
\node at (65,3) {$B,0$};
\draw[-,thin,dashed] (70,0) -- (70,10) {};
\node at (75,3) {$C,2$};

\draw[-,thick] (80,0) -- (80,10) {};
\draw[-,thin,dashed] (80,0) -- (80,20) {};
\node at (85,15) {$C$};
\node at (85,3) {$C,0$};

\draw[-,thick] (90,0) -- (90,10) {};
\draw[-,thin,dashed] (90,0) -- (90,20) {};
\node at (100,15) {$D$};
\node at (95,3) {$D,0$};
\draw[-,thin,dashed] (100,0) -- (100,10) {};
\node at (105,3) {$A,2$};
\draw[-,thick] (110,0) -- (110,10) {};
\draw[-,thin,dashed] (110,0) -- (110,20) {};


\draw[-,thick] (0,0) -- (0,-5) {};
\draw[-,thin,dashed] (0,-5) -- (0,-15) {};
\node at (10,-10) {$D$};
\draw[-,thick] (20,0) -- (20,-5) {};
\draw[-,thin,dashed] (20,-5) -- (20,-15) {};
\node at (25,-10) {$C$};
\draw[-,thick] (30,0) -- (30,-5) {};
\draw[-,thin,dashed] (30,-5) -- (30,-15) {};
\node at (40,-10) {$B$};
\draw[-,thick] (50,0) -- (50,-5) {};
\draw[-,thin,dashed] (50,-5) -- (50,-15) {};
\node at (80,-10) {$A$};
\draw[-,thick] (110,0) -- (110,-5) {};
\draw[-,thin,dashed] (110,-5) -- (110,-15) {};

\end{tikzpicture}}
\hspace{0.5 cm}
{\begin{tikzpicture}[scale=0.1]



\draw[-,thick] (0,0) -- (40,0) {};
\draw[-,thick] (0,0) -- (0,5) {};
\node at (5,45) {$A$};
\draw[-,thick] (10,0) -- (10,5) {};
\node at (15,45) {$B$};
\draw[-,thick] (20,0) -- (20,5) {};
\node at (25,45) {$D$};
\draw[-,thick] (30,0) -- (30,5) {};
\node at (35,45) {$C$};
\draw[-,thick] (40,0) -- (40,5) {};

\draw[-,thin,dashed] (0,-5) -- (0,0) {};
\node at (5,-5) {$D$};
\draw[-,thin,dashed] (10,-5) -- (10,0) {};
\node at (15,-5) {$A$};
\draw[-,thin,dashed] (20,-5) -- (20,0) {};
\node at (25,-5) {$C$};
\draw[-,thin,dashed] (30,-5) -- (30,0) {};
\node at (35,-5) {$B$};
\draw[-,thin,dashed] (40,-5) -- (40,0) {};

\draw[-,thin] (0,0) -- (0,30) {};
\node at (5,3) {$A,0$};
\draw[-,thin] (0,10) -- (10,10) {};
\node at (5,13) {$A,1$};
\draw[-,thin] (0,20) -- (10,20) {};
\node at (5,23) {$A,2$};
\draw[-,thin] (0,30) -- (10,30) {};
\draw[-,thin] (10,0) -- (10,30) {};

\draw[-,thin] (10,10) -- (20,10) {};
\node at (15,3) {$A,3$};
\draw[-,thin] (10,20) -- (20,20) {};
\node at (15,13) {$B,0$};

\draw[-,thin] (20,0) -- (20,40) {};
\node at (25,3) {$C,1$};
\draw[-,thin] (20,10) -- (30,10) {};
\node at (25,13) {$C,2$};
\draw[-,thin] (20,20) -- (30,20) {};
\node at (25,23) {$C,3$};
\draw[-,thin] (20,30) -- (30,30) {};
\node at (25,33) {$D,0$};
\draw[-,thin] (20,40) -- (30,40) {};
\draw[-,thin] (30,0) -- (30,40) {};

\draw[-,thin] (30,10) -- (40,10) {};
\node at (35,3) {$B,1$};
\draw[-,thin] (30,20) -- (40,20) {};
\node at (35,13) {$C,0$};
\draw[-,thin] (40,0) -- (40,20) {};

\end{tikzpicture}}
\end{center}
\caption{The dynamical partition $\cP(T_\gamma,5)$, where 
$
T_\gamma=T(\pi,\lambda_\gamma)
$ 
in terms of $\pi$, $\lambda^{(\gamma)}$ and $\gamma:\pi\to\pi^{(\ast)}$ defined in \S~\ref{SectionConfigurations}. The left endpoints of the atoms of $\cP(T_\gamma,5)$ are the points in the standard $\gamma$-configuration $\cV^{(\gamma)}$. They form an unique orbit of length $N=11$ under the action of $T_\gamma$. The points $\underline{v([\alpha,0])}$ for $\alpha\in\cA$ are the critical points of $T_\gamma$.}
\label{FigureExampleConfiguration}
\end{figure}

\smallskip

For any $[\alpha,i]\in\cI_\gamma$ define the rational point 
$$
\underline{v([\alpha,i])}:=
T_\gamma^i\big(u^t_\alpha(T_\gamma)\big)\in [0,1)\cap\QQ,
$$
so that in particular $\underline{v([\alpha,0])}$ and $\underline{v([\alpha,1])}$ for $\alpha\in\cA$ correspond respectively to the critical point and the critical value of $T_\gamma$. The \emph{reference $\gamma$-configuration} $\cV^{(\gamma)}$ is the set of $N$ points in $[0,1)\cap\QQ$ defined by 
\begin{equation}
\label{EquationReferenceConfiguration}
\cV^{(\gamma)}:=
\{\underline{v([\alpha,i])};[\alpha,i]\in\cI_\gamma\}.
\end{equation}
Modulo the equivalence relation on $\cA\times\ZZ$ in Equation~\eqref{EquationIdentificationLabelsCriticalPoints}, 
for any $[\alpha,i]$ we have 
$$
T_\gamma\big(\underline{v([\alpha,i])}\big)=
\underline{v([\alpha,i+1])}.
$$

A preconfiguration is a set $\cV\subset[0,1)$ of $N$ distinct points $v([\alpha,i])$ labeled by classes  
$
[\alpha,i]\in\cI_\gamma
$, 
where 
$
v([\beta_0,1])=v([\alpha_0,0])=0
$ 
for the letter $\alpha_0,\beta_0$ with $\pi_b(\beta_0)=1$ and $\pi_t(\alpha_0)=1$ and moreover 
$$
v([\alpha,i])<v([\beta,j])
\quad
\Leftrightarrow
\quad
\underline{v([\alpha,i])}<\underline{v([\beta,j])}
$$
for any other $[\alpha,i]$ and $[\beta,j]$ in $\cI_\gamma$. In particular the standard configuration $\cV^{(\gamma)}$ in Equation~\eqref{EquationReferenceConfiguration} is a preconfiguration. In general the points in a preconfiguration $\cV$ have the same geometrical order as the points in $\cV^{(\gamma)}$. Notice that the space of all preconfigurations is the open standard simplex $\Delta^N$ in $\RR^N$. 

Given a preconfiguration 
$\cV=\{v([\alpha,i]):[\alpha,i]\in\cI_\gamma\}$ let $V(\cV):=\{v([\alpha,1]):\alpha\in\cA\}$ be the ordered subset of points $v([\alpha,i])$ with $i=1$. This gives a projection map 
$$
V:\Delta^N\to\nabla_\pi,
\quad V(\cV):=\big(v([\alpha,1]):\alpha\in\cA\big).
$$
Let $(u^b\circ F)^{-1}:\nabla_\pi\to\Delta^d$ be the inverse of the homeomorphism in Point (1) of 
Definition~\ref{DefinitionFullFamily}. By composition, we obtain a map 
\begin{equation}
\label{EquationFromConfigurationToGIET}
\Delta^N \xrightarrow{V(\cdot)} 
\nabla_\pi \xrightarrow{(u^b\circ F)^{-1}} 
\Delta^d \xrightarrow{F} \cC\big(\pi,[0,1)\big),
\qquad
\cV\mapsto f_\cV.
\end{equation}
The map in Equation \eqref{EquationFromConfigurationToGIET} is uniformly continuous, indeed each of its three factors is uniformly continuous because it admits a continuous extension to the closure of its domain. Similarly, considering the projection $U:\Delta^N\to\nabla^t_\pi$, where 
$U(\cV):=\big(v([\alpha,0]):\alpha\in\cA\big)$ is the ordered set of points $v([\alpha,i])\in\cV$ with $i=0$, we obtain a continuous map
$$
\Delta^N \xrightarrow{U(\cdot)} 
\nabla_\pi^t \xrightarrow{(u^t\circ F)^{-1}} 
\Delta^d \xrightarrow{F} \cC\big(\pi,[0,1)\big),
\qquad
\cV\mapsto g_\cV,
$$
where $(u^t\circ F)^{-1}:\nabla_\pi^t\to\Delta^d$ is the inverse of the homeomorphism in Point (2) of 
Definition~\ref{DefinitionFullFamily}, and where continuity is not necessarily uniform because $(u^t\circ F)^{-1}$ is not assumed to have a continuous extension to the boundary of its domain.

\begin{lemma}
\label{LemmaFromConfigurationToMap}
Fix any preconfiguration 
$
\cV=\{v([\alpha,i]):[\alpha,i]\in\cI_\gamma\}\in\Delta^N
$.
\begin{enumerate}
\item
The GIET $f_\cV$ is the unique element of the family $\cF$ such that for any $\alpha\in\cA$ we have
$$
u^b_\alpha(f_\cV)=v([\alpha,1]).
$$ 
\item
The GIET $g_\cV$ is the unique element of the family $\cF$ such that for any $\alpha\in\cA$ we have
$$
u^t_\alpha(g_\cV)=v([\alpha,0]).
$$ 
\end{enumerate}
\end{lemma}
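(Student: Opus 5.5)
This is essentially an unwinding of the definitions of $f_\cV$ and $g_\cV$ in Equation~\eqref{EquationFromConfigurationToGIET} and its analogue, together with the bijectivity in Points (1) and (2) of Definition~\ref{DefinitionFullFamily}. I will treat Point (1) in detail; Point (2) is the same argument with $u^t$, $\nabla^t_\pi$ and $U$ in place of $u^b$, $\nabla_\pi$ and $V$.

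The first step is to check that $V$ really takes values in $\nabla_\pi$, so that the composition is defined. By definition of preconfiguration, the points $v([\alpha,1])$ are ordered as the reference points $\underline{v([\alpha,1])}=T_\gamma(u^t_\alpha(T_\gamma))=u^b_\alpha(T_\gamma)$. For the IET $T_\gamma$ these are the singular values, ordered according to $\pi_b$ as in Equation~\eqref{EquationSingularitiesTimeZeroIET}. The point with $\pi_b(\beta_0)=1$ is $0$, and this matches the normalization $v([\beta_0,1])=0$. The points $v([\alpha,1])$ are distinct elements of a preconfiguration, and they are $<1$. Hence $V(\cV)\in\nabla_\pi$. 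The analogous check gives $U(\cV)\in\nabla^t_\pi$, using $\underline{v([\alpha,0])}=u^t_\alpha(T_\gamma)$, the ordering by $\pi_t$, and $v([\alpha_0,0])=0$.

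The second step is existence. Set $\lambda:=(u^b\circ F)^{-1}(V(\cV))\in\Delta^d$. Then $f_\cV=F(\lambda)$ satisfies $u^b(f_\cV)=(u^b\circ F)(\lambda)=V(\cV)$, which means $u^b_\alpha(f_\cV)=v([\alpha,1])$ for every $\alpha$.

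The third step is uniqueness. Suppose $f=F(\lambda')$ is an element of $\cF$ with $u^b_\alpha(f)=v([\alpha,1])$ for all $\alpha$. Then $(u^b\circ F)(\lambda')=(u^b\circ F)(\lambda)$, so injectivity from Point (1) gives $\lambda'=\lambda$, and hence $f=f_\cV$. Here "element of the family" is read as an element $F(\lambda)$ together with its parameter. If $F$ fails to be injective as a map, distinct parameters still give distinct singular value vectors, so the statement holds at the level of maps as well.

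The only subtle point is the identification of $\underline{v([\alpha,0])}$ and $\underline{v([\alpha,1])}$ with the critical points and critical values of $T_\gamma$. This follows from the definition $\underline{v([\alpha,i])}=T_\gamma^i(u^t_\alpha(T_\gamma))$ and from the fact that $T_\gamma$ maps $I^t_\alpha$ onto $I^b_\alpha$ by a translation, so its left endpoint goes to the left endpoint. Everything else is formal.
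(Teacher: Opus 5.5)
Your proposal is correct and follows the paper's proof, which simply invokes the bijectivity of $u^b\circ F$ and $u^t\circ F$ in Points (1) and (2) of Definition~\ref{DefinitionFullFamily}; you additionally check that $V(\cV)\in\nabla_\pi$ and $U(\cV)\in\nabla^t_\pi$, which the paper takes for granted. Your caveat about $F$ possibly failing to be injective is unnecessary, since injectivity of $u^b\circ F$ already forces injectivity of $F$.
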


\begin{proof}
The unicity properties of $f_\cV$ and $g_\cV$ follow respectively from Points (1) and (2) in Definition~\ref{DefinitionFullFamily}. The Lemma is proved.
\end{proof}

Let $[\alpha_m,i_m]\in\cI_\gamma$ be the label such that 
$
\underline{v([\alpha_m,i_m])}
$ 
is the rightmost element in the standard configuration $\cV^{(\gamma)}$, that is 
$\underline{v([\alpha_m,i_m])}=1-N^{-1}$, 
or in other words
\begin{equation}
\label{EquationRightmostPointConfiguration}
\underline{v([\alpha_m,i_m])}:=
\max\big\{v([\alpha_m,i_m]):[\alpha,i]\in\cI_\gamma\big\}.
\end{equation}
The set $\cI_\gamma$ admits a \emph{geometrical order}, where for any $[\alpha,i]\in\cI_\gamma$ we define 
\begin{equation}
\label{EquationGeometricOrder}
[\alpha,i]^\ast:=
\left\{
\begin{array}{ccc}
1&\quad\text{ if }&\quad[\alpha,i]=[\alpha_m,i_m]
\\
\min\{\underline{v([\chi,l])}\textrm{ ; }\underline{v([\chi,l])}>\underline{v([\alpha,i])}\} 
&\quad\text{ if }&\quad[\alpha,i]\not=[\alpha_m,i_m].
\end{array}
\right.
\end{equation}
In other words 
$
\underline{v([\alpha,i]^\ast)}=\underline{v([\alpha,i])}+1/N
$. 
Recall Equation \eqref{EquationRightEndpointFlatInterval}.

\begin{definition}
\label{DefinitionGammaConfiguration}
A configuration is a preconfiguration $\cV$ such that for any $\alpha$ we have
$$
v([\alpha,0])^\ast>j_\alpha(g_\cV)
$$
strictly, that is the flat interval $L_\alpha(g_\cV)$ of $g_\cV$ does not contain any point of $\cV$ other than $u^t_\alpha(g_\cV)=v([\alpha,0])$. The \emph{configuration space} $\cO_\gamma\subset\Delta^N$ is the set of all configurations $\cV$.
\end{definition}

The map $\cV\to v([\alpha,0])^\ast-j_\alpha(g_\cV)$ is continuous, thus the configuration space $\cO_\gamma$ is an open subset of the simplex $\Delta^N$.

\subsection{Definition of the Thurston map} 
\label{SectionDefinitionThurstonMap}

For $\cV\in\cO_\gamma$ let $f_\cV$ be the map in Equation \eqref{EquationFromConfigurationToGIET}. Let
$
\cV'=\{v'([\alpha,i]);[\alpha,i]\in\cI_\gamma\}\subset[0,1)
$ 
be the set of points defined by 
\begin{equation}
\label{EquationDefinitionThurstonMap}
v'([\alpha,i]):=
\left\{
\begin{array}{cccc}
u^t_\alpha(f_\cV) & \text{for} & i=0
\\
f_\cV^{-1}\big(v([\alpha,i+1])\big) & \text{for} & i\neq0.
\end{array}
\right.
\end{equation}

Observe that the maps in Lemmma \ref{LemmaFromConfigurationToMap} satisfy
\begin{equation}
\label{EquationRelationFunctionsFandG}
g_{\cV'}=f_{\cV}.
\end{equation}

\begin{proposition}
\label{PropositionDefinitionThurstonMap}
For any configuration $\cV$ the set $\cV'$ in Equation~\eqref{EquationDefinitionThurstonMap} is a configuration. In other words we have a well defined map 
$$
\cT_\gamma:\cO_\gamma\to\cO_\gamma
\quad
\textrm{ ; }
\quad
\cV\mapsto\cV'
$$
called the \emph{Thurston map}.
\end{proposition}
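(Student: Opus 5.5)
The plan is to verify directly that $\cV'$ satisfies the two defining conditions. First, $\cV'$ must be a preconfiguration: its points are distinct, they carry the prescribed labels, they are ordered exactly as in $\cV^{(\gamma)}$, and the normalizations at $0$ hold. Second, the flat-interval condition of Definition~\ref{DefinitionGammaConfiguration} must hold for $g_{\cV'}$. For the second condition I use Equation~\eqref{EquationRelationFunctionsFandG}. By construction $U(\cV')=(u^t_\alpha(f_\cV))_{\alpha\in\cA}$, so by Point (2) of Lemma~\ref{LemmaFromConfigurationToMap} and the injectivity of $u^t\circ F$ we get $g_{\cV'}=f_\cV$. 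The condition to check therefore becomes $v'([\alpha,0])^\ast>j_\alpha(f_\cV)$ for every $\alpha$.

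\emph{Step 1: locating the points of $\cV$ in the bottom partition of $f_\cV$.} By Point (1) of Lemma~\ref{LemmaFromConfigurationToMap}, the singular values of $f_\cV$ are exactly the points $v([\beta,1])$. Since $\cV$ is ordered as $\cV^{(\gamma)}$, whose points $\underline{v([\beta,1])}$ are the singular values of $T_\gamma$, a point $v([\alpha,i+1])$ lies in $I^b_\beta(f_\cV)$ if and only if $\underline{v([\alpha,i+1])}\in I^b_\beta(T_\gamma)$.

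Now take $i\neq0$ (with the labels read modulo $\sim$). Then $[\alpha,i+1]$ is not a class $[\beta,1]$, because the predecessors of the classes $[\beta,1]$ are exactly the classes $[\beta,0]$. Hence $v([\alpha,i+1])$ lies in the interior of $I^b_\beta(f_\cV)$. So $v'([\alpha,i])=f_\beta^{-1}(v([\alpha,i+1]))$ is well defined, using the inverse branch $f_\beta^{-1}:I^b_\beta\to J_\beta$, and it lies in $\interior(J_\beta)$, strictly to the right of $j_\beta(f_\cV)\geq u^t_\beta(f_\cV)=v'([\beta,0])$.

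\emph{Step 2: the order.} Within one bottom interval, $f_\beta^{-1}$ is an increasing homeomorphism, just as $T_\gamma^{-1}$ is a translation on $I^b_\beta(T_\gamma)$. Preimages of points in different bottom intervals land in the corresponding top intervals, which are ordered by $\pi_t$ for both maps. Inside $\closure(I^t_\beta(f_\cV))$, the point $u^t_\beta$ is the leftmost one and all the other preimage points lie in $\interior(J_\beta)$. Putting these facts together, the points $v'([\alpha,i])$ are distinct and are ordered exactly as the points $\underline{v([\alpha,i])}=T_\gamma^{-1}(\underline{v([\alpha,i+1])})$ and $u^t_\alpha(T_\gamma)$ are ordered in $\cV^{(\gamma)}$. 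For the normalization, we have $v'([\alpha_0,0])=u^t_{\alpha_0}(f_\cV)=0$. The class $[\beta_0,1]$ coincides with $[\alpha_0,0]$, since both reference points equal $0$, so this normalization is consistent.

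\emph{Step 3: the flat-interval condition.} Let $[\chi,l]=[\alpha,0]^\ast$ be the successor label. There are two cases. If $\underline{v([\chi,l])}\in I^t_\alpha(T_\gamma)$, then $[\chi,l]$ is not of the form $[\cdot,0]$, so Step 1 gives $v'([\chi,l])>j_\alpha(f_\cV)$. Otherwise $I^t_\alpha(T_\gamma)$ contains a single atom, and the successor is $u^t$ of the next letter, or $1$. This is $\geq\sup I^t_\alpha(f_\cV)=\sup J_\alpha>j_\alpha(f_\cV)$, because $J_\alpha$ is a nonempty right-open interval.

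The main obstacle is the label bookkeeping in Step 1. One must check carefully, using Equation~\eqref{EquationIdentificationLabelsCriticalPoints} and the cyclicity of $\pi^{(\ast)}$, that the map $[\alpha,i]\mapsto[\alpha,i+1]$ on $\cI_\gamma$ is a bijection whose preimage of $\{[\beta,1]\}$ is exactly $\{[\beta,0]\}$. This is what ensures that no point defined via $f_\cV^{-1}$ falls on a singular value, or into a flat interval, of $f_\cV$.
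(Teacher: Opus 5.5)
Your proof is correct and follows essentially the paper's route. The preconfiguration property comes from order preservation of the inverse branches $f_\beta^{-1}:I^b_\beta(f_\cV)\to J_\beta(f_\cV)$, split into the same versus different continuity interval cases. The flat-interval condition then comes from $g_{\cV'}=f_\cV$, combined with the label fact that only the class $[\beta,1]$ can pull back into $L_\beta(f_\cV)$, and its predecessor is $[\beta,0]$.

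The difference is only in presentation. The paper argues the last step by contradiction: a non-critical point of $\cV'$ in $L_\beta$ would force $v([\alpha,i+1])=v([\beta,1])$. You instead show directly that non-critical points land in $\interior(J_\beta)$ and then check the geometric successor explicitly, including the case where it is the next critical point or $1$, which the paper leaves implicit.
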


\begin{proof}
Part (1): we first prove that $\cV'$ is a preconfiguration. We have $v'([\alpha,0])=u^t_\alpha(f_\cV)$ for any 
$\alpha\in\cA$. Since $f_\cV$ and $T_\gamma$ have the same combinatorial datum $\pi$, then $v'([\alpha_0,0])=0$ for the letter with $\pi_t(\alpha_0)=1$. Moreover for all other critical points we have
$$
v'([\alpha,0])< v'([\beta,0])  
\quad
\Leftrightarrow
\quad
\underline{v([\alpha,0])}< \underline{v([\beta,0])}.
$$
Now consider any pair of different labels $[\alpha,i],[\beta,j]\in\cI_\gamma$ and assume without loss of generality that 
$
\underline{v([\alpha,i])}<\underline{v([\beta,j])}
$. 
The Proposition follows proving that $v'([\alpha,i])<v'([\beta,j])$. Recall the notation in \S~\ref{SectionSingularValuesMap} and that for any $\alpha\in\cA$ we have 
$\min I^t_\alpha(T_\gamma)=\underline{v([\alpha,0])}$ and 
$\min I^b_\alpha(T_\gamma)=\underline{v([\alpha,1])}$. We consider separately two cases. Assume first that 
$
\underline{v([\alpha,i])}
$ 
and 
$
\underline{v([\beta,j])}
$ 
are in the same continuity interval of 
$
T_\gamma
$, 
that is there is a letter $\chi\in\cA$ such that  
$$
\min I^t_\chi(T_\gamma)
=
\underline{v([\chi,0])}
\leq
\underline{v([\alpha,i])}<\underline{v([\beta,j])}
<
\sup I^t_\chi(T_\gamma).
$$ 
In this case we have
\begin{align*}
\min I^b_\chi(T_\gamma)=
\underline{v([\chi,1])}
\leq
&
T_\gamma\big(\underline{v([\alpha,i])}\big)=
\underline{v([\alpha,i+1])}<
\\
&
\underline{v([\beta,j+1])}=
T_\gamma\big(\underline{v([\beta,j])}\big)
<
\sup I^b_\chi(T_\gamma).
\end{align*}
The geometrical order of the points in $\cV$ is the same as in $\cV^{(\gamma)}$, hence we have 
$$
\min I^b_\chi(f_\cV)
=
v([\chi,1])
\leq
v([\alpha,i+1])<v([\beta,j+1])
<
\sup I^b_\chi(f_\cV).
$$ 
It follows that $v([\alpha,i+1])$ and $v([\beta,j+1])$ belong to the same (right-open) continuity interval of $f^{-1}_\cV$, and thus 
$$
v'([\alpha,i])=
f_\cV^{-1}\big(v([\alpha,i+1])\big) 
<
f_\cV^{-1}\big(v([\beta,j+1])\big)
=
v'([\beta,j]).
$$

Assume now that 
$
\underline{v([\alpha,i])}
$ 
and 
$
\underline{v([\beta,j])}
$ 
belong to two different continuity intervals of $T_\gamma$, labelled respectively by letters $\chi_1$ and $\chi_2$ with $\pi_t(\chi_2)\geq\pi_t(\chi_1)+1$, that is  
\begin{align*}
\min I^t_{\chi_1}(T_\gamma)
\leq
&
\underline{v([\alpha,i])}<
\sup I^t_{\chi_1}(T_\gamma)
\leq
\min I^t_{\chi_2}(T_\gamma)
\leq
\underline{v([\beta,j])}<
\sup I^t_{\chi_2}(T_\gamma,0).
\end{align*}
In this case we have
\begin{align*}
&
\min I^b_{\chi_1}(T_\gamma)=
\underline{v([\chi_1,1])}\leq
\underline{v([\alpha,i+1])}<
\sup I^b_{\chi_1}(T_\gamma)
\\
&
\min I^b_{\chi_2}(T_\gamma)=
\underline{v([\chi_2,1])}\leq
\underline{v([\beta,j+1])}<
\sup I^b_{\chi_2}(T_\gamma).
\end{align*}
As in the previous case we have 
\begin{align*}
&
\min I^b_{\chi_1}(f_\cV)=
v([\chi_1,1])\leq
v([\alpha,i+1])<
\sup I^b_{\chi_1}(f_\cV)
\\
&
\min I^b_{\chi_2}(f_\cV)=
v([\chi_2,1])\leq
v([\beta,j+1])<
\sup I^b_{\chi_2}(f_\cV).
\end{align*}
Recalling Equation~\eqref{EquationDefinitionThurstonMap} and arguing as in the previous case we get
\begin{align*}
&
\min I^t_{\chi_1}(f_\cV)=
v'([\chi_1,0])\leq
v'([\alpha,i])<
\sup I^t_{\chi_1}(f_\cV)
\\
&
\min I^t_{\chi_2}(f_\cV)=
v'([\chi_2,0])\leq
v'([\beta,j])<
\sup I^t_{\chi_2}(f_\cV).
\end{align*}
The proof is completed observing that 
$
\sup I^t_{\chi_1}(f_\cV)\leq \sup I^t_{\chi_2}(f_\cV)
$, 
because we proved yet that the critical points of $f_\cV$ have the same geometrical order as those of $T_\gamma$.

\smallskip

Part (2): now we prove that the preconfiguration $\cV'$ is a configuration. 
Assume that we have $i\not=0$ and a point $v'([\alpha,i])\in L_\beta(g_{\cV'})$ . Since 
$
v'([\alpha,i])=f_\cV^{-1}(v([\alpha,i+1]))
$ 
then Equation \eqref{EquationRelationFunctionsFandG} implies 
$$
v([\alpha,i+1])\in f_\cV\big( L_\beta(g_{\cV'})\big)
=
f_\cV\big( L_\beta(f_{\cV})\big)=v([\beta,1]).
$$
Thus $[\alpha,i+1]=[\beta,1]$ and hence $[\alpha,i]=[\beta,0]$. We get 
$v'([\alpha,i])=v'([\beta,0])$, that is the only element of $\cV'$ intersecting $L_\beta(g_{\cV'})$ is the critical point $v'([\beta,0])$. The Proposition is proved.
\end{proof}

\begin{lemma}
\label{LemmaUniformContinuityThurstonMap}
The Thurston map 
$
\cT_\gamma:\cO_\gamma\to\cO_\gamma
$ 
defined by Equation~\eqref{EquationDefinitionThurstonMap} is uniformly continuous.
\end{lemma}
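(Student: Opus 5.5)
We prove uniform continuity of $\cT_\gamma$ one coordinate $v'([\alpha,i])$ at a time, using the factorisation $\cV\mapsto f_\cV$ of Equation~\eqref{EquationFromConfigurationToGIET}. As observed right after that equation, this map is uniformly continuous on $\Delta^N$, hence on $\cO_\gamma$: the projection $V$ is linear, $(u^b\circ F)^{-1}$ extends to a homeomorphism of the compact set $\overline{\nabla_\pi}$ by Point~(1) of Definition~\ref{DefinitionFullFamily}, and $F$ extends continuously to the compact $\overline{\Delta^d}$ by Point~(3). So for every $\eta>0$ there is $\delta>0$ with $\distance(f_\cV,f_{\cW})<\eta$ whenever $\|\cV-\cW\|<\delta$. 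It then suffices to bound $|v'([\alpha,i])-w'([\alpha,i])|$ by a function of $\distance(f_\cV,f_\cW)$ and $\|\cV-\cW\|$ that tends to $0$ and does not depend on $\cV,\cW$.

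\emph{Coordinates with $i=0$.} Here $v'([\alpha,0])=u^t_\alpha(f_\cV)$. The singular point $u^t_\alpha$ is the abscissa of the left endpoint of $\overline{G_{f_\alpha}}$, and it is $1$-Lipschitz with respect to the distance~\eqref{EquationDinstanceGIETs}. Hence
$$|v'([\alpha,0])-w'([\alpha,0])|\leq\distance(f_\cV,f_\cW).$$

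\emph{Coordinates with $i\neq0$.} Let $\chi$ be the letter such that $\underline{v([\alpha,i+1])}\in I^b_\chi(T_\gamma)$. Since the order of points is preserved, $v([\alpha,i+1])\in I^b_\chi(f_\cV)$ and $w([\alpha,i+1])\in I^b_\chi(f_\cW)$. Then $v'([\alpha,i])=(f_{\cV,\chi})^{-1}(v([\alpha,i+1]))$, where the inverse branch goes onto $J_\chi$. We argue as in the proof of Proposition~\ref{PropositionContinuityRenormalizationHaudorffdistance}. Corollary~\ref{CorollaryHausdorffDistanceInverse} gives
$$\distance\big((f_{\cV,\chi})^{-1},(f_{\cW,\chi})^{-1}\big)\leq\distance(f_\cV,f_\cW).$$
Then Equation~\eqref{EquationHausdorffDistanceBoundsDifference} yields
$$|v'([\alpha,i])-w'([\alpha,i])|\leq \distance(f_\cV,f_\cW)+\omega\big(\distance(f_\cV,f_\cW)+|v([\alpha,i+1])-w([\alpha,i+1])|\big).$$

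\emph{Main obstacle: uniformity of $\omega$.} The key point is that $\omega$ must be the \emph{uniform} modulus of continuity of Equation~\eqref{EquationUniformContinuityModulus}, valid for all inverse branches $f_\chi^{-1}$ with $f$ in the family, rather than the modulus $\omega_{f_\chi^{-1}}$ of a single map. This is exactly what upgrades continuity to uniform continuity. The uniform modulus for inverse branches comes from the relative compactness provided by Point~(3) of Definition~\ref{DefinitionFullFamily} and the Ascoli--Arzel\`a type argument of \S~\ref{AppendixUniformContinuityModulus}. I would check in particular that it holds for the inverse branches $f_\chi^{-1}:I^b_\chi\to J_\chi$ even near degenerations, where a branch collapses or its flat interval grows. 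Near the boundary of the family the graphs of the inverse branches converge in the Hausdorff distance, and a monotone map whose closed graph is close to a limiting graph has controlled oscillation; this is where I would spend most care.

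Combining the two cases, each coordinate of $\cT_\gamma$ varies by at most a fixed function $\Phi(\|\cV-\cW\|)\to0$, which proves the Lemma. As a consequence, $\cT_\gamma$ extends continuously to $\overline{\cO_\gamma}$.
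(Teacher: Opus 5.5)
Your proposal is correct and follows the paper's proof essentially verbatim. The case $i=0$ uses uniform continuity of $\cV\mapsto f_\cV$ together with $v'([\alpha,0])=u^t_\alpha(f_\cV)$. The case $i\neq0$ combines Corollary~\ref{CorollaryHausdorffDistanceInverse}, Equation~\eqref{EquationHausdorffDistanceBoundsDifference} and the uniform modulus of Equation~\eqref{EquationUniformContinuityModulus}, giving a bound of the form $\omega_F(\delta)+\omega\big(\omega_F(\delta)+\delta\big)$.

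At the point where you flag extra care, the paper simply invokes Equation~\eqref{EquationUniformContinuityModulus}, which it justifies after Definition~\ref{DefinitionFullFamily} via Point~(3) and Appendix~\ref{AppendixUniformContinuityModulus}. So your argument is complete as it stands.
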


\begin{proof}
For a configuration $\cV\in\cO_\gamma$ set 
$
\cV':=\widehat{\cT}_\gamma(\cV)
$, 
then for $[\alpha,i]\in\cI_\gamma$ let $v([\alpha,i])$ and $v'([\alpha,i])$ be the points in $\cV$ and $\cV'$ respectively. We prove that for any $[\alpha,i]\in\cI_\gamma$ the map
$$
\cV\mapsto v'([\alpha,i])
$$
is uniformly continuous. This is true for $i=0$ because the map $\cV\mapsto f_\cV$ in 
Equation \eqref{EquationFromConfigurationToGIET} is uniformly continuous and for any $\alpha\in\cA$ we have 
$v'([\alpha,0])=u^t_\alpha(f_\cV)$.

In order to treat the other points, take a second configuration $\widetilde{\cV}$ and let 
$
\widetilde{v}([\alpha,i])
$ 
with $[\alpha,i]\in\cI_\gamma$ be its elements. 
Set 
$
\widetilde{\cV}':=\cT_\gamma(\widetilde{\cV})
$ 
and denote $\widetilde{v}'([\alpha,i])$ the points in $\widetilde{\cV}'$. Both $\cV$ and $\widetilde{\cV}$ are configurations, thus for $[\alpha,i]\in\cI_\gamma$ with $i\not=0$ let $\chi\in\cA$ be the letter with
$$
u^b_\chi(f_\cV)<v([\alpha,i+1])<\sup I^b_\chi(f_\cV)
\quad\text{and}\quad
u^b_\chi(f_{\widetilde{\cV}})<\widetilde{v}([\alpha,i+1])<\sup I^b_\chi(f_{\widetilde{\cV}}).
$$
Denote 
$
f_\chi:=f_\cV|_{J_\chi(f_\cV)}
$ 
and 
$
\widetilde{f}_\chi:=f_{\widetilde{\cV}}|_{J_\chi(f_{\widetilde{\cV}})}
$ 
the restrictions of $f_\cV$ and $f_{\widetilde{\cV}}$ to $J_\chi(f_\cV)$ and to $J_\chi(f_{\widetilde{\cV}})$ respectively. We have 
\begin{align*}
&
|v'([\alpha,i])-\widetilde{v}'([\alpha,i)]|
=
\big|
f_\chi^{-1}\big(v([\alpha,i+1])\big)-\widetilde{f}_\chi^{-1}\big(\widetilde{v}([\alpha,i+1])\big)
\big|\leq
\\
&
\distance(f_\chi^{-1},\widetilde{f}_\chi^{-1})
+
\omega\Big(
\distance(f_\chi^{-1},\widetilde{f}_\chi^{-1})
+
\big|v([\alpha,i+1])\big)-\widetilde{v}([\alpha,i+1])\big|
\Big)\leq
\\
&
\distance(f_\cV,f_{\widetilde{\cV}})
+
\omega\Big(
\distance(f_\cV,f_{\widetilde{\cV}})
+
|\cV-\widetilde{\cV}|
\Big)
\leq
\omega_F(|\cV-\widetilde{\cV}|)
+
\omega\Big(
\omega_F(|\cV-\widetilde{\cV}|)
+
|\cV-\widetilde{\cV}|
\Big).
\end{align*}
Here the first inequality follows combining Equations \eqref{EquationHausdorffDistanceBoundsDifference} and \eqref{EquationUniformContinuityModulus}, while the second follows from Corollary \ref{CorollaryHausdorffDistanceInverse}. The third inequality holds because the map in 
Equation \eqref{EquationFromConfigurationToGIET} is uniformly continuous, thus there exists a modulus of continuity $\omega_F$ such that we have 
$
\distance(f_\cV,f_{\widetilde{\cV}})\leq\omega_F\big(|\cV-\widetilde{\cV}|\big)
$. 
Finally observe that for $\delta\to0$ we have
$$
\omega_F(\delta)+\omega\Big(\omega_F(\delta)+|\delta|\Big)\to0,
$$
thus the expression above is a modulus of continuity for $\cT_\gamma$. The Lemma is proved.
\end{proof}

\subsection{Extended map to the closure of the configuration space}
\label{SectionBoundaryConfigurationSpace}

According to Definition \ref{DefinitionGammaConfiguration} the configuration space $\cO_\gamma$ is an open subset of 
$\Delta^N$. A \emph{degenerate configuration} $\cV\in\overline{\Delta^N}$ is a set of points $v([\alpha,i])\in[0,1]$ labeled by $[\alpha,i]\in\cI_\gamma$ such that there exist configurations $\cV_n\in\cO_\gamma$, where $n\in\NN$ and 
$
\cV_n=\{v_n([\alpha,i]);[\alpha,i]\in\cI_\gamma\}
$,  
with $\cV_n\to\cV$, that is 
$$
v([\alpha,i])=\lim_{n\to\infty}v_n([\alpha,i])
\quad
\textrm{ for any }
\quad
[\alpha,i]\in\cI_\gamma.
$$
We also require that degenerate configurations are not configurations in the standard sense, that is the sequence 
$\cV_n$ satisfies one of the following properties:
\begin{enumerate}
\item
There exist two labels $[\alpha,i]\not=[\beta,j]$ such that 
$
v_n([\alpha,i])-v_n([\beta,j])\to0
$.
\item
We have $v_n([\alpha_m,i_m])\to1$ for $n\to\infty$. 
\item
A point in the configuration converges to a flat interval, that is we have $\alpha\in\cA$ with 
$$
v_n([\alpha,0]^\ast)-j_\alpha(g_{\cV_n})\to0.
$$
\end{enumerate}

The boundary $\partial\cO_\gamma$ of $\cO_\gamma$ is the set of all degenerate configurations. The closure of the configuration space is 
$
\overline{\cO(\gamma)}:=\cO(\gamma)\cup\partial\cO_\gamma
$. 
The Thurston map 
$
\cT_\gamma:\cO_\gamma\to\cO_\gamma
$ 
is uniformly continuous by Lemma \ref{LemmaUniformContinuityThurstonMap}, therefore it admits an unique continuous extension to the closure $\overline{\cO_\gamma}$ of its domain
\begin{equation}
\label{EquationDefinitionExtensionThurstonMap}
\widehat{\cT}_\gamma:\overline{\cO_\gamma}\to\overline{\cO_\gamma}
\quad
\textrm{ ; }
\quad
\cV\mapsto\widehat{\cT}_\gamma(\cV):=\lim_{n\to\infty}\cT_\gamma(\cV_n),
\end{equation}
where for any a configuration (degenerate or not) $\cV\in\overline{\cO_\gamma}$ and any sequence of configurations 
$\cV_n\in\cO_\gamma$ with $\cV_n\to\cV$ the limit above is well defined, it belongs to  $\overline{\cO_\gamma}$ and it does not depend on the specific choice of the sequence $\cV_n$ converging to $\cV$. Moreover if $\cV\in\cO_\gamma$ then we have $\widehat{\cT}_\gamma(\cV)=\cT_\gamma(\cV)$. 
Setting $\cV':=\widehat{\cT}_\gamma(\cV)$ and writing 
$
\cV'=\{v'([\alpha,i]);[\alpha,i]\in\cI_\gamma\}
$ 
we have
$$
v'([\alpha,i]):=
\left\{
\begin{array}{cccc}
\lim_{n\to\infty} u^t_\alpha(f_{\cV_n}) & \text{for} & i=0
\\
\lim_{n\to\infty}f_{\cV_n}^{-1}\big(v_n([\alpha,i+1])\big) & \text{for} & i\neq0.
\end{array}
\right.
$$

\section{Fixed points for the Thurston map: proof of Theorem~\ref{TheoremRealizationRauzyPath}}
\label{SectionExistenceFixedPointThurstonMap}

In this section we show that the Thurston map cannot fix an element of the boundary of the configuration space (\S~\ref{SectionCyclicBehavior}). This completes the proof of Theorem~\ref{TheoremRealizationRauzyPath} (\S~\ref{SectionProofTheoremRealizationRauzyPath}). We use the same notation as in \S~\ref{SectionProofFullFamilyTheorem} and \S~\ref{SectionThurstonMap}.

\subsection{Degenerate GIETs associated to configurations in the boundary}

Each factor map in Equation \eqref{EquationFromConfigurationToGIET} can be continuously extended to the closure of its domain, thus by composition we get a map
$$
\overline{\Delta^N} \xrightarrow{V(\cdot)} 
\overline{\nabla_\pi} \xrightarrow{(u^b\circ F)^{-1}} 
\overline{\Delta^d} \xrightarrow{\widehat{F}} 
\widehat{\cC}\big(\pi,[0,1)\big),
$$
where the second factor is the extension to the closed simplex of the homeomorphism in Point (1) of Definition~\ref{DefinitionFullFamily} and the third factor is the extended map in Point (3). 
Postcomposing the map above with the inclusion $\overline{\cO(\gamma)}\subset\overline{\Delta^N}$ we get a map
\begin{equation}
\label{EquationFromConfigurationToGIET(Degenerate)}
\overline{\cO(\gamma)}\to\widehat{\cC}(\pi,[0,1))
\textrm{ ; }
\quad
\cV\mapsto D_\cV=(f_\cV,S_\cV).
\end{equation}

\begin{remark}
\label{RemarkNoDegenerateMapG}
The map $\cV\mapsto D_\cV$ is the degenerate counterpart of the map $\cV\mapsto f_\cV$ in 
Equation \eqref{EquationFromConfigurationToGIET}. We don't need to define a counterpart of the map $\cV\mapsto g_\cV$, indeed in the next Lemmas \ref{LemmaCyclicBehavior} and \ref{LemmaCyclicBehaviorBis} we look at flat intervals only \emph{after} the action of $\widehat{\cT}_\gamma$.
\end{remark}

We have  
$
D_\cV=\widehat{F}\big(\lambda(\cV)\big)
$, 
where $\lambda(\cdot)$ is the composition of the first two factor maps:
$$
\overline{\Delta^N} \xrightarrow{V(\cdot)} 
\overline{\nabla_\pi} \xrightarrow{(u^b\circ F)^{-1}} 
\overline{\Delta^d},
\qquad
\cV\mapsto\lambda(\cV).
$$
For a degenerate configuration $\cV\in\partial\cO(\gamma)$ it is possible to have a non degenerate vector  
$\lambda(\cV)\in\Delta^d$ and not necessarily $\lambda(\cV)\in\partial\Delta^d$. 
If $\lambda=\lambda(\cV)\in\Delta^d$, then 
$
\widehat{F}(\lambda)=f_\lambda\in\cC\big(\pi,[0,1)\big)
$. 
Otherwise if $\lambda=\lambda(\cV)\in\partial\Delta^d$, then we have a degeneration 
$
\widehat{F}(\lambda)=D_\lambda=(f_\lambda, S_\lambda)$, where $S_\lambda$ is the singular part and 
$
f_\lambda:[0,1)\to[0,1)
$ 
is a GIET whose combinatorial datum is a reduction of $\pi$. In both cases we say that $f_\lambda$ is the regular part of $\widehat{F}(\lambda)$.

\begin{lemma}
\label{LemmaControlExpansionInverseBranches}
Let $\lambda\in\overline{\Delta^d}$ and 
$
(\lambda_n)_{n\in\NN}\subset\Delta^d
$ 
be a sequence with $\lambda_n\to\lambda$ as $n\to\infty$. Let $I_n$ be a sequence of open intervals such that there exists $\alpha\in\cA$ with 
$$
I_n\subset \interior I^b_\alpha(f_{\lambda_n})
\quad
\textrm{ for any }
\quad
n\in\NN,
$$
so that $f_{\lambda_n}^{-1}(I_n)$ is an interval too with 
$
f_{\lambda_n}^{-1}(I_n)\subset J_\alpha(f_{\lambda_n})
$.
\begin{enumerate}
\item
If $|I_n|\to0$ as $n\to\infty$ then we have $|f_{\lambda_n}^{-1}(I_n)|\to0$. 
\item
If $\inf_{n\in\NN}|I_n|>0$ then we have $\inf_{n\in\NN}|f_{\lambda_n}^{-1}(I_n)|>0$.
\end{enumerate}
\end{lemma}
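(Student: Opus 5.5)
Both points will follow from the uniform modulus of continuity in Equation~\eqref{EquationUniformContinuityModulus}, which Point (3) of Definition~\ref{DefinitionFullFamily} provides for the whole family. Write $f_n:=f_{\lambda_n}$. Its branch $f_{n,\alpha}$ maps $J_\alpha(f_n)$ homeomorphically onto $I^b_\alpha(f_n)$. Hence $f_n^{-1}(I_n)=(f_{n,\alpha}|_{J_\alpha})^{-1}(I_n)$ is an interval in $J_\alpha(f_n)$, and its endpoints are the images of the endpoints of $I_n$ under the inverse branch.

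For Point (1), apply the second line of \eqref{EquationUniformContinuityModulus} to the endpoints $y<y'$ of $\closure(I_n)$. These lie in the closure of $I^b_\alpha(f_n)$, so we first extend the estimate to the closure by continuity. This gives
$$
|f_n^{-1}(I_n)|=\big|f_{n,\alpha}^{-1}(y')-f_{n,\alpha}^{-1}(y)\big|\leq\omega(|I_n|)\to0 .
$$
The modulus $\omega$ is independent of $n$, and this uniformity is exactly what is needed here.

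For Point (2), use the first line of \eqref{EquationUniformContinuityModulus}, now for the forward branch. Let $x<x'$ be the endpoints of $f_n^{-1}(I_n)$. Then $f_{n,\alpha}(x)$ and $f_{n,\alpha}(x')$ are the endpoints of $I_n$, because the branch is a homeomorphism on $J_\alpha$ and $f_n^{-1}(I_n)\subset J_\alpha(f_n)$. This gives
$$
|I_n|\leq\omega\big(|f_n^{-1}(I_n)|\big).
$$
Now argue by contradiction. Suppose that along a subsequence $|f_n^{-1}(I_n)|\to0$. Since $\omega(\delta)\to0$ as $\delta\to0$, we would get $|I_n|\to0$, which contradicts $\inf_n|I_n|>0$.

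The main point to check is that \eqref{EquationUniformContinuityModulus} really holds uniformly for all $f$ in $F(\Delta^d)$. This includes parameters $\lambda_n$ approaching $\partial\Delta^d$, where the branches may shrink to points or to flat pieces. This uniformity is where Point (3) of Definition~\ref{DefinitionFullFamily} is used. Since $\widehat F$ is continuous on the compact set $\overline{\Delta^d}$, the closures of the graphs of $f_\alpha$ and of $f_\alpha^{-1}$ form a relatively compact family in the Hausdorff metric, and the Ascoli--Arzel\`a-type argument cited in the paper then yields a common $\omega$. I would quote that statement and check only that it covers inverse branches restricted to $J_\alpha$; this holds because the graph of $f_\alpha^{-1}$ is the reflection of the graph of $f_\alpha|_{J_\alpha}$, and Corollary~\ref{CorollaryHausdorffDistanceInverse} controls its distance. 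Boundary endpoints cause no trouble, because all the maps involved extend continuously to closures. The limit $\lambda$ itself plays no role beyond guaranteeing that we are inside this compact family.
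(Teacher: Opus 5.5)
Your argument is correct, but it takes a different route from the paper. The paper does not use the uniform moduli \eqref{EquationUniformContinuityModulus}; it works directly with the limit object. It writes $I_n=(c_n,d_n)$ and $f_{\lambda_n}^{-1}(I_n)=(a_n,b_n)$ and extracts subsequences with $a_n\to a$, $b_n\to b$, $c_n\to c$, $d_n\to d$. Continuity of $\widehat F$ at $\lambda$ then gives $c=f_\lambda(a)$ and $d=f_\lambda(b)$, where $f_\lambda$ is the regular part of $\widehat F(\lambda)$. Point (1) follows from injectivity of $f_\lambda$ on $\closure(J_\alpha)$: if $c=d$ then $a=b$. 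The case where the $\alpha$-branch collapses is disposed of separately. Point (2) follows because $f_\lambda$ is single-valued: if $a=b$ then $c=d$.

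Your version front-loads all the compactness into the two equicontinuity statements. The lemma then reduces to $|f_n^{-1}(I_n)|\leq\omega(|I_n|)$ and $|I_n|\leq\omega(|f_n^{-1}(I_n)|)$. These bounds are quantitative, hold uniformly over the whole family, need no case distinction, and, as you note, make the hypothesis $\lambda_n\to\lambda$ superfluous.

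The price is that you rely on the inverse-branch half of \eqref{EquationUniformContinuityModulus}, which the paper only sketches. Your justification of that half is slightly off target. Corollary~\ref{CorollaryHausdorffDistanceInverse} gives continuity of the inverse graphs in the parameter. The Ascoli-type argument, however, also needs every limit set to be the graph of a continuous map. This holds for two reasons. By definition of $\widehat{\cC}(\pi,[0,1))$, the $\alpha$-branch of the regular part of $\widehat F(\lambda)$ is a homeomorphism on $J_\alpha$. And $j_\alpha$ converges, because of the $|j_\alpha-j'_\alpha|$ term in the metric. Together these mean no new flat piece can form inside $J_\alpha$ in the limit.

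That is exactly the injectivity the paper uses in Point (1). So the two proofs have the same content: forward equicontinuity corresponds to the limits being graphs of functions, and inverse equicontinuity corresponds to their injectivity on $J_\alpha$.
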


\begin{proof}
For any $n\in\NN$ set $c_n:=\inf I_n$, $d_n:=\sup I_n$, $a_n:=f_{\lambda_n}^{-1}(c_n)$ and 
$b_n:=f_{\lambda_n}^{-1}(d_n)$. Observe that Point (1) is obvious if $|I^t_\alpha(f_{\lambda_n})|\to0$. Otherwise there exists non-empty right-open intervals $J_\alpha$ and $I^b_\alpha$ with 
$
\closure\big(J_\alpha(f_{\lambda_n})\big)\to \closure(J_\alpha)
$ 
and 
$
\closure\big(I^b_\alpha(f_{\lambda_n})\big)\to \closure(I^b_\alpha)
$ 
in the Hausdorff distance, and the restriction 
$
f_\lambda:J_\alpha \to I^b_\alpha
$ 
is a homeomorphism, where $f_\lambda$ is the regular part of $\widehat{F}(\lambda)$. 
Its continuous extension, still denoted $f_\lambda$, is an homeomorphism from $\closure(J_\alpha)$ onto 
$\closure(I^b_\alpha)$. If $|b_n-a_n|$ does not converge to zero, consider $a,b\in\closure(I^b_\alpha)$ with $a<b$ strictly and a subsequence with $a_n\to a$ and $b_n\to b$. Take $c,d\in\closure(I^b_\alpha)$ and a subsubsequence such that $(a_n,c_n)\to(a,c)$ and $(b_n,d_n)\to(b,d)$.  
The continuity of $\widehat{F}$ implies  
$
\distance\big(f_{\lambda_n},\widehat{F}(\lambda)\big)\to0
$ 
and therefore $c=f_\lambda(a)$ and $d=f_\lambda(b)$. But $|d_n-c_n|\to0$, thus $c=d$. Injectivity of $f_\lambda$ implies $a=b$. This is absurd, thus Point (1) is proved. 
In order to prove Point (2) observe that we have $\inf_{n\in\NN}|I^b_\alpha(f_{\lambda_n})|>0$, therefore the restriction 
$
f_\lambda:J_\alpha \to I^b_\alpha
$ 
is a homeomorphism between non-trivial intervals. If $|b_n-a_n|\to0$ but $\inf|d_n-b_n|>0$, then modulo subsequences there are points $a,c,d$ with $a_n,b_n\to a$, $c_n\to c$ and $d_n\to d$, where $c<d$ strictly. Thus $f_\lambda(a)=c$ but also $f_\lambda(a)=d$, which is absurd. The Lemma is proved.
\end{proof}

\subsection{Behavior at the boundary}
\label{SectionCyclicBehavior}

Let $\cV^{(\gamma)}$ be the standard configuration in Equation~\eqref{EquationReferenceConfiguration}. 
Recall that the critical points and critical values of $T_\gamma$ are respectively 
$\underline{v([\alpha,0])}$ and $\underline{v([\alpha,1])}$ with $\alpha\in\cA$.
\begin{lemma}
\label{LemmaCyclicBehavior(CriticalValues)}
Fix $\alpha\in\cA$. We have 
$$
\underline{v([\alpha,1]^\ast)}=\sup I^b_{\alpha}(T_\gamma)
\Leftrightarrow
\underline{v([\alpha,0]^\ast)}=\sup I^t_{\alpha}(T_\gamma).
$$
Otherwise for $[\chi,j]:=[\alpha,1]^\ast$ we have
$$
\underline{v([\chi,j])}\in \interior\big(I^b_\alpha(T_\gamma)\big)
\Leftrightarrow
\left\{
\begin{array}{l}
[\alpha,0]^\ast=[\chi,j-1]
\\
\underline{v([\chi,j-1])}\in \interior\big(I^t_\alpha(T_\gamma)\big).
\end{array}
\right.
$$
\end{lemma}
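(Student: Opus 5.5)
The plan is to use only that $T_\gamma$ restricted to $I^t_\alpha(T_\gamma)$ is a translation onto $I^b_\alpha(T_\gamma)$, and that $\cV^{(\gamma)}$ is the single $T_\gamma$-orbit of the $N$ points $k/N$, $k=0,\dots,N-1$. Write $\tau_\alpha$ for the translation $x\mapsto x+\underline{v([\alpha,1])}-\underline{v([\alpha,0])}$. This map restricts to a bijection from $\cV^{(\gamma)}\cap I^t_\alpha(T_\gamma)$ onto $\cV^{(\gamma)}\cap I^b_\alpha(T_\gamma)$, since $T_\gamma$ maps $\cV^{(\gamma)}$ onto itself and $I^t_\alpha$ onto $I^b_\alpha$. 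On labels it sends $[\chi,l]$ to $[\chi,l+1]$.

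For the first equivalence, I will use that the endpoints of $I^t_\alpha(T_\gamma)$ and $I^b_\alpha(T_\gamma)$ lie in $\frac1N\ZZ$. Indeed $\lambda^{(\gamma)}\in\frac1N\NN^\cA$, so all $u^t,u^b$ are points of $\cV^{(\gamma)}$ or equal to $1$. Then $\underline{v([\alpha,0]^\ast)}=\underline{v([\alpha,0])}+1/N$ by the definition in Equation~\eqref{EquationGeometricOrder}, with the convention that the value is $1$ at the rightmost point. Hence the condition $\underline{v([\alpha,0]^\ast)}=\sup I^t_\alpha$ is equivalent to $\lambda^{(\gamma)}_\alpha=1/N$. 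By the same reasoning, $\underline{v([\alpha,1]^\ast)}=\sup I^b_\alpha$ is also equivalent to $\lambda^{(\gamma)}_\alpha=1/N$, because $|I^t_\alpha|=|I^b_\alpha|$. This gives the first claim.

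In the remaining case, $\lambda^{(\gamma)}_\alpha\geq 2/N$. Both $I^t_\alpha$ and $I^b_\alpha$ then contain at least two grid points, and the next grid point after the left endpoint lies in the interior. Set $[\chi,j]:=[\alpha,1]^\ast$. Then $\underline{v([\chi,j])}=\underline{v([\alpha,1])}+1/N\in\interior I^b_\alpha$. Applying $\tau_\alpha^{-1}=T_\gamma^{-1}|_{I^b_\alpha}$ gives the point $\underline{v([\alpha,0])}+1/N\in\interior I^t_\alpha$. This point equals $\underline{v([\alpha,0]^\ast)}$, and its image under $T_\gamma$ is $\underline{v([\chi,j])}$. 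Because the labelling is injective on $\cV^{(\gamma)}$ and compatible with $[\cdot,i]\mapsto[\cdot,i+1]$, we get $[\alpha,0]^\ast=[\chi,j-1]$, which gives the forward implication. Conversely, suppose $[\alpha,0]^\ast=[\chi,j-1]$ and $\underline{v([\chi,j-1])}\in\interior I^t_\alpha$. Applying $\tau_\alpha$ gives $\underline{v([\chi,j])}=\underline{v([\alpha,1])}+1/N$, which lies in $\interior I^b_\alpha$. The statement's left-hand side, however, is phrased for $[\chi,j]=[\alpha,1]^\ast$, and the analysis shows it is automatic in this case. So the equivalence holds with both sides true. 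I will present it as two implications, each a one-line translation argument.

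The main (mild) obstacle is bookkeeping. One must check that the endpoints of the atoms of $\cP(T_\gamma,r)$ and of $I^{t/b}_\alpha$ really lie on the grid $\frac1N\ZZ$, so that "next point of the configuration" means "plus $1/N$". One must also treat the convention at the rightmost point $[\alpha_m,i_m]$, where $[\cdot]^\ast$ is defined as $1$. That case arises only when $\sup I^t_\alpha=1$ or $\sup I^b_\alpha=1$ with $\lambda_\alpha=1/N$, and it falls under the first equivalence.
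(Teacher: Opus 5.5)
Your argument is correct, and its core is the paper's: the branch $T_\gamma|_{I^t_\alpha(T_\gamma)}$ is an increasing bijection onto $I^b_\alpha(T_\gamma)$ that maps the closed orbit $\cV^{(\gamma)}$ to itself. Hence the first orbit point to the right of $u^t_\alpha(T_\gamma)$ inside $I^t_\alpha(T_\gamma)$ goes to the first orbit point to the right of $u^b_\alpha(T_\gamma)$ inside $I^b_\alpha(T_\gamma)$.

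The two proofs differ in how they decide whether such interior points exist. The paper argues purely by order: the interior of $I^t_\alpha(T_\gamma)$ meets the orbit if and only if the interior of $I^b_\alpha(T_\gamma)$ does. You instead use the metric fact $\cV^{(\gamma)}=\frac1N\ZZ\cap[0,1)$. This holds because the $N$ atoms of $\cP(T_\gamma,r)$ are isometric copies of the intervals of $T_\gamma^{(r)}=T(\pi^{(\ast)},N^{-1}\vec{1})$, and it reduces both sides of the first equivalence to $\lambda^{(\gamma)}_\alpha=1/N$.

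Your version makes explicit that in the ``otherwise'' case both sides of the second equivalence hold automatically. The statement is then really a computation of $[\alpha,0]^\ast$ in terms of $[\alpha,1]^\ast$. The paper's order-theoretic version does not rely on the equal spacing of $\cV^{(\gamma)}$, and it is the form invoked again (``arguing as in Lemma~\ref{LemmaCyclicBehavior(CriticalValues)}'') in Cases (2) and (3) of Remark~\ref{RemarkCyclicBehavior(3cases)}. Your grid argument would handle those cases for $T_\gamma$ just as easily.
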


\begin{proof}
Recall that $\cV^{(\gamma)}$ is a closed orbit of $T_\gamma$. Moreover $T_\gamma^{-1}$ acts continuously on 
$I^b_{\alpha}(T_\gamma)$. Therefore $I^t_\alpha(T_\gamma)$ contains points in the orbit in its interior if and only if $I^b_\alpha(T_\gamma)$ does. 
\end{proof}

\begin{remark}
\label{RemarkCyclicBehavior(3cases)}
The $d+1$ endpoints of the continuity intervals of $T_\gamma^{-1}$ belong to $\cV^{(\gamma)}\cup\{1\}$, 
where excluding $1$ all such endpoints are critical values of $T_\gamma$. Given two consecutive elements 
$
\underline{v([\alpha,i])}<\underline{v([\alpha,i])^\ast}
$ 
of $\cV^{(\gamma)}$, there are three possible cases.
\begin{enumerate}
\item
$\underline{v([\alpha,i])}$ is an endpoint, thai is $i=1$ and $\underline{v([\alpha,1])}$ is a critical value. In particular $0=\underline{v([\alpha,1])}$ is a critical value for the letter $\alpha$ with 
$\pi_b(\alpha)=1$. 
\begin{enumerate}
\item 
If 
$
\underline{v([\alpha,1]^\ast)}=\sup I^b_{\alpha}(T_\gamma)
$ 
then Lemma~\ref{LemmaCyclicBehavior(CriticalValues)} implies 
$$
\underline{v([\alpha,0]^\ast)}=\sup I^t_{\alpha}(T_\gamma).
$$
\item 
Otherwise, setting $[\chi,j]:=[\alpha,1]^\ast$, Lemma~\ref{LemmaCyclicBehavior(CriticalValues)} implies 
$$
[\alpha,0]^\ast=[\chi,j-1].
$$
\end{enumerate}
\item
$\underline{v([\alpha,i])^\ast}$ is an endpoint. In this case $\underline{v([\alpha,i])}$ and 
$\underline{v([\alpha,i])^\ast}$ do not belong to the same continuity interval of $T_\gamma^{-1}$. Letting $\beta$ be the letter with 
$
\underline{v([\alpha,i])}\in I^b_\beta(T_\gamma)
$ 
we have $\underline{v([\alpha,i]^\ast)}=\sup I^b_\beta(T_\gamma)$. Here  
$\underline{v([\alpha,i]^\ast)}=1$ if $\pi^b(\alpha)=d$, while otherwise 
$\underline{v([\alpha,i]^\ast)}$ is a critical value of $T_\gamma$. 
In both cases, arguing as in Lemma~\ref{LemmaCyclicBehavior(CriticalValues)}, we have 
$$
\underline{v([\alpha,i-1]^\ast)}=\sup I^t_\beta(T_\gamma).
$$ 
\item
Both $\underline{v([\alpha,i])}$ and $\underline{v([\alpha,i])^\ast}$ belong to the interior of a continuity interval of $T_\gamma^{-1}$. In this case, setting $[\chi,j]:=[\alpha,i]^\ast$ and arguing as in Lemma~\ref{LemmaCyclicBehavior(CriticalValues)}, we have 
$$
[\alpha,i-1]^\ast=[\chi,j-1].
$$ 
\end{enumerate}
\end{remark}

Consider a degenerate configuration 
$
\cV\in\partial\cO(\gamma)
$ 
and its image $\cV':=\widehat{\cT}_\gamma(\cV)$. This corresponds to considering a sequence $\cV_n\in\cO(\gamma)$ with 
$\cV_n\to\cV$ and its image sequence $\cV'_n:=\cT_\gamma(\cV_n)$, so that 
$\cV'=\lim_{n\to\infty}\cV'_n$. Let $v([\alpha,i])$, $v'([\alpha,i])$, $v_n([\alpha,i])$ and $v'_n([\alpha,i])$ be the points respectively in 
$\cV$, $\cV'$, $\cV_n$ and $\cV'_n$ corresponding $[\alpha,i]\in\cI_\gamma$. The configuration $\cV$ satisfies $v([\alpha,i])=v([\alpha,i]^\ast)$ if and only if 
\begin{equation}
\label{EquationAssumptionLemma(1)CyclicBehavior}
\big|
v_n([\alpha,i]^\ast)
-
v_n([\alpha,i])
\big|\to0
\quad
\textrm{ as }
\quad
n\to\infty.
\end{equation}
The image configuration $\cV'=\widehat{\cT}_\gamma(\cV)$ satisfies   
$
v'([\alpha,i-1])=v'([\alpha,i-1]^\ast)
$ 
if and only if  
\begin{equation}
\label{EquationConsequenceLemma(1)CyclicBehavior}
\big|
v'_n([\alpha,i-1]^\ast)
-
v'_n([\alpha,i-1])
\big|\to0
\quad
\textrm{ as }
\quad
n\to\infty.
\end{equation}
The continuity of the map in Equation \eqref{EquationFromConfigurationToGIET(Degenerate)} implies 
$f_{\cV_n}\to D_\cV=(f_\cV,S_\cV)$ for $\cV_n\to\cV$. Flat intervals of $f_{\cV_n}$ have a counterpart in the limit and for any $\alpha\in\cA$ we define
$$
j_\alpha(D_\cV):=\lim_{n\to\infty}j_\alpha(f_{\cV_n}).
$$

\begin{lemma}
\label{LemmaCyclicBehavior}
Fix $\alpha\in\cA$. If $v([\alpha,1])=v([\alpha,1]^\ast)$, then 
$$
v'([\alpha,0]^\ast)=j_\alpha(D_{\cV}).
$$
For any other $[\alpha,i]$ with $i\not=1$, if $v([\alpha,i])=v([\alpha,i]^\ast)$, then 
$$
v'([\alpha,i-1])=v'([\alpha,i-1]^\ast).
$$
\end{lemma}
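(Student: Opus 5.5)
We prove both statements by working with the approximating sequences $\cV_n\to\cV$ and $\cV'_n=\cT_\gamma(\cV_n)\to\cV'$. The tools are the defining formula \eqref{EquationDefinitionThurstonMap}, Lemma~\ref{LemmaControlExpansionInverseBranches} applied to $\lambda_n:=\lambda(\cV_n)\to\lambda(\cV)$, and the combinatorial case analysis of Remark~\ref{RemarkCyclicBehavior(3cases)}. The order relations of $\cV^{(\gamma)}$ are preserved by every configuration $\cV_n$ and by the preconfiguration $\cV'_n$ (Proposition~\ref{PropositionDefinitionThurstonMap}). Hence the two consecutive points $v_n([\alpha,i])<v_n([\alpha,i]^\ast)$ span an interval $I_n$, and by the case analysis $I_n$ either lies inside a single continuity interval $I^b_\beta(f_{\cV_n})$ of $f_{\cV_n}^{-1}$ or has an endpoint at a critical value or at $1$.

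\emph{First statement} (case (1) of Remark~\ref{RemarkCyclicBehavior(3cases)} with $i=1$). Here $I_n=[v_n([\alpha,1]),v_n([\alpha,1]^\ast))\subset I^b_\alpha(f_{\cV_n})$, starting at $u^b_\alpha(f_{\cV_n})$.

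Suppose first that $[\alpha,1]^\ast=[\chi,j]$ lies in the interior of $I^b_\alpha$ (subcase (b)). Then $[\alpha,0]^\ast=[\chi,j-1]$ and $v'_n([\chi,j-1])=f_{\cV_n}^{-1}(v_n([\chi,j]))$, while $f_{\cV_n}^{-1}$ of the left endpoint $u^b_\alpha(f_{\cV_n})$, taken through the homeomorphic branch on $J_\alpha$, is $j_\alpha(f_{\cV_n})$. Since $|I_n|\to 0$, Lemma~\ref{LemmaControlExpansionInverseBranches}(1) (applied to the interior, and extended to the closure by continuity) gives $|v'_n([\alpha,0]^\ast)-j_\alpha(f_{\cV_n})|\to0$. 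Passing to the limit yields $v'([\alpha,0]^\ast)=j_\alpha(D_\cV)$.

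Suppose instead that $[\alpha,1]^\ast$ is $\sup I^b_\alpha$ (subcase (a)). Then $|I^b_\alpha(f_{\cV_n})|\to0$. In the same way the whole branch $J_\alpha(f_{\cV_n})$ shrinks, which gives $\sup I^t_\alpha(f_{\cV_n})-j_\alpha(f_{\cV_n})\to0$. Since $v'_n([\alpha,0]^\ast)=\sup I^t_\alpha(f_{\cV_n})$ (the next critical point, or $1$), the claim follows again. One subtle point is that Lemma~\ref{LemmaControlExpansionInverseBranches} is stated for intervals inside $\interior I^b_\alpha$; shrinking $I_n$ slightly and using monotonicity handles this.

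\emph{Second statement}, $i\neq1$. If $i\ne 0$ and we are in case (3), then $[\alpha,i-1]^\ast=[\chi,j-1]$ and both points of $\cV'_n$ are $f_{\cV_n}^{-1}$-preimages of the endpoints of $I_n\subset \interior I^b_\beta(f_{\cV_n})$, which lie inside $J_\beta$. Lemma~\ref{LemmaControlExpansionInverseBranches}(1) then gives \eqref{EquationConsequenceLemma(1)CyclicBehavior}.

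In case (2), $v'_n([\alpha,i-1]^\ast)=\sup I^t_\beta(f_{\cV_n})=\sup J_\beta(f_{\cV_n})$ and $v'_n([\alpha,i-1])=f_{\cV_n}^{-1}(v_n([\alpha,i]))$. The image interval under $f_{\cV_n}$ has length $\sup I^b_\beta-v_n([\alpha,i])=|I_n|\to0$, so Lemma~\ref{LemmaControlExpansionInverseBranches}(1) again applies.

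The label $i=0$ is handled by rewriting $[\alpha,0]=[\beta,q]$ for a representative with $q\ne 0,1$ via the identification \eqref{EquationIdentificationLabelsCriticalPoints}. The exceptional points are only the critical values, whose index is always $1$, so $i-1$ makes sense in $\cI_\gamma$.

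The main obstacle is the flat-interval bookkeeping in the first statement. We must check that the inverse branch is taken on $J_\alpha$ rather than on $I^t_\alpha$, so that the preimage of $u^b_\alpha$ is $j_\alpha$ and not $u^t_\alpha$. We must also check that the limit $j_\alpha(D_\cV)$ exists and equals the Hausdorff limit endpoint. This relies on continuity of $j_\alpha\circ\widehat F$ from Definition~\ref{DefinitionFullFamily}(3) and the extended distance.
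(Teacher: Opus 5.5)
Your proposal is correct and follows the paper's proof. You use the same split into Cases (1a), (1b), (2), (3) of Remark~\ref{RemarkCyclicBehavior(3cases)}, and in each case you apply Point (1) of Lemma~\ref{LemmaControlExpansionInverseBranches} to the interval between two consecutive points of $\cV_n$, reading off its preimage under $f_{\cV_n}$ in $\cV'_n$. The one minor variation is in Case (1a), where you apply that lemma to $\interior I^b_\alpha(f_{\cV_n})$ to get $\sup I^t_\alpha(f_{\cV_n})-j_\alpha(f_{\cV_n})\to0$ directly, instead of the paper's collapse of the whole $I^t_\alpha(f_{\cV_n})$ combined with Remark~\ref{RemarkCyclicBehavior}. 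Your separate treatment of $i=0$ is unnecessary, because the formula for $v'$ depends only on whether the class $[\alpha,i]$ is a critical-value class.
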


\begin{remark}
\label{RemarkCyclicBehavior}
In the first part of Lemma~\ref{LemmaCyclicBehavior}, assume that the flat interval $L_\alpha(f_{\cV_n})$ collapses to a point, either because the entire $\alpha$-brach of $f_{\cV_n}$ collapses or because the latter is an homeomorphism in the limit. In both cases we have $j_\alpha(D_{\cV})=v'([\alpha,0])$ and therefore $v'([\alpha,0])=v'([\alpha,0]^\ast)$. Notice also that for $i=0$ the assumption in the second part implies 
$j_\alpha(D_{\cV})=v'([\alpha,0])$.
\end{remark}

\begin{proof}
We start by proving the first statement, which corresponds to Case (1) in 
Remark~\ref{RemarkCyclicBehavior(3cases)}. In Case (1a), since both $\cV_n$ and $\cV^{(\gamma)}$ are configurations, we have
\begin{equation}
\label{EquationCase(1a)LemmaCyclicBehavior}
v_n([\alpha,1]^\ast)=\sup I^b_{\alpha}(f_{\cV_n})
\quad\text{ and }\quad
v'_n([\alpha,0]^\ast)=\sup I^t_{\alpha}(f_{\cV_n}).
\end{equation}
In this case, Condition~\eqref{EquationAssumptionLemma(1)CyclicBehavior} with $i=1$ means 
$|I^b_{\alpha}(f_{\cV_n})|\to0$, which implies $|I^t_{\alpha}(f_{\cV_n})|\to0$. 
Equation~\eqref{EquationConsequenceLemma(1)CyclicBehavior} is satisfied because 
$$
v'_n([\alpha,0]^\ast)-v'_n([\alpha,0])
=
\sup I^t_{\alpha}(f_{\cV_n}) - \min I^t_{\alpha}(f_{\cV_n})\to 0.
$$
This means $v'([\alpha,0])=v'([\alpha,0]^\ast)$, and Remark \ref{RemarkCyclicBehavior} gives 
$
v'([\alpha,0])=j_\alpha(D_\cV)=v'([\alpha,0]^\ast)
$.

In Case (1b) setting $[\chi,j]:=[\alpha,1]^\ast$ we have $[\alpha,0]^\ast=[\chi,j-1]$. Therefore we also have 
\begin{equation}
\label{EquationCase(1b)LemmaCyclicBehavior}
\left\{
\begin{array}{l}
v_n([\alpha,1])<v_n([\chi,j])=v_n([\alpha,1]^\ast)<\sup I^b_{\alpha}(f_{\cV_n})
\\
v'_n([\alpha,0])<v'_n([\chi,j-1])=v_n([\alpha,0]^\ast)<\sup I^t_{\alpha}(f_{\cV_n}).
\end{array}
\right.
\end{equation}
We obtain $v'([\alpha,0]^\ast)=j_\alpha(D_{\cV})$ because for any $n$ the interval 
$
I_n:=\big(v_n([\alpha,1]),v_n([\chi,j])\big)
$ 
satisfies the assumption in Point (1) of Lemma~\ref{LemmaControlExpansionInverseBranches}, so we have 
\begin{align*}
\big|v'_n([\alpha,0]^\ast)-j_\alpha(f_{\cV_n})\big|
&
=
\big|\big(j_\alpha(f_{\cV_n}),v'_n([\chi,j-1])\big)\big|
\\
&
=
\big|f_{\cV_n}^{-1}\big(v_n([\alpha,1]),v_n([\chi,j])\big)\big|
=
\big|f_{\cV_n}^{-1}(I_n)\big|\to0.
\end{align*}

We now prove the second part of the statement, considering separately the remaining cases in Remark~\ref{RemarkCyclicBehavior(3cases)}. In Case (2) we have 
$\underline{v([\alpha,i]^\ast)}=\sup I^b_\beta(T_\gamma)$ and
$\underline{v([\alpha,i-1]^\ast)}=\sup I^t_\beta(T_\gamma)$, thus we also have
\begin{equation}
\label{EquationCase(2)LemmaCyclicBehavior}
\left\{
\begin{array}{l}
\inf I^b_\beta(f_{\cV_n})<v_n([\alpha,i])<v_n([\alpha,i]^\ast)=\sup I^b_\beta(f_{\cV_n})
\\
\inf J_\beta(f_{\cV_n})<v'_n([\alpha,i-i])<v'_n([\alpha,i-1]^\ast)=\sup I^t_\beta(f_{\cV_n}).
\end{array}
\right.
\end{equation}
Condition~\eqref{EquationAssumptionLemma(1)CyclicBehavior} means that the interval 
$
I_n:=\big(v_n([\alpha,i]),\sup I^b_\beta(f_{\cV_n})\big)
$ 
satisfies the assumption in Point (1) of Lemma~\ref{LemmaControlExpansionInverseBranches}. 
Equation~\eqref{EquationConsequenceLemma(1)CyclicBehavior} follows since we have
$$
\big|
v'_n([\alpha,i-1]^\ast)-v'_n([\alpha,i-1])
\big|
=
\big|
\sup I^t_\beta(f_{\cV_n})
-
f_{\cV_n}^{-1}\big(v_n([\alpha,i])\big)
\big|
=
\big|f_{\cV_n}^{-1}\big(I_n\big)\big|\to0.
$$

In Case (3), setting $[\chi,j]:=[\alpha,i]^\ast$, we have $[\alpha,i-1]^\ast=[\chi,j-1]$. Therefore, if $\beta$ is the letter with $v_n([\alpha,i])\in I^b_\beta(f_{\cV_n})$, then we have also 
\begin{equation}
\label{EquationCase(3)LemmaCyclicBehavior}
\left\{
\begin{array}{l}
\inf I^b_\beta(f_{\cV_n})<v_n([\alpha,i])<v_n([\chi,j])<\sup I^b_\beta(f_{\cV_n})
\\
\inf J_\beta(f_{\cV_n})<v'_n([\alpha,i-i])<v'_n([\chi,j-1])<\sup I^t_\beta(f_{\cV_n}).
\end{array}
\right.
\end{equation}

Equation~\eqref{EquationConsequenceLemma(1)CyclicBehavior} follows considerung the interval 
$
I_n:=\big(v_n([\alpha,i]),v_n([\chi,j])\big)
$ 
and applying Point (1) of Lemma~\ref{LemmaControlExpansionInverseBranches}. Details are left to the reader. The Lemma is proved. 
\end{proof}

\begin{lemma}
\label{LemmaCyclicBehaviorBis}
Assume that $v([\alpha,1]^\ast)>v([\alpha,1])$ strictly. Then we have the strict inequality
$$
v'([\alpha,0]^\ast)>j_\alpha(D_\cV).
$$
For any other $[\alpha,i]$ with $i\not=1$, if $v([\alpha,i]^\ast)>v([\alpha,i])$ strictly, then we have the strict inequality
$$
v'([\alpha,i-1]^\ast)>v'([\alpha,i-1]).
$$
\end{lemma}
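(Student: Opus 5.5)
The plan is to mirror the proof of Lemma~\ref{LemmaCyclicBehavior}, using Point (2) of Lemma~\ref{LemmaControlExpansionInverseBranches} in place of Point (1). Fix a sequence of configurations $\cV_n\to\cV$ and set $\cV'_n:=\cT_\gamma(\cV_n)$, so that $\cV'_n\to\cV'$. In each case I will find a letter $\beta$ and an open interval $I_n\subset\interior I^b_\beta(f_{\cV_n})$ with two properties: the length $|I_n|$ is the gap $v_n([\alpha,i]^\ast)-v_n([\alpha,i])$, and $f_{\cV_n}^{-1}(I_n)$ is exactly the interval whose length we want to bound from below. The hypothesis $v([\alpha,i]^\ast)>v([\alpha,i])$ gives $\inf_n|I_n|>0$ after discarding finitely many $n$. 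Point (2) of Lemma~\ref{LemmaControlExpansionInverseBranches}, applied with $\lambda_n:=\lambda(\cV_n)\to\lambda(\cV)$ (using continuity of $\lambda(\cdot)$ on $\overline{\Delta^N}$), then gives $\inf_n|f_{\cV_n}^{-1}(I_n)|>0$. Passing to the limit yields the required strict inequality.

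The case split follows Remark~\ref{RemarkCyclicBehavior(3cases)}. For the first statement ($i=1$) I use Case (1).
\begin{itemize}
\item In Case (1a), Equation~\eqref{EquationCase(1a)LemmaCyclicBehavior} holds. Take $I_n:=\interior I^b_\alpha(f_{\cV_n})$. Its preimage is $\interior J_\alpha(f_{\cV_n})=(j_\alpha(f_{\cV_n}),\sup I^t_\alpha(f_{\cV_n}))$, and $\sup I^t_\alpha(f_{\cV_n})=v'_n([\alpha,0]^\ast)$. Hence $v'_n([\alpha,0]^\ast)-j_\alpha(f_{\cV_n})$ is bounded below, and in the limit $v'([\alpha,0]^\ast)>j_\alpha(D_\cV)$.
\item In Case (1b), Equation~\eqref{EquationCase(1b)LemmaCyclicBehavior} holds. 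Take $I_n:=(v_n([\alpha,1]),v_n([\chi,j]))$. The point is that $f_{\cV_n}^{-1}$ sends the left endpoint $v_n([\alpha,1])=\inf I^b_\alpha$ to $j_\alpha(f_{\cV_n})$, because preimages are taken in $J_\alpha$. So $|f_{\cV_n}^{-1}(I_n)|=v'_n([\alpha,0]^\ast)-j_\alpha(f_{\cV_n})$, and we conclude as before.
\end{itemize}

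For $i\neq1$ I use Cases (2) and (3).
\begin{itemize}
\item In Case (2), Equation~\eqref{EquationCase(2)LemmaCyclicBehavior} holds. Take $I_n:=(v_n([\alpha,i]),\sup I^b_\beta(f_{\cV_n}))$. Its preimage is $(v'_n([\alpha,i-1]),\sup I^t_\beta(f_{\cV_n}))$, and $\sup I^t_\beta(f_{\cV_n})=v'_n([\alpha,i-1]^\ast)$.
\item In Case (3), Equation~\eqref{EquationCase(3)LemmaCyclicBehavior} holds. Take $I_n:=(v_n([\alpha,i]),v_n([\chi,j]))$. Its preimage is $(v'_n([\alpha,i-1]),v'_n([\chi,j-1]))$, and $[\chi,j-1]=[\alpha,i-1]^\ast$.
\end{itemize}
The case $i=0$ needs some care, since $v'_n([\alpha,-1])$ is defined via the identification $\sim$ in Equation~\eqref{EquationIdentificationLabelsCriticalPoints}. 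But $[\alpha,-1]$ has nonzero index in its class (or is identified with some $[\beta',i']$, $i'\neq0$), so the formula $v'=f^{-1}(v(\cdot+1))$ still applies. The only exceptional situation is when $[\alpha,i]$ is itself a critical value, and that is the first statement.

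The main obstacle is making sure Lemma~\ref{LemmaControlExpansionInverseBranches}(2) genuinely applies. That lemma needs $I_n$ inside the interior of a single $I^b_\beta(f_{\cV_n})$ for every $n$, with $\beta$ fixed. This holds because the combinatorial position of the points of $\cV_n$ relative to the critical values is the same as in $\cV^{(\gamma)}$, so $\beta$ does not depend on $n$. In its proof, the lower bound on $|I^b_\beta|$ forces the limiting branch to be a nondegenerate homeomorphism $\closure(J_\beta)\to\closure(I^b_\beta)$. I would double check the subtle point that preimages are always taken in $J_\beta$, so that collapse of the flat part does not interfere. This is exactly why the first statement compares with $j_\alpha(D_\cV)$ rather than with $v'([\alpha,0])$.
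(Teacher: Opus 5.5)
Your proposal is correct and follows the paper's proof. In each of Cases (1a), (1b), (2), (3) of Remark~\ref{RemarkCyclicBehavior(3cases)} you pick the same interval $I_n$ as the paper, identify its preimage under $f_{\cV_n}$, and apply Point (2) of Lemma~\ref{LemmaControlExpansionInverseBranches}; in Case (1a) your $\interior I^b_\alpha(f_{\cV_n})$ coincides with the paper's $(v_n([\alpha,1]),v_n([\alpha,1]^\ast))$. Your remarks that $\beta$ does not depend on $n$, and on how labels with $i=0$ are handled, only make explicit points the paper leaves implicit.
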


\begin{proof}
Observe preliminarily that the assumption in both statements corresponds to 
$$
\inf_{n\geq0} v_n([\alpha,i]^\ast)-v_n([\alpha,i])>0.
$$
We prove the first part of the statement, which corresponds to Case (1) in Remark~\ref{RemarkCyclicBehavior(3cases)}. 
In Case (1a) Equation~\eqref{EquationCase(1a)LemmaCyclicBehavior} holds. The interval 
$
I_n:=\big(v_n([\alpha,1]),v_n([\alpha,1]^\ast)\big)
$ 
has preimage 
$$
f_{\cV_n}^{-1}(I_n)
=
\big(j_\alpha(f_{\cV_n}),v'_n([\alpha,0]^\ast)\big).
$$
The assumption means $\inf_{n\in\NN}|I_n|>0$, thus we have 
$
\inf_{n\in\NN}|f_{\cV_n}^{-1}(I_n)|>0
$ 
by Point (2) of Lemma~\ref{LemmaControlExpansionInverseBranches}. In the limit we have 
$v'([\alpha,0]^\ast)>j_\alpha(D_\cV)$ strictly. In Case (1b) setting $[\chi,j]:=[\alpha,1]^\ast$ we have $[\alpha,0]^\ast=[\chi,j-1]$ and Equation~\eqref{EquationCase(1b)LemmaCyclicBehavior} holds.
The interval
$$
I_n:=\big(v_n([\alpha,1]),v_n([\chi,j])\big)=\big(v_n([\alpha,1]),v_n([\alpha,1]^\ast)\big)
$$
has preimage 
$$
f_{\cV_n}^{-1}(I_n)=\big(j_\alpha(f_{\cV_n}),v'_n([\chi,j-1])\big)
=\big(j_\alpha(f_{\cV_n}),v'_n([\alpha,0]^\ast)\big).
$$ 
The assumption means $\inf_{n\in\NN}|I_n|>0$, therefore 
$
\inf_{n\in\NN}\big|f_{\cV_n}^{-1}(I_n)\big|>0
$ 
by Point (2) of Lemma~\ref{LemmaControlExpansionInverseBranches}. In the limit we have 
$v'([\alpha,0]^\ast)>j_\alpha(D_\cV)$.

Now we prove the second part of the statement, considering separately the remaining cases in Remark~\ref{RemarkCyclicBehavior(3cases)}. In Case (2) Equation~\eqref{EquationCase(2)LemmaCyclicBehavior} holds. The interval 
$$
I_n:=\big(v_n([\alpha,i]),v_n([\alpha,i]^\ast)\big)
=
\big(v_n([\alpha,i]),\sup I^b_\beta(f_{\cV_n})\big)
$$
has preimage 
$$
f_{\cV_n}^{-1}(I_n)=\big(v'_n([\alpha,i-1]),\sup I^t_\beta(f_{\cV_n})\big)
=\big(v'_n([\alpha,i-1]),v'_n([\alpha,i-1]^\ast)\big)
$$ 
The assumption means $\inf_{n\in\NN}|I_n|>0$, thus 
$
\inf_{n\in\NN}\big|f_{\cV_n}^{-1}(I_n)\big|>0
$ 
by Point (2) of Lemma~\ref{LemmaControlExpansionInverseBranches}. In the limit we have 
$v'([\alpha,0]^\ast)>v'([\alpha,0])$ strictly. In Case (3) Equation~\eqref{EquationCase(3)LemmaCyclicBehavior} holds. The interval 
$
I_n:=\big(v_n([\alpha,i]),v_n([\chi,j])\big)
$ 
has preimage
$$
f_{\cV_n}^{-1}(I_n)=\big(v'_n([\alpha,i-1]),v'_n([\chi,j-1])\big).
$$
We have $\inf_{n\in\NN}|I_n|>0$, thus 
$
\inf_{n\in\NN}\big|f_{\cV_n}^{-1}(I_n)\big|>0
$ 
by Point (2) of Lemma~\ref{LemmaControlExpansionInverseBranches}. In the limit we have 
$
v'([\alpha,i-1]^\ast)=v'([\chi,j-1])>v'([\alpha,i-1])
$ 
strictly. The Lemma is proved.
\end{proof}

\begin{proposition}
\label{PropositionCyclicBehaviour}
The extended map $\widehat{\cT}_\gamma$ in Equation \eqref{EquationDefinitionExtensionThurstonMap} 
does not fix any point in the boundary $\partial\cO_\gamma$.
\end{proposition}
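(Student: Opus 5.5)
Argue by contradiction. Suppose $\cV\in\partial\cO_\gamma$ satisfies $\widehat{\cT}_\gamma(\cV)=\cV$, so $v'([\alpha,i])=v([\alpha,i])$ for all labels. Attach to $\cV$ the set of ``collapsed gaps'': the labels $[\alpha,i]$ with $v([\alpha,i])=v([\alpha,i]^\ast)$, where by convention the gap after the rightmost label $[\alpha_m,i_m]$ is collapsed when $v([\alpha_m,i_m])=1$. We also add the ``flat'' degeneracies $v([\alpha,0]^\ast)=j_\alpha(D_\cV)$. Lemma~\ref{LemmaCyclicBehavior} and Lemma~\ref{LemmaCyclicBehaviorBis} together give an exact transfer rule under one application of $\widehat{\cT}_\gamma$:
\begin{itemize}
\item for $i\neq1$, the gap after $[\alpha,i]$ is collapsed in $\cV$ if and only if the gap after $[\alpha,i-1]$ is collapsed in $\cV'$;
\item the gap after $[\alpha,1]$ is collapsed in $\cV$ if and only if $v'([\alpha,0]^\ast)=j_\alpha(D_\cV)$.
\end{itemize}
Since $v'([\alpha,0])\le j_\alpha(D_\cV)\le v'([\alpha,0]^\ast)$, the second case always collapses the gap after $[\alpha,0]$ in $\cV'$ as well. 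For a fixed point, $\cV'=\cV$, so the set $\mathcal{Z}$ of labels with collapsed gaps is invariant under the shift $[\alpha,i]\mapsto[\alpha,i-1]$. This holds except possibly for gaps after $[\alpha,0]$ whose collapse in $\cV'$ comes only from the flat interval and not from a collapsed gap after $[\alpha,1]$; those need separate care, by showing that a gap $v([\alpha,0]^\ast)>v([\alpha,0])$ together with $v([\alpha,1]^\ast)>v([\alpha,1])$ forces $v'([\alpha,0]^\ast)>v'([\alpha,0])$.

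The first step is to show that $\mathcal{Z}$ is nonempty. Each of the three boundary alternatives in the definition of $\partial\cO_\gamma$ produces a collapsed gap. Alternative (1) does so directly, since some two distinct labels converge and hence some consecutive pair does. Alternative (2) is the convention for the last gap. Alternative (3) gives $v([\alpha,0]^\ast)=j_\alpha$, and the fixed-point relation then has to be pulled back to a collapsed gap, using Remark~\ref{RemarkCyclicBehavior} and the first part of Lemma~\ref{LemmaCyclicBehaviorBis}.

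The second step is to iterate the shift. Since $T_\gamma$ acts on $\cV^{(\gamma)}$ as a single cycle of length $N$, the shift $[\alpha,i]\mapsto[\alpha,i-1]$ is transitive on $\cI_\gamma$. Starting from one collapsed label and applying the transfer rule $N$ times, every gap is collapsed. That forces all points $v([\alpha,i])$ to coincide with each other and with $1$, which contradicts $v([\alpha_0,0])=0$.

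The main obstacle is the bookkeeping at the labels $[\alpha,1]\to[\alpha,0]$ and at the rightmost gap. Case~(2) of Remark~\ref{RemarkCyclicBehavior(3cases)} with $\pi_b=d$ sends the last gap to $\sup I^t_\beta$, which is $1$ only when $\pi_t(\beta)=d$. This is where the cyclicity of $\pi^{(\ast)}$, i.e.\ $T_\gamma^{(r)}$ permuting the $d$ intervals as one $d$-cycle, must enter. It guarantees that the combinatorics of the gaps adjacent to endpoints of $I^t_\alpha$ and $I^b_\alpha$ also form one cycle, so the propagation cannot get stuck in a proper invariant subset. I would first try to show directly that the "boundary gaps" (those ending at $\sup I^b_\beta$ or $1$) are permuted via $\sigma(\pi^{(\ast)})$. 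The collapse then propagates through all $d$ letters and finally reaches the gap before $1$, which gives the contradiction.
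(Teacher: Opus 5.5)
There is a genuine gap, and it sits at the step that carries the content of the argument. The inference ``since $v'([\alpha,0])\le j_\alpha(D_\cV)\le v'([\alpha,0]^\ast)$, the second case always collapses the gap after $[\alpha,0]$ in $\cV'$'' is false.

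\begin{itemize}
\item The equality $v'([\alpha,0]^\ast)=j_\alpha(D_\cV)$ only says that the part $(j_\alpha(D_\cV),v'([\alpha,0]^\ast))$ of that gap shrinks to a point. This is the part inside $J_\alpha$, the preimage of the collapsed gap after $[\alpha,1]$.
\item The limiting flat interval $[v'([\alpha,0]),j_\alpha(D_\cV)]$ may keep positive length. Nothing rules this out: boundary configurations may have $\lambda(\cV)\in\Delta^d$, where $D_\cV$ is a genuine element of $\cC\big(\pi,[0,1)\big)$. In the Cherry family, for instance, the flat interval has length $\frac{r_0}{7}(\widetilde{c}-\widetilde{u})>0$.
\end{itemize}

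So collapse does not propagate from $[\alpha,1]$ to $[\alpha,0]$. Your backward-shift invariance of $\mathcal{Z}$ is therefore unjustified, and so is the step ``apply the transfer rule $N$ times''.

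The case you set aside for ``separate care'' is the opposite one: collapse after $[\alpha,0]$ in $\cV'$ without collapse after $[\alpha,1]$. That case is already impossible by Lemma~\ref{LemmaCyclicBehaviorBis}. Likewise, the ``main obstacle'' you describe is not one. Case (2) in the proof of Lemma~\ref{LemmaCyclicBehavior} treats all gaps ending at $\sup I^b_\beta$ uniformly, including the last gap ending at $1$. The cyclicity of $\pi^{(\ast)}$ enters only through the fact that $[\chi,j]\mapsto[\chi,j+1]$ is a single $N$-cycle on $\cI_\gamma$, which you already use.

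The paper handles the flat interval by a dichotomy. Propagating a collapsed gap backwards with Lemma~\ref{LemmaCyclicBehavior}, one reaches $v([\alpha,0]^\ast)=j_\alpha(D_\cV)$.
\begin{itemize}
\item If $j_\alpha(D_\cV)=v([\alpha,0])$, the collapse continues.
\item If $j_\alpha(D_\cV)>v([\alpha,0])$, the gap after $[\alpha,0]$ is strictly positive. Lemma~\ref{LemmaCyclicBehaviorBis} then propagates the \emph{strict} inequality backwards around the whole cycle, including across every step $[\beta,1]\to[\beta,0]$, since $v'([\beta,0]^\ast)>j_\beta(D_\cV)\ge v'([\beta,0])$. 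It returns to the gap you started from, a contradiction.
\end{itemize}

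In your language, the repair is to run the propagation forwards. By the contrapositive of Lemma~\ref{LemmaCyclicBehaviorBis}, for a fixed point $\mathcal{Z}$ is closed under $[\alpha,i-1]\mapsto[\alpha,i]$ for every $i$. At $i=1$ this holds because collapse after $[\alpha,0]$ forces $v([\alpha,0]^\ast)=j_\alpha(D_\cV)$ by your own sandwich inequality. A nonempty subset of a single $N$-cycle that is closed under the cycle is everything.

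Finally, in the nonemptiness step for alternative (3), the boundary condition involves $j_\alpha(g_{\cV_n})$, not $j_\alpha(f_{\cV_n})$. In that case $\cV\in\Delta^N$, so Equation~\eqref{EquationRelationFunctionsFandG} and $\widehat{\cT}_\gamma(\cV)=\cV$ give $g_\cV=f_\cV$, as in the paper. Hence $v([\alpha,0]^\ast)=j_\alpha(D_\cV)$. The first part of Lemma~\ref{LemmaCyclicBehaviorBis} then gives a collapsed gap after $[\alpha,1]$, contradicting $\cV\in\Delta^N$.
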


\begin{proof}
Take $\cV\in\overline{\cO_\gamma}$ with $\widehat{\cT}_\gamma(\cV)=\cV$. If $\cV\in\partial\cO_\gamma$ then consider 
$
D_\cV=(f_\cV,S_\cV)\in\widehat{\cC}\big(\pi,[0,1)\big)
$. 
First assume $\cV\in\partial\Delta^N$, that is there exists $[\alpha,i]$ with 
$v([\alpha,i]^\ast)=v([\alpha,i])$. Recall that the map 
$[\chi,j]\mapsto[\chi,j+1]$ is a cyclic permutation of maximal length $N=\sharp\cI_\gamma$. 
Applying $i$ times $\widehat{\cT}_\gamma$ to $\cV$ Lemma~\ref{LemmaCyclicBehavior} gives $v([\alpha,0]^\ast)=j_\alpha(D_\cV)$. 
If $j_\alpha(D_\cV)>v([\alpha,0])$ then $v([\alpha,0]^\ast)>v([\alpha,0])$ strictly. Applying $N-i$ more times 
$\widehat{\cT}_\gamma$ to $\cV$ Lemma~\ref{LemmaCyclicBehaviorBis} gives 
$v([\alpha,i]^\ast)>v([\alpha,i])$, which is absurd. 
Otherwise $v([\alpha,0]^\ast)=v([\alpha,0])$, and we keep applying $\widehat{\cT}_\gamma$ to $\cV$ untill Lemma~\ref{LemmaCyclicBehavior} gives the next letter $\beta$ with $v([\beta,0]^\ast)=j_\beta(D_\cV)$. Reasoning as above we see that we must have $v([\chi,j]^\ast)=v([\chi,j])$ for any 
$[\chi,j]$, that is $\cV$ defines a partition of $[0,1)$ into $N$ subintervals, but all the endpoints of these intervals coincide, in particular $0=1$, which is absurd. Now assume $\cV\in\Delta^N\cap\partial\cO_\gamma$. We have 
$
D_\cV=f_\cV\in\cC\big(\pi,[0,1)\big)
$ 
and $v([\chi,j]^\ast)>v([\chi,j])$ strictly for any $[\chi,j]$, moreover there exists $\alpha$ with 
$j_\alpha(g_\cV)=v([\alpha,0]^\ast)$. Since $\cT_\gamma(\cV)=\cV$ then 
Equation \eqref{EquationRelationFunctionsFandG} gives 
$j_\alpha(f_\cV)=j_\alpha(g_\cV)$. 
In conclusion $j_\alpha(f_\cV)=v([\alpha,0]^\ast)$. But $v([\alpha,1]^\ast)>v([\alpha,1])$, thus Lemma~\ref{LemmaCyclicBehaviorBis} gives $v([\alpha,0]^\ast)>j_\alpha(f_\cV)$ strictly, which is absurd. 
The Proposition is proved.
\end{proof}

\subsection{End of the proof of Theorem~\ref{TheoremRealizationRauzyPath}}
\label{SectionProofTheoremRealizationRauzyPath}

According to Brouwer's fixed point Theorem, there exists a configuration $\cV\in\overline{\cO_{\gamma}}$ such that 
$
\widehat{\cT}_\gamma(\cV)=\cV
$. 
According to Proposition \ref{PropositionCyclicBehaviour} we must have $\cV\in\cO_\gamma$ and 
$
\cT_\gamma(\cV)=\widehat{\cT}_\gamma(\cV)=\cV
$. 
Therefore Theorem~\ref{TheoremRealizationRauzyPath} follows from the next Lemma~\ref{LemmaFixedPointsThurstonAndRauzyPath}.

\begin{lemma}
\label{LemmaFixedPointsThurstonAndRauzyPath}
Let $\cV\in\cO_\gamma$ be a configuration such that $\cT_\gamma(\cV)=\cV$. Then $f_\cV$ has the same Rauzy path as $T_\gamma$ up to time $r$, that is 
$$
\gamma(f_\cV,r)=\gamma=\gamma(T_\gamma,r).
$$
\end{lemma}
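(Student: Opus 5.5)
The plan is to show that a fixed configuration $\cV$ turns the labelled points $v([\alpha,i])$ into a genuine orbit of $f_\cV$ with the same combinatorics as the standard configuration $\cV^{(\gamma)}$. Then Proposition~\ref{PropositionPartitionDeterminesRauzyPath} gives equality of the Rauzy paths. Write $f:=f_\cV$.

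\textbf{Step 1: the points form an $f$-orbit.} The fixed point relation $\cT_\gamma(\cV)=\cV$ and Equation~\eqref{EquationDefinitionThurstonMap} give
$$
v([\alpha,0])=u^t_\alpha(f)
\qquad\text{and}\qquad
f\big(v([\alpha,i])\big)=v([\alpha,i+1])\quad\text{for } i\neq0 .
$$
By Lemma~\ref{LemmaFromConfigurationToMap} we also have $u^b_\alpha(f)=v([\alpha,1])$. For $i=0$, the point $v([\alpha,0])=u^t_\alpha(f)$ lies in the flat interval $L_\alpha(f)$, so $f(v([\alpha,0]))=u^b_\alpha(f)=v([\alpha,1])$. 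Hence $f(v([\alpha,i]))=v([\alpha,i+1])$ for every label. Since $[\chi,j]\mapsto[\chi,j+1]$ is a single cycle of length $N$, the set $\cV$ is a periodic orbit of $f$ of period $N$. By construction of preconfigurations, it is ordered exactly as the orbit $\cV^{(\gamma)}$ of $T_\gamma$.

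\textbf{Step 2: transport the combinatorics.} Let $\varphi$ be the increasing piecewise affine homeomorphism of $[0,1)$ sending $\underline{v([\alpha,i])}$ to $v([\alpha,i])$. The singular points and values of $f$ and of $T_\gamma$ are the points labelled $[\alpha,0]$ and $[\alpha,1]$, and they correspond under $\varphi$. One checks inductively along $\gamma$ that at each step $k<r$ the Rauzy type agrees. The relevant singularities of $f^{(k)}$ and $T_\gamma^{(k)}$ are corresponding points of the orbits, because the first return maps act on the orbit points through the same label dynamics $[\alpha,i]\mapsto[\alpha,i+1]$, and the induced intervals have endpoints in the orbit. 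For $T_\gamma$ the comparison between $u^t_{\alpha_t}$ and $u^b_{\alpha_b}$ is strict, because $T_\gamma$ follows $\gamma$. The strict order of the labelled points then transfers to $f$.

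\textbf{Step 3: the flat intervals (main obstacle).} The hardest point is the type-$t$ case, where Condition~\eqref{EquationConditionRenormalizabilityDifferentSingularities} compares $j_{\alpha_t}(f^{(k)})$, not $u^t_{\alpha_t}$, with $u^b_{\alpha_b}(f^{(k)})$. Here I use the configuration property. Since $g_\cV=g_{\cV'}=f$ by Equation~\eqref{EquationRelationFunctionsFandG}, Definition~\ref{DefinitionGammaConfiguration} says that $L_\alpha(f)$ contains no point of $\cV$ other than $v([\alpha,0])$, that is $j_\alpha(f)<v([\alpha,0]^\ast)$. Flat intervals are only pulled back by homeomorphic branches during induction, as in the proof of Proposition~\ref{PropositionContinuityRenormalizationHaudorffdistance}, Part (III). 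So this property persists: $L_\alpha(f^{(k)})$ meets the orbit only at its left endpoint. In the type-$t$ case, $u^b_{\alpha_b}(f^{(k)})$ is an orbit point strictly to the right of $u^t_{\alpha_t}(f^{(k)})$, hence it lies to the right of $j_{\alpha_t}(f^{(k)})$, and the type is $t$ as required.

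Therefore $\cP(f,r)\sim\cP(T_\gamma,r)$ via $\varphi$: the atoms of both partitions have endpoints in the respective orbits, with the same labels. Proposition~\ref{PropositionPartitionDeterminesRauzyPath} then yields $\gamma(f_\cV,r)=\gamma(T_\gamma,r)=\gamma$.
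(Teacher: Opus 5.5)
Your proof is correct and follows the same route as the paper. The fixed-point relation makes $\cV$ a periodic orbit of $f_\cV$ ordered exactly like $\cV^{(\gamma)}$ under $T_\gamma$, so the order-$r$ dynamical partitions, and hence the Rauzy types at each step, match. Your Step~3 fills a point the paper's short proof takes for granted when it asserts that the points of $\cV$ are the endpoints of the atoms of $\cP(f_\cV,r)$: using $g_\cV=f_\cV$ and the configuration condition $j_\alpha(g_\cV)<v([\alpha,0]^\ast)$, it checks that $f_\cV$ actually satisfies the renormalizability condition at type-$t$ steps.
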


\begin{proof}
According to Proposition~\ref{PropositionPartitionDeterminesRauzyPath}, it is enough to prove 
$
\cP(f_\cV,r)\sim\cP(T_\gamma,r)
$. 
If 
$
\cT_\gamma(\cV)=\cV
$ 
then Equation~\eqref{EquationDefinitionThurstonMap} implies 
$
v([\alpha,i+1])=f_\cV\big(v([\alpha,i])\big)
$ 
for any $[\alpha,i]\in\cI_\gamma$. It follows that the points in $\cV$ form a closed orbit of $f_\cV$, and their dynamical order is the same as the cyclic dynamical order of the points in $\cV^{(\gamma)}$ under the action of $T_\gamma$. By assumption $\cV$ contains the critical values $v([\alpha,1])=u^b_\alpha(f_\cV)$ and the critical points 
$
v([\alpha,0])=v'([\alpha,0])=u^t_\alpha(f_\cV)
$ 
of $f_\cV$, where $\alpha\in\cA$ and 
$
u^b_\alpha(f_\cV)=f_\cV\big(u^t_\alpha(f_\cV)\big)
$. 
Therefore $\cV$ equals the set of points 
$$
f_\cV^i\big(u^t_\alpha(f_\cV)\big),\quad
\alpha\in\cA,\quad
-h(\alpha,r)\leq i\leq q^{(r)}_\alpha-h(\alpha,r)-1,
$$
where the integers $h(\alpha,r)$ are defined in Equation~\eqref{EquationHalfReturnTimes}. Moreover these points have the same geometrical order as the points of the standard configuration $\cV^{(\gamma)}$, which are
$$
T_\gamma^i\big(u^t_\alpha(T_\gamma)\big)
\quad,\quad
\alpha\in\cA
\quad,\quad
-h(\alpha,r)\leq i\leq q^{(r)}_\alpha-h(\alpha,r)-1.
$$
These two families of points are the endpoints of the intervals in the partitions $\cP(f_\cV,r)$ and 
$\cP(T_\gamma,r)$ respectively, thus the Lemma follows.
\end{proof}

\section{Translation surfaces and zippered rectangles}
\label{SectionTranslationSurfaces}

\subsection{Translation surfaces, strata and vertical vector field}
\label{SectionTranslationSurfacesStrataVerticalField}

We introduce some basic notions on translation surfaces. For a more complete overview see \cite{ForniMatheus}, \cite{Yoccoz} and \cite{Zorich}.

Consider $2n$ triangles $T_1,\dots,T_{2n}$ in $\RR^2$. Any $T_i$ has a positively oriented boundary 
$
\partial T_i=\gamma^{(i)}_1\cup\gamma^{(i)}_2\cup\gamma^{(i)}_3
$, 
where for $k=1,2,3$ any side $\gamma^{(i)}_k$  inherits the induced orientation, so that its initial point $\xi_0(\gamma^{(i)}_k)$ and ending point $\xi_1(\gamma^{(i)}_k)$ are well defined. In particular the differences 
$
z^{(i)}_k:=\xi_1(\gamma^{(i)}_k)-\xi_0(\gamma^{(i)}_k)
$ 
satisfy $z^{(i)}_1+z^{(i)}_2+z^{(i)}_3=0$ for any $i$. We assume that for any $z^{(i)}_k$ there exists $z^{(j)}_l$ such that $z^{(i)}_k=-z^{(j)}_l$, that is $\gamma^{(i)}_k$ and $\gamma^{(j)}_l$ are parallel, with same length and opposite orientation. The identifications between such pairs $\gamma^{(i)}_k$ and $\gamma^{(j)}_l$ induces an equivalence relation $\sim$ on the union of the boundaries of the triangles above. A \emph{translation surface} $X$ is the quotient space 
\begin{equation}
\label{EquationDefinitionTranslationSurface(QuotientSurface)}
X:=\bigg(\bigsqcup_{i=1}^{2n}T_i\bigg)\bigg/\sim.
\end{equation}
We assume that the identification gives a connected quotient space $X$. If $n=1$ then $X=P/\sim$ is a flat torus. In general a translation surface is a compact surface $X$ of genus $g\geq1$, with a metric which is flat outside of a finite set of points $\{p_1,\dots,p_m\}$, where the metric has a conical singularity with angle $2(k_j+1)\pi$, $k_j\in\NN$. Any $p_j$ corresponds to a subset of the vertices of $T_1,\dots,T_{2n}$, all identified to the same point in $X$ by Equation~\eqref{EquationDefinitionTranslationSurface(QuotientSurface)}. We have 
$
k_1+\dots+k_m=2g-2
$. 
Any triangle corresponds to an open set in $X\setminus\{p_1,\dots,p_m\}$ identified with an open subset of $\RR^2$. Representing the same surface $X$ by different sets of triangles we get changes of charts which are translations. These changes of charts extend to a maximal \emph{translation atlas} of maps 
$\varphi:U\to\RR^2$ from open sets   
$
U\subset X\setminus\{p_1,\dots,p_m\}
$ 
onto open sets of $\RR^2$ with the properties below.
\begin{itemize}
\item
Any coordinate change between two charts of the atlas is a translation of $\RR^2$.
\item
For any $1\leq j\leq m$ there exists a neighborhood $U_j$ of $p_j$, a ball $B(0,r_j)$ of radius $r_j>0$ around the origin $0\in\RR^2$ and a ramified covering 
\begin{equation}
\label{EquationDefinitionTranslationSurface(Ramification)}
\rho_j:\big(U_j,p_j\big)\to \big(B(0,r_j),0\big)
\end{equation}
of degree $k_j+1$ such that every injective restriction of $\rho_j$ is a chart of the atlas.
\end{itemize}

It is possible to see that the translation atlas extends to an holomorphic atlas at the conical points $p_1,\dots,p_m$ of $X$. Therefore the latter has a smooth atlas (see \S~1.1 in \cite{Yoccoz}). 

\subsubsection{Strata and their connected components}
\label{SectionConnectedComponentsStrata}

A \emph{stratum} is the set of all translation surfaces $X$ with the same orders $k_1,\dots,k_m$ of conical singularities. It is an affine \emph{orbifold} with real dimension $2\cdot(2g-2+m)$, the affine coordinates around any $X$ being the vectors 
$
(z^{(i)}_k)_{i=1,\dots,6n}\in\RR^{12n}
$ 
introduced above, modulo a system of linear equations with coefficients in $\QQ$. Strata have up to three connected components, which have been classified in 
\cite{KontsevichZorich} and are in bijection with Rauzy classes, in a sense which is explained in \cite{Veech}.

\subsubsection{Vertical vector field}

Equation~\eqref{EquationDefinitionTranslationSurface(QuotientSurface)} identifies the sides of the triangles $T_1,\dots,T_{2n}$ by translations. Thus the constant vector field $V_0:=(0,1)$ on the disjoint union 
$\bigsqcup_{i=1}^{2n}T_i$ induces a constant vector field on $X\setminus\{p_1,\dots,p_m\}$, that we still denote $V_0$. The first return of $V_0$ to a transversal segment in $X$ is an IET. In \cite{Veech},  Veech introduced a representation of translation surfaces as the disjoint unions of euclidean rectangles, with sides aligned along the direction of $V_0$ and its transversal, so that the identifications between transversal sides are given by an IET. This construction is recalled in \S~\ref{SectionZipperedRectangles} below, following \S~4.3 in \cite{Yoccoz}.

\subsection{Zippered rectangles}
\label{SectionZipperedRectangles}

Let $\pi$ be an admissible combinatorial datum over an alphabet $\cA$ with $d\geq2$ letters. A \emph{suspension datum} for the combinatorial datum $\pi$ is a vector $\tau\in\RR^\cA$ such that for any $\alpha$ and $\beta$ with $\pi_t(\alpha)\geq2$ and $\pi_b(\beta)\geq2$ respectively, we have 
$$
\sum_{\pi_t(\chi)<\pi_t(\alpha)}\tau_\chi>0
\quad
\textrm{ and }
\quad
\sum_{\pi_b(\chi')<\pi_b(\beta)}\tau_{\chi'}<0.
$$

Fix combinatorial-length-suspension data $(\pi,\lambda,\tau)$ and let $T=T(\pi,\lambda)$ be the corresponding IET. 
For $\alpha\in\cA$ let $I^t_\alpha$ and $I^b_\alpha$ be the right-open intervals in Equation~\eqref{EquationIntervalsIET}. The procedure below defines a translation surface 
$
X=X(\pi,\lambda,\tau)
$. 
An example is shown in the left part of Figure~\ref{FigureZipperedRectanglesAndSaddleSink}. 
Modulo the identification $\CC\simeq\RR^2$, for any $\alpha\in\cA$ consider  
$
\zeta_\alpha:=\lambda_\alpha+i\tau_\alpha 
$ 
and set
$$
\xi^t_\alpha:=
\sum_{\pi_t(\chi)<\pi_t(\alpha)}\zeta_\chi
\quad
\textrm{ and }
\quad
\xi^b_\beta:=
\sum_{\pi_b(\chi)<\pi_b(\beta)}\zeta_{\chi}.
$$
Let $h=h(\pi,\tau)\in\RR_+^\cA$ be the vector whose coordinates, for any $\alpha\in\cA$, are defined by
$$
h_\alpha:=
\sum_{\pi_t(\chi)<\pi_t(\alpha)}\tau_\chi
-
\sum_{\pi_b(\chi')<\pi_b(\alpha)}\tau_{\chi'}.
$$
For any $\alpha\in\cA$ we have 
$
h_\alpha=\im(\xi^t_\alpha)-\im(\xi^b_\alpha)>0
$ 
and 
$$
0\leq \im(\xi^t_\alpha)\leq h_\alpha
\quad
\textrm{ and }
\quad
0\geq \im(\xi^b_\alpha)\geq -h_\alpha.
$$
For $\alpha\in\cA$ define the rectangle 
$
R_\alpha^t:=I_\alpha^t\times[0,h_\alpha]\subset\RR^2
$. 
Proper identifications along the boundaries of these rectangles will define the translation surface $X$. The points 
$\xi^{t/b}_\alpha$ represent the coordinates of the conical singularities of $X$ in the charts associated to the rectangles. We have $\xi^t_\alpha=0$ if $\pi_t(\alpha)=1$ and 
$
\im(\xi^t_\alpha)>0
$ 
whenever $2\leq \pi_t(\alpha)\leq d$. Similarly $\xi^b_\beta=0$ if $\pi_b(\beta)=1$ and 
$
\im(\xi^b_\beta)<0
$ 
whenever $2\leq \pi_b(\beta)\leq d$. The identifications between the horizontal sides of the rectangles $R_\alpha^t$ for $\alpha\in\cA$ are given by 
$$
(x,h_\alpha)\sim (Tx,0)
\quad
\textrm{ iff }
\quad
x\in I^t_\alpha.
$$

In order to describe the identifications between the vertical sides of the rectangles above, it is convenient to introduce a copy of them setting 
$
R_\alpha^b:=I_\alpha^b\times[-h_\alpha,0]
$ 
for any $\alpha$. Such copies are different translation charts, but in the quotient surface we have $R_\alpha^t=R_\alpha^b$. For letters $\alpha$ and $\beta$ with respectively 
$
\pi_t(\alpha)\geq2
$ 
and 
$
\pi_b(\beta)\geq2
$ 
define the vertical segments 
$$
S_\alpha^t:=
\{\inf I_\alpha^t\}\times\big[0,\sum_{\pi_t(\chi)<\pi_t(\alpha)}\tau_\chi\big)
\quad
\textrm{ and }
\quad
S_\beta^b:=
\{\inf I_\beta^b\}\times\big(\sum_{\pi_b(\chi)<\pi_b(\beta)}\tau_\chi,0\big].
$$
Consider also the vertical segment 
$$
S^\ast:=\{\sum_{\chi\in\cA}\lambda_\chi\}\times \big(0,\tau^\ast\big)
\quad\text{ where }\quad
\tau_\ast:=\sum_{\chi\in\cA}\tau_\chi.
$$
\begin{enumerate}
\item
For $\alpha$ with $\pi_t(\alpha)\geq2$ let $\alpha'$ be the letter with 
$
\pi_t(\alpha')=\pi_t(\alpha)-1
$, 
then paste $R_{\alpha'}^t$ with $R_\alpha^t$ along the segment $S_\alpha^t$. 
\item
For $\beta$ with $\pi_b(\beta)\geq2$ let $\beta'$ be the letter with 
$
\pi_b(\beta')=\pi_b(\beta)-1
$, 
then paste $R_{\beta'}^b$ with $R_\beta^b$ along the segment $S_\beta^b$. Since any $R_\alpha^b$ is identified with $R_\alpha^t$, this induces a new gluing between $R_\beta^t$ and $R_{\beta'}^t$ missing in Point (1).
\item
If $\tau_\ast\geq0$, identify $S_\ast$ with $S_\alpha^t\setminus\partial R_{\beta_0}$, 
where $\beta_0$ and $\alpha$ are the letters with respectively $\pi_b(\beta_0)=d$ and  $\pi_t(\alpha)=\pi_t(\beta_0)+1$. If $\tau_\ast<0$, identify $S_\ast$ with
$
S_\beta^b\setminus\partial R_{\alpha_0}
$, 
where $\alpha_0$ and $\beta$ are the letters with respectively $\pi_t(\alpha_0)=d$ and 
$
\pi_b(\beta)=\pi_b(\alpha_0)+1
$.  
\end{enumerate}

\subsection{Marking a point on a translation surface}
\label{SectionMarkingPointTranslationSurface}

Fix $\chi\in\cA$. Let 
$
\widehat{\cA}:=\cA\setminus\{\chi\}\cup\{\chi_L,\chi_R\}
$ 
be the alphabet obtained from $\cA$ replacing the letter $\chi$ by the pair $\chi_L,\chi_R$, so that 
$\widehat{\cA}$ contains $d+1$ letters. Let $\widehat{\pi}$ be the combinatorial datum over $\widehat{\cA}$ obtained from $\pi$ marking the letter $\chi$. Such $\widehat{\pi}$ is admissible over 
$\widehat{\cA}$. For example, if $\cA=\{A,B,C,D\}$ and $\chi=C$, then 
$$
\pi:=
\begin{pmatrix}
A & B & C & D\\
D & C & B & A
\end{pmatrix}
\quad
\textrm{ and }
\quad
\widehat{\pi}:=
\begin{pmatrix}
A & B & C_L & C_R & D\\
D & C_L & C_R & B & A
\end{pmatrix}.
$$
Let $(\pi,\lambda,\tau)$ be data over $\cA$ representing a surface $X=X(\pi,\lambda,\tau)$ in $\cH$. The data $(\pi,\lambda)$ define an IET acting on 
$
\big[0,\sum_{\alpha\in\cA}\lambda_\alpha\big)
$. 
For fixed $u$ with 
\begin{equation}
\label{EquationParameterU}
u^t_\chi<u<u^t_\chi+\lambda_\chi
\end{equation} 
define length and suspension data 
$
(\widehat{\lambda},\widehat{\tau})
$ 
by 
\begin{equation}
\label{EquationLengthDataMarkedSurface}
\left\{
\begin{array}{l}
\widehat{\lambda}_\alpha:=\lambda_\alpha\text{ for }\alpha\not=\chi\\
\widehat{\lambda}_{\chi_L}:=u-u^t_\chi\\
\widehat{\lambda}_{\chi_R}:=u^t_\chi+\lambda_\chi-u
\end{array}
\right.
\quad
\text{ and }
\quad
\left\{
\begin{array}{l}
\widehat{\tau}_\alpha:=\tau_\alpha\text{ for }\alpha\not=\chi\\
\widehat{\tau}_{\chi_L}:=\tau_\chi\\
\widehat{\tau}_{\chi_R}:=0,
\end{array}
\right.
\end{equation}
so that the data 
$
(\widehat{\pi},\widehat{\lambda},\widehat{\tau})
$ 
represent the same surface $X$ together with a marked point. In the following, the datum $\tau$ is fixed, so that $\widehat{\tau}$ is fixed too, while $\lambda$ and $u$ vary, so that $\widehat{\lambda}$ spans the entire simplex $\Delta^{d+1}$. We have 
$
\lambda_\chi=\widehat{\lambda}_{\chi_L}+\widehat{\lambda}_{\chi_R}
$.

\section{The model for the saddle-sink node}
\label{SectionFlowBox}

\subsection{The saddle-sink node}

Let $\|(x,y)\|:=\max\{|x|,|y|\}$. For $h\in\RR$ let $\phi_h:\RR^2\to[0,1]$ be a smooth function such that 
$$
\left\{
\begin{array}{cccc}
&\phi_h(x,y)=1&\text{ for }&\|(x,y-h)\|\leq 2\\
&0<\phi_h(x,y)<1&\text{ for }&2<\|(x,y-h)\|<3\\
&\phi_h(x,y)=0&\text{ for }&\|(x,y-h)\|\geq 3.
\end{array}
\right.
$$
Assume also the extra conditions
\begin{align}
&
\label{EquationConditionCutFunction}
\left\{
\begin{array}{l}
\partial_x\phi_h(x,y)\not=0
\quad\text{for}\quad
2<\|(x,y-h)\|<3
\quad\text{and}\quad
|y-h|<2.
\\
\partial_y\phi_h(x,y)=0
\quad\text{for}\quad
|y-h|<2.
\end{array}
\right.
\\
&
\label{EquationDecayPartialDerivatives}
\partial^n_x\partial^m_y\varphi_h(x,y)\to 0
\quad\text{as}\quad
\|(x,y-h)\|\to 3
\quad\text{for any}\quad n,n\in\NN.
\end{align}
Equation \eqref{EquationConditionCutFunction} is used in the proof of Lemma \ref{LemmaUpperBoundIntegral}, while Equation \eqref{EquationDecayPartialDerivatives} gives the smoothness of the flow defined by Equation \eqref{EquationFlowY}. Consider the constant vector fields over $\RR^2$ given by $V_0:=(0,1)$ and $H^{(s)}:=(-s,s)$, where $s>0$. 
Let $Z:\RR^2\to\RR^2$ be as in Equation~\eqref{EquationSaddleSinkNode}, that is $Z(x,y):=(-x,y^2-1)$. Consider the smooth vector field  
$$
W^{(s)}(x,y):=\big(1-\phi_{-4}-\phi_4\big)\cdot V_0(x,y) + \phi_{-4}\cdot Z(x,y+4)+ \phi_4\cdot H^{(s)}(x,y).
$$

\begin{definition}
\label{DefinitionTrajectoriesAtSaddle}
Let $\sigma_1:\RR\to\RR^2$ be the trajectory of $W^{(s)}$ with $\sigma_1(t)\to(0,-3)$ for $t\to+\infty$ and such that for any $t$ big enough we have $d\sigma_1(t)/dt=\big(a(t),0\big)$ with $a(t)>0$. In other words $\sigma_1$ is the incoming trajectory to the saddle $(0,-3)$ from the left. Let $\sigma_2:\RR\to\RR^2$ be the outgoing $W^{(s)}$-trajectory from the saddle $(0,-3)$ in the upward direction, that is $\sigma_2(t)\to(0,-3)$ for $t\to-\infty$ and $d\sigma_2(t)/dt=\big(0,b(t)\big)$ with $b(t)>0$ for any $t$ close enough to $-\infty$. Finally let $\sigma_3:\RR\to\RR^2$ the incoming $W^{(s)}$-trajectory to the saddle $(0,-3)$ from the right, that is $\sigma_3(t)\to(0,-3)$ for $t\to+\infty$ and $d\sigma_3(t)/dt=\big(c(t),0\big)$ with $c(t)<0$ for any $t$ big enough. 
\end{definition}

The right part of Figure~\ref{FigureZipperedRectanglesAndSaddleSink} represents these trajectories for the vector field $W$ in Equation~\eqref{EquationFlowBox} below. Write $W^{(s)}$ in components as 
$W^{(s)}=(W^{(s)}_1,W^{(s)}_2)$.

\begin{lemma}
\label{LemmaFlowBox}
The vector field $W^{(s)}$ is smooth, moreover the following holds.
\begin{enumerate}
\item
The only zeros of $W^{(s)}$ are a sink at $(0,-5)$ and a saddle at $(0,-3)$.
\item
We have $W^{(s)}(x,y)=(0,1)$ for any $(x,y)$ outside the box $(-3,3)\times(-7,7)$.
\item
There exists an unique $t\in\RR$ such that $\sigma_1(t)$ belongs to the line $\{y=-7\}$. For such $t$ we have
$$
\sigma_1(t)=(\widetilde{u},-7)\text{ with }-3<\widetilde{u}<-2\text{ and }d\sigma_1(t)/dt=(0,1).
$$
\item
There exists an unique $t\in\RR$ such that $\sigma_3(t)$ belongs to the line $\{y=-7\}$. For such $t$ we have
$$
\sigma_1(t)=(\widetilde{c},-7)\text{ with }2<\widetilde{c}<3\text{ and }d\sigma_3(t)/dt=(0,1).
$$
\item
Denoting $\Omega$ the closed disc delimited by $\sigma_1$, $\sigma_3$ and the line $\{y=-7\}$, for $(x,y)$ outside 
$\Omega$ we have
$$
W_2^{(s)}(x,y)>0.
$$
\item
There exists an unique $t\in\RR$ such that $\sigma_2(t)$ belongs to the line $\{y=7\}$. For such $t$ we have
$$
\sigma_2(t)=(\widetilde{v},7)\text{ with }-3<\widetilde{v}<-2\text{ and }d\sigma_2(t)/dt=(0,1).
$$
Moreover, if $\widetilde{u}$ is as in Point (3), then there exists an unique value of the parameter $s>0$ such that $\widetilde{v}=\widetilde{u}$.
\end{enumerate}
\end{lemma}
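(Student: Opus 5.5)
The plan is to read off every property from the explicit form of $W^{(s)}$. The starting point is that the two cut-off regions do not overlap. Indeed $\phi_{-4}$ is supported in $\{|x|<3,\ -7<y<-1\}$ and $\phi_4$ in $\{|x|<3,\ 1<y<7\}$, so the supports are disjoint, and on the strip $-1\le y\le 1$ the field is $V_0$. Smoothness is immediate where the cut-offs are smooth. At the outer boundary $\|(x,y\mp4)\|=3$, Equation~\eqref{EquationDecayPartialDerivatives} makes all derivatives of the glued field match those of $V_0$, which gives the $C^\infty$ gluing. Point (2) follows directly from the supports.

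For Point (1), on the lower region we have $W^{(s)}=\big(-\phi_{-4}x,\ (1-\phi_{-4})+\phi_{-4}((y+4)^2-1)\big)$. The first component vanishes, where $\phi_{-4}>0$, only on $x=0$. On $x=0$ with $2<|y+4|<3$ the second component is at least $(1-\phi)+3\phi>0$. Hence the zeros are those of $Z(x,y+4)$, namely the sink $(0,-5)$ and the saddle $(0,-3)$. On the upper region the second component $(1-\phi_4)+s\phi_4$ is positive, so there are no zeros there.

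Two invariant lines organize the separatrices. The line $x=0$ is invariant in the whole lower region, since $W_1=-\phi_{-4}x$. The line $y=-3$ is invariant inside $|x|\le 2$, where the field equals $Z$.

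For Points (3)--(5) I follow $\sigma_1$ and $\sigma_3$ backward in time. They coincide with $\{y=-3,\ |x|<2\}$ near the saddle. In the band $|y+4|<2$, Equation~\eqref{EquationConditionCutFunction} makes $\phi_{-4}=\phi_{-4}(x)$ strictly monotone in $|x|$ on $2<|x|<3$. On $y=-3$ with $2<|x|<3$ we have $W_2=1-\phi>0$, so the backward orbit leaves this line going downward and outward. I would show that in backward time $|x|$ increases while $y$ decreases. The orbit then stays in $2<|x|<3$ and exits the line $\{y=-7\}$ transversally at a point with $2<|\widetilde u|<3$, where $\phi=0$ and the velocity is $(0,1)$. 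Uniqueness of the crossing time comes from the fact that the orbit never returns upward below $y=-7$.

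Point (5) then follows. The set $\{W_2\le 0\}$ is contained in $\{|y+4|<1,\ |x|<3\}$, and I would show it lies in the closed region cut out by $\sigma_1$, $\sigma_3$ and $\{y=-7\}$. The comparison uses the monotonicity of $\phi$ in $x$ together with the fact that the separatrices pass through the region where $\phi\in(0,1)$ with $W_2>0$. I expect this separatrix bookkeeping, namely pinning down the exact shape of $\Omega$, to be the fiddly part of the lemma.

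For Point (6), $\sigma_2$ rises along $x=0$ through the lower region and the strip $-1\le y\le 1$, and enters the upper region at $(0,1)$. Since $W_2>0$ there, the orbit is a graph over $y$ and satisfies $dx/dy=-s\phi_4/(1-\phi_4+s\phi_4)$. It leaves the support of $\phi_4$ at $y=7$ with velocity $(0,1)$, at the point $\widetilde v(s)=-\int_1^7 s\phi_4/(1-\phi_4+s\phi_4)\,dy$ evaluated along the orbit. The map $s\mapsto\widetilde v(s)$ is continuous by smooth dependence on parameters, and $\widetilde v(0)=0$. It is strictly decreasing: for larger $s$ the slope $|dx/dy|$ is pointwise larger, and a comparison argument shows the orbit stays to the left at every height.

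As $s\to\infty$ the slope tends to $-1$ wherever $\phi_4>0$. The orbit then enters the square $\{\phi_4=1\}$ and exits it on the side $x=-2$ for $y<6$, after which it continues leftward in the transition layer. This forces $\liminf_{s\to\infty}\widetilde v(s)\le -3+o(1)$, while $\widetilde v>-3$ for every finite $s$. This limit estimate is what Lemma~\ref{LemmaUpperBoundIntegral} is meant to supply, and it is the main obstacle. The delicate point is to show that the integral keeps growing in the layer $2<|x|<3$ despite $\phi_4\to0$; Equation~\eqref{EquationConditionCutFunction} ($\partial_x\phi_4\neq0$) is what I would use to control this.

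Given these two facts, $\widetilde v$ is a strictly monotone continuous bijection from $(0,\infty)$ onto an interval containing $(-3,-2)\ni\widetilde u$. The intermediate value theorem then gives a unique $s$ with $\widetilde v(s)=\widetilde u$.
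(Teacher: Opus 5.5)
Your smoothness argument and Points (1)--(2) are fine. Your comparison argument for strict monotonicity of $s\mapsto\widetilde v(s)$ is a genuine plus: it gives uniqueness of $s$, which the paper's intermediate-value argument asserts but does not prove.

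The gap is at the small-$s$ end of Point (6). The claim $\widetilde v(0)=0$ overlooks the square $\{|x|\le2,\ 2\le y\le6\}$, on which $\phi_4\equiv1$ and hence $W^{(s)}=H^{(s)}=(-s,s)$. There your integrand $s\phi_4/(1-\phi_4+s\phi_4)$ equals $1$ for every $s>0$. At $s=0$ the field vanishes on the whole square, so $\sigma_2$ never reaches $\{y=7\}$. Thus $\widetilde v$ is undefined at $s=0$, and its limit as $s\to0^+$ is not $0$.

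What actually happens for every $s>0$ is the following.
In the layer $1<y<2$ one has $|dx/dy|<1$, because $1-\phi_4>0$ there, so $\sigma_2$ enters the square at $(-\delta,2)$ with $0<\delta<1$.
It crosses the square along a line of slope $-1$ and leaves through the side $\{x=-2\}$ at height $4-\delta<6$.
Afterwards $W_1=-s\phi_4\le0$, and the vertical line $\{x=-3\}$ is itself an orbit, so it cannot be reached.
Hence $-3<\widetilde v(s)<-2$ for all $s>0$. This is the first assertion of Point (6); your proposal never establishes it, and your claimed limit would contradict it.

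Moreover $|dx/dy|\le s/(1-\phi_4)$, and $1-\phi_4$ is bounded below on $\{-3\le x\le-2-\epsilon\}$. So $\widetilde v(s)\to-2$ as $s\to0^+$, which is the correct endpoint (the one the paper uses). With it, your monotonicity plus the intermediate value theorem do give a unique $s$ with $\widetilde v(s)=\widetilde u$.

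The limit $s\to\infty$, which you call the main obstacle, is elementary by the same square-then-layer picture, since the slope tends to $-1$ wherever $\phi_4>0$. Lemma~\ref{LemmaUpperBoundIntegral} cannot supply it anyway: it concerns $g_W'$, not $\widetilde v$.

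For Points (3)--(5), the step you postpone (``in backward time $|x|$ increases while $y$ decreases'') is precisely the content, and it needs an idea you do not give. In the left layer, the set $\{W_2\le0\}$ is $\{\phi_{-4}(x)\ge(2-(y+4)^2)^{-1},\ |y+4|<1\}$. Its boundary curve starts exactly at $(-2,-3)$, the point where $\sigma_1$ leaves the invariant segment. So nothing a priori prevents the backward orbit from entering this set and moving upward.

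The paper's mechanism is a barrier. On that curve $-W^{(s)}=(\phi_{-4}x,0)$ is horizontal and points left, away from $\{W_2\le0\}$. The backward orbit is therefore trapped between the curve and the invariant line $\{x=-3\}$, and can leave the band $|y+4|<1$ only through $\{y=-5\}$. Below that line, $x$ is non-increasing and $y$ strictly decreasing in backward time until the orbit crosses $\{y=-7\}$.

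The same trapping shows that $\{W_2\le0\}$ lies on the inner side of $\sigma_1$ and, by symmetry, of $\sigma_3$. Hence it lies inside $\Omega$, which is Point (5).
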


\begin{proof}
The points $(0,-5)$ and $(0,-3)$ are the only zeros of $(x,y)\mapsto Z(x,y+4)$ inside $[-2,2]\times[-6,-2]$. Outside of such region, $Z$ and $V_0$ are parallel only along the line $\{x=0\}$, but on such line $W^{(s)}$ vanishes only on $(0,-3)$ and $(0,-5)$. Thus there are no other points with $W^{(s)}(x,y)=0$ with $y\leq0$. For $y>0$, $W^{(s)}(x,y)$ is the convex combination of $V_0$ and $H^{(s)}$, which are never parallel. Point(1) follows. Point(2) is obvious from the definitions. In order to prove Point (3) observe first that $\sigma_1$ never touches the vertical line $\{x=-3\}$, because the convex combination of $Z(x,y+4)$ and $V_0$ has horizontal component which vanishes for $x\to-3$. Let $O_1$ be the set of $(x,y)$ with $2<\|(x,y+4)\|<3$. For $(x,y)\in O_1$ we have $0<\phi_{-4}(x,y)<1$ and 
$$
W^{(s)}_2(x,y)=1+\phi_{-4}(x,y)\big((y+4)^2-2\big).
$$
Let $O_2\subset O_1$ be the subset satisfying the extra condition $|y+4|\geq 1$. For $(x,y)\in O_2$ we have 
$W^{(s)}_2(x,y)>0$ because $\phi_{-4}(x,y)\big((y+4)^2-2\big)>-1$. In $O_1\setminus O_2$ we have $|y+4|<1$. 
Thus we have $W^{(s)}_2(x,y)\leq 0$ in the subregion $F\subset O_1\setminus O_2$ of those $(x,y)$ with
$$
\phi_{-4}(x,y)\geq\big(2-(y+4)^2\big)^{-1}.
$$
Reversing time corresponds to consider trajectories of $-W^{(s)}$. The reverse of $\sigma_1$ starts from $(0,-3)$ and reaches $(-2,-3)$ along an horizontal segment. After $(-2,-3)$, it turns downwards and enters $O_1\setminus O_2$, because $W^{(s)}_2(x,2)>0$ for $-3<x<-2$. The left boundary of $O_1\setminus O_2$ is the vertical line $\{x=-3\}$, and the reverse of $\sigma_1$ cannot cross it. The right boundary of $O_1\setminus O_2$ is the smooth curve of equation  
$$
\phi_{-4}(x,y)-\big(2-(y+4)^2\big)^{-1}=0,
$$ 
and the reverse of $\sigma_1$ cannot cross it because for $(x,y)$ in such curve we have 
$$
-W^{(s)}(x,y)=\phi_{-4}(x,y)(x,0),
$$
which points to the left. In other words such curve acts as a repeller for trajectories of $-W^{(s)}$ coming from the left. In conclusion, the reverse of $\sigma_1$ can leave $O_1\setminus  O_2$ only crossing the horizontal line 
$\{y=-5\}$. After that, being inside $O_1$, we have $-W_1^{(s)}<0$ and $-W_2^{(s)}<0$, thus the reverse of $\sigma_1$ continues moving downwards untill it intersects transversally the line $\{y=-7\}$ at a point $(\widetilde{u},-7)$ with $-3<\widetilde{u}<-2$. Moreover 
$
\phi_4(\widetilde{u},-7)=0
$, 
thus $W^{(s)}(\widetilde{u},-7)=V_0$. Point (3) is proved. Point (4) follows by the same argument. The proof of Point (3) also shows that $W_2^{(s)}(x,y)>0$ for those $(x,y)$ outside $\Omega$ with $y<-1$. For $y\geq-1$ we have $W_2^{(s)}(x,y)>0$ trivially because $W^{(s)}$ is the convex combination of $V_0$ and $H^{(s)}$. Point (5) is proved. The proof of the first part of Point (6) follows directly from Point (5).  In order to prove the last part, consider the continuous map $s\mapsto \widetilde{v}(s)$, where $\widetilde{v}$ is the horizontal coordinate of the intersection between $\sigma_2$ and the line $\{y=7\}$. We have $\lim_{s\to0}\widetilde{v}(s)=-2$ and $\lim_{s\to+\infty}\widetilde{v}(s)=-3$, thus the equality 
$
\widetilde{v}(s)=\widetilde{v}
$ 
follows from the Mean Value Theorem. The Lemma is proved.
\end{proof}

\subsection{The hitting map}

Let $s$ be the parameter determined in Point (6) of Lemma~\ref{LemmaFlowBox}. For such value we define the vector field $W$ setting
\begin{equation}
\label{EquationFlowBox}
W:=W^{(s)}.
\end{equation}
Write it in components as $W=(W_1,W_2)$, then let 
$
\big(t,(x,y)\big)\mapsto \Phi_W\big(t,(x,y)\big)
$ 
be its integral flow, that is $d/dt\Phi_W(t,x,y)=W\big(\Phi_W(t,x,y)\big)$ for any $t,x,y$ where the trajectory is defined. Since $W$ is smooth, then $\Phi_W$ is smooth too. For any $x\in(-3,\widetilde{u})\cup(\widetilde{c},3)$ let 
$\sigma_x$ be the positive trajectory of $W$ passing through $(x,-7)$, parametrized by 
$$
\sigma_x(t):=\Phi_W(t,x,-7).
$$

\begin{lemma}
\label{LemmaReturnMapFlowBox}
For any $x\in(-3,\widetilde{u})\cup(\widetilde{c},3)$ we have $t(x)>0$ defined by 
$$
t(x):=\min\{t>0:\sigma_x(t)\in(-3,3)\times\{7\}\}
$$
Moreover we have $\sigma_x\big(t(x)\big)=\big(g_W(x),7\big)$, where $g_W:[-3,3]\to [-3,3]$ is a continuous map, strictly increasing on 
$[-3,\widetilde{u})\cup[\widetilde{c},3)$ with $g_W(x)=\widetilde{u}$ for any $x\in[\widetilde{u},\widetilde{c}]$. 
\end{lemma}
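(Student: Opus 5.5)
The plan is to study the flow of $W$ on the strip $(-3,3)\times[-7,7]$ through the trajectories $\sigma_1,\sigma_2,\sigma_3$ of Definition~\ref{DefinitionTrajectoriesAtSaddle}. These, together with the saddle $(0,-3)$, split the strip into a left region, a right region and the basin $\Omega$ of the sink. We will use the following facts from Lemma~\ref{LemmaFlowBox}:
\begin{itemize}
\item $W=(0,1)$ near the vertical sides $x=\pm3$, and trajectories cannot cross the lines $x=\pm3$ (the horizontal component vanishes there);
\item $W_2>0$ outside $\Omega$ (Point (5));
\item the points $(\widetilde{u},-7)$ and $(\widetilde{c},-7)$ lie on the stable separatrices $\sigma_1$ and $\sigma_3$ (Points (3) and (4));
\item $\sigma_2$ exits the box at $(\widetilde{v},7)=(\widetilde{u},7)$ (Point (6), by the choice of $s$).
\end{itemize}

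\emph{Existence of $t(x)$.} Take $x\in(-3,\widetilde{u})\cup(\widetilde{c},3)$. The point $(x,-7)$ lies outside $\Omega$, and the trajectory $\sigma_x$ never enters $\Omega$, because $\partial\Omega$ is made of trajectories and of a segment of $\{y=-7\}$ on which $W=(0,1)$ points inward only from below. Hence $W_2>0$ along $\sigma_x$, so $y$ is strictly increasing along the trajectory. Since $\sigma_x$ stays in the compact set $[-3,3]\times[-7,7]$ minus $\Omega$, we need a positive lower bound on $W_2$ to conclude that it reaches $y=7$ in finite time. Away from a neighbourhood of the saddle we get this bound by compactness. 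Near the saddle, $\sigma_x$ passes at positive distance from $(0,-3)$, because $x\neq\widetilde{u},\widetilde{c}$ and $\sigma_x$ is not a separatrix, so a local linearization argument at the hyperbolic saddle gives a finite passage time. Then $t(x)$ is well defined and $\sigma_x(t(x))=(g_W(x),7)$ with $g_W(x)\in(-3,3)$. We set $g_W(\pm3)=\pm3$, since the vertical lines are invariant, and $g_W\equiv\widetilde{u}$ on $[\widetilde{u},\widetilde{c}]$.

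\emph{Monotonicity and continuity.} Strict monotonicity on each of the two intervals follows because distinct trajectories cannot cross, and the region between two trajectories, the segment of $\{y=-7\}$ and the segment of $\{y=7\}$ is a topological disc. So the ordering of the entry points on $\{y=-7\}$ is preserved on $\{y=7\}$. For the same reason the left and right families are separated by $\sigma_1\cup\{(0,-3)\}\cup\sigma_2$ and $\sigma_3\cup\{(0,-3)\}\cup\sigma_2$: left points exit to the left of $\widetilde{u}$ and right points to the right of $\widetilde{u}$. Continuity at interior points comes from smooth dependence of $\Phi_W$ on initial conditions together with the implicit function theorem, applied to the transversal crossing of $\{y=7\}$, where $W_2>0$.

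\emph{Main obstacle: continuity at $\widetilde{u}$ and $\widetilde{c}$.} We must show that $g_W(x)\to\widetilde{u}$ as $x\to\widetilde{u}^-$ or $x\to\widetilde{c}^+$. The orbit $\sigma_x$ follows $\sigma_1$ (respectively $\sigma_3$) into a small neighbourhood of the saddle. By the hyperbolic structure (the $\lambda$-lemma, or a Hartman--Grobman chart, using that $Z$ has eigenvalues $-1$ and $2$ at the saddle), it then leaves along the unstable separatrix $\sigma_2$, from the left or from the right respectively. After that it follows $\sigma_2$ on a compact time interval up to $\{y=7\}$, and by continuous dependence it exits close to $(\widetilde{v},7)=(\widetilde{u},7)$. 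The monotone limits from both sides therefore coincide with $\widetilde{u}$, which makes $g_W$ continuous on $[-3,3]$. This is exactly where the choice of $s$ in Point (6) is used: it ensures that the flat value $\widetilde{u}$ matches the exit point of $\sigma_2$.
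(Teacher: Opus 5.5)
Your proposal is correct. For the existence of $t(x)$, the monotonicity, and the separation of the left and right families, it follows the same lines as the paper: non-crossing of trajectories, Point~(5) of Lemma~\ref{LemmaFlowBox}, and continuous dependence on initial conditions. The genuine difference is the key step, continuity at $\widetilde{u}$ and $\widetilde{c}$.

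\emph{The paper's route.} It never analyses the passage near the saddle. Following trajectories of $-W$ backwards from $(-3,3)\times\{7\}$, it shows that $g_W$ maps $(-3,\widetilde{u})$ onto $(-3,\widetilde{u})$ and $(\widetilde{c},3)$ onto $(\widetilde{u},3)$. Essentially, the only backward orbit from the top that does not come down to $\{y=-7\}$ is $\sigma_2$, and by Point~(6) it hits the top at $(\widetilde{v},7)=(\widetilde{u},7)$. A monotone map whose image has no gaps has no jumps. So the extension by the constant $\widetilde{u}$ on $[\widetilde{u},\widetilde{c}]$ is automatically continuous.

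\emph{Your route.} You argue forward and locally, through the hyperbolic saddle. This is valid, and here it is even elementary. On $[-2,2]\times[-6,-2]$ one has $W=Z(x,y+4)=(-x,(y+4)^2-1)$, which decouples. An orbit entering through the left side at height $-3+\delta$ therefore leaves through $\{y=-2\}$ after a time $T(\delta)\to+\infty$, at abscissa $-2e^{-T(\delta)}\to0^-$, i.e.\ close to $(0,-2)\in\sigma_2$. No $\lambda$-lemma or Hartman--Grobman chart is needed.

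\emph{What each buys.} The paper's argument is shorter and directly yields the surjectivity onto $(-3,3)\setminus\{\widetilde{u}\}$ used later in Lemma~\ref{LemmaReturnInsideRectangle}. In your version, surjectivity follows afterwards from continuity and the intermediate value theorem. Yours instead gives a quantitative picture of the slow passage near the saddle, which is exactly the mechanism ($z(x)\to-3$) exploited in Lemma~\ref{LemmaUpperBoundIntegral}.

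Two small points to tidy up.

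First, the set $[-3,3]\times[-7,7]\setminus\Omega$ is not compact. Its closure contains the horizontal pieces $[-2,0)\times\{-3\}$ and $(0,2]\times\{-3\}$ of $\sigma_1$ and $\sigma_3$, where $W=(-x,0)$. So $W_2$ is not bounded below ``away from a neighbourhood of the saddle'' unless that neighbourhood is the whole square above, where the explicit formula gives the finite passage time. Alternatively, use an $\omega$-limit argument: if $\sigma_x$ never reached $\{y=7\}$, its $\omega$-limit set would be a nonempty invariant set inside a horizontal line and outside the interior of $\Omega$. That set can only be the saddle, which forces $\sigma_x$ to be $\sigma_1$ or $\sigma_3$, a contradiction.

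Second, you should state continuity of $g_W$ at $x=\pm3$. It follows from the same transversality argument, since the lines $\{x=\pm3\}$ are trajectories crossing $\{y=7\}$ transversally.
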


\begin{proof}
By uniqueness of the solution of a smooth ODE, for $x\in(-3,\widetilde{u})\cup(\widetilde{c},3)$ the path $\sigma_x$ never crosses neither the vertical segments in the boundary of $[-3,3]\times[-7,7]$ nor the union 
$
\sigma_1\cup\sigma_2\cup\sigma_3
$. 
The vertical component of $d\sigma_x(t)/dt$ has a positive lower bound uniformly in $t$ (but not in $x$) according to Point (5) of Lemma~\ref{LemmaFlowBox}, therefore there exists $t(x)>0$ as in the statement and a strictly increasing map $g_W:(-3,\widetilde{u})\cup(\widetilde{c},3)\to (-3,3)$ with 
$\sigma_x\big(t(x)\big)=\big(g_W(x),7\big)$. Moreover $g_W$ is continuous by continuous dependence on the initial conditions. Considering the trajectories of $-W$ one can see that $g_W$ is surjective onto 
$(-3,3)\setminus\{\widetilde{u}\}$. The definitions of $\sigma_1$ and $\sigma_3$, together with Point (6) in Lemma Lemma~\ref{LemmaFlowBox} imply $g_W(-3,\widetilde{u})=(-3,\widetilde{u})$ and 
$g_W(\widetilde{c},3)=(\widetilde{u},3)$. Such map has an unique continuous monotone extension $g_W:I^t_\chi\to I^t_\chi$ as in the statement. The Lemma is proved.
\end{proof}

For $i=1,2$ let $\rho_i:\RR^2\to\RR$ be defined by $\rho_1(x,y):=x$ and  $\rho_2(x,y):=y$. 
The smooth function $H(t,x):=\rho_2\big(\Phi_W(t,x,-7)\big)$ gives the vertical coordinate at time $t$ of the trajectories of points $(x,-7)$ with $x\in(-3,\widetilde{u})\cup(\widetilde{u},3)$. 
We have $H\big(t(x),x\big)=7$.
Moreover Point (5) in Lemma \ref{LemmaFlowBox} gives 
$$
\partial_t H(t,x)=W_2\big(\Phi_W(t(x),x,0)\big)>0,
$$
therefore $x\mapsto t(x)$ is a smooth\footnote{The Implicit Function Theorem proves that $x\mapsto t(x)$ is continuously differentiable. Smoothness follows a posteriori because the derivative of $t(x)$ is a ratio of smooth functions.} function by the Implicit Function Theorem. The map $g_W$ is smooth on 
$(-3,\widetilde{u})\cup(\widetilde{c},3)$ because $g_W(x)=\rho_1\big(\Phi_W(t(x),x,-7)\big)$ for $x$ in such union. Point (5) of Lemma \ref{LemmaFlowBox} implies also that for any $x\in(-3,\widetilde{u})\cup(\widetilde{c},3)$ and any $t\in[0,t(x)]$ we have 
$$
\frac{d\rho_2\big(\sigma_x(t)\big)}{dt}=W_2\big(\sigma_x(t)\big)>0.
$$
Thus the map $t\mapsto y_x(t):=\rho_2\big(\sigma_x(t)\big)$ admits a smooth inverse $y\mapsto t_x(y)$, so that 
$\sigma_x$ can be parametrized by the hight $y$ that it reaches. Therefore we have a well defined map 
\begin{equation}
\label{EquationReparametrizationByHight}
s_W:\big((-3,\widetilde{u})\cup(\widetilde{c},3)\big)\times [-7,7]\to [-3,3],
\qquad 
s_W(x,y):=\rho_1\big(\sigma_x\cap([-3,3]\times\{y\})\big),
\end{equation}
that is $s_W(x,y)$ is the horizontal component of the trajectory $\sigma_x$ when it reaches hight $y$. We have
$g_W(x)=s_W(x,7)$ for 
$
x\in(-3,\widetilde{u})\cup(\widetilde{c},3)
$. 
Since $W$ is smooth with $W_2>0$ outside $\Omega$, then we get a smooth function $F:\Omega\to\RR$ setting 
$$
F(x,y):=\frac{W_1(x,y)}{W_2(x,y)}.
$$

\begin{lemma}
\label{LemmaFormulaDerivativeFirstReturn}
The map $(x,y)\mapsto s_W(x,y)$ is smooth. Moreover, for 
$x\in(-3,\widetilde{u})\cup(\widetilde{c},3)$ we have 
$$
\frac{dg_W(x)}{dx}=\exp\bigg(\int_{-7}^{7}\partial_x F\big(s_W(x,y),y\big)dy\bigg).
$$
\end{lemma}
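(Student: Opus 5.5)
The plan is to view the trajectories $\sigma_x$ as graphs over the height variable, and to get the derivative formula from the first variational equation of the resulting non-autonomous scalar ODE. Fix $x_0\in(-3,\widetilde{u})\cup(\widetilde{c},3)$. By Point (5) of Lemma~\ref{LemmaFlowBox}, $W_2>0$ along $\sigma_{x_0}$ for $y\in[-7,7]$, since that trajectory stays outside $\Omega$. We then reparametrize by height, using the inverse $y\mapsto t_x(y)$ already discussed. This shows that $y\mapsto s_W(x,y)$ solves
$$
\partial_y s_W(x,y)=\frac{W_1\big(s_W(x,y),y\big)}{W_2\big(s_W(x,y),y\big)}=F\big(s_W(x,y),y\big),
\qquad s_W(x,-7)=x .
$$
Here $F=W_1/W_2$ is smooth on an open neighbourhood of the compact curve $\{(s_W(x_0,y),y):y\in[-7,7]\}$, because $W_2$ is bounded away from zero on this compact set. (The domain of $F$ should really be the complement of $\Omega$ rather than $\Omega$.)

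Smoothness of $s_W$ then follows from smooth dependence of solutions of ODEs on initial conditions, applied to the scalar equation $\partial_y s=F(s,y)$ on the interval $[-7,7]$. The only point needing care is to show that the solution exists on the whole interval for $x$ near $x_0$. For this I would argue that nearby trajectories stay in a compact neighbourhood where $W_2>0$, using openness of the complement of $\Omega$ together with continuous dependence. An alternative route is to write $s_W(x,y)=\rho_1\Phi_W(t_x(y),x,-7)$ and compose smooth maps, using the Implicit Function Theorem for $t_x(y)$ as in the preceding discussion of $t(x)$.

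For the formula, set $w(y):=\partial_x s_W(x,y)$. Differentiating the ODE in $x$, which is legitimate since $s_W$ is smooth so the mixed partials commute, gives the linear equation
$$
w'(y)=\partial_xF\big(s_W(x,y),y\big)\,w(y),\qquad w(-7)=1 .
$$
Its unique solution is $w(y)=\exp\int_{-7}^{y}\partial_xF(s_W(x,\eta),\eta)\,d\eta$. Evaluating at $y=7$ and using $g_W(x)=s_W(x,7)$ yields the claimed formula. The main obstacle is only the uniform control of the domain of existence near $x_0$; the computation itself is routine.
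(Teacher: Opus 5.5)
Your proposal is correct and follows essentially the same route as the paper. Both reparametrize $\sigma_x$ by height to get the Cauchy problem $\partial_y s_W=F(s_W,y)$, use smooth dependence on initial data, differentiate in $x$ to obtain the linear variational equation for $\partial_x s_W$, and integrate it from $y=-7$ (where $\partial_x s_W=1$) to $y=7$. Your remarks that $F$ should be defined off $\Omega$ rather than on it, that the initial condition is $s_W(x,-7)=x$, and that existence on all of $[-7,7]$ for nearby $x$ needs a continuity argument are correct, and they tidy up points the paper states loosely.
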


\begin{proof}
For any fixed $x\in(-3,\widetilde{u})\cup(\widetilde{c},3)$ we have 
$
s_W(x,y)=\rho_1\circ\sigma_x\big(t_x(y)\big)
$. 
Therefore
\begin{align*}
\frac{\partial s_W(x,y)}{\partial y}
&
=
\frac{d\rho_1\circ\sigma_x(t)}{dt}\bigg|_{t=t_x(y)}
\frac{dt_x(y)}{dy}
=
\frac{d\rho_1\circ\sigma_x(t)}{dt}\bigg|_{t=t_x(y)}
\bigg(\frac{dy_x\big(t_x(y)\big)}{dt}\bigg)^{-1}
\\
&
=
\frac{W_1\big(\sigma_x(t_x(y))\big)}{W_2\big(\sigma_x(t_x(y))\big)}
=F\big(s_W(x,y),y\big)
\quad\text{ with }\quad s_W(x,0)=x.
\end{align*}
Thus $y\mapsto s_W(x,y)$ is the solution of a Cauchy problem with $F$ smooth. The smooth dependence on initial conditions implies that $(x,y)\mapsto s_W(x,y)$ is smooth too. Setting $V(x,y):=\partial_x s_W(x,y)$ we get
\begin{align*}
\partial_y V(x,y)
&
=\partial_y \partial_x s_W(x,y)=\partial_x \partial_y s_W(x,y)
\\
&
=\partial_x\Big(F\big(s_W(x,y),y\big)\Big)=\partial_x F\big(s_W(x,y),y\big)\partial_x s_W(x,y)
=\partial_x F\big(s_W(x,y),y\big)V(x,y).
\end{align*}
Therefore 
$$
\log\big(V(x,7)\big)-\log\big(V(x,-7)\big)=\int_{-7}^{7}\partial_x F\big(s(x,y),y\big)dy
$$
Since $s_W(x,-7)=x$ then $V(x,-7)=1$. Moreover $s_W(x,7)=g_W(x)$, so $V(x,7)=dg_W(x)/dx$ and the Lemma is proved.
\end{proof}

\begin{lemma}
\label{LemmaUpperBoundIntegral}
There exists a constant $C>0$ such that for any $x\in(-3,\widetilde{u})\cup(\widetilde{c},3)$ we have
$$
\int_{-7}^{7}\partial_x F\big(s_W(x,y),y\big)dy\leq C.
$$
Moreover 
$
\lim_{x\to \widetilde{u}_-}\int_{-7}^{7}\partial_x F\big(s(x,y),y\big)dy=
\lim_{x\to \widetilde{c}_+}\int_{-7}^{7}\partial_x F\big(s(x,y),y\big)dy=-\infty
$. 
In particular $g_W$ is continuously differentiable on $[-3,3]$.
\end{lemma}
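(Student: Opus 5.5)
We split the integral $\int_{-7}^{7}\partial_x F\big(s_W(x,y),y\big)dy$ according to the regions the trajectory $\sigma_x$ crosses, and handle each region with the explicit form of $W$.

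\textbf{Regions where the integrand vanishes or is bounded.} Where $\phi_{-4}=\phi_4=0$ we have $W=V_0$, so $F\equiv0$. In the upper region $y\geq-1$ the field is $W=(1-\phi_4)V_0+\phi_4H^{(s)}$. Hence $F=-s\phi_4/\big(1-\phi_4+s\phi_4\big)$, which is smooth with bounded derivatives on the compact box, and $W_2\geq\min\{1,s\}>0$ there. So this part contributes a bounded amount, uniformly in $x$.

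\textbf{The lower region $\|(x,y+4)\|<3$.} Here $W=(1-\phi_{-4})V_0+\phi_{-4}Z(x,y+4)$, so
$$
W_1=-x\phi_{-4},\qquad W_2=1+\phi_{-4}\big((y+4)^2-2\big),
$$
and $F=W_1/W_2$. The trajectory $\sigma_x$ stays outside $\Omega$ by Lemma \ref{LemmaFlowBox}. For $x$ in a compact subset of $(-3,\widetilde u)\cup(\widetilde c,3)$, $W_2$ has a positive lower bound along $\sigma_x$, so the integral is bounded there. The delicate regimes are $x\to\widetilde u^-$, $x\to\widetilde c^+$, and $x\to\pm3$.

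Near $x=\pm3$, by \eqref{EquationDecayPartialDerivatives} $\phi_{-4}$ and all its derivatives vanish, so $\partial_xF\to0$ and there is no problem.

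Near the separatrices, $\sigma_x$ follows $\sigma_1$ (respectively $\sigma_3$) into a neighbourhood of the saddle $(0,-3)$. It spends a long height-parametrized stretch close to the curve where $W_2$ is small, then exits upward along $\sigma_2$. There $\phi_{-4}=1$ and $F=-x/((y+4)^2-1)$, so
$$
\partial_xF=-\frac{1}{(y+4)^2-1},
$$
which near $y=-3$ equals $-1/(2(y+3))+O(1)$ for $y>-3$. This is large negative. Heuristically the linearization at the saddle has eigenvalues $-1$ (horizontal) and $+2$ (vertical), so passing near it contracts horizontally. For the upper bound $C$ I would show:

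1. On the set where $\phi_{-4}=1$, i.e. $\|(x,y+4)\|\leq2$, the integrand $\partial_xF=-1/((y+4)^2-1)$ is negative whenever $|y+4|>1$ with $W_2>0$.
2. On the annulus $2<\|(x,y+4)\|<3$, compute $\partial_xF$ and use \eqref{EquationConditionCutFunction}: $\partial_y\phi_{-4}=0$ for $|y+4|<2$ and $\partial_x\phi_{-4}\neq0$ there. In the side strips this gives
$$
\partial_xF=\frac{-\phi_{-4}-x\,\partial_x\phi_{-4}}{W_2}+\frac{x\phi_{-4}\big((y+4)^2-2\big)\partial_x\phi_{-4}}{W_2^2}.
$$
Since $x\,\partial_x\phi_{-4}<0$ (the cutoff decreases outward), the sign analysis near the small-$W_2$ curve should show the singular contribution is $\leq0$.

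Then $\int\partial_xF\,dy$ is bounded above by the integral of the positive part, which lives where $W_2$ is bounded below, giving $C$.

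\textbf{The limits $-\infty$.} As $x\to\widetilde u^-$, continuous dependence makes $\sigma_x$ pass arbitrarily close to $(0,-3)$ and exit along $\sigma_2$ in the region $y\in(-3,-2)$, $|x|$ small, where $\phi_{-4}=1$. Rather than integrate $-1/(2(y+3))$ directly, which is awkward since the trajectory's height passes $-3$ only in the limit, I would use Lemma \ref{LemmaFormulaDerivativeFirstReturn} in reverse. It suffices to show $g_W'(x)\to0$. This follows because $g_W$ maps $(-3,\widetilde u)$ onto $(-3,\widetilde u)$ with points accumulating at the saddle exit, and the hyperbolic passage time $\sim\log(1/\mathrm{dist})$ yields a horizontal contraction factor $\to0$. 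Combined with the upper bound on the remaining pieces, the integral tends to $-\infty$. The same argument applies at $\widetilde c^+$.

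Finally, $g_W'=\exp(\cdot)$ is bounded by $e^C$, tends to $0$ at $\widetilde u^-$ and $\widetilde c^+$, and equals $0$ on the flat piece. Near $\pm3$, $g_W'\to1$ since $F$ vanishes to all orders there. This gives $C^1$ on $[-3,3]$.

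The main obstacle is the sign analysis on the annulus near the curve $W_2\approx0$, where the second term carries a $W_2^{-2}$ factor; I expect this to require \eqref{EquationConditionCutFunction} carefully.
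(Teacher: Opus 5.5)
Your proposal breaks down at the step you flag as the main obstacle, and the problem is not a technicality: the inequalities you are aiming for are false.

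\textbf{The sign in the annulus is wrong.} Since $W_2-\phi_{-4}\big((y+4)^2-2\big)=1$, your formula simplifies to
\[
\partial_xF=\frac{-\phi_{-4}W_2-x\,\partial_x\phi_{-4}}{W_2^2}=-\frac{\phi_{-4}}{W_2}+\frac{-x\,\partial_x\phi_{-4}}{W_2^2}.
\]
On the left strip $-3<x<-2$, $|y+4|<2$, you correctly note $-x\,\partial_x\phi_{-4}>0$. So the only term carrying $W_2^{-2}$ is positive, and the singular contribution is positive, not $\leq0$.

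It is also large. In that strip $\phi_{-4}$ depends only on $s$, and along $\sigma_x$ one has $dy=\frac{W_2}{|s|\phi_{-4}}\,ds$. Hence the strip contributes
\[
\int\frac{\partial_x\phi_{-4}}{\phi_{-4}W_2}\,ds-\int\frac{ds}{|s|}.
\]
Let $\eta:=z(x)+3$ be the height above $-3$ at which $\sigma_x$ hits $\{x=-2\}$. Height increases along $\sigma_x$, so before that point $W_2\leq 1-\phi_{-4}+3\eta$. The first integral is therefore at least $\log(1/\eta)-O(1)$, and the second is bounded.

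Inside the square $\{\phi_{-4}=1\}$ the integrand is $-1/((y+4)^2-1)$, which contributes exactly $\frac12\log\frac{3\eta}{2+\eta}$. All remaining pieces are bounded. So the integral is at least $\frac12\log(1/\eta)-O(1)\to+\infty$ as $x\to\widetilde u^-$, and symmetrically as $x\to\widetilde c^+$.

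\textbf{Why the contraction heuristic fails.} The Dulac map gives the same answer. The saddle has eigenvalues $-1$ (stable, horizontal) and $+2$ (unstable, vertical). In the square, the trajectory through $(-2,-3+\eta)$ leaves $\{y=-2\}$ at $x=-2\sqrt{3\eta/(2+\eta)}$. The holonomies from $\{y=-7\}$ to $\{x=-2\}$ along $\sigma_1$, and from $\{y=-2\}$ to $\{y=7\}$ along $\sigma_2$, are local diffeomorphisms. Hence $\widetilde u-g_W(x)\asymp(\widetilde u-x)^{1/2}$ and $g_W'(x)\to+\infty$.

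The contraction factor $e^{-t}\approx\sqrt{\eta}$ that you count is beaten by a factor $\approx 1/\eta$. That factor comes from comparing nearly horizontal trajectories at equal height. Consequently no uniform $C$ exists, both limits are $+\infty$, and $g_W$ is not $C^1$ at $\widetilde u$ and $\widetilde c$. A vanishing derivative at these points would require $|\lambda_s|>\lambda_u$ at the saddle.

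\textbf{The paper's proof has the matching error.} It reaches the stated conclusion only through a faulty step in the bound for $B_x$. There it writes $2(y+4)\phi_{-4}/W_2^2=-\partial_y(1/W_2)$, using $\partial_y\phi_{-4}=0$. Along $\sigma_x$, however, $\frac{d}{dy}\phi_{-4}\big(s_W(x,y),y\big)=\partial_x\phi_{-4}\cdot F\neq0$. As a result, its final bound $M\big(\epsilon^{-1}-\frac{1}{3(3+z(x))}\big)$ for a positive integral becomes negative as $z(x)\to-3^+$.
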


\begin{proof}
For simplicity write $P(x,y):=\partial_x F(x,y)$ and $s(x,y)$ instead of $s_W(x,y)$. Observe that for $-7\leq y\leq -1$ we have
$W_2\big(s(x,y),y\big)=1+\phi_{-4}\big((y+4)^2-2\big)$ and $W_1\big(s(x,y),y\big)=-x\phi_{-4}$, therefore
\begin{equation}
\label{EquationLemmaUpperBoundIntegral}
P\big(s(x,y),y\big)
=
\frac
{-\phi_{-4}W_2\big(s(x,y),y\big)-s(x,y)\partial_x\phi_{-4}}
{\big(W_2\big(s(x,y),y\big)\big)^2}
\leq
\frac
{8+3\|\partial_x\phi_{-4}\|_\infty}
{\big(1+\phi_{-4}\big((y+4)^2-2\big)\big)^2}.
\end{equation}
Let $s$ be the parameter determined in Point (6) of Lemma~\ref{LemmaFlowBox}.
For $1<y<7$ we have $W_2(x,y)=1-\phi_{4}(s-1)$ and $W_1(x,y)=-\phi_{4}s$, thus 
$$
P\big(s(x,y),y\big)=
\frac{-s\partial_x\phi_4}{\big(1+\phi_{4}(s-1)\big)^2}\leq
\frac{s\|\partial_x\phi_4\|_\infty}{(\min\{1,s\})^2}.
$$ 
For $-1\leq y\leq 1$ we have $W_2(x,y)=1$ and $W_1(x,y)=0$, so $P\big(s(x,y),y\big)=0$. 
Finally for $-4+\sqrt{2}<y<-1$ we have $(y+4)^2-2>0$, and recalling Equation \eqref{EquationLemmaUpperBoundIntegral} we have 
$P\big(s(x,y),y\big)\leq 8+3\|\partial_x\phi_{-4}\|_\infty$. 
Therefore for any $w\geq-4+\sqrt{2}$ (below we use both $w=-4+\sqrt{2}$ and $w=-2$) we have  
$$
\int_{w}^{7}P\big(s(x,y),y\big)dy
\leq 
(11-\sqrt{2})\max\Big\{8+3\|\partial_x\phi_{-4}\|_\infty,\frac{s}{\min\{1,s\}^2}\Big\}.
$$

From now on, assume $-3<x<\widetilde{u}$. Fix $\delta>0$ such that $\phi_{-4}(x,y)\geq 1/2$ whenever $\|(x,y+4)\|\leq 2+\delta$, that is $(x,y)$ belongs to the square $S_\delta$ of center $(0,-4)$ and a vertex in $(-2-\delta,-6-\delta)$. Consider the compact set 
$
K_\delta:=[-3,-2-\delta]\times[-7,-4+\sqrt{2}]
$. 
According to Point (5) of Lemma \ref{LemmaFlowBox} define $\epsilon>0$ by
$$
\epsilon:=\min\Big\{W_2\big(s(x,y),y\big):\big(s(x,y),y\big)\in K_\delta\Big\}.
$$
Consider $x\in(-3,\widetilde{u})$ and assume that $\big(s(x,y),y\big)\in K_\delta$ for any $y\in[-7,-4+\sqrt{2}]$. In this case the Lemma follows because Equation \eqref{EquationLemmaUpperBoundIntegral} gives
$$
\int_{-7}^{-4+\sqrt{2}}P\big(s(x,y),y\big)dy
\leq 
(3+\sqrt{2})\frac{8+3\|\partial_x\phi_{-4}\|_\infty}{\epsilon^2}.
$$
Otherwise there exists $y(x)\in(-3,-4+\sqrt{2})$ such that 
$s\big(x,y(x)\big)=-2-\delta$, that is $\sigma_x$ enters in $S_\delta$. In this case, choosing $\delta$ small enough, there exists $z(x)$ with $y(x)<z(x)<-2$ such that $s\big(x,z(x)\big)=-2$, that is $\sigma_x$ enters in the square where $W(x,y)=Z(x,y+4)$. We have
$$
\begin{array}{c}
\int_{-7}^{-2}P\big(s(x,y),y\big)dy=A_x+B_x+C_x
\qquad\text{where}
\\
A_x:=\int_{-7}^{y(x)}P\big(s(x,y),y\big)dy,
\quad
B_x:=\int_{y(x)}^{z(x)}P\big(s(x,y),y\big)dy,
\quad
C_x:=\int_{z(x)}^{-2}P\big(s(x,y),y\big)dy.
\end{array}
$$
We have clearly 
$
A_x\leq (3+\sqrt{2})3\|\partial_x\phi_{-4}\|_\infty\epsilon^{-2}
$. 
Moreover for $y(x)\leq y<z(x)$ we have $\phi_{-4}\geq 1/2$ and $y+4\geq 1$. Therefore setting 
$M:=(8+3\|\partial_x\phi_{-4}\|_\infty)$, Equation \eqref{EquationLemmaUpperBoundIntegral} gives
\begin{align*}
B_x
&
\leq M\int_{y(x)}^{z(x)}
\frac{1}{\big(1+\phi_{-4}\big((y+4)^2-2\big)\big)^2}dy
&
\leq M\int_{y(x)}^{z(x)}
\frac{2(y+4)\phi_{-4}}{\big(1+\phi_{-4}\big((y+4)^2-2\big)\big)^2}dy
\\
&
=-M\int_{y(x)}^{z(x)}
\partial_y\bigg(\frac{1}{1+\phi_{-4}\big((y+4)^2-2\big)}\bigg)dy
&
\leq M\bigg(\frac{1}{\epsilon}-\frac{1}{1+\big((z(x)+4)^2-2\big)}\bigg)
\\
&
=M\bigg(\frac{1}{\epsilon}+\frac{1}{5+z(x)}\cdot\frac{1}{-3-z(x)}\bigg)
&
\leq M\bigg(\frac{1}{\epsilon}+\frac{-1/3}{3+z(x)}\bigg).
\end{align*}
In the second line, the equality uses the condition $\partial_y\phi_{-4}=0$ in Equation \eqref{EquationConditionCutFunction} and the inequality follows because 
$\phi_{-4}\big(-2,z(x)\big)=1$ and because $(-2-\delta,y(x))\in K_\delta$. 
Finally for $y(x)\leq y\leq -2$ we have 
$P\big(s(x,y),y\big)=\big(1-(y+4)^2\big)^{-1}$, therefore 
\begin{align*}
C_x
&
=\int_{z(x)}^{-2}\frac{1}{1-(y+4)^2}dy
&
=\int_{z(x)}^{-2}\frac{1}{5+y}\cdot\frac{1}{-3-y}dy
&
\leq\int_{z(x)}^{-2}\frac{1/3}{-3-y}dy
&
=\frac{\ln\big(3+z(x)\big)}{3}.
\end{align*}
The Lemma follows for $-3<x<\widetilde{u}$ because both $B_x$ and $C_x$ are uniformly bounded from above and moreover for $x\to\widetilde{u}_-$ we have $z(x)\to -3_+$ so that $B_x\to-\infty$ and $C_x\to-\infty$. For $\widetilde{c}<x<3$ the symmetric argument applies to $A_x,B_x,C_x$ because the restriction of the vector field $W$ to the region where $y<0$ is symmetric with respect to the reflection $(x,y)\mapsto(-x,y)$. The Lemma is proved.
\end{proof}

\begin{corollary}
The map $g_W$ in Lemma \ref{LemmaReturnMapFlowBox} is smooth on $[-3,3]$.
\end{corollary}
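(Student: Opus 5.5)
We have to show that $g_W$ is $C^\infty$ on all of $[-3,3]$. Away from the flat interval this is already settled: on $(-3,\widetilde{u})\cup(\widetilde{c},3)$ smoothness was obtained via $g_W(x)=s_W(x,7)$ and Lemma~\ref{LemmaFormulaDerivativeFirstReturn}. On the open flat interval $(\widetilde{u},\widetilde{c})$ the map is constant, hence smooth. Near $\pm3$ the argument of Lemma~\ref{LemmaFlowBox} shows that trajectories starting close to the vertical sides stay in the region where $W$ is close to $V_0$. Using the flat decay \eqref{EquationDecayPartialDerivatives}, $g_W$ extends smoothly there, with all derivatives of $g_W(x)-x$ tending to zero. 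What remains is smoothness at the two points $\widetilde{u}$ and $\widetilde{c}$. For this we will show that every derivative $g_W^{(n)}(x)$, $n\geq1$, tends to $0$ as $x\to\widetilde{u}_-$ and as $x\to\widetilde{c}_+$. A standard lemma (iterated mean value theorem) then says that a function which is $C^\infty$ on each side and whose one-sided derivatives all vanish in the limit is $C^\infty$ across the point. Consequently all derivatives of $g_W$ at $\widetilde{u}$ and $\widetilde{c}$ are zero, matching the constant branch.

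For $n=1$ this follows from Lemmas~\ref{LemmaFormulaDerivativeFirstReturn} and~\ref{LemmaUpperBoundIntegral}. Write $\Lambda(x):=\int_{-7}^{7}\partial_xF(s_W(x,y),y)\,dy$, so that $g_W'=e^{\Lambda}$. The integral is bounded above and tends to $-\infty$, so $g_W'\to0$. For higher derivatives we differentiate $g_W'=e^{\Lambda}$ repeatedly. By the Faà di Bruno formula, $g_W^{(n)}=e^{\Lambda}\,P_n(\Lambda',\dots,\Lambda^{(n-1)})$ with $P_n$ a polynomial. It therefore suffices to prove a polynomial bound $|\Lambda^{(k)}(x)|\leq C_k\,(\widetilde{u}-x)^{-m_k}$ (and similarly at $\widetilde{c}$), together with a quantitative rate $e^{\Lambda(x)}\leq C(\widetilde{u}-x)^{\kappa}$ for some $\kappa>0$, or better a rate beating any power.

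To obtain these estimates we use the explicit local model. In the square where $W(x,y)=Z(x,y+4)$, the computation of $C_x$ in Lemma~\ref{LemmaUpperBoundIntegral} shows that $\Lambda(x)$ behaves like $\tfrac13\ln(3+z(x))+\tfrac13\ln(\cdots)$ plus bounded terms. Here $z(x)$ is the height at which $\sigma_x$ enters that square. Using the linearization $\dot x=-x$, $\dot y=(y+4)^2-1$ near the hyperbolic saddle, the passage time near the saddle is of order $\ln(1/(\widetilde{u}-x))$, and the exit coordinate behaves like a power $(\widetilde{u}-x)^{\kappa}$, with $\kappa$ the ratio of eigenvalues (here $2$). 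This gives $e^{\Lambda}\lesssim(\widetilde{u}-x)^{\kappa}$. The flow is conjugate to a standard saddle passage (Dulac map). Its derivatives are then controlled by powers of $(\widetilde{u}-x)^{-1}$ times $(\widetilde{u}-x)^{\kappa}$, and the regular flow-box pieces before and after the saddle contribute smooth factors. This already implies $g_W^{(n)}(x)=O\big((\widetilde{u}-x)^{\kappa-n}\big)$.

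The main obstacle is that this estimate alone yields only finitely many vanishing derivatives. To get $C^\infty$ we would exploit that the saddle passage leads into the sink region: points slightly to the left of $\widetilde{u}$ pass arbitrarily close to the stable separatrix. The outgoing side is the unstable separatrix $\sigma_2$, and because of the sink the branch flattens. We will try to show that the relevant exponent is effectively unbounded, for instance by arguing that the transition map near the saddle-sink node is flat, as a Dulac map of a saddle-node type. If this fails, we would instead modify $\phi_{-4}$ as in \eqref{EquationConditionCutFunction} to ensure flatness. We expect this flatness at $\widetilde{u},\widetilde{c}$ to be the delicate step. The symmetric case $x\to\widetilde{c}_+$ follows by the reflection symmetry noted at the end of Lemma~\ref{LemmaUpperBoundIntegral}.
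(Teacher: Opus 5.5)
\textbf{Gap: the flatness step is not proved, and the mechanisms you propose do not apply.} The decisive step, that all one-sided derivatives of $g_W$ vanish at $\widetilde{u}$ and $\widetilde{c}$, is never established. You only say you will try to get an unbounded exponent from a saddle-node Dulac map, with modifying $\phi_{-4}$ as a fallback. Neither option is available.
\begin{itemize}
\item The zero $(0,-3)$ is a hyperbolic saddle, not a saddle-node: $DW(0,-3)=\mathrm{diag}(-1,2)$, with horizontal stable direction $\lambda_s=-1$ and vertical unstable direction $\lambda_u=2$.
\item $W=Z(x,y+4)$ exactly on $[-2,2]\times[-6,-2]$, so the cutoff plays no role in the passage near the saddle. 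Changing $\phi_{-4}$ would in any case change the object rather than prove the statement.
\item Your exponent is inverted. The Dulac exponent is $|\lambda_s|/\lambda_u=1/2$, not $2$.
\end{itemize}
Explicitly, set $w=y+4$. A trajectory entering the square at $(-2,-3+\eta)$ has $x(t)=-2e^{-t}$ and $\frac{w-1}{w+1}=\frac{\eta}{2+\eta}e^{2t}$. It therefore leaves through $\{y=-2\}$ at $x=-2\sqrt{3\eta/(2+\eta)}$. The holonomies from $\{y=-7\}$ to $\{x=-2\}$ near $\widetilde u$, and from $\{y=-2\}$ to $\{y=7\}$ near $0$, are local diffeomorphisms. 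Transversality holds at $(\widetilde u,\pm7)$, at $(-2,-3)$ where $W=(2,0)$, and at $(0,-2)$ where $W=(0,3)$. Hence
\[
\widetilde u-g_W(x)\asymp(\widetilde u-x)^{1/2},\qquad g_W'(x)\to+\infty\quad\text{as } x\to\widetilde u_- .
\]
You can also see this from the divergence. Since $W_2=1$ on $\{y=\pm7\}$, we have $g_W'(x)=\exp\int_0^{t(x)}\mathrm{div}\,W(\sigma_x(t))\,dt$. Near the saddle $\mathrm{div}\,W\approx1>0$, and $\sigma_x$ spends time $\approx\frac12\ln(1/\eta)$ there. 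So your $n=1$ step already fails. Your Fa\`a di Bruno scheme, run with the correct exponent, shows that $g_W$ is not even $C^1$ at $\widetilde u$, and symmetrically at $\widetilde c$.

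\textbf{Comparison with the paper, whose route also fails.} The ingredient the paper uses, which your proposal lacks, is the bound $\ln g_W'(x)\le -a/(3+z(x))\le -a/(\widetilde u-x)$. It is extracted from the estimate of $B_x$ in Lemma \ref{LemmaUpperBoundIntegral}, and would give super-polynomial decay of $g_W'$ (the higher derivatives are left to the reader). That estimate does not survive checking.
\begin{itemize}
\item It rewrites the integrand as $-\partial_y(1/W_2)$ using $\partial_y\phi_{-4}=0$. Along $\sigma_x$, however, $\frac{d}{dy}\phi_{-4}(s_W(x,y),y)=\partial_x\phi_{-4}\cdot F\neq0$.
\item The resulting bound $M\big(\epsilon^{-1}-\frac{1}{3(3+z(x))}\big)$ is eventually negative, yet it bounds the integral of a positive function.
\end{itemize}
The correct bookkeeping is as follows. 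First, $A_x+B_x=\ln(1/\eta)+O(1)$, because the slope $W_1/W_2=2/(\eta(2+\eta))$ at the entry point $(-2,z(x))$ converts the transversal holonomy into the horizontal one. Second, $C_x=\frac12\ln\frac{3\eta}{2+\eta}$. Therefore the integral in Lemma \ref{LemmaFormulaDerivativeFirstReturn} tends to $+\infty$, not $-\infty$.

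So neither your argument nor the paper's establishes the statement. For $Z$ as in \eqref{EquationSaddleSinkNode}, $g_W$ has a square-root cusp at $\widetilde u$ and $\widetilde c$. With any hyperbolic saddle the contact is of power type $(\widetilde u-x)^{|\lambda_s|/\lambda_u}$, never flat. The full-family argument only uses continuous non-decreasing branches and is unaffected. The smoothness claims built on this corollary would need a different local model.
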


\begin{proof}
We only need to prove smoothness at $\widetilde{u}$ and $\widetilde{c}$, where Lemma \ref{LemmaUpperBoundIntegral} proves the continuous differentiability. The estimates for $A_x,B_x,C_x$ in the proof of Lemma \ref{LemmaUpperBoundIntegral} actually prove that there exists $a>0$ such that for $x\to\widetilde{u}_-$ we have 
$$
\ln\bigg(\frac{dg_W(x)}{dx}\bigg)=
\int_{-7}^{7}G\big(s_W(x,y),y\big)dy
\leq \frac{-a}{3+z(x)}
\leq \frac{-a}{\widetilde{u}-x}\to-\infty,
$$
where $|3+z(x)|<|\widetilde{u}-x|$ because trajectories get "compressed" onto $\sigma_1$ near the point $(-2,-3)$. This is enough to prove that all the derivatives of $g_W$ converge to zero as $x\to\widetilde{u}_-$. The same holds for $\widetilde{c}$ by the symmetric argument. Details are left to the reader.
\end{proof}

\section{Construction of Cherry flows}
\label{SectionConstructionCherryFlows}

\subsection{Setting}
\label{SectionSettingConstructionCherryFlows}

Fix a connected component $\cH$ of the moduli space of translation surfaces, with ramification indices $k_1,\dots,k_m$. Let $\cA$ be an alphabet with $d=m-1+\sum_{j=1}^mk_j$ letters and $\pi$ be an admissible combinatorial datum over $\cA$ representing surfaces in $\cH$ (see \S~\ref{SectionConnectedComponentsStrata}). 
As in \S~\ref{SectionMarkingPointTranslationSurface}, fix $\chi\in\cA$ and set 
$
\widehat{\cA}:=\cA\setminus\{\chi\}\cup\{\chi_L,\chi_R\}
$, 
then let $\widehat{\pi}$ be the combinatorial datum over $\widehat{\cA}$ obtained from $\pi$ marking the letter 
$\chi$. Assume that the Rauzy class $\widehat{\cR}$ of $\widehat{\pi}$ contains a cyclic element as in Definition~\ref{DefinitionCyclicCombinatorialDatum}. 
Fix a suspension datum $\tau$ for $\pi$, then let $\widehat{\tau}$ be as in Equation~\eqref{EquationLengthDataMarkedSurface}. For $\lambda\in\Delta^d$ and $u$ as in Equation~\eqref{EquationParameterU}, that is 
$
u^t_\chi<u<u^t_\chi+\lambda_\chi
$, 
we define a vector field $Y=Y_{(\lambda,u)}$ on the surface $X$ associated to the data $(\pi,\lambda,\tau)$ as in  \S~\ref{SectionZipperedRectangles}.

\subsection{A family of smooth vector fields}
\label{SectionDefinitionSmoothVectorField}

Let $p_1,\dots,p_m$ be the conical singularities of $X$, the angle at any $p_j$ being $2(k_j+1)\pi$. 
Define $\lambda_{\chi_L}=\lambda_{\chi_L}(\lambda,u)$ and $\lambda_{\chi_R}=\lambda_{\chi_R}(\lambda,u)$ as in 
Equation~\eqref{EquationLengthDataMarkedSurface}. Define $r_0=r_0(\lambda,u)$ by 
\begin{equation}
\label{EquationRadiusNeighbourhoodSingularities}
r_0:=\frac{1}{10}\cdot\min_{\alpha\in\widehat{\cA}}\widehat{\lambda}_\alpha.
\end{equation}
For $1\leq j\leq m$ let 
$
\rho_j:\big(U_j,p_j\big)\to \big(B(0,r_j),0\big)
$ 
be the ramified covering in Equation~\eqref{EquationDefinitionTranslationSurface(Ramification)}. For a proper choice of the suspension datum $\tau$, we have
$$
r_0\leq \min_{j=1,\dots,m}r_j.
$$

\subsubsection{Slowing down the vertical field at singularities}

Let 
$f:[0,r_0)\to[0,1]$ be a smooth function such that 
$$
\left\{
\begin{array}{l}
f^{(n)}(0)=0\quad\textrm{ for any }\quad n\in\NN\\
0<f(r)<1\quad\textrm{ for any }\quad 0<r<r_0/2\\
f(r)=1\quad\textrm{ for any }\quad r_0/2\leq r<r_0,
\end{array}
\right.
$$
where $f^{(n)}$ denotes the $n$-th derivative of $f$. The conditions above imply that we have a smooth function $F:B(0,r_0)\to\RR$ defined in polar coordinates by $F(r,\theta):=f(r)$. 
With an abuse of notation let $U_j$ be the pre-image of $B(0,r_0)$ under the ramified covering $\rho_j$ in 
Equation~\eqref{EquationDefinitionTranslationSurface(Ramification)}. For $1\leq j\leq m$, setting $G_j:=F\circ\rho_j$, we define a smooth function $G_j:U_j\to[0,1]$. Such function extends to a smooth function $G_j:X\to[0,1]$ such that 
$$
\left\{
\begin{array}{l}
G_j(p_j)=0\\
0<G_j(p)\leq1\textrm{ for any }p\in U_j\setminus \{p_j\}\\
G_j(p)=1\textrm{ for any }p\in X\setminus K_j,
\end{array}
\right.
$$
where $K_j\subset U_j$ is a compact set with $p_j\in K_j$. Let $V_0$ be the vertical flow determined by the data $(\pi,\lambda,\tau)$ and consider the smooth complete vector field
\begin{equation}
\label{EquationCompleteVerticalVectorField}
V:=G_1\cdot\dots\cdot G_m\cdot V_0.
\end{equation}

\subsubsection{Adding the saddle-sink node}
\label{SectionAddingSaddleSinkNode}

Recall \S~\ref{SectionZipperedRectangles} and for $\alpha\in\cA$ let $R_\alpha$ be the rectangles inside the surface $X=X(\pi,\lambda,\tau)$, each $R_\alpha$ having height $h_\alpha$. In the coordinates $(x,y)$ of the rectangle $R_\chi$, we define a point 
$(x_0,y_0)\in\interior(R_\chi)$ setting 
\begin{equation}
\label{EquationCenterSaddleSinkNode}
y_0:=\frac{h_\chi}{2}\quad\text{ and }\quad 
x_0:=u-\frac{r_0\widetilde{u}}{7},
\end{equation}
where $u$ is as in Equation~\eqref{EquationParameterU}, $\widetilde{u}$ is as in Point (3) of Lemma~\ref{LemmaFlowBox} and $r_0$ is defined in Equation~\eqref{EquationRadiusNeighbourhoodSingularities}. Consider the map
\begin{equation}
\label{EquationBijectionToFlowBox}
(x,y)\mapsto \varphi(x,y):=\bigg(\frac{7(x-x_0)}{r_0},\frac{7(y-y_0)}{r_0}\bigg)
\end{equation}
and let $Q$ be the preimage of the box $[-3,3]\times[-7,7]$, that is 
\begin{equation}
\label{EquationBoxQ}
Q:=\varphi^{-1}\big([-3,3]\times[-7,7]\big)\subset R_\chi.
\end{equation}
By the property of $r_0$, the box $Q$ is contained in $R_\chi$ and disjoint from the neighbourhoods $U_j$ with $j=1,\dots,m$ where the ramified covering 
$
\rho_j:\big(U_j,p_j\big)\to \big(B(0,r_j),0\big)
$ 
in Equation~\eqref{EquationDefinitionTranslationSurface(Ramification)} are defined. Let $W$ be the vector field in Equation~\eqref{EquationFlowBox} and consider 
$$
\widetilde{W}(x,y):=W\bigg(\frac{7(x-x_0)}{r_0},\frac{7(y-y_0)}{r_0}\bigg).
$$
Point (2) of Lemma~\ref{LemmaFlowBox} implies that $\widetilde{W}$ is a smooth vector field with 
$\widetilde{W}(x,y)=(0,1)$ for $(x,y)$ outside $Q$. Moreover according to Equation \eqref{EquationDecayPartialDerivatives} all partial derivatives of $\widetilde{W}(x,y)$ vanish as $(x,y)\to\partial Q$. Finally $Q$ is disjoint from $U_j$, $j=1,\dots,m$. Therefore we have a smooth vector field $Y_{(\lambda,u)}$ over $X$ defined by 
\begin{equation}
\label{EquationFlowY}
Y_{(\lambda,u)}(p):=
\left\{
\begin{array}{ccc}
\widetilde{W}(p) & \text{if} & p\in Q\\
V(p) & \text{if} & p\in X\setminus Q,
\end{array}
\right.
\end{equation}
where $V$ is defined in Equation~\eqref{EquationCompleteVerticalVectorField} and where, abusing the notation, we identify $(x,y)\in \interior(R_\chi)$ with the point $p\in X$ that they define. We have proved the following Lemma.

\begin{lemma}
\label{LemmaZerosVectorField}
The vector field $Y_{(\lambda,u)}$ in Equation~\eqref{EquationFlowY} is smooth, with a saddle-sink node, an isolated zero of index $-k_j$ at any conical singularity $p_j$ and no other zeros. 
\end{lemma}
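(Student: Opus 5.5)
The proof splits into three parts: smoothness, the zeros and their indices, and the absence of other zeros. In each part the surface $X$ is covered by three kinds of regions: the box $Q$, the neighbourhoods $U_j$ of the conical points, and the remaining set where $Y_{(\lambda,u)}$ equals $V_0$.

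\emph{Smoothness.} On $X\setminus(Q\cup\bigcup_j K_j)$ the field is the constant vertical field $V_0$, read in translation charts. This is smooth because all changes of charts are translations. On each $U_j$ the field $V=G_1\cdots G_m V_0$ is smooth away from $p_j$. At $p_j$ I will use the holomorphic chart $z$ with $\rho_j(z)=z^{k_j+1}$. In this chart $V_0$ pulls back to a field of the form $c\,\bar z^{k_j}/|z|^{2k_j}$ times a constant. It is smooth away from $0$, with $|V_0|$ bounded in flat length and derivatives growing at most polynomially in $1/|z|$. The factor $G_j=f(|z|^{k_j+1})$ is flat to infinite order at $0$ because $f^{(n)}(0)=0$ for all $n$. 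A product of an infinitely flat function with a field of polynomial growth extends smoothly by $0$ at $p_j$. On $Q$ the field is $\widetilde W$, the affine rescaling of $W$, which is smooth by Lemma~\ref{LemmaFlowBox}. It agrees with $(0,1)=V_0$ outside $\varphi^{-1}((-3,3)\times(-7,7))$ by Point (2) of that Lemma. Since $Q$ is disjoint from the $U_j$ by the choice of $r_0$, the two definitions in Equation~\eqref{EquationFlowY} agree on a neighbourhood of $\partial Q$. Equation~\eqref{EquationDecayPartialDerivatives} is only needed to confirm this gluing; Point (2) already gives it.

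\emph{Zeros.} Outside $Q$, the field $V$ vanishes only where some $G_j=0$, and $G_j>0$ except at $p_j$. Inside $Q$, Point (1) of Lemma~\ref{LemmaFlowBox} gives exactly two zeros, a sink and a saddle. Near them $W(x,y)=Z(x,y+4)$, since $\phi_{-4}=1$ and $\phi_4=0$ there. So after the affine change $\varphi$ and a translation, $Y$ coincides with $Z$ on a neighbourhood of the two points, which is the required saddle-sink node. The ball $B(0,4)$ in the definition has to be matched by composing with a further rescaling; I expect this to be routine.

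\emph{Index at $p_j$.} The index is unchanged when the field is multiplied by a positive function away from the zero. So $\iota_Y(p_j)$ equals the degree, on a small loop around $p_j$, of the direction of $V_0$ read in the smooth chart $z$. In that chart $V_0$ is proportional to $\bar z^{k_j}$, which gives degree $-k_j$. This local computation is the main (though standard) obstacle. The vector field in the $z$-chart is the pullback of $(0,1)$ under $w=z^{k_j+1}$, namely $i/((k_j+1)z^{k_j})$. Its direction is that of $\bar z^{k_j}$, so the index is $-k_j$. As a sanity check, Poincar\'e--Hopf in Equation~\eqref{EquatinPoincareHopf} confirms the count.
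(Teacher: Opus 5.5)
Your proposal is correct and follows the paper's route. The paper reads the lemma directly off the construction: Points (1)--(2) of Lemma~\ref{LemmaFlowBox}, the disjointness of $Q$ from the $U_j$, and the slowed-down vertical field at the cone points. You additionally supply the two steps the paper leaves implicit: the flatness argument that makes $G_jV_0$ extend smoothly by $0$ at $p_j$, and the computation that $V_0=i/\big((k_j+1)z^{k_j}\big)$ points along $i\bar z^{k_j}$, which gives index $-k_j$.

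The only soft spot is your aside on $B(0,4)$. A rescaling cannot achieve it: an affine map carrying $Z$ to a multiple of itself must fix the zeros $(0,\pm1)$, so it cannot shrink the box vertically. Meanwhile the construction only makes $Y$ a constant multiple of $Z$ on the box of radius $2$. This mismatch, however, already exists between the paper's own definition of a saddle-sink node and its construction, so it does not separate your argument from the paper's.
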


\begin{figure}
\begin{tikzpicture}[scale=0.7]

\tikzset
{->-/.style={decoration={markings,mark=at position .5 with {\arrow{>}}},postaction={decorate}}}

\clip(-1,-6.1) rectangle (12,6.1);



\draw[-] (0,0) -- (10,0);
\node [circle,fill,inner sep=1pt] at (0,0) {};


\draw[->-,thick,dashed] (0,0) -- (2,1);
\node [circle,fill,inner sep=1pt] at (2,1) {};

\draw[->-,thick,dashed] (2,1) -- (3,4) node[pos=0.3,below,black] {};
\node [circle,fill,inner sep=1pt] at (3,4) {};

\draw[->-,thick,dashed] (3,4) -- (7,2) node[pos=0.7,above] {$\zeta_C$};
\node [circle,fill,inner sep=1pt] at (7,2) {};

\draw[->-,thin,dashed] (3,4) -- (4.5,2) node[pos=0.5,below] {$\zeta_{C_L}$};
\node [circle,fill,inner sep=1pt] at (4.5,2) {};
\draw[->-,thin,dashed] (4.5,2) -- (7,2) node[pos=0.7,below] {$\zeta_{C_R}$};

\draw[->-,thick,dashed] (7,2) -- (10,-2);
\node [circle,fill,inner sep=1pt] at (10,-2) {};


\draw[->-,thick,dashed] (0,0) -- (4,-2);
\node [circle,fill,inner sep=1pt] at (4,-2) {};

\draw[->-,thin,dashed] (0,0) -- (1.5,-2);
\node [circle,fill,inner sep=1pt] at (1.5,-2) {};
\draw[->-,thin,dashed] (1.5,-2) -- (4,-2);

\draw[->-,thick,dashed] (4,-2) -- (6,-1);
\node [circle,fill,inner sep=1pt] at (6,-1) {};

\draw[->-,thick,dashed] (6,-1) -- (9,-5);
\node [circle,fill,inner sep=1pt] at (9,-5) {};

\draw[->-,thick,dashed] (9,-5) -- (10,-2) node[pos=0.3,below,black] {};
\node [circle,fill,inner sep=1pt] at (10,-2) {};


\draw[-,very thin,dashed] (0,0) rectangle (2,2);
\node[] at (0.5,5) {$A$};
\draw[-,thick] (0,0) -- (0,2);
\draw[-,thick] (0,2) -- (2,2);
\draw[-,thick] (2,1) -- (2,2);

\draw[-,very thin,dashed] (2,0) rectangle (3,6);
\node[] at (2.5,5) {$B$};
\draw[-,thick] (2,1) -- (2,6);
\draw[-,thick] (2,6) -- (3,6);
\draw[-,thick] (3,4) -- (3,6);

\draw[-,very thin,dashed] (3,0) rectangle (7,4);
\node[] at (5,5) {$C$};
\draw[-,thick] (3,0) -- (3,0.5);
\node[] at (3.5,0.5) {$u^t_C$};
\draw[-,thick] (4.5,0) -- (4.5,0.5);
\node[] at (5.6,0.5) {$u=u^t_{C_R}$};
\draw[-,thick] (3,4) -- (7,4);
\draw[-,thick] (7,2) -- (7,4);

\draw[-,very thin,dashed] (7,0) rectangle (10,3);
\node[] at (8.5,5) {$D$};
\draw[-,thick] (7,2) -- (7,3);
\draw[-,thick] (7,3) -- (10,3);
\draw[-,thick] (10,0) -- (10,3);


\draw[-,very thin,dashed] (0,-4) rectangle (4,0);
\node[] at (2.5,-5) {$C$};
\draw[-,thick] (0,-4) -- (0,0);
\draw[-,thick] (0,-4) -- (4,-4);
\draw[-,thick] (4,-4) -- (4,-2);

\draw[-,very thin,dashed] (4,-2) rectangle (6,0);
\node[] at (5,-5) {$A$};
\draw[-,thick] (4,-2) -- (6,-2);
\draw[-,thick] (6,-2) -- (6,-1);

\draw[-,very thin,dashed] (6,-3) rectangle (9,0);
\node[] at (7.5,-5) {$D$};
\draw[-,thick] (6,-3) -- (6,-1);
\draw[-,thick] (6,-3) -- (9,-3);

\draw[-,very thin,dashed] (9,-6) rectangle (10,0);
\node[] at (9.5,-5) {$B$};
\draw[-,thick] (9,-6) -- (9,-5);
\draw[-,thick] (9,-6) -- (10,-6);
\draw[-,thick] (10,-6) -- (10,-2);
\end{tikzpicture}
{\begin{tikzpicture}[scale=0.037]

\tikzset
{->-/.style={decoration={markings,mark=at position .7 with {\arrow{>}}},postaction={decorate}}}



\draw[-] (-20,-20) -- (170,-20) {};
\draw[-] (-20,220) -- (170,220) {};
\draw[-] (170,-20) -- (170,220) {};
\draw[-] (-20,-20) -- (-20,220) {};

\draw[-,very thick] (-20,-20) -- (-20,-15);
\node[] at (-10,-10) {$u^t_C$};


\draw[ dashed] (40,25) rectangle (110,175);
\coordinate [label=right :$Q$]  (x_1) at (110,150);
\draw[-, very thick] (75,78) -- (75,105) {};
\draw[thick,dotted] (46,-20) -- (46,220) {};

\node[] at (75,227) {$R_C$};
\node[] at (110,-15) {$c$};
\node[] at (52,-15) {$u$};

\draw[-, very thick] (46,-20)-- (46,25) {};
\draw[-, very thick] (46,175)-- (46,220) {};
\draw[-, very thick] (104,-20)-- (104,25) {};


\draw[pattern=north west lines, pattern color=gray!80, very thin] (55,120) rectangle (95,160);

\draw[-, very thick] (75,105) .. controls (75,110) and (73,117)   ..  (70,120) {};
\draw[-, very thick] (70,120) --  (55,135) {};

\draw[->, very thick] (55,135) .. controls (46,146)   .. (46,175) {};

\coordinate [label=right :$\sigma_2$]  (x_1) at (75,100);


\draw[fill=gray!25,very thin] (55,40) rectangle (95,80);


\draw[-, very thick] (46,25) .. controls (46,70)  .. (55,70) {};
\draw[->, very thick] (55,70)-- (74,70) {};
\coordinate [label=right :$\sigma_1$]  (x_1) at (45,30);

\draw[-, very thick] (104,25) .. controls (104,70)  .. (95,70) {};
\draw[->, very thick] (95,70)-- (76,70) {};
\coordinate [label=left :$\sigma_3$]  (x_1) at (105,30);

\draw[->, very thick] (75,70)-- (75,80) {};
\draw[->, very thick] (75,70)-- (75,60) {};


\draw[->, very thick] (75,58)-- (75,51) {};
\draw[->, very thick] (75,42)-- (75,49) {};
\draw[->, very thick] (67,50)-- (75,50) {};
\draw[->, very thick] (83,50)-- (76,50) {};

\end{tikzpicture}}
\caption{On the left, a surface $X=X(\pi,\lambda,\tau)$ with $\cA=\{A,B,C,D\}$. 
Marking the point $(u,\tau_A+\tau_B+\tau_C)$ inside the rectangle $R_C$ produces data 
$
(\widehat{\pi},\widehat{\lambda},\widehat{\tau})
$ 
over $\widehat{\cA}=\{A,B,C_L,C_R,D\}$, which represent $X$ as marked surface. 
On the right, the trajectories of $Y_{(\lambda,u)}$ inside $R_C$. The dashed rectangle represents $Q$ 
in Equation~\eqref{EquationBoxQ}, where $Y_{(\lambda,u)}=\widetilde{W}$. Outside $Q$ we have $Y_{(\lambda,u)}=V$. Inside the upper square $Y_{(\lambda,u)}=(-s,s)$ with $s>0$. Inside the lower square $Y_{(\lambda,u)}=Z$ with $Z$ as in Equation \eqref{EquationSaddleSinkNode}, modulo a change of variable. Both $\sigma_1$ and $\sigma_2$ cross the boundary of $Q$ at points with identical horizontal component, thus the same is true for their intersection with the boundary of $R_C$.}
\label{FigureZipperedRectanglesAndSaddleSink}
\end{figure}

\subsection{The flow inside the rectangle $R_\chi$}
\label{SectionFlowInsideRectangle}

Fix $(\lambda,u)$ and let $Y_{(\lambda,u)}$ be the flow in Equation \eqref{EquationFlowY}. Set for simplicity $Y:=Y_{(\lambda,u)}$, then let 
$
\big(t,(x,y)\big)\mapsto \Phi_Y\big(t,(x,y)\big)
$ 
be its (smooth) integral flow. Let $\varphi$ be the map in Equation \eqref{EquationBijectionToFlowBox}, write it in components as $\varphi=(\varphi_1,\varphi_2)$, and recall that $Y=\widetilde{W}:=W\circ\varphi$ on the box $Q$ in Equation \eqref{EquationBoxQ}. Let $\Phi_W$ be the flow of $W$ in Equation \eqref{EquationFlowBox}. The map $\varphi$ is an homothethy with $D\varphi=\kappa\id$ and $\kappa=7/r_0$, thus for $(x,y)$ in $Q$ we have\footnote{Note that the speed of $Y$-trajectories inside $Q$ is 
$
\Big|d/dt\varphi^{-1}\Big(\Phi_W\big(\varphi(x,y),\kappa t\big)\Big)\Big|
=|D\varphi|^{-1}\cdot|W\circ\varphi|\cdot\kappa=|W|.
$}
\begin{equation}
\label{EquationConjugationFlows}
\varphi\big(\Phi_Y(x,y,t)\big)=\Phi_W\big(\varphi(x,y),\kappa t\big).
\end{equation}
Let $\sigma_1$, $\sigma_2$, $\sigma_3$ be the trajectories of $W$ in Definition~\ref{DefinitionTrajectoriesAtSaddle}. 
Abusing the notation, we call $\sigma_1$, $\sigma_2$ and $\sigma_3$ the corresponding trajectories of 
$Y$ inside the rectangle $R_\chi$, which are represented in the right part of Figure~\ref{FigureZipperedRectanglesAndSaddleSink}. Points (3) and (6) of Lemma~\ref{LemmaFlowBox} imply 
$$
\left\{
\begin{array}{l}
\sigma_1\cap\partial Q=\{(u,y_0- r_0)\}
\text{ and }
\widetilde{W}(u,y_0- r_0)=W(\widetilde{u},- 7)=(0,1)
\\
\sigma_2\cap\partial Q=\{(u,y_0+ r_0)\}
\text{ and }
\widetilde{W}(u,y_0+ r_0)=W(\widetilde{u},+ 7)=(0,1).
\end{array}
\right.
$$
The positive $V$-trajectory of $(u,0)\in I^t_\chi\times\{0\}$ meets $\partial Q$ in $(u,y_0- r_0)$, while the negative 
$V$-trajectory of $(u,h_\chi)\in I^t_\chi\times\{h_\chi\}$ meets $\partial Q$ in $(u,y_0+r_0)$. Since 
$Y=V$ outside $Q$, then 
\begin{equation}
\label{EquationMatchingOrbitsSigma1and2}
\sigma_1\cap I^t_\chi\times\{0\}=(u,0)
\quad\text{ and }\quad
\sigma_2\cap I^t_\chi\times\{h_\chi\}=(u,h_\chi).
\end{equation}
Set $a:=\inf I^t_\chi$ and $d:=\sup I^t_\chi$. Let $c\in(u,d)$ be such that 
$$
\sigma_3\cap I^t_\chi\times\{0\}=(c,0).
$$
Equations \eqref{EquationCenterSaddleSinkNode} and \eqref{EquationBijectionToFlowBox} give
$\varphi_1(u)=\widetilde{u}$. We also have $\varphi_1(c)=\widetilde{c}$.

\smallskip

Let $s_W(\cdot,\cdot)$ be the map in Equation \eqref{EquationReparametrizationByHight}. The time change $t\mapsto\kappa t$ does not change the trajectories of $W$, thus by Equation \eqref{EquationConjugationFlows} and Point (5) in Lemma~\ref{LemmaFlowBox} we have a map
$$
s_Y:\big((a,u)\cup(c,d)\big)\times [0,h_\chi]\to I^t_\chi,\qquad
(x,y)\mapsto s_Y(x,y),
$$
where $s_Y(x,y)$ is the horizontal coordinate of the $Y$-trajectory starting at $(x,0)\in I^t_\chi\times\{0\}$ when it reaches hight $y$. According to Equation \eqref{EquationConjugationFlows} for 
$(\widetilde{x},\widetilde{y})=\varphi(x,y)$ we have 
\begin{equation}
\label{EquationConjugationTrajectories}
\varphi\big(s_Y(x,y),y\big)=\big(s_W(\widetilde{x},\widetilde{y}),\widetilde{y}\big).
\end{equation}
For any $x\in(a,u)\cup(c,d)$ let $\sigma_x$ be the positive trajectory of $Y$ passing through
$(x,0)\in I^t_\chi\times\{0\}$. Observe that 
$$
(u,c)=\varphi_1^{-1}(\widetilde{u},\widetilde{c})
\subset 
\varphi_1^{-1}\big([-3,3]\big)
\subset(a,d).
$$

\begin{lemma}
\label{LemmaReturnInsideRectangle}
For any $x\in(a,u)\cup(c,d)$ we have $t(x)>0$ defined as
$$
t(x)=\min\{t>0:\sigma_x(t)\in I^t_\chi\times\{h_\chi\}\}.
$$
Moreover we have $\sigma_x\big(t(x)\big)=\big(g(x),h_\chi\big)$,
where $g:I^t_\chi\to I^t_\chi$ is a smooth map, strictly increasing on $[a,u)\cup[c,d)$ with $g(x)=u$ for any $x\in[u,c]$. 
Finally we have
$$
g(x)=
\left\{
\begin{array}{ccc}
x&\text{if}& x\in I^t_\chi\setminus\varphi_1^{-1}\big([-3,3]\big)\\
\cfrac{r_0}{7}g_W\Big(\cfrac{7(x-x_0)}{r_0}\Big)+x_0 &\text{if}& x\in \varphi_1^{-1}\big([-3,3]\big).
\end{array}
\right.
$$
\end{lemma}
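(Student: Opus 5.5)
The plan is to split the rectangle $R_\chi=I^t_\chi\times[0,h_\chi]$ into three horizontal layers: $[0,y_0-r_0]$, the strip containing the box $Q$, and $[y_0+r_0,h_\chi]$. In the two outer layers the trajectories are straightened vertical lines. I then transfer Lemma~\ref{LemmaReturnMapFlowBox} and its smoothness corollary to the middle strip through the homothety $\varphi$ in Equation~\eqref{EquationBijectionToFlowBox}.

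First, I would check that $Y=V_0=(0,1)$ on all of $R_\chi$ outside $Q$. By definition $Y=V$ outside $Q$. Since $Q\subset R_\chi$ is disjoint from the neighbourhoods $U_j$ where the factors $G_j$ differ from $1$, the only point to settle is whether the $U_j$ meet the interior of $R_\chi$ away from $Q$. There I may have $V=G_1\cdots G_m V_0$ with $G_j<1$. This slows the field but does not change its direction, so trajectories are unchanged. Since $G_j>0$ off the conical points, which lie on $\partial R_\chi$, vertical trajectories starting in $I^t_\chi\times\{0\}$ still reach the top in positive time. Hence for $x\in I^t_\chi\setminus\varphi_1^{-1}([-3,3])$ the trajectory $\sigma_x$ is the vertical segment $\{x\}\times[0,h_\chi]$, which never meets $Q$ because its $x$-coordinate lies outside the projection of $Q$. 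This gives $t(x)<\infty$ and $g(x)=x$.

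For $x\in\varphi_1^{-1}([-3,3])\cap\big((a,u)\cup(c,d)\big)$, the trajectory goes straight up to $(x,y_0-r_0)$ on the bottom side of $Q$. Applying $\varphi$ gives the point $(\widetilde{x},-7)$ with $\widetilde{x}=7(x-x_0)/r_0\in(-3,\widetilde{u})\cup(\widetilde{c},3)$; here I use $\varphi_1(u)=\widetilde{u}$ and $\varphi_1(c)=\widetilde{c}$. By the conjugation in Equation~\eqref{EquationConjugationFlows}, the $Y$-orbit inside $Q$ is the $\varphi^{-1}$-image of the $W$-orbit of $(\widetilde{x},-7)$, up to the time rescaling by $\kappa$. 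By Lemma~\ref{LemmaReturnMapFlowBox} this orbit exits through the top at $(g_W(\widetilde{x}),7)$ without touching the vertical sides. Pulling back, the $Y$-orbit exits $Q$ at $\big(\tfrac{r_0}{7}g_W(\widetilde{x})+x_0,\,y_0+r_0\big)$ and then travels vertically to the top of $R_\chi$. This gives the displayed formula and $t(x)>0$ finite.

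Finally I would assemble the map. On $[u,c]$ I define $g\equiv u$; this is consistent with the formula because $g_W\equiv\widetilde{u}$ on $[\widetilde{u},\widetilde{c}]$ and $\varphi_1^{-1}(\widetilde{u})=u$. Monotonicity and surjectivity are inherited from $g_W$. Smoothness holds away from $\partial\varphi_1^{-1}([-3,3])$ because of the explicit affine conjugate of the smooth $g_W$ (the corollary after Lemma~\ref{LemmaUpperBoundIntegral}). At the endpoints $x_0\pm 3r_0/7$, I need the two pieces to glue smoothly, i.e. $g_W(x)-x$ must be flat at $\pm3$. This should follow from $W\equiv(0,1)$ near the vertical sides of the box, together with the decay in Equation~\eqref{EquationDecayPartialDerivatives}. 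I expect this endpoint smoothness, and confirming that the slowdown near the $U_j$ does not alter trajectories crossing $R_\chi$, to be the delicate points; the rest is bookkeeping of the conjugation.
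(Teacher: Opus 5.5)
Your proposal is correct and follows the paper's proof. Outside $Q$ the field $Y=V$ has the same vertical trajectories as $V_0$; inside $Q$ the conjugation \eqref{EquationConjugationFlows} transfers Lemma~\ref{LemmaReturnMapFlowBox} and yields the displayed formula for $g$; and $g$ is extended by the constant $u$ on $[u,c]$.

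One correction concerns the point you flag as delicate. $W$ is not identically $(0,1)$ near the sides $x=\pm3$ of the box, because the cutoffs $\phi_{\pm4}$ take values strictly between $0$ and $1$ on the annuli $2<\|(x,y\mp4)\|<3$, and these annuli reach those sides. No flatness computation for $g_W$ is needed, however. $Y$ is smooth on $X$ (Lemma~\ref{LemmaZerosVectorField}) and crosses the top side of $R_\chi$ transversally, so the implicit function theorem applied to its smooth flow makes $g$ smooth on all of $(a,u)\cup(c,d)$, including at $x_0\pm3r_0/7$. This is how the paper obtains smoothness there; it uses the corollary on $g_W$ only at $u$ and $c$.
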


\begin{proof}
The instant $t(x)>0$ exists because $Y=V$ outside $Q$, while inside $Q$ we have Lemma \ref{LemmaReturnMapFlowBox} and Equation \eqref{EquationConjugationFlows}. This gives a smooth strictly increasing map 
$g:(a,u)\cup(c,d)\to I^t_\chi$ surjective onto $\interior(I^t_\chi)\setminus\{u\}$ with 
$\sigma_x\big(t(x)\big)=\big(g(x),h_\chi\big)$. We extend to a continuous non decreasing map $g:I^t_\chi\to I^t_\chi$ setting $g(a):=a$ and $g(x):=u$ for $u\leq x\leq c$. Observe that for  
$
x\in\varphi_1^{-1}\big((-3,\widetilde{u})\cup(\widetilde{c},3)\big)
$ 
Equation \eqref{EquationConjugationTrajectories} implies
\begin{align*}
g(x)=s_Y(x,h_\chi)
&
=s_Y(x,y_0+r_0)
=\frac{r_0}{7}s_W\big(\varphi(x,y_0+r_0)\big)+x_0
\\
&=\frac{r_0}{7}s_W\Big(\frac{7(x-x_0)}{r_0},7\Big)+x_0
=\frac{r_0}{7}g_W\Big(\frac{7(x-x_0)}{r_0}\Big)+x_0,
\end{align*}
where the first equality holds because $Y=V$ outside $Q$, the second and the third hold because 
$\varphi_1(x)=7/r_0(x-x_0)$, while the last follows from $g_W(\cdot)=s_W(\cdot,7)$. Therefore $g$ is smooth in $u$ and $c$ because $g_W$ is smooth at $\widetilde{u}$ and $\widetilde{c}$. Moreover this proves the formula for $g$ inside 
$\varphi_1^{-1}\big([-3,3]\big)$. Finally $g(x)=x$ for $x\in I^t_\chi\setminus\varphi_1^{-1}\big([-3,3]\big)$ because $Y=V$ outside $Q$. The Lemma is proved.
\end{proof}

\subsection{Parametric family of first return maps}
\label{SectionParameterFamilyFirstReturnMaps}

Recall the setting in \S~\ref{SectionSettingConstructionCherryFlows}. In particular recall that $\pi$ is the combinatorial datum over $\cA$ obtained from $\widehat{\pi}$ collapsing the pair of letters $\chi_L,\chi_R$ to the letter $\chi$. Fix $(\lambda,u)$ with 
$\lambda\in\Delta^d$ and $u\in I^t_\chi$. Let $T:[0,1)\to[0,1)$ be the IET defined by $(\pi,\lambda)$. Let $g:I^t_\chi\to I^t_\chi$ be the map in Lemma~\ref{LemmaReturnInsideRectangle} and extend it to a continuous non decreasing function $g:[0,1)\to [0,1)$ setting 
$$
g|_{I^t_\alpha}:=\id
\quad
\textrm{ for any }
\quad
\alpha\not=\chi.
$$
Finally define $f_{(\lambda,u)}:[0,1)\to [0,1)$ by 
\begin{equation}
\label{EquationFirstReturnCherryFlow}
f_{(\lambda,u)}:=T\circ g.
\end{equation}

Consider the horizontal segment $I:=[0,1)\times\{0\}$ embedded in $X=X(\pi,\lambda,\tau)$. Extending the construction in \S~\ref{SectionFlowInsideRectangle}, for any $x\in [0,1)$ let $\sigma_x$ be the positive trajectory of $Y_{(\lambda,u)}$ starting from $(x,0)\in [0,1)\times\{0\}$, parametrized so that $\sigma_x(0)=(x,0)$. 
In the sense of Lemma~\ref{LemmaFirstReturnFlow}, the map $f_{(\lambda,u)}$ is the first return map to $I$ of the flow $Y_{(\lambda,u)}$. An example of such map is represented in the right part of Figure \ref{FigureIETsFlatPiece}. 
In the notation of \S~\ref{SectionFlowInsideRectangle}, define
$$
I^\ast:=\Big((a,u)\cup(c,d)\cup\bigcup_{\alpha\not=\chi}\interior I^t_\alpha\Big)\times\{0\}.
$$

\begin{lemma}
\label{LemmaFirstReturnFlow}
For any $x\in I^\ast$ we have $t(x)>0$ defined by
$$
t(x):=\inf\big\{t>0:\sigma_x(t)\in I\times\{0\}\big\}.
$$
Moreover for any $x\in I^\ast$ we have
$$
\sigma_x\big(t(x)\big)=\big(f_{(\lambda,u)}(x),0\big)
$$
\end{lemma}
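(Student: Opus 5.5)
The plan is to follow a trajectory $\sigma_x$ from $(x,0)$ through the rectangle $R_\alpha$ containing it, and then check how the top side of that rectangle is glued back to $I\times\{0\}$. Fix $x\in I^\ast$ and let $\alpha$ be the letter with $x\in\interior I^t_\alpha$. The argument splits into two cases, $\alpha\neq\chi$ and $\alpha=\chi$, and in both the aim is to show that $\sigma_x$ reaches the top side $I^t_\alpha\times\{h_\alpha\}$ of $R_\alpha$ at a finite positive time without meeting $I\times\{0\}$ before.

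\emph{Case $\alpha\neq\chi$.} Here $Y_{(\lambda,u)}=V=G_1\cdots G_m\cdot V_0$ on $R_\alpha$, because the box $Q$ lies in $R_\chi$. Away from the singularities $V$ is a positive multiple of $V_0=(0,1)$, so $\sigma_x$ is a reparametrization of the vertical segment $\{x\}\times[0,h_\alpha]$. This segment lies in the interior of the rectangle except for its endpoints, because $x$ is an interior point of $I^t_\alpha$. The singularities $p_j$ sit only at the points $\xi^{t/b}_\beta$, which lie on vertical sides of rectangles above or below $\inf I^t_\beta$. So the segment avoids every zero of $G_j$, and $V_2$ is bounded below by a positive constant on it. The top point $(x,h_\alpha)$ is therefore reached in finite time, and it is identified with $(Tx,0)$. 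Since $g=\id$ on $I^t_\alpha$, this gives $\sigma_x(t(x))=(T(g(x)),0)=(f_{(\lambda,u)}(x),0)$. No intermediate time returns to $I\times\{0\}$, because heights strictly increase inside $R_\alpha$.

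\emph{Case $\alpha=\chi$, with $x\in(a,u)\cup(c,d)$.} Here I would invoke Lemma~\ref{LemmaReturnInsideRectangle} directly. It gives a first time at which $\sigma_x$ hits $I^t_\chi\times\{h_\chi\}$, at the point $(g(x),h_\chi)$, where $g(x)\in\interior I^t_\chi$. Gluing the top side by $T$ then gives $(T g(x),0)=(f_{(\lambda,u)}(x),0)$. It remains to rule out an earlier hit of $I\times\{0\}$ on the way up. By Point (5) of Lemma~\ref{LemmaFlowBox} and Equation~\eqref{EquationConjugationFlows}, the vertical coordinate increases strictly along $\sigma_x$ as long as it stays outside $\varphi^{-1}(\Omega)$. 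Outside $Q$ it is simply $V$. One also needs $\sigma_x$ never to leave $R_\chi$ through a vertical side. For this, note that inside $Q$ the trajectory stays in $[-3,3]$ after rescaling, because trajectories cannot cross the vertical sides of the box, as shown in the proof of Lemma~\ref{LemmaReturnMapFlowBox}. Outside $Q$ the trajectory is vertical. Finally, $Q\subset\interior R_\chi$ is guaranteed by the choice of $r_0$.

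The main obstacle I expect is this last confinement point: making sure $\sigma_x$ does not exit $R_\chi$ sideways and does not enter $\Omega$ (where it would be trapped by the sink). For the second part, I would argue by uniqueness of solutions: $\varphi^{-1}(\Omega)$ is bounded by $\sigma_1$, $\sigma_3$ and the bottom side of $Q$, and its base is $[u,c]\times\{y_0-r_0\}$. A point with $x\notin[u,c]$ enters $Q$ outside this base, so its trajectory can never cross $\sigma_1\cup\sigma_3$. Together with the uniform positivity of $\rho_2$-speed on compact pieces, this gives finiteness of $t(x)$ and shows that the infimum defining $t(x)$ is a minimum.
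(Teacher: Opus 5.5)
Your proposal is correct and follows essentially the same route as the paper. The paper also splits according to whether $x$ lies in $R_\chi$ or in another rectangle, and uses that $Y=V$ is a time change of the vertical flow off $Q$. Inside $R_\chi$ it invokes Lemma~\ref{LemmaReturnInsideRectangle}, and in both cases it closes up via the gluing $(x,h_\alpha)\sim(Tx,0)$. Your extra checks fill in details the paper leaves implicit: no earlier hit of $I\times\{0\}$, confinement to $R_\chi$, avoidance of $\Omega$, and avoidance of the zeros of the $G_j$.
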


\begin{proof}
On $X\setminus R_\chi$ we have $Y_{(\pi,\lambda)}=V$ and the trajectories of $V$ are the same as those of $V_0$, with a different parametrization. On the other hand, on $R_\chi$ Lemma~\ref{LemmaReturnInsideRectangle} applies. Thus $t(x)$ is well defined for $x\in I^\ast$. Moreover $T$ is the first return of $V$ to $I$, thus for $\alpha\not=\chi$ and $x\in\interior I^t_\alpha$ we have 
$$
\sigma_x\big(t(x)\big)=\big(T(x),0\big)
$$
On the other hand, for $x\in(a,u)\cup(c,d)$ Lemma~\ref{LemmaReturnInsideRectangle} gives 
$
\sigma_x\big(t(x)\big)=\big(g(x),h_\chi\big).
$ 
and on the quotient surface we have 
$
I^t_\alpha\times\{h_\chi\}\simeq I^b_\alpha\times\{0\}\subset I
$, 
so that 
$$
\sigma_x\big(t(x)\big)=\big(g(x),h_\chi\big)\simeq \big( T(g(x)),0\big).
$$
The Lemma is proved.
\end{proof}

Varying the parameters that define $Y_{(\lambda,u)}$ and than taking its first return as in Equation \eqref{EquationFirstReturnCherryFlow}, we get a parametric family of maps $(\lambda,u)\mapsto f_{(\lambda,u)}$. In order to make the description of such parameter family consistent with the terminology of Definition \ref{DefinitionFullFamily}, we introduce the following notation. 
For any $\widehat{\lambda}\in\Delta^{d+1}$ consider the corresponding pair 
$(\lambda,u)$, where we recall that the map $(\lambda,u)\mapsto\widehat{\lambda}$ in Equation \eqref{EquationLengthDataMarkedSurface} is a linear isomorphism. Let $T$ be the IET defined by $(\pi,\lambda)$ and for any $\alpha\in\cA$ let $T_\alpha$ be the restriction of $T$ to its continuity interval $I^t_\alpha$. Define the maps
$$
g_{\chi_L}:=g|_{[a,u)}:[a,u)\to[a,u)
\quad
\textrm{ and }
\quad
g_{\chi_R}:=g|_{[u,d)}:[u,d)\to[u,d).
$$
We describe the above parameter family as a map
$$
F:\Delta^{d+1}\to\cC\big(\widehat{\pi},[0,1)\big)
,\qquad
\widehat{\lambda}\mapsto f=F(\widehat{\lambda}),
$$
where $f=F(\widehat{\lambda})$ is the GIET whose branches $f_\alpha$ with $\alpha\in\widehat{\cA}$ are given by 
\begin{equation}
\label{EquationFirstReturnCherryFlowBIS}
f_\alpha:=
\left\{
\begin{array}{cccc}
&
T_\alpha
&
\quad\textrm{ if }\quad
&
\alpha\not=\chi_L,\chi_R
\\
&
T_\chi\circ g_{\chi_L}
&
\quad\textrm{ for }\quad
&
\alpha=\chi_L
\\
&
T_\chi\circ g_{\chi_R}
&
\quad\textrm{ for }\quad
&
\alpha=\chi_R.
\end{array}
\right.
\end{equation}
Of course $f=F(\widehat{\lambda})$ is equal to the map in Equation \eqref{EquationFirstReturnCherryFlow}.

\begin{remark}
There exists an universal constant $C>0$ such that for any $\widehat{\lambda}\in\Delta^{d+1}$ and any $\alpha\in\widehat{\cA}$ the Lipschitz constant 
of the branch $f_\alpha$ of $F(\widehat{\lambda})$ satisfies
$$
\lipschitz(f_\alpha)\leq C.
$$
Indeed, according to Equation \eqref{EquationFirstReturnCherryFlowBIS} we have 
$\lipschitz(f_\alpha)=1$ for any $\alpha\not=\chi_L,\chi_R$. Moreover, since $g_W$ is a smooth function, Lemma \ref{LemmaReturnInsideRectangle} implies 
$$
\max\big\{\lipschitz(f_{\chi_L}),\lipschitz(f_{\chi_R})\big\}
\leq
\lipschitz\bigg(
\cfrac{r_0}{7}g_W\Big(\cfrac{7(x-x_0)}{r_0}\Big)+x_0
\bigg)
=
\lipschitz(g_W)<+\infty.
$$
\end{remark}

\subsection{Extension to the boundary}

Recall that for any $\alpha\in\widehat{\cA}$ Equation \eqref{EquationSingularitiesTimeZeroIET} defines linear maps $u^t_\alpha,u^b_\alpha:\Delta^{d+1}\to[0,1)$  given by 
$$
u^t_\alpha(\widehat{\lambda}):=
\sum_{\widehat{\pi}_t(\beta)\leq\widehat{\pi}_t(\alpha)-1}\widehat{\lambda}_\beta
\quad\text{and}\quad
u^b_\alpha(\widehat{\lambda}):=
\sum_{\widehat{\pi}_b(\beta)\leq\widehat{\pi}_b(\alpha)-1}\widehat{\lambda}_\beta.
$$
These maps extend continuously to maps $u^t_\alpha,u^b_\alpha:\overline{\Delta^{d+1}}\to[0,1]$. 
For any $\widehat{\lambda}\in\partial\Delta^{d+1}$ let 
$
\widehat{\cA}'=\widehat{\cA}'(\widehat{\lambda})
$ 
be the set of $\alpha\in\widehat{\cA}$ such that $\widehat{\lambda}_\alpha=0$ and $\widehat{\pi}'$ be the reduction of $\widehat{\pi}$ obtained removing the letters of $\widehat{\cA}'$ (see \S \ref{SectionDescriptionAllowedDegenerations}). Let $T_{\widehat{\lambda}}$ be the IET over $\widehat{\pi}'$ that sends the interval 
$
I^t_\alpha:=
\big[u^t_\alpha(\widehat{\lambda}),u^t_\alpha(\widehat{\lambda})+\widehat{\lambda}_\alpha\big)
$ 
onto 
$
I^b_\alpha:=
\big[u^b_\alpha(\widehat{\lambda}),u^b_\alpha(\widehat{\lambda})+\widehat{\lambda}_\alpha\big)
$ 
via a translation for any $\alpha\in\widehat{\cA}\setminus\widehat{\cA}'$. Let $S_{\widehat{\lambda}}=\{s_\alpha:\alpha\in\widehat{\cA}'\}$ be the set of points defined by 
$
s_\alpha:=\big(u^t_\alpha(\widehat{\lambda}),u^t_\alpha(\widehat{\lambda})\big)
$. 
Finally consider the degeneration 
$$
D_{\widehat{\lambda}}:=
(T_{\widehat{\lambda}},S_{\widehat{\lambda}})
\in
\widehat{\cC}\big(\widehat{\pi},[0,1)\big).
$$ 
Let $F:\Delta^{d+1}\to\cC\big(\widehat{\pi},[0,1)\big)$ be the map in Equation \eqref{EquationFirstReturnCherryFlowBIS}. We extend it by the map
\begin{equation}
\label{EquationExtensionFullFamily}
\widehat{F}:\overline{\Delta^{d+1}}\to\widehat{\cC}\big(\widehat{\pi},[0,1)\big)
\quad\textrm{ where }\quad
\widehat{F}(\widehat{\lambda}):=
\left\{
\begin{array}{l}
F(\widehat{\lambda})
\quad\textrm{ if }\quad
\widehat{\lambda}\in\Delta^{d+1}
\\
D_{\widehat{\lambda}}
\quad\textrm{ if }\quad
\widehat{\lambda}\in\partial\Delta^{d+1}.
\end{array}
\right.
\end{equation}

\begin{lemma}
\label{LemmaContinuousDependenceFirstReturn}
The map $\widehat{F}$ is continuous.
\end{lemma}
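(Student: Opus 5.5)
To prove that $\widehat{F}$ is continuous, I would split into two regimes: continuity at interior points $\widehat{\lambda}\in\Delta^{d+1}$, and continuity at boundary points $\widehat{\lambda}_\infty\in\partial\Delta^{d+1}$ along sequences $\widehat{\lambda}_n\to\widehat{\lambda}_\infty$. In both regimes I would control, letter by letter, the two terms of the distance in Equation~\eqref{EquationDinstanceGIETs}: the Hausdorff distance between graph closures of branches, and $|j_\alpha(\cdot)-j_\alpha(\cdot)|$. By Equation~\eqref{EquationFirstReturnCherryFlowBIS}, the branches $f_\alpha$ with $\alpha\neq\chi_L,\chi_R$ are translations $T_\alpha$. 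Their endpoints $u^t_\alpha,u^b_\alpha$ are linear in $\widehat{\lambda}$, and they have $j_\alpha=u^t_\alpha$. So these letters are trivially continuous, including at the boundary, where a branch with $\widehat{\lambda}_\alpha\to0$ has graph collapsing to the point $(u^t_\alpha,u^b_\alpha)$. I take this limit point to be the intended singular point $s_\alpha$ of $D_{\widehat{\lambda}}$; the paper writes $s_\alpha=(u^t_\alpha,u^t_\alpha)$, which I read as a typo for $(u^t_\alpha,u^b_\alpha)$.

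The real work concerns the letters $\chi_L,\chi_R$, whose branches are $T_\chi\circ g_{\chi_{L/R}}$. By Lemma~\ref{LemmaReturnInsideRectangle}, $g$ is the identity outside $\varphi_1^{-1}([-3,3])$ and is the rescaled copy $x\mapsto \frac{r_0}{7}g_W\big(\frac{7(x-x_0)}{r_0}\big)+x_0$ inside it. Here $g_W$ is a fixed map independent of parameters, with $\lipschitz(g_W)<\infty$, while $x_0=u-r_0\widetilde u/7$ and $r_0=\frac1{10}\min\widehat\lambda_\alpha$ are continuous functions of $\widehat\lambda$.
\begin{itemize}
\item \emph{Interior points.} The graph of $g$ depends continuously, in the sup/Hausdorff sense, on $(x_0,r_0)$. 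The graph of $T_\chi$ depends continuously on $\lambda$. Composing, and using the uniform Lipschitz bound in the preceding Remark together with Equation~\eqref{EquationHausdorffDistanceBoundsDifference}, gives Hausdorff continuity of the branches $f_{\chi_L},f_{\chi_R}$. For the flat parts, $j_{\chi_L}=u^t_{\chi_L}$ since $g_{\chi_L}$ is a homeomorphism, and $j_{\chi_R}=c=x_0+r_0\widetilde c/7$; both are continuous.
\item \emph{Boundary points.} $r_0(\widehat\lambda_n)\to0$, so $|g(x)-x|\le 6r_0/7\cdot C$ uniformly, and the flat interval $[u,c]$ has length $O(r_0)\to0$. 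Hence the graphs of $f_{\chi_{L/R}}$ are uniformly $O(r_0)$-close to the graphs of the translations $T_{\chi_{L/R}}$ for the marked IET with data $(\widehat\pi,\widehat\lambda_n)$. Those graphs converge to the corresponding branches of $T_{\widehat\lambda_\infty}$, or to points of $S_{\widehat\lambda_\infty}$ if $\widehat\lambda_{\chi_{L/R}}\to0$. Similarly $j_{\chi_R}(f_n)=c_n$ and $u^t_{\chi_R}=u_n$ differ by $O(r_0)$, so $j_{\chi_R}\to u^t_{\chi_R}(\widehat\lambda_\infty)=j_{\chi_R}(D_{\widehat\lambda_\infty})$.
\end{itemize}

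I expect the main obstacle to be the bookkeeping at the boundary, where some letters disappear and the Hausdorff comparison must be done against either a collapsing point or a limiting translation branch. To handle it I would prove a single estimate: the distance from $F(\widehat\lambda)$ to the degenerate-or-not marked IET with the same $\widehat\lambda$ is at most $C' r_0(\widehat\lambda)$. I would then note that the family of marked IETs $\widehat\lambda\mapsto T(\widehat\pi,\widehat\lambda)$, extended by $D_{\widehat\lambda}$ on the boundary, is continuous, by linearity of the endpoints. The triangle inequality then concludes.
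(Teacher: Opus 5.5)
Your proof is correct and follows the paper's argument. In the interior, continuity comes from the continuous dependence of $r_0,x_0$ in the explicit formula for $g$ from Lemma~\ref{LemmaReturnInsideRectangle}, together with $j_{\chi_R}=x_0+r_0\widetilde{c}/7$; at the boundary, $r_0\to0$ forces $g\to\id$ and $j_{\chi_R}-u^t_{\chi_R}\to0$. Your explicit $O(r_0)$ comparison with the marked IET $T(\widehat{\pi},\widehat{\lambda})$ followed by the triangle inequality only makes the paper's boundary step more precise. Your reading of $s_\alpha$ as $(u^t_\alpha,u^b_\alpha)$ is the correct one, and continuity at the boundary actually requires it.
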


\begin{proof}
We first prove the continuity of the map 
$
\widehat{\lambda}\mapsto F(\widehat{\lambda})
$ 
in Equation \eqref{EquationFirstReturnCherryFlowBIS}. This corresponds to prove that for any $\alpha\in\widehat{\cA}$ the branch $f_\alpha$ of $f$ in Equation \eqref{EquationFirstReturnCherryFlowBIS} depends continuously on 
$\widehat{\lambda}$ with respect to the Hausdorff distance, and that the function 
$
\widehat{\lambda}\mapsto j_\alpha(f_{\widehat{\lambda}})
$ 
is continuous. The continuous dependence of the $\alpha$-branch $f_\alpha$ is obvious for letters 
$\alpha\not=\chi_L,\chi_R$. For $\alpha=\chi_L,\chi_R$ it is enough to prove that the map $g:I^t_\chi\to I^t_\chi$ in Lemma \ref{LemmaReturnInsideRectangle} depends continuously on the parameters $(\lambda,u)$. 
This is true because $r_0$ in Equation \eqref{EquationRadiusNeighbourhoodSingularities} and $x_0,y_0$ in Equation \eqref{EquationCenterSaddleSinkNode} do (actually $y_0$ is constant), and Lemma \ref{LemmaReturnInsideRectangle} gives a formula for $g$ in terms of such quantities. Moreover for any $\widehat{\lambda}\in\Delta^{d+1}$ the only flat interval of  
$
f_{\widehat{\lambda}}=F\big(\widehat{\lambda}\big)
$ 
is in the branch $\alpha=\chi_R$ and we have 
$
j_{\chi_R}(f_{\widehat{\lambda}})=\varphi_1^{-1}(\widetilde{c})=x_0+r_0\widetilde{c}/7
$, 
which varies continuously with $\widehat{\lambda}$ because $x_0$ and $r_0$ do (here $\widetilde{c}$ is a constant and $(\varphi_1(x):=7(x-x_0)/r_0$ is the horizontal component of the map $\varphi$ in Equation \eqref{EquationBijectionToFlowBox}).

Now we prove the continuity of the extended map in Equation \eqref{EquationExtensionFullFamily}. 
Fix $\widehat{\lambda}\in\partial\Delta^{d+1}$. We must prove that if
$
\widetilde{\lambda}\in\overline{\Delta^{d+1}}
$ 
is close to $\widehat{\lambda}$ then 
$
\widehat{F}(\widetilde{\lambda})
$ 
is close to 
$
D_{\widehat{\lambda}}=(T_{\widehat{\lambda}},S_{\widehat{\lambda}})
$. 
This is true if  $\widetilde{\lambda}\in\partial\Delta^{d+1}$, because the restriction of $\widehat{F}$ to $\partial\Delta^{d+1}$ is continuous by continuity of the extended maps $u^t_\alpha,u^b_\alpha:\overline{\Delta^{d+1}}\to[0,1]$, $\alpha\in\cA$. Otherwise, if $\widetilde{\lambda}\in\Delta^{d+1}$, we observe that 
$r_0(\widetilde{\lambda})\to0$ as $\widetilde{\lambda}\to\partial\Delta^{d+1}$, so that 
$\big|\varphi_1^{-1}\big([-3,3]\big)\big|\to0$ and $g\to\id_\chi$, where $\id_\chi$ denotes the identity on the interval $I^t_\chi$. This proves that the continuous branches of $f_{\widetilde{\lambda}}$ converge to the branches of $D_{\widehat{\lambda}}$. Moreover 
$
\widetilde{\lambda}\mapsto j_\alpha(f_{\widetilde{\lambda}})
$ 
is continuous at $\widetilde{\lambda}=\widehat{\lambda}$ because 
$$
j_{\chi_R}(f_{\widetilde{\lambda}})-u^t_{\chi_R}(f_{\widetilde{\lambda}})=
\frac{r_0}{7}\big|\widetilde{c}-\widetilde{u}\big|\to0
\quad\text{ as }\quad
\widetilde{\lambda}\to\widehat{\lambda},
$$
where $\widetilde{u}$ is also a constant. The Lemma is proved.
\end{proof}

\subsection{End of the proof of Theorem~\ref{TheoremMainTheoremCherryFlow}}
\label{SectionEndProofTheoremCherryFlows}

Lemma \ref{LemmaContinuousDependenceFirstReturn} proves that the parametric family 
$
F:\Delta^{d+1}\to\cC\big(\widehat{\pi},[0,1)\big)
$ 
in Equation \eqref{EquationFirstReturnCherryFlowBIS} is continuous and that Point (3) in Definition \ref{DefinitionFullFamily} is satisfied. Moreover for $\widehat{\lambda}\in\Delta^{d+1}$ let $\widehat{T}$ be the IET determined by the data 
$(\widehat{\pi},\widehat{\lambda})$. The map
$
\widehat{\lambda}\mapsto u^b\big(F(\widehat{\lambda})\big)=u^b(\widehat{T}) 
$
is a linear isomorphism from $\Delta^{d+1}$ onto $\nabla_{\widehat{\pi}}$. Thus in particular its extension to the closure is an homeomorphism. The map 
$
\widehat{\lambda}\mapsto u^t\big(F(\widehat{\lambda})\big)=u^t(\widehat{T}) 
$ 
is also a bijection from $\Delta^{d+1}$ onto $\nabla^t_{\widehat{\pi}}$. Points (1) and (2) in Definition \ref{DefinitionFullFamily} are satisfied.

\smallskip

Point (1) of Theorem \ref{TheoremMainTheoremCherryFlow} follows from Lemma \ref{LemmaZerosVectorField}. Point (2) from Lemma \ref{LemmaFirstReturnFlow}. Finally Point (3) follows from Theorem \ref{TheoremFullFamilyTheorem} because the parametric family 
$
F:\Delta^{d+1}\to\cC\big(\widehat{\pi},[0,1)\big)
$ 
satisfies the assumptions in Definition \ref{DefinitionFullFamily}. Theorem \ref{TheoremMainTheoremCherryFlow} is proved  $\qed$

\subsection{Proof of Corollary \ref{Cor:uniqueQMSandmeasures}}
\label{SectionProofOfCorollary}

Let $T$ be a Keane IET with combinatorial datum $\pi$ and $\widehat{\lambda}\in\Delta^{d+1}$ be as in Point (3) of Theorem \ref{TheoremMainTheoremCherryFlow}. Let $f=f_{\widehat{\lambda}}$ be the map in Equation \eqref{EquationFirstReturnCherryFlow} and $h:[0,1)\to[0,1)$ be the semi-conjugacy to $T$ in Equation \eqref{EquationSemiconjugation}.

We first prove that the flow of 
$Y_{\widehat{\lambda}}$ has only one quasi-minimal set.  
Since $T$ satisfies the Keane property, it is minimal and has no
periodic points. Set
\begin{equation}
\label{EquationQuasiMinimalSet}
K:=[0,1)\setminus
\bigcup_{y\in[0,1)}\operatorname{Int}h^{-1}(y).
\end{equation}
We first prove existence. Since $h$ is non-decreasing, there are at
most countably many $y$ for which $h^{-1}(y)$ is a non-trivial
interval. Hence we can choose $y$ such that $h^{-1}(y)=\{x\}$ for some $x\in[0,1)$. 
By minimality of $T$, there exists $n_j\to\infty$ such that
$T^{n_j}(y)\to y$. We have 
$h(f^{n_j}(x))=T^{n_j}(y)\to y$. Since $h^{-1}(y)=\{x\}$, it follows that $f^{n_j}(x)\to x$.
Thus $x$ is recurrent for $f$ and hence also for the
flow. Therefore
$$
Q:=\overline{\{\varphi_t(x):t\in\mathbb R\}}
$$
is a quasi-minimal set. Notice that $Q$ is invariant, since the closure
of a full orbit is invariant under the flow. 
We now prove uniqueness. Let $x$ be an intersection with the section
of any non-trivial recurrent orbit and set
$
A:=\overline{\{f^n(x):n\geq0\}}
$. 
We claim that $A=K$. First, $A\subset K$. Indeed, if $A$ meet
$\operatorname{Int}h^{-1}(y)$, the orbit of $x$ would enter this
interval infinitely many times. Thus we have 
$
T^{n_i}(h(x))=T^{n_j}(h(x))=y
$ 
for some $n_i<n_j$ and hence $T^{n_j-n_i}(y)=y$, contradicting the absence of periodic
points for $T$. For the opposite inclusion, let $z\in K$ have singleton fiber, that is $h^{-1}(h(z))=\{z\}$.
Taking $n_j\to\infty$ with $T^{n_j}(h(x))\to h(z)$ we get $h(f^{n_j}(x))\to h(z)$. The monotonicity of $h$, together with
$h^{-1}(h(z))=\{z\}$, implies that $f^{n_j}(x)\to z$. Thus $z\in A$. 
It remains to observe that the points with singleton fibers are dense
in $K$. Indeed, if $x_0\in K$ and $U$ is a neighborhood of $x_0$,
then $h$ is not constant on $U$. Hence $h(U)$ contains a non-trivial
interval, and, since only countably many values have non-trivial
fibers, $U$ contains a point with singleton fiber. Since $A$ is
closed, it follows that $K\subset A$. Therefore $A=K$.
Thus the closure of the intersections with the section is the same
set $K$ for every non-trivial recurrent orbit. Consequently, every
such orbit has closure
$$
\overline{\{\varphi_t(z):z\in K,\ t\in\mathbb R\}}.
$$

Now fix $k\in\{1,\dots,g\}$ and assume that $T$ has exactly $k$ ergodic measures. 
Let $\widehat{\lambda}$ be the parameter associated to $T$ by Point (3) of Theorem \ref{TheoremMainTheoremCherryFlow}. The map $\mu\mapsto h_\ast\mu$ is an isomorphism between the convex set of $f$-invariant probability measures and the analogous set of  $f$-invariant probability measures. Thus $K$ is the support of $k$ ergodic measures for $T$ and the same is true for the quasi-minimal set of $Y_{\widehat{\lambda}}$. 
Corollary \ref{Cor:uniqueQMSandmeasures} is proved. $\qed$

\appendix

\section{Estimates for the Hausdorff distance}
\label{SectionEstimatesHausdorffDistance}

A modulus of continuity for a continuous map $h:J\to I$ between intervals $I,J\subset[0,1]$ is a monotone function  
$\omega_h:[0,\infty)\to[0,\infty)$ with $\lim_{\delta\to0_+}\omega_h(\delta)=0$ and such that 
$|h(x)-h(y)|\leq \omega_h(|x-y|)$ for any $x,y\in I$. Take $h_0:I_0\to J_0$ and $h:I\to J$ as above and two points 
$x_0\in I_0$ and $x\in I$. Let $x'\in I_0$ be such that 
$$
\big\|\big(x,h(x)\big)-\big(x',h_0(x')\big\|_\infty\leq \distance(h,h_0),
$$
where for simplicity we write 
$
\distance(h,h_0):=\distance\big(\overline{\text{Graph}(h)},\overline{\text{Graph}(h_0)}\big)
$. 
We have
$$
\left\{
\begin{array}{l}
|x'-x_0|\leq |x'-x|+|x-x_0|\leq \distance(h,h_0)+|x-x_0|
\\
\big|h(x)-h_0(x_0)\big|
\leq 
\big|h(x)-h_0(x')\big| + \big|h_0(x')-h_0(x_0)\big|
\leq 
\distance(h,h_0) + \omega_{h_0}\big(|x'-x_0|\big).
\end{array}
\right.
$$
Therefore
\begin{equation}
\label{EquationHausdorffDistanceBoundsDifference}
\big|h(x)-h_0(x_0)\big|
\leq 
\distance(h,h_0) + \omega_{h_0}\Big(\distance(h,h_0)+|x-x_0|\Big).
\end{equation}
Given $f\in\cC\big(\pi,[0,1)\big)$, Equation \eqref{EquationHausdorffDistanceBoundsDifference} is relevant for the branches 
$f_\alpha:I_\alpha^t\to I_\alpha^b$ in Equation \eqref{EquationPiecewiseHomeomorphism} and for their inverse branches $f_\alpha^{-1}:I_\alpha^b\to J_\alpha^b$.

\begin{lemma}
\label{LemmaHausdorffDistanceRestrictions}
For any $f,f'\in\cC\big(\pi,[0,1)\big)$ and any $\chi\in\cA$ the restrictions satisfy
$$
\distance\big(f_\chi|_{J_\chi(f)},f'_\chi|_{J_\chi(f)}\big)
\leq
\max\Big\{\distance\big(f_\chi,f'_\chi\big),\big|j_\chi(f)-j_\chi(f')\big|\Big\}.
$$
\end{lemma}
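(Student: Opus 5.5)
The plan is to compare the two restricted graphs directly, point by point, using the fact that each full branch graph is its restricted graph plus a horizontal flat segment at height $u^b_\chi$. I read the second restriction in the statement as $f'_\chi|_{J_\chi(f')}$, the branch of $f'$ restricted to its own non-flat part, since that is how the lemma is used for inverse branches. Set $\delta:=\distance(f_\chi,f'_\chi)$ and $\eta:=|j_\chi(f)-j_\chi(f')|$. Hausdorff distance is symmetric in its two arguments, and passing to closures changes nothing. It therefore suffices to show the following: every point $(x,f_\chi(x))$ with $x\in J_\chi(f)$ lies within box-distance $\max\{\delta,\eta\}$ of some point of $\overline{\text{Graph}(f'_\chi|_{J_\chi(f')})}$.

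Fix $x\geq j_\chi(f)$ in $I^t_\chi(f)$. By definition of $\delta$ there is $x'$ in $\closure(I^t_\chi(f'))$ with $|x-x'|\leq\delta$ and $|f_\chi(x)-f'_\chi(x')|\leq\delta$. The argument splits into two cases.

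\emph{Case $x'\geq j_\chi(f')$.} Then $(x',f'_\chi(x'))$ already lies in the closure of the restricted graph of $f'$, and we are done with bound $\delta$.

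\emph{Case $x'<j_\chi(f')$.} Then $x'$ lies in the flat interval $L_\chi(f')$, so $f'_\chi(x')=u^b_\chi(f')=f'_\chi(j_\chi(f'))$. We use the point $(j_\chi(f'),f'_\chi(x'))$, which belongs to the closed restricted graph. Its vertical distance to $(x,f_\chi(x))$ is at most $\delta$. For the horizontal distance there are two subcases.
\begin{enumerate}
\item If $x\leq j_\chi(f')$, then $j_\chi(f)\leq x\leq j_\chi(f')$, so $|x-j_\chi(f')|\leq\eta$.
\item If $x>j_\chi(f')$, then $x'<j_\chi(f')<x$, so $|x-j_\chi(f')|\leq|x-x'|\leq\delta$.
\end{enumerate}
In every case the box distance is at most $\max\{\delta,\eta\}$.

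Exchanging the roles of $f$ and $f'$ gives the other half of the Hausdorff distance, and the lemma follows. The only delicate point is the bookkeeping at the endpoint $j_\chi$: one must use the closure of the restricted graph, so that the corner point $(j_\chi(f'),u^b_\chi(f'))$ is admissible, and note that right-openness of $I^t_\chi$ is irrelevant after taking closures.
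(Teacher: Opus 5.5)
Your proof is correct and follows the same route as the paper. You pick a point $x'$ on the full graph of $f'_\chi$ close to $(x,f_\chi(x))$ and keep it if $x'\geq j_\chi(f')$; otherwise you replace it by the corner point $(j_\chi(f'),u^b_\chi(f'))$, bounding the horizontal error by $|x-x'|$ or by $|j_\chi(f)-j_\chi(f')|$ according to whether $x>j_\chi(f')$ or not. Your reading of the second restriction as $f'_\chi|_{J_\chi(f')}$ is also what the paper's proof actually uses.
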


\begin{proof}
Fix $x\in J_\chi(f)$, that is $j_\chi(f)\leq x<u^b_\chi(f)$. It is enough to find $y\in J_\chi(f')$ such that 
$$
\big\|\big(x,f_\chi(x)\big)-\big(y,f'_\chi(y)\big)\big\|
\leq
\max\Big\{\distance\big(f_\chi,f'_\chi\big),\big|j_\chi(f)-j_\chi(f')\big|\Big\},
$$
indeed the estimate above holds for any $x\in J_\chi(f)$ and moreover the role of $f$ and $f'$ are completely symmetric. Let $x'\in I^t_\chi(f')$ such that 
$$
\big\|\big(x,f_\chi(x)\big)-\big(x',f'_\chi(x')\big)\big\|
\leq
\distance(f_\chi,f'_\chi).
$$
If $x'\in J_\chi(f')$ then the required estimate holds with $y:=x'$. If otherwise $u^t_\chi(f)<x'\leq j_\chi(f')$, then $f'(x')=f'\big(j_\chi(f')\big)$ and
$
\big|f(x)-f'\big(j_\chi(f')\big)\big|
=
\big|f(x)-f'(x')\big|
$. 
Moreover
$$
\big|x-j_\chi(f')\big|
=
\left\{
\begin{array}{cccc}
x-j_\chi(f') \leq x-x' & = |x-x'| & \text{if} & x\geq j_\chi(f')
\\
j_\chi(f')-x \leq j_\chi(f')-j_\chi(f) & = \big|j_\chi(f')-j_\chi(f)\big| & \text{if} & x< j_\chi(f').
\end{array}
\right.
$$
It follows that 
$$
\Big\|\big(x,f_\chi(x)\big)-\Big(j_\chi(f'),f'_\chi\big(j_\chi(f')\big)\Big)\Big\|
\leq
\max\Big\{\distance\big(f_\chi,f'_\chi\big),\big|j_\chi(f)-j_\chi(f')\big|\Big\},
$$
so that the required estimate holds with $y:=j_\chi(f')$. The Lemma is proved.
\end{proof}

The inversion $f\mapsto f^{-1}$ is an isometry for the Hausdorff distance, thus the next Corollary \ref{CorollaryHausdorffDistanceInverse} follows immediately.

\begin{corollary}
\label{CorollaryHausdorffDistanceInverse}
For any $f,f'\in\cC\big(\pi,[0,1)\big)$ and any $\chi\in\cA$ we have 
$$
\distance\big(f_\chi^{-1},(f'_\chi)^{-1}\big)
\leq
\max\Big\{\distance\big(f_\chi,f'_\chi\big),\big|j_\chi(f)-j_\chi(f')\big|\Big\}.
$$
\end{corollary}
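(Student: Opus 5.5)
The claim is the Corollary: for any $f,f'\in\cC\big(\pi,[0,1)\big)$ and any $\chi\in\cA$,
$$
\distance\big(f_\chi^{-1},(f'_\chi)^{-1}\big)\leq\max\Big\{\distance\big(f_\chi,f'_\chi\big),\big|j_\chi(f)-j_\chi(f')\big|\Big\}.
$$
I would deduce it directly from Lemma~\ref{LemmaHausdorffDistanceRestrictions} via the reflection of the plane across the diagonal.

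First I fix the conventions. The inverse branch $f_\chi^{-1}$ is the inverse of the homeomorphism $f_\chi|_{J_\chi(f)}:J_\chi(f)\to I^b_\chi(f)$. Its graph is therefore the image of $\text{Graph}\big(f_\chi|_{J_\chi(f)}\big)$ under $\iota(x,y):=(y,x)$. The same holds for $f'$, with $J_\chi(f')$ in place of $J_\chi(f)$.

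Second, $\iota$ is an isometry of $\big(\closure(I)^2,\|\cdot\|\big)$, because the box norm is symmetric in its two coordinates. An isometry commutes with closures and preserves Hausdorff distance. Hence
$$
\distance\big(f_\chi^{-1},(f'_\chi)^{-1}\big)
=
\distance_H\Big(\overline{\text{Graph}(f_\chi|_{J_\chi(f)})},\overline{\text{Graph}(f'_\chi|_{J_\chi(f')})}\Big).
$$

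Third, I bound the right-hand side by Lemma~\ref{LemmaHausdorffDistanceRestrictions}. This needs care, because the Lemma as literally written restricts $f'_\chi$ to $J_\chi(f)$ rather than $J_\chi(f')$. Its proof, however, treats exactly the distance between the restriction of $f_\chi$ to $J_\chi(f)$ and the restriction of $f'_\chi$ to $J_\chi(f')$. For each $x\in J_\chi(f)$ it produces a point $y\in J_\chi(f')$, either $x'$ or $j_\chi(f')$, within the claimed bound. It then invokes the symmetry between $f$ and $f'$ for the other half of the Hausdorff distance.

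I would therefore read the Lemma in this intended form, or briefly restate it so. With that reading the bound is exactly the right-hand side of the Corollary.

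The only delicate point is this bookkeeping of which restriction appears in the Lemma; the rest is routine.
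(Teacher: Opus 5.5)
Your argument is correct and is exactly the paper's: the Corollary is deduced in one line from Lemma~\ref{LemmaHausdorffDistanceRestrictions}, by observing that the coordinate swap $(x,y)\mapsto(y,x)$ (passing to inverse branches) is an isometry for the Hausdorff distance induced by the box norm. You are also right to read the Lemma with $f'_\chi$ restricted to $J_\chi(f')$ rather than $J_\chi(f)$, since that is what its proof establishes and what the Corollary needs.
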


\section{Uniform modulus of continuity}
\label{AppendixUniformContinuityModulus}

Let $\cF$ be a family of continuous maps $f:I\to J$ between intervals $I,J\subset[0,1]$, which is compact with respect to the Hausdorff distance. For any $f\in\cF$ denote $I_f,J_f$ respectively its domain and image. 
Fix $\epsilon>0$. For any $f\in\cF$ let $\delta_f>0$ such that 
$
\big|f(x)-f(y)\big|\leq\epsilon/3
$ 
for $x,y$ with $|x-y|\leq2\delta_f$. Set 
$
r_f:=\min\big\{\delta_f/2,\epsilon/3\big\},
$
then let $\cB(f)$ be the ball of radius $r_f$ around $f$. Consider any map $h\in\cB(f)$. For any $x_1,x_2\in I_h$ with $|x_1-x_2|\leq \delta_f$ consider $y,z\in I_f$ such that 
$$
\left\{
\begin{array}{l}
\max\Big\{|x_1-y|,\big|h(x_1)-f(y)\big|\Big\}\leq r_f
\\
\max\Big\{|x_2-z|,\big|h(x_2)-f(z)\big|\Big\}\leq r_f.
\end{array}
\right.
$$ 
We have
$
|y-z|\leq|y-x_1|+|x_1-x_2|+|x_2-z|\leq 2r_f+\delta_f\leq 2r_f
$, 
therefore 
$$
\big|h(x_1)-h(x_2)\big|
\leq
\big|h(x_1)-f(y)\big|+\big|f(y)-f(z)\big|+\big|f(z)-h(x_2)\big|
\leq
2r_f+\frac{\epsilon}{3}\leq\epsilon.
$$
By compactness, let $f_1,\dots,f_N$ be maps in $\cF$ such that 
$
\cF\subset \cB(f_1)\cup\dots\cup\cB(f_N)
$, 
then set
$$
\delta:=\min\big\{d_{f_1},\dots,\delta_{f_N}\big\}.
$$
In conclusion, take any $g\in\cF$. Let $f_k$ be such that $g\in\cB(f_k)$. For any $x_1,x_2\in I_g$ with 
$|x_1-x_2|\leq\delta\leq\delta_{f_k}$ we have
$
\big|g(x_1)-g(x_2)\big|\leq\epsilon
$. 
This proves that we have a uniform modulus of continuity $\omega(\cdot):[0,1]\to[0,1]$ for $\cF$, where for any 
$\delta>0$ we set 
$$
\omega(\delta):=\sup\Big\{\big|g(x)-g(y)\big|;g\in\cF,x,y\in I_g\text{ with }|x-y|\leq\delta\Big\}.
$$

\subsection{Acknowledgments*}

We are also grateful to S. Marmi, to 
the Scuola Normale Superiore and to Unicredit Bank for founding and logistic support during our stay at 
the De Giorgi Center in Pisa, where this project was developed.
L.M. was supported by the European Union - NextGenerationEU under the National Recovery and Resilience
Plan (PNRR) - Mission 4 Education and research - Component 2 From
research to business - Investment 1.1 Notice Prin 2022 - DD N. 104 del
2/2/2022, from title "Geometry and topology of manifolds", proposal code
2022NMPLT8 - CUP J53D23003820001. L.P. was partially supported by the VR grant 67578, Renormalization in dynamics. We are also grateful to C. Ulcigrai for numerous discussions on this and related topics.

\end{document}